 \newif\ifarxiv
 \newif\ifshowupdate
\title{Learning Optimal Flows for Non-Equilibrium Importance Sampling}
\author{Yu Cao}
\address{Courant Institute of Mathematical Sciences, New York University, New York, NY, 10012}
\email{yucaoyc@outlook.com}
\author{Eric Vanden-Eijnden}
\email{eve2@cims.nyu.edu}
\address{Courant Institute of Mathematical Sciences, New York University, New York, NY, 10012}
\date{\today}
\DeclarePairedDelimiter\floor{\lfloor}{\rfloor}
\newcommand{\propref}[1]{Proposition~\ref{#1}}
\newcommand{\lemref}[1]{Lemma~\ref{#1}}
\newcommand{\secref}[1]{Section~\ref{#1}}
\newcommand{\appref}[1]{Appendix~\ref{#1}}
\newcommand{\figref}[1]{Figure~\ref{#1}}
\newcommand{\defref}[1]{Definition~\ref{#1}}
\newcommand{\ie}{{i.e.}}
\newcommand{\eg}{{e.g.}}
\newcommand*{\rom}[1]{\expandafter\@slowromancap\romannumeral #1@}
\newcommand{\wt}[1]{\widetilde{#1}}
\theoremstyle{plain}
\newtheorem{theorem}{Theorem}[section]
\newtheorem{proposition}[theorem]{Proposition}
\newtheorem{lemma}[theorem]{Lemma}
\theoremstyle{definition}
\newtheorem{definition}[theorem]{Definition}
\newtheorem{assumption}[theorem]{Assumption}
\theoremstyle{remark}
\newtheorem{remark}[theorem]{Remark}
\newtheorem{example}[theorem]{Example}
\newcommand\myeq[1]{\mathrel{\stackrel{\makebox[0pt]{\mbox{\normalfont\scriptsize #1}}}{=}}}
\newcommand\myge[1]{\mathrel{\stackrel{\makebox[0pt]{\mbox{\normalfont\tiny #1}}}{\ge}}}
\newcommand\myle[1]{\mathrel{\stackrel{\makebox[0pt]{\mbox{\normalfont\tiny #1}}}{\le}}}
\newcommand\myless[1]{\mathrel{\stackrel{\makebox[0pt]{\mbox{\normalfont\tiny #1}}}{<}}}
\newcommand{\ud}{\,\mathrm{d}}
\newcommand{\Natural}{\mathbb{N}}
\newcommand{\Int}{\mathbb{Z}}
\newcommand{\Real}{\mathbb{R}}
\newcommand{\ee}{\mathbb{E}}
\newcommand{\eps}{\epsilon}
\newcommand{\absbig}[1]{\big\lvert#1\big\rvert}
\newcommand{\Abs}[1]{\Big\lvert#1\Big\rvert}
\newcommand{\inner}[2]{\langle#1, #2\rangle}
\newcommand{\innerbig}[2]{\big\langle#1, #2\big\rangle}
\newcommand{\innerBig}[2]{\Big\langle#1, #2\Big\rangle}
\def\bigl{\mathopen\big}
\def\bigr{\mathclose\big}
\newcommand*\Laplace{\Delta}
\newcommand{\idmat}{\mathbb{I}}
\newcommand{\corrz}[2]{\bm{C}_{#1,#2}}
\newcommand{\corrg}[3]{\bm{V}_{#1,#2}^{(#3)}}
\newcommand{\partition}{\mathcal{Z}}
\newcommand{\dimn}{d}
\newcommand{\intreal}{\int_{-\infty}^{\infty}}
\newcommand{\accF}{\mathfrak{F}}
\newcommand{\fouriertrans}{\mathscr{F}}
\newcommand{\tm}{t_{-}}
\newcommand{\tp}{t_{+}}
\newcommand{\afin}{\mathcal{A}}
\newcommand{\newafin}{\afin_{\tm,\tp}}
\newcommand{\newainf}{\afin}
\newcommand{\bfin}{\mathscr{B}}
\newcommand{\binf}{\mathcal{B}}
\newcommand{\newafinarg}[2]{\afin_{#1,#2}}
\newcommand{\chrontime}{\mathcal{T}_{\leftarrow}}
\newcommand{\antichrontime}{\mathcal{T}_{\rightarrow}}
\newcommand{\dynb}{{\boldsymbol{b}}}
\newcommand{\AF}{\eta}
\newcommand{\mode}{m}
\renewcommand{\var}{\text{Var}}
\newcommand{\newvarfin}{\var_{\tm,\tp}}
\newcommand{\newmsecfin}{\mathcal{M}_{\tm,\tp}}
\newcommand{\newmsecfinarg}[2]{\mathcal{M}_{#1,#2}}
\newcommand{\newvarinf}{\var}
\newcommand{\newmsecinf}{\mathcal{M}}
\newcommand{\varmax}{\text{Var}^{(\max)}}
\newcommand{\derimsec}{\frac{\delta\newmsecfin(\dynb)}{\delta \dynb}}
\newcommand{\derimsecinf}{\frac{\delta\newmsecinf(\dynb)}{\delta \dynb}}
\newcommand{\derivarinf}{\frac{\delta\newvarinf(\dynb)}{\delta \dynb}}
\newcommand{\ftneis}{finite-time NEIS}
\newcommand{\itneis}{infinite-time NEIS}
\newcommand{\mani}[1]{\mathfrak{B}_{#1}}
\newcommand{\maniinf}{\mani{\infty}}
\newcommand{\basis}{\bm{e}}
\newcommand{\dom}{\Omega}
\newcommand{\effdom}{\mho}
\newcommand{\domsubset}{D}
\newcommand{\vectorzero}{\bm{0}}
\newcommand{\indi}{\chi}
\newcommand{\Rd}{\Real^{\dimn}}
\newcommand{\td}{\frac{\ud}{\ud t}}
\newcommand{\tube}{\mathfrak{T}}
\newcommand{\mapT}{\bm{T}}
\newcommand{\opSinf}{\bm{\mathcal{S}}^{\infty}}
\newcommand{\opGinf}{\mathcal{G}^{\infty}}
\newcommand{\opSinfarg}[2]{\opSinf_{#1}(#2)}
\newcommand{\opSinfargbig}[2]{\opSinf_{#1}\big(#2\big)}
\newcommand{\opGinfarg}[2]{\opGinf_{#1}(#2)}
\newcommand{\opGinfargbig}[2]{\opGinf_{#1}\big(#2\big)}
\newcommand{\fhit}[1]{\tau^{+}_{#1}}
\newcommand{\bhit}[1]{\tau^{-}_{#1}}
\newcommand{\diam}{\text{Diameter}}
\newcommand{\xst}{x^{\star}}
\definecolor{cadmiumgreen}{rgb}{0.0, 0.42, 0.24}
\newcommand{\state}[2]{\bm{X}_{#1}(#2)}
\newcommand{\statebig}[2]{\bm{X}_{#1}\big(#2\big)}
\newcommand{\stateidx}[3]{(\bm{X}_{#1}(#2))_{#3}}
\newcommand{\statesup}[3]{\bm{X}^{#3}_{#1}(#2)}
\newcommand{\statesupT}[3]{\wt{\bm{X}}^{#3}_{#1}(#2)}
\newcommand{\statesupbig}[3]{\bm{X}^{#3}_{#1}\big(#2\big)}
\newcommand{\stated}[2]{\bm{D}_{#1}(#2)}
\newcommand{\stateg}[2]{\bm{g}_{#1}(#2)}
\newcommand{\stategsup}[3]{\bm{g}^{#3}_{#1}(#2)}
\newcommand{\stateH}[2]{\bm{H}_{#1}(#2)}
\newcommand{\stateHsup}[3]{\bm{H}^{#3}_{#1}(#2)}
\newcommand{\stateL}[2]{\bm{L}_{#1}(#2)}
\newcommand{\stateLsup}[3]{\bm{L}^{#3}_{#1}(#2)}
\newcommand{\deristate}[2]{\bm{Y}_{#1}(#2)}
\newcommand{\deristatesup}[3]{\bm{Y}^{#3}_{#1}(#2)}
\newcommand{\deristatesupT}[3]{\wt{\bm{Y}}^{#3}_{#1}(#2)}
\newcommand{\stateD}[2]{\bm{Z}_{#1}(#2)}
\newcommand{\testfunc}[1]{\mathcal{F}^{(#1)}}
\newcommand{\deritestfunc}[1]{\mathcal{G}^{(#1)}}
\newcommand{\testfuncarg}[3]{\testfunc{#1}_{#2}(#3)}
\newcommand{\testfuncargbig}[3]{\testfunc{#1}_{#2}\big(#3\big)}
\newcommand{\testfuncepsarg}[4]{\testfunc{#1,#2}_{#3}(#4)}
\newcommand{\deritestfuncarg}[3]{\deritestfunc{#1}_{#2}(#3)}
\newcommand{\jacoarg}[2]{\mathcal{J}_{#1}(#2)}
\newcommand{\jacoargsup}[3]{\mathcal{J}^{#3}_{#1}(#2)}
\newcommand{\jacoargbig}[2]{\mathcal{J}_{#1}\big(#2\big)}
\newcommand{\jacomap}[2]{\jacoarg{#1}{#2}}
\newcommand{\lag}{\text{lag}}
\newcommand{\lowergamma}[2]{\Gamma\big(#1,#2\big)}
\newcommand{\pts}{\mathscr{X}}
\newcommand{\ptsy}{\mathscr{Y}}
\newcommand{\ball}[2]{B_{#1}(#2)}
\newcommand{\nn}{\boldsymbol{n}} 
\newcommand{\ratio}{\partition_1}
\newcommand{\timeT}{\varkappa}
	\newcommand{\newparagraph}[1]{\subsection{#1}}
	\newcommand{\newparagraphno}[1]{\subsection*{#1}}
 	\newcommand{\newparagraph}[1]{\paragraph{#1.}}
 	\newcommand{\newparagraphno}[1]{\paragraph{#1.}}
\newcommand{\partitionzero}{{}}
\newcommand{\gausscst}[1]{\frac{#1}{2}\ln(2\pi)}
\newcommand{\gauss}[2]{\mathscr{N}(#1,#2)}
\newcommand{\sigmamat}{\mathcal{D}}
\soulregister{\allowbreak}{0}
\newcommand{\bestcell}{\cellcolor{green!10}}
\newcommand{\bestcellais}{\cellcolor{blue!10}}
 \newcommand{\myemph}[1]{\emph{#1}}
 \providecommand\@dotsep{5}
 \renewcommand{\listoftodos}[1][\@todonotes@todolistname]{%
 	\@starttoc{tdo}{#1}}
\begin{document}
 \maketitle

\begin{abstract}
Many applications in computational sciences and statistical inference require the computation of expectations with respect to complex high-dimensional distributions with unknown normalization constants, as well as the estimation of these constants.
Here we develop a method to perform these calculations based on generating samples from a simple base distribution, transporting them by the flow generated by a velocity field, and performing averages along these flowlines.
This non-equilibrium importance sampling (NEIS) strategy is straightforward to implement and can be used for calculations with arbitrary target distributions.
On the theory side, we discuss how to tailor the velocity field to the target and establish general conditions under which the proposed estimator is a perfect estimator with zero-variance.
We also draw connections between NEIS and approaches based on mapping a base distribution onto a target via a transport map. On the computational side, we show how to use deep learning to represent the velocity field by a neural network and train it towards the zero variance optimum.
These results are illustrated numerically on benchmark examples (with dimension up to $10$), where after training the velocity field, the variance of the NEIS estimator is reduced by up to 6 {orders} of magnitude than that of a vanilla estimator.
We also compare the performances of NEIS with those of Neal's annealed importance sampling (AIS).
\end{abstract}

\section{Introduction}

Given  a potential function $U_1 :\dom\rightarrow\Real$ on the domain $\dom\subseteq\Rd$, the main goal of this paper is to evaluate
\begin{equation}
\label{eq:partition}
\partition_1 := \int_{\dom} e^{-U_1(x)} \ud x.
\end{equation}
The calculations of such integrals arise in many applications from several scientific fields. For instance, $\partition_1$ is known as the partition function in statistical physics \cite{lifshitz2013statistical}, where it is used to characterize the thermodynamic properties of a system with energy $U_1$, and as the evidence in Bayesian statistics, where it is used for model selection  \cite{feroz_multimodal_2008}.

When the dimension $d$ of the domain $\Omega$ is large, standard numerical quadrature methods are inapplicable to~\eqref{eq:partition} and the method of choice to estimate $\partition_1$ is Monte-Carlo sampling~\cite{liu2001monte,brooks_handbook_2011}. This requires expressing $\partition_1$ as an expectation, which can be done \eg{}, by realizing that
\begin{align}
	\label{eqn::direct_importance}
	\partition_1 = \ee_{0}\big[e^{-U_1}/\rho_0\big],
\end{align}
where $\ee_0$ denotes expectation with respect to the probability density function $\rho_0>0$. If $\rho_0$ is both known (\ie{}, we can evaluate it pointwise in $\Omega$, normalization factor included) and simple to sample from, we can build an estimator for $\partition_1$ by replacing the expectation on the right hand side of~\eqref{eqn::direct_importance} by the empirical average of $e^{-U_1}/\rho_0$ over samples drawn from $\rho_0$. Unfortunately, finding a density $\rho_0$ that has the two properties above is hard: unless $\rho_0$ is well-adapted to $e^{-U_1}$, {the estimator} based on \eqref{eqn::direct_importance} is terrible in general, with a standard deviation that is typically much larger than its mean or even infinite. A similar issue arises if we want to estimate the expectation $\ee_0 f$ of some function $f:\Omega\to\Real$, and the two problems are in fact connected when $f > 0$ since the second reduces to~\eqref{eqn::direct_importance} for $U_1=-\log(f\rho_0)$.

These difficulties have prompted the development of importance sampling strategies~\cite{geyer_importance_2011} whose aim is to produce estimators with a reasonably low variance for $\partition_1$ or $\ee_0f$. These include for example umbrella sampling~\cite{torrie1977nonphysical,thiede_eigenvector_2016}, replica exchange (aka parallel tempering)~\cite{geyer_importance_2011,lu_methodological_2019}, nested sampling~\cite{skilling_nested_2004,skilling_nested_2006}, in which  the estimation of $\partition_1$ is factorized into the calculation of several expectations of the type~\eqref{eqn::direct_importance}, but with better properties, that can then be recombined using thermodynamic integration~\cite{kirkwood1935statistical} or Bennett acceptance ratio method~\cite{bennett1976efficient}.

Complementary to these equilibrium techniques, non-equilibrium sampling strategies have also been introduced for the calculation of~\eqref{eq:partition}. For example, Neal's annealed importance sampling (AIS)~\cite{neal_annealed_2001}  based on the Jarzynski equality~\cite{jarzynski_equilibrium_1997,jarzynski_nonequilibrium_1997,andrieu_sampling_2018} calculates $\partition_1$ using properly weighted averages over sequences of states evolving from samples from $\rho_0$, without requiring that the kernel used to generate these states be in detailed-balance with respect to either $\rho_0$, or $\rho_1 := e^{-U_1}/\partition_1$, or any density interpolating between these two. Instead the weight factors are based on the probability distribution of the sequence of states in path space. Other non-equilibrium sampling strategies in this vein include bridge  and path sampling \cite{gelman_simulating_1998}, and sequential Monte Carlo (SMC) sampling \cite{moral_sequential_2006, arbel_annealed_2021}.

In this paper, we analyze another non-equilibrium importance sampling  (NEIS) method, originally introduced in~\cite{rotskoff_dynamical_2019}. NEIS is based on generating samples from a simple base density $\rho_0$, then propagating them forward and backward in time along the flowlines of a velocity field, and computing averages along these trajectories---the basic idea of the method is to use the flow induced by this velocity field to sweep samples from $\rho_0$ through regions in $\Omega$ that contribute most to the expectation. As shown in~\cite{rotskoff_dynamical_2019} and recalled below, this procedure leads to  consistent estimators for the calculation of $\partition_1$ or $\ee_0 f$ via a generalization of~\eqref{eqn::direct_importance}. One advantage of the method, which is a rare feature among importance sampling strategies, is that it leads to estimators that always have lower variance than the vanilla estimator based on~\eqref{eqn::direct_importance} \cite{rotskoff_dynamical_2019}. The question we investigate in this paper is how low their variance can be made, both in theory and in practice.  Our \textbf{main contributions} are:

\begin{itemize}[leftmargin=.1in]
    \item Under mild assumptions on $U_1$ and $\rho_0$, we show that if the NEIS velocity field is the gradient of a potential that satisfies a Poisson equation, the NEIS estimator for $\partition_1$ has zero variance.
    \item {Under the same assumptions, we show that this optimal flow can be used to construct a perfect transport map from $\rho_0$ to $\rho_1$.}
    This allows us to compare NEIS with importance sampling strategies
    involving transport maps like normalizing flows (NF) that have  recently
    gained
    popularity~\cite{rezende_variational_2015,kobyzev_normalizing_2020,papamakarios_normalizing_2021},
and highlight some potential advantages of the former over the latter.
    \item On the practical side, we derive variational problems for the optimal velocity field in NEIS, and show how to solve these problems by approximating the velocity by a neural network and optimizing its parameters using deep learning training strategies, similar to what is done with neural ODE~\cite{chen_neural_2018}.
    \item We illustrate the feasibility and usefulness of this approach by testing it on numerical examples. First we consider Gaussian mixtures in up to 10 dimensions. In this context, we show that training the velocity used in NEIS allows to reduce the variance of a vanilla estimator using a standard Gaussian distribution as $\rho_0$ by up to 6 orders of magnitude.
{Second we study Neal's 10-dimensional funnel distribution \cite{neal_slice_2003,arbel_annealed_2021}, for which the variance of the vanilla importance sampling method is infinity; training a linear dynamics with 2 parameters in NEIS can lead to an estimator with empirical variance less than $1$.}
    {In these examples we also show that after training, NEIS leads to estimators with lower variance than AIS \cite{neal_annealed_2001}.}
\end{itemize}

\newparagraphno{Related works}

The idea of transporting samples from $\rho_0$ to lower the variance of the
vanilla estimator based on~\eqref{eqn::direct_importance} is also at the core
of importance sampling strategies using normalizing flows (NF)
\cite{tabak_density_2010, tabak_family_2013, rezende_variational_2015,
kobyzev_normalizing_2020,
papamakarios_normalizing_2021,wu_stochastic_2020,wirnsberger_targeted_2020,noe_boltzmann_2019,muller_neural_2019}.
The type of transport used in NF-based method is however different in nature
from the one used in NEIS. With NF, one tries to construct a map that transforms
each sample from $\rho_0$ into a sample from the target $\rho_1
= e^{-U_1}/\partition_1$. In contrast, NEIS uses samples from $\rho_0$ as
initial conditions to generate trajectories, and uses the data along these
entire trajectories to build an estimator.  Intuitively, this means that
samples likely on $\rho_0$ must become likely on $\rho_1$ sometime along these
trajectories rather than at a given time specified beforehand, which is easier
to enforce.

NEIS bears similarities with  Neal's AIS~\cite{neal_annealed_2001}, except that in NEIS the sampling is done once from $\rho_0$ to generate deterministic trajectories to gather data for the estimator, whereas AIS uses random trajectories. There are some methods based on AIS that optimize the transition kernel: for instance, stochastic normalizing flows (SNF) proposed in \cite{wu_stochastic_2020} incorporates NF between annealing steps; and
annealed flow transport (AFT) in \cite{arbel_annealed_2021} combines NF with the sequential Monte Carlo method to provide optimized flow transport. These approaches require learning several maps along the annealed transition, whereas the NEIS herein only needs to learn a single flow dynamics.

A time-discrete version of NEIS, termed NEO, was proposed in \cite{thin_neo_2021}. The current implementation of NEO iterates on a map that needs to be prescribed beforehand, but this map could perhaps be optimized using a strategy similar to the one proposed here.

From a practical standpoint, the idea of optimizing the velocity field in NEIS using a neural network approximation for this field can be viewed as an application of neural ODEs~\cite{chen_neural_2018} that uses the variance of the NEIS estimator as the objective function to minimize. The nature of this objective poses specific challenges in the training procedure, which we investigate here.

\newparagraphno{Notations}  For symmetry, we will denote $\rho_0(x) = e^{-U_0(x)}$ with $U_0 = - \log \rho_0:\Omega \to \Real$ and $\partition_0= \int_\Omega e^{-U_0(x)} \ud x =1$. We denote a $\dimn$-dimensional vector filled with zeros as $\vectorzero_{\dimn}$ and the $\dimn\times \dimn$ identity matrix as $\idmat_{\dimn}$.
$\inner{\cdot}{\cdot}$ is the Euclidean inner product in $\Rd$. We assume that the domain $\Omega$ is either an open and connected subset of $\Real^\dimn$ with smooth boundary or a $\dimn$-dimensional torus (without boundary).
{We denote by  $\gauss{\mu}{\Sigma}$ the multivariate Gaussian density with mean $\mu$ and covariance matrix $\Sigma$.
For two functions $f, g: \domsubset\to \Real$ where $\domsubset$ is a domain of interest, the notation $f\lesssim g$ means that there exists a  constant $C>0$ such that $f(x)\le C g(x)$ for any $x\in \domsubset$.}
{Suppose $\mapT:\dom\to\Rd$ is a map and $\rho$ is a distribution, then the pushforward distribution of $\rho$ by the map $\mapT$ is denoted as $\mapT\#\rho$.}
The notation $\abs{\cdot}$ is the usual $\ell_2$ norm for vectors and $\norm{\cdot}$ is the matrix norm or functional norm.

\section{Flow-based NEIS method}
\label{sec::method}

Here we recall the main ingredients of the non-equilibrium importance sampling (NEIS) method proposed in \cite{rotskoff_dynamical_2019}. Let $\dynb: \Omega \to\Real^d$ be a velocity field which we assume belongs to the vector space

\begin{align}
	\label{eqn::b_space}
	\mani{} := \Big\{
	\dynb\in C^\infty\big(\overline{\dom}, \Rd\big)\ \Big\rvert\
		 \dynb\cdot \nn{} \rvert_{\partial\dom} = 0,\
	 	 \sup_{x\in \dom}\ \abs{\nabla \dynb(x)} < \infty
	 \Big\},
\end{align}
where $\nn{}$ is the normal vector at the boundary $\partial\dom$.
Define the associated flow map $\bm{X}_t: \Omega \to\Omega$ via
\begin{align}
\label{eqn::ode}
\td \state{t}{x} = \dynb\left(\state{t}{x}\right), \qquad \state{0}{x} = x,
\end{align}
and let $\jacoarg{t}{x}$ be the Jacobian of this map:
\begin{align}
\label{eq:jacob}
	\jacoarg{t}{x}:=\left|\det\left(\nabla_x \state{t}{x}\right) \right| \equiv \exp\left(\int_{0}^{t} \nabla \cdot \dynb\left(\state{s}{x}\right) \ud s\right).
\end{align}
Finally, let us denote
\begin{align}
	\label{eqn::testfunc}
	\begin{aligned}
	&\testfuncarg{k}{t}{x} := e^{-U_k\left(\state{t}{x}\right)} \jacoarg{t}{x}
	\end{aligned}
\end{align}
for $k \in \{ 0, 1\}$, $x\in\dom$ and $t\in\Real$. NEIS is based on the following result, proven in \appref{subsec::proof::formulation}:

\begin{proposition}
\label{prop::formulation} If $\dynb\in \mani{}$, then for any $-\infty<\tm<\tp<\infty$, we have
\begin{equation}
    \label{eqn::noneq_sample::finite}
	\partition_1 = \ee_{0} \newafin,
\end{equation}
where
\begin{equation}
    \label{eq::afin}
    \newafin(x) := \int_{\tm}^{\tp} \frac{\testfuncarg{1}{t}{x}}{\int_{t-\tp}^{t-\tm} \testfuncarg{0}{s}{x} \ud s} \ud t.
\end{equation}
In addition, if
\begin{equation}
    \label{eqn::ainf}
    \lim_{\substack{\tm\to\ -\infty\\\tp\to\infty}} \newafin(x) = \newainf(x) := \frac{\int_{\Real} \testfunc{1}_t(x)\ud t }{\int_{\Real} \testfunc{0}_t(x)\ud t}
\end{equation}
exists for almost all $x\sim\rho_0$, then
\begin{equation}
    \label{eqn::noneq_sample::infinite}
	\partition_1 = \ee_{0} \newainf.
\end{equation}
\end{proposition}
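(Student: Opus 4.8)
The identity in \propref{prop::formulation} should follow from essentially two ingredients, so the first thing I would record is these. For $\dynb\in\mani{}$, the condition $\sup_x|\nabla\dynb(x)|<\infty$ makes $\dynb$ globally Lipschitz, so \eqref{eqn::ode} has a unique solution for all $t\in\Real$, while $\dynb\cdot\nn|_{\partial\dom}=0$ forces the flow to preserve $\overline{\dom}$; hence for every $t$ the map $\state{t}{\cdot}$ is a $C^\infty$-diffeomorphism of $\overline{\dom}$ onto itself with inverse $\state{-t}{\cdot}$ and strictly positive Jacobian $\jacoarg{t}{\cdot}$. Second, from \eqref{eq:jacob} and the semigroup property $\state{s+t}{x}=\state{s}{\state{t}{x}}$ one gets the cocycle identity $\jacoarg{s+t}{x}=\jacoarg{s}{\state{t}{x}}\,\jacoarg{t}{x}$, in particular $\jacoarg{-t}{\state{t}{x}}=1/\jacoarg{t}{x}$. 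Everything in the estimator is built from these: the plan is a change of variables along the flow, sandwiched between two applications of Fubini--Tonelli.

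For \eqref{eqn::noneq_sample::finite} I would write $\rho_0=e^{-U_0}$ and use Tonelli's theorem---legitimate because every integrand in sight is nonnegative---to exchange the $x$- and $t$-integrations in $\ee_0\newafin=\int_\dom e^{-U_0(x)}\newafin(x)\ud x$, reducing matters to the inner integral $\int_\dom \frac{e^{-U_0(x)}\,\testfuncarg{1}{t}{x}}{\int_{t-\tp}^{t-\tm}\testfuncarg{0}{s}{x}\ud s}\ud x$ for fixed $t\in[\tm,\tp]$. In it I would substitute $y=\state{t}{x}$: by the change-of-variables formula the factor $\jacoarg{t}{x}$ hidden inside $\testfuncarg{1}{t}{x}=e^{-U_1(\state{t}{x})}\jacoarg{t}{x}$ is exactly what converts $\jacoarg{t}{x}\ud x$ into $\ud y$, while $e^{-U_1(\state{t}{x})}=e^{-U_1(y)}$ and $e^{-U_0(x)}=e^{-U_0(\state{-t}{y})}$; for the denominator, the cocycle identity gives $\testfuncarg{0}{s}{\state{-t}{y}}=\testfuncarg{0}{s-t}{y}/\jacoarg{-t}{y}$, so the substitution $u=s-t$ turns $\int_{t-\tp}^{t-\tm}\testfuncarg{0}{s}{\state{-t}{y}}\ud s$ into $\jacoarg{-t}{y}^{-1}\int_{-\tp}^{-\tm}\testfuncarg{0}{u}{y}\ud u$. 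Collecting terms and using $e^{-U_0(\state{-t}{y})}\jacoarg{-t}{y}=\testfuncarg{0}{-t}{y}$, the inner integral becomes $\int_\dom \frac{e^{-U_1(y)}\,\testfuncarg{0}{-t}{y}}{\int_{-\tp}^{-\tm}\testfuncarg{0}{u}{y}\ud u}\ud y$. Exchanging the $y$- and $t$-integrations back (Tonelli again) yields $\ee_0\newafin=\int_\dom e^{-U_1(y)}\,\frac{\int_{\tm}^{\tp}\testfuncarg{0}{-t}{y}\ud t}{\int_{-\tp}^{-\tm}\testfuncarg{0}{u}{y}\ud u}\ud y$, and the substitution $v=-t$ shows the numerator of the ratio equals its denominator, so the ratio is $1$ and $\ee_0\newafin=\int_\dom e^{-U_1(y)}\ud y=\partition_1$.

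For \eqref{eqn::noneq_sample::infinite} I would run the very same computation with $\int_{\Real}$ in place of $\int_{\tm}^{\tp}$ (and the $t$-shifted whole line in place of $[t-\tp,t-\tm]$); the Fubini--Tonelli exchanges and the flow change of variables remain valid by nonnegativity, and one arrives at $\ee_0\newainf=\int_\dom e^{-U_1(y)}\,\frac{\int_{\Real}\testfuncarg{0}{-t}{y}\ud t}{\int_{\Real}\testfuncarg{0}{u}{y}\ud u}\ud y$, where the ratio is $1$ at every $y$ for which $\int_{\Real}\testfuncarg{0}{t}{y}\ud t\in(0,\infty)$. Equivalently, one can pass to the limit $\tm\to-\infty,\ \tp\to+\infty$ directly in the finite-time identity. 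Either route isolates the one genuinely delicate point, which I expect to be the main obstacle: justifying this limit/ratio. Nonnegativity and Fatou's lemma give $\ee_0\newainf\le\partition_1$ for free, but the reverse inequality amounts to ruling out an escape of mass along the flowlines as the time window grows, i.e.\ to checking that $\int_{\Real}\testfuncarg{0}{t}{x}\ud t$ is finite (and positive) for $\rho_0$-a.e.\ $x$. This is precisely the role of the hypothesis that the limit $\newainf(x)=\lim\newafin(x)$ exists a.e.\ (it already fails, e.g., for $\dynb\equiv 0$, where every such integral diverges), and I would make that implication explicit---either via a uniform-integrability argument for the family $\{\newafin\}$, or by invoking the hypothesis to confine the $y$-integral above to the set on which the displayed ratio legitimately equals $1$.
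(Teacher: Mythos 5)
Your proposal is correct and follows essentially the same route as the paper: a Fubini--Tonelli exchange combined with the change of variables $y=\state{t}{x}$ and the cocycle identity \eqref{eqn::testfunc_trans}, only run in the opposite direction (the paper starts from $\partition_1$, inserts the factor $\int_{\tm}^{\tp}\testfuncarg{0}{-t}{x}\ud t\big/\int_{\tm}^{\tp}\testfuncarg{0}{-s}{x}\ud s\equiv 1$, and unwinds it into $\ee_{0}\newafin$). For the infinite-time part the paper likewise repeats the finite-time computation, restricted to the full-measure set $\effdom(\dynb)$ on which $\int_{\Real}\testfuncarg{0}{t}{\cdot}\ud t$ is finite and continuous, which is exactly the delicate point you flag and handle by confining the $y$-integral to where the ratio equals one.
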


When $\dynb = \vectorzero_{\dimn}$, or $\tm\uparrow 0$ and $\tp\downarrow0$,  \eqref{eqn::noneq_sample::finite}  reduces to \eqref{eqn::direct_importance}. The aim, however, is to choose $\dynb$ so that the estimator based on \eqref{eqn::noneq_sample::finite}  has a {lower} variance than the one based \eqref{eqn::direct_importance}: we will show below that this can indeed be done. For now, note that Jensen's inequality implies that an estimator based on \eqref{eqn::noneq_sample::infinite} for \textit{any}~$\dynb$ has lower variance than the one based \eqref{eqn::direct_importance}; {see \cite{rotskoff_dynamical_2019} or \propref{prop::var_dynb} below for details}.

Note also that, if one allows the magnitude of the flow $\dynb$ to be arbitrarily large,  the finite-time NEIS \eqref{eq::afin} will behave like the infinite-time NEIS \eqref{eqn::ainf}; such a relation will be discussed and elaborated in Appendix~\ref{subsec::relation}.

{Finally, note that the estimator \eqref{eqn::noneq_sample::infinite} based on \eqref{eqn::ainf} is invariant with respect to the parameterization of the flowlines generated by the dynamics $\dynb$, as shown by the following result proved in \appref{subsec::proof_invariance}:}
\begin{proposition}[An invariance property]
	\label{prop::invariance_b}
	Suppose $\dynb, \alpha\dynb \in \mani{}$, where $\alpha\in C^{\infty}(\dom,\Real)$ satisfies $\inf_{x\in\dom} \alpha(x)>0$.
	Then the fields $\dynb$ and $\alpha\dynb$ generate the same flowlines, and  $\newainf_{\dynb} = \newainf_{\alpha\dynb}$ where $\newainf_{\dynb}$ and $\newainf_{\alpha\dynb}$ are the function defined in \eqref{eqn::ainf} using $\dynb$ and $\alpha\dynb$, respectively.
\end{proposition}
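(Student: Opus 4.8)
The plan is to establish the invariance in two stages, reflecting the two assertions in the statement: first that $\dynb$ and $\alpha\dynb$ generate the same flowlines up to reparameterization of time, and second that the function $\newainf$ is unchanged because both numerator and denominator integrals in \eqref{eqn::ainf} transform in exactly the same way under that time-change.

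First I would make precise the claim that $\dynb$ and $\alpha\dynb$ share flowlines. Let $\statesup{}{t}{x}$ denote the flow of $\dynb$ and $\statesupT{}{t}{x}$ the flow of $\alpha\dynb$ (both are well-defined on $\dom$ for all $t\in\Real$ under the hypotheses guaranteeing membership in $\mani{}$). Starting from a fixed $x$, I would introduce the time-change $t\mapsto \sigma(t,x)$ determined by the ODE
\begin{equation*}
\frac{\partial \sigma(t,x)}{\partial t} = \alpha\big(\statesup{}{\sigma(t,x)}{x}\big), \qquad \sigma(0,x)=0,
\end{equation*}
and verify by direct differentiation that $\statesupT{}{t}{x} = \statesup{}{\sigma(t,x)}{x}$ solves $\td \statesupT{}{t}{x} = \alpha\dynb(\statesupT{}{t}{x})$ with the correct initial condition; uniqueness of solutions then gives the identity. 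Since $\inf_x \alpha(x)>0$ and $\alpha$ is smooth, $\sigma(\cdot,x)$ is a smooth, strictly increasing bijection of $\Real$ onto $\Real$ (its derivative is bounded below by $\inf\alpha>0$), so the two flows trace the same curves in $\dom$, just at different speeds.

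Next I would compute how $\jacoargsup{}{t}{x}$ changes. Writing $\wt{\mathcal J}_t(x)$ for the Jacobian of $\statesupT{}{t}{x}$, the chain rule through $\statesupT{}{t}{x}=\statesup{}{\sigma(t,x)}{x}$ together with the formula \eqref{eq:jacob} yields
\begin{equation*}
\wt{\mathcal J}_t(x) = \jacoarg{\sigma(t,x)}{x}\cdot \frac{\alpha\big(\statesup{}{\sigma(t,x)}{x}\big)}{\alpha(x)},
\end{equation*}
the extra ratio coming from differentiating $\sigma(t,x)$ in $x$. Substituting into \eqref{eqn::testfunc} gives, for $k\in\{0,1\}$,
\begin{equation*}
\wt{\mathcal F}^{(k)}_t(x) = e^{-U_k(\statesupT{}{t}{x})}\wt{\mathcal J}_t(x) = \frac{\alpha(\statesup{}{\sigma(t,x)}{x})}{\alpha(x)}\,\testfuncarg{k}{\sigma(t,x)}{x}.
\end{equation*}
Now change variables $s=\sigma(t,x)$ in the integrals over $\Real$ appearing in \eqref{eqn::ainf}: since $\ud s = \alpha(\statesup{}{s}{x})\,\ud t$ and $\sigma$ maps $\Real$ onto $\Real$, we get $\int_\Real \wt{\mathcal F}^{(k)}_t(x)\,\ud t = \alpha(x)^{-1}\int_\Real \testfuncarg{k}{s}{x}\,\ud s$. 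The common factor $\alpha(x)^{-1}$ cancels between the $k=1$ numerator and the $k=0$ denominator, yielding $\newainf_{\alpha\dynb}(x)=\newainf_{\dynb}(x)$ wherever the limit defining $\newainf_{\dynb}$ exists.

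The main obstacle I anticipate is not any single computation but the care required around the time-change map: showing $\sigma(\cdot,x)$ is globally defined on all of $\Real$ and surjective (so that the change of variables in the improper integrals is legitimate rather than merely formal), and handling the $x$-dependence of $\sigma$ cleanly enough that the Jacobian ratio $\alpha(\statesup{}{\sigma}{x})/\alpha(x)$ emerges correctly. The hypotheses $\dynb,\alpha\dynb\in\mani{}$ and $\inf_x\alpha(x)>0$ are exactly what is needed: the former gives global-in-time existence and smoothness of both flows, and the latter guarantees $\sigma$ does not stall and that the denominator $\alpha(x)$ never vanishes. Once those points are nailed down, the cancellation is immediate.
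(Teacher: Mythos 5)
Your proposal is correct and follows essentially the same strategy as the paper's proof: both introduce the time-change $\sigma(t,x)$ (the paper calls it $\theta_t$), establish $\statesupT{t}{x}=\state{\sigma(t,x)}{x}$, and show that each integral $\int_{\Real}\testfunc{k}_t\,\ud t$ picks up the same overall factor $\alpha(x)^{-1}$, which then cancels in the ratio \eqref{eqn::ainf}. The only stylistic difference is how the key identity is verified: you derive the Jacobian relation $\wt{\mathcal J}_t(x)=\jacoarg{\sigma(t,x)}{x}\,\alpha(\state{\sigma(t,x)}{x})/\alpha(x)$ explicitly and then change variables, while the paper instead changes variables first and checks that the two resulting integrands $\psi_1,\psi_2$ solve the same scalar ODE; these are the same computation organized in a different order.
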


\section{Optimal NEIS}
\label{sec::optimal_neis}

\begin{figure}[t]
\centering
	\includegraphics[width=0.5\textwidth]{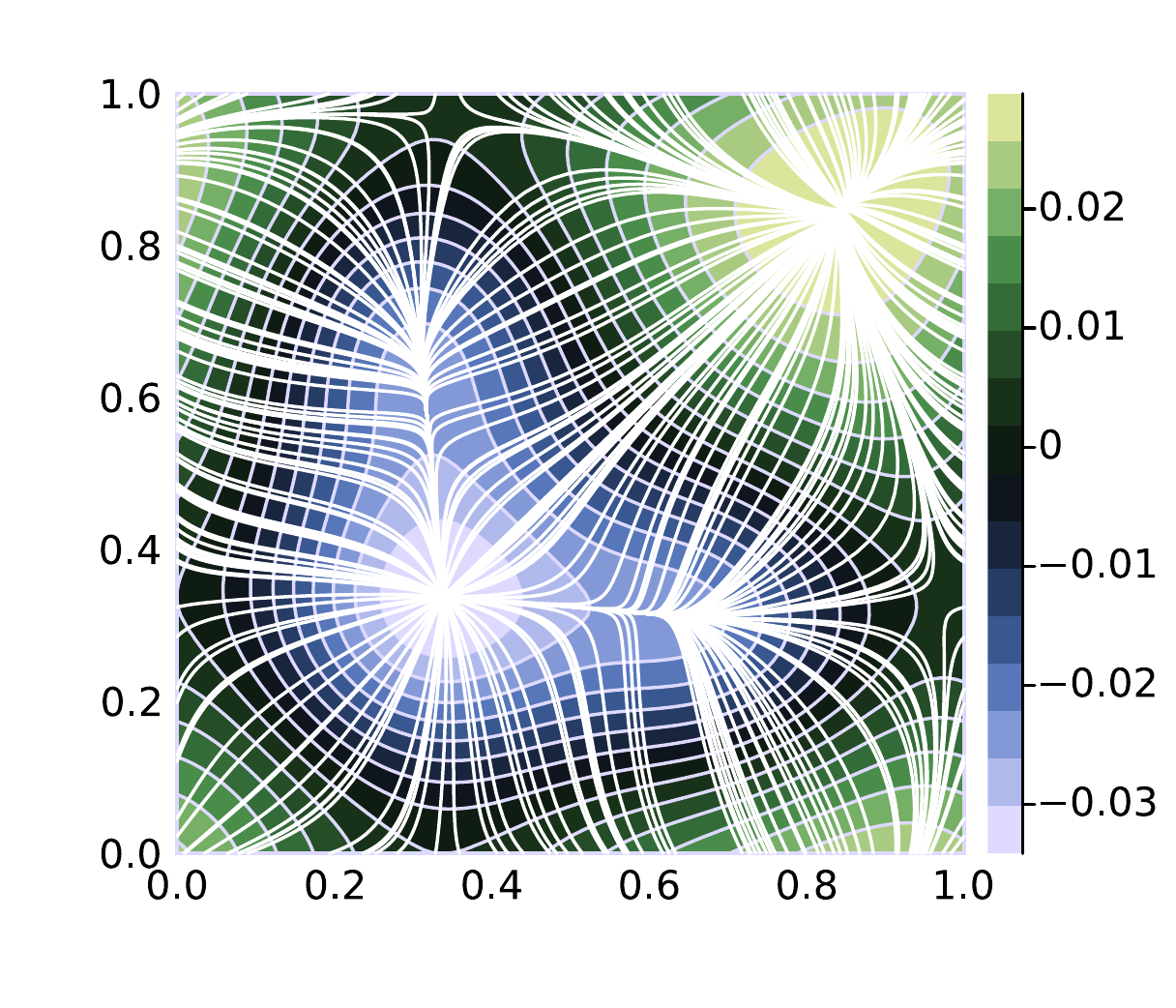}
	\caption{Contour plot of $V$ and flowlines of $\dynb=\nabla V$, where $V$ solves the Poisson's equation \eqref{eqn::poisson} with $\sigmamat=1$, assuming that $\rho_0=1$ and $\rho_1$ is a mixture density with 3 modes; see \eqref{eqn::torus_eg}. With this $\dynb=\nabla V$, we have {$\newainf(x)=\partition_1$} for almost all $x\in[0,1]^2$.
	}
	\label{fig::torus}
 \end{figure}

  The NEIS estimator for \eqref{eqn::noneq_sample::infinite} is unbiased
  no matter what $\dynb$ is. However, its performance relies on the choice of $\dynb$.
Therefore, a natural question is to find the field $\dynb$ that achieves the largest variance reduction. The next result shows that an optimal $\dynb$ exists that leads to a zero-variance estimator:

\begin{proposition}[{Existence of zero-variance dynamics}]
	\label{prop::existence}
	Assume that  $\dom=[0,1]^\dimn$ is a torus and $U_0, U_1\in C^\infty(\dom,\Real)$.
	Let $\sigmamat: \Omega\to (0,\infty)$ be some smooth positive function {with $\inf_{x\in\dom}\sigmamat(x)>0$}, and suppose that $V\in C^\infty(\dom)$ solves the following Poisson's equation on $\dom$
	\begin{align}
		\label{eqn::poisson}
		\begin{aligned}
			\nabla \cdot(\sigmamat \nabla V) = \rho_1 - \rho_0,\qquad \text{with} \quad\int_{\dom} V(x) \ud x= 0.
		\end{aligned}
	\end{align}
	If the solution $V$ is a Morse function, then $\dynb = \nabla V$ is a zero-variance velocity field: that is, if we use it to define~\eqref{eqn::ainf}, we have 
  \begin{align}
		\label{eqn::zero::var::2}
		\int_{\Real} \rho_0(\state{s}{x})\jacoarg{s}{x}\ud s = \int_{\Real} \rho_1(\state{s}{x})\jacoarg{s}{x}\ud s, \qquad \text{for almost all $x\sim \rho_0$},
	\end{align}
	and as a result
	\begin{equation}
	    \label{eq:zero:var}
	    {\newainf(x) = \partition_1} \quad \text{for almost all $x\sim \rho_0$}.
	\end{equation}
\end{proposition}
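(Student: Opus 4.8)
The plan is to reduce \eqref{eqn::zero::var::2} to the fundamental theorem of calculus applied along each flowline, with the antiderivative read off directly from the Poisson equation, and then to use the Morse structure of $V$ to kill the boundary terms for almost every $x$. Note first that on the torus $\nabla V\in\mani{}$ automatically ($V\in C^\infty$ and compactness give $\sup_{x\in\dom}|\nabla\nabla V(x)|<\infty$ and there is no boundary), so \propref{prop::formulation} applies. The key identity is that for any smooth scalar $\phi$, differentiating in $s$ and using $\frac{\ud}{\ud s}\state{s}{x}=\nabla V(\state{s}{x})$ together with $\frac{\ud}{\ud s}\jacoarg{s}{x}=\Delta V(\state{s}{x})\jacoarg{s}{x}$ (which follows from \eqref{eq:jacob} with $\dynb=\nabla V$) yields
\[
  \frac{\ud}{\ud s}\Big(\phi(\state{s}{x})\,\jacoarg{s}{x}\Big)=\big(\nabla\cdot(\phi\,\nabla V)\big)(\state{s}{x})\,\jacoarg{s}{x}.
\]
Taking $\phi=\sigmamat$ and inserting \eqref{eqn::poisson}, the right-hand side equals $(\rho_1-\rho_0)(\state{s}{x})\jacoarg{s}{x}$, so $s\mapsto\sigmamat(\state{s}{x})\jacoarg{s}{x}$ is a primitive of the integrand in \eqref{eqn::zero::var::2}. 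Hence, once the integrand is known to be integrable and the one-sided limits to exist (both established below),
\[
  \int_{\Real}\big(\rho_1-\rho_0\big)(\state{s}{x})\,\jacoarg{s}{x}\ud s=\lim_{s\to+\infty}\sigmamat(\state{s}{x})\jacoarg{s}{x}-\lim_{s\to-\infty}\sigmamat(\state{s}{x})\jacoarg{s}{x},
\]
and it remains to show that both limits vanish for a.e.\ $x$, and that $\int_\Real\rho_k(\state{s}{x})\jacoarg{s}{x}\ud s<\infty$ for $k=0,1$ so that \eqref{eqn::ainf} is well defined.

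Both of these follow from the behaviour of the gradient-ascent flow $\frac{\ud}{\ud s}\state{s}{x}=\nabla V(\state{s}{x})$ on the compact torus. Since $V$ strictly increases off its critical set, each forward orbit has a nonempty, connected $\omega$-limit set contained in $\{\nabla V=0\}$; as $V$ is Morse this set is finite, so the orbit converges to a single critical point, and by the same reasoning applied to $-V$ each backward orbit converges to a single critical point. By the stable manifold theorem, the set of $x$ whose forward orbit converges to a critical point that is not a local maximum of $V$ is a finite union of immersed submanifolds of dimension $<\dimn$, hence Lebesgue-null; the analogous statement with unstable manifolds handles backward orbits and local minima. Because $\rho_0>0$, ``almost every $x\sim\rho_0$'' coincides with Lebesgue-a.e.\ $x$, so for a.e.\ $x$ we may assume $\state{s}{x}\to\xst_+$, a nondegenerate local maximum, as $s\to+\infty$, and $\state{s}{x}\to\xst_-$, a nondegenerate local minimum, as $s\to-\infty$.

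Near $\xst_+$ one has $\Delta V<0$ (trace of a negative-definite Hessian), so after the orbit enters a small enough neighborhood at some time $s_0$ the representation $\jacoarg{s}{x}=\jacoarg{s_0}{x}\exp\!\big(\int_{s_0}^{s}\Delta V(\state{r}{x})\ud r\big)$ forces $\jacoarg{s}{x}\to0$ exponentially as $s\to+\infty$; near $\xst_-$, $\Delta V>0$ drives $\int_0^{s}\Delta V(\state{r}{x})\ud r\to-\infty$ as $s\to-\infty$, so $\jacoarg{s}{x}\to0$ exponentially there too. Since $\sigmamat,\rho_0,\rho_1$ are bounded on the torus, this makes $\sigmamat(\state{s}{x})\jacoarg{s}{x}\to0$ at both ends and $\rho_k(\state{s}{x})\jacoarg{s}{x}$ absolutely integrable over $\Real$. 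The displayed identity then gives $\int_\Real\rho_1(\state{s}{x})\jacoarg{s}{x}\ud s=\int_\Real\rho_0(\state{s}{x})\jacoarg{s}{x}\ud s$, i.e.\ \eqref{eqn::zero::var::2}; and since $\testfuncarg{1}{t}{x}=\partition_1\,\rho_1(\state{t}{x})\jacoarg{t}{x}$ while $\testfuncarg{0}{t}{x}=\rho_0(\state{t}{x})\jacoarg{t}{x}$ (as $\partition_0=1$), plugging into \eqref{eqn::ainf} yields $\newainf(x)=\partition_1$, which is \eqref{eq:zero:var}. The main obstacle is this middle step — proving convergence of the gradient-flow orbits to single critical points and discarding the measure-zero set of initial conditions flowing to saddles or to the ``wrong'' extremum — for which the Morse structure of $V$ (isolated nondegenerate critical points) and compactness of the torus are essential; the antiderivative identity and the exponential Jacobian decay are then routine.
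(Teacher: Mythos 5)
Your proof is correct, and its skeleton is the paper's: a fundamental-theorem-of-calculus identity along flowlines with the primitive supplied by the Poisson equation, then the Morse structure plus compactness of the torus to show that almost every orbit converges forward to a nondegenerate local maximum and backward to a nondegenerate local minimum, and finally the sign of $\Laplace V$ at these limits to force exponential decay of $\jacoarg{s}{x}$, which kills the boundary terms and gives integrability of $\rho_k(\state{s}{x})\jacoarg{s}{x}$. The genuine difference is how you treat a general $\sigmamat$: the paper reduces to the rescaled field $\sigmamat\nabla V$, for which the Jacobian of that flow is itself the primitive, and then transfers the conclusion back to $\nabla V$ via the reparameterization invariance of \propref{prop::invariance_b}; you instead stay with the $\nabla V$-flow throughout and observe that $s\mapsto\sigmamat(\state{s}{x})\jacoarg{s}{x}$ is an exact primitive of $(\rho_1-\rho_0)(\state{s}{x})\jacoarg{s}{x}$, so the boundedness of $\sigmamat$ makes the boundary terms vanish for free. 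This buys a slightly cleaner argument (no invariance lemma, no second flow and second Jacobian), at no loss of generality; your appeal to the stable manifold theorem to dispose of the null set of initial conditions flowing to saddles or to the wrong extremum is if anything more explicit than the paper's. The only item the paper proves that you omit is its Step 3, namely that $\nabla V\in\maniinf$ (continuity of $x\mapsto\int_0^{\pm\infty}\testfuncarg{k}{t}{x}\ud t$ off a null set); that verification serves the unbiasedness machinery of \propref{prop::formulation} rather than the equalities \eqref{eqn::zero::var::2}--\eqref{eq:zero:var} themselves, so its absence is not a gap for the statement as posed.
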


This proposition is proven in \appref{sec:proof:zero-variance}, where we
also make a connection between \eqref{eqn::poisson} and Beckmann’s transportation problem. We
stress that the optimal $\dynb$ specified in \propref{prop::existence} is not
unique (see \propref{prop::non-unique}): however, we show below in
\propref{prop::local} that, under certain conditions, all local minima of the
variance (viewed as a functional of $\dynb$) are global minima. We also note
that the assumption that the solution is a Morse function is mostly
a technicality, as  discussed in \appref{subsec::proof::existence_approximate}.
Similarly, we consider the torus in \propref{prop::existence} for simplicity
mainly; we expect that the proposition will hold in general when $\dom$ has
compact closure or even when $\dom=\Rd$, see
\appref{sec:proof:zero-variance} for examples including that of
Gaussian mixture distributions.

For illustration, the contour plot of $V$ and the flowlines of  $\dynb=\nabla V$ are shown in \figref{fig::torus} in a simple example in a two-dimensional torus where {$\rho_0 = 1$ and $\rho_1$ is a mixture density with 3 modes}; their explicit expressions are given in \eqref{eqn::torus_eg}; in this example, we solved \eqref{eqn::poisson} numerically with $\sigmamat=1$, see \appref{subsec::eg_poisson} for more details.  Some other examples where the zero-variance dynamics is  explicit are discussed in \appref{app::eg}.

\newparagraphno{Connection to transport maps and normalizing flows}

	The zero-variance dynamics  provides a transport map $\mapT$ from $\rho_0$ to $\rho_1$, as shown in:
	\begin{proposition}[Existence of a perfect generator]
		\label{prop::existence_generator}
		Suppose $\sigmamat = 1$ for simplicity. Under the  same assumption as in \propref{prop::existence}, let $V$ be the Morse function solving \eqref{eqn::poisson} and $\dynb = \nabla V$ the associated zero-variance dynamics. Then there exists a continuously differentiable function $\timeT$ (defined almost everywhere on $\dom$) such that
		\begin{align}
			\label{eqn::optimal_time}
			\int_{-\infty}^{0} \rho_0(\state{s}{x}) \jacoarg{s}{x}\ud s = \int_{-\infty}^{\timeT(x)} \rho_1(\state{s}{x})\jacoarg{s}{x}\ud s.
		\end{align}
		Furthermore, the map $\mapT(x) := \state{\timeT(x)}{x}$ is a transport map from $\rho_0$ to $\rho_1$, i.e., $\mapT\#\rho_0 = \rho_1$.
	\end{proposition}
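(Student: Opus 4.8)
The plan is to show that, off a Lebesgue‑null set, $\mapT$ is an injective $C^1$ map satisfying the pointwise change‑of‑variables identity $\rho_0(x)=\rho_1(\mapT(x))\,|\det\nabla\mapT(x)|$, and then to upgrade this to $\mapT\#\rho_0=\rho_1$ by a soft measure‑theoretic argument. Throughout, $\rho_0,\rho_1$ denote the normalized densities, and I set
\[
P_0(x):=\int_{-\infty}^{0}\rho_0(\state{s}{x})\jacoarg{s}{x}\ud s,\qquad P_1(x,\tau):=\int_{-\infty}^{\tau}\rho_1(\state{s}{x})\jacoarg{s}{x}\ud s ,
\]
so that \eqref{eqn::optimal_time} is exactly the equation $P_1(x,\timeT(x))=P_0(x)$.

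First I would construct $\timeT$ and check its regularity. Since $V$ is a Morse function on the compact torus, for a.e.\ $x$ the orbit $s\mapsto\state{s}{x}$ converges exponentially to a local minimum of $V$ as $s\to-\infty$ and to a local maximum as $s\to+\infty$ (the complement — critical points together with the stable/unstable manifolds of the saddles — is null), so $\rho_k(\state{s}{x})\jacoarg{s}{x}$ decays exponentially as $s\to\pm\infty$, locally uniformly in $x$; these are precisely the estimates used in the proof of \propref{prop::existence}. It follows that $P_1(x,\cdot)$ is a continuous, strictly increasing bijection of $\Real$ onto $(0,N(x))$, where $N(x):=\int_\Real\rho_1(\state{s}{x})\jacoarg{s}{x}\ud s=\int_\Real\rho_0(\state{s}{x})\jacoarg{s}{x}\ud s$ by the zero‑variance property \eqref{eqn::zero::var::2}. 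Because $0<P_0(x)<N(x)$ (strictly, as $\rho_0>0$), a unique $\timeT(x)$ exists; applying the implicit function theorem to $\Phi(x,\tau):=P_1(x,\tau)-P_0(x)$ — with $\partial_\tau\Phi=\rho_1(\state{\tau}{x})\jacoarg{\tau}{x}>0$ and $\nabla_x\Phi$ obtained by differentiating under the integral sign, which the locally uniform decay justifies — gives $\timeT\in C^1$ on the open, co‑null set where it is defined. I expect this regularity step, in particular the differentiation under the integral near basin boundaries, to be the main technical obstacle; the remainder is algebra.

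Next I would compute $\nabla\mapT$. The chain rule gives $\nabla\mapT(x)=\dynb(\mapT(x))\otimes\nabla\timeT(x)+[\nabla_x\state{t}{x}]_{t=\timeT(x)}$; because the flow transports $\dynb$ into itself, $[\nabla_x\state{t}{x}]^{-1}\dynb(\state{t}{x})=\dynb(x)$, so $\det(\idmat+u\otimes v)=1+\inner{u}{v}$ together with $|\det[\nabla_x\state{t}{x}]_{t=\timeT(x)}|=\jacoarg{\timeT(x)}{x}$ yields $|\det\nabla\mapT(x)|=\jacoarg{\timeT(x)}{x}\,\big|1+\inner{\nabla\timeT(x)}{\dynb(x)}\big|$. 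To identify the last factor I would differentiate the defining relation along the flow: inserting $\state{u}{x}$ for $x$ and using the cocycle identities $\state{s}{\state{u}{x}}=\state{s+u}{x}$ and $\jacoarg{s}{\state{u}{x}}=\jacoarg{s+u}{x}/\jacoarg{u}{x}$, the equation $P_1(\state{u}{x},\timeT(\state{u}{x}))=P_0(\state{u}{x})$ becomes, after multiplying by $\jacoarg{u}{x}$,
\[
P_1\big(x,\timeT(\state{u}{x})+u\big)=P_0(x)+\int_0^u\rho_0(\state{r}{x})\jacoarg{r}{x}\ud r ,
\]
and differentiating at $u=0$ (using $\partial_\tau P_1(x,\tau)=\rho_1(\state{\tau}{x})\jacoarg{\tau}{x}$, $P_1(x,\timeT(x))=P_0(x)$ and the $C^1$‑regularity of $\timeT$) gives
\[
\rho_1(\mapT(x))\,\jacoarg{\timeT(x)}{x}\,\big(1+\inner{\nabla\timeT(x)}{\dynb(x)}\big)=\rho_0(x) .
\]
The left‑hand side is positive, so $1+\inner{\nabla\timeT(x)}{\dynb(x)}>0$, and combining with the Jacobian formula above, $\rho_0(x)=\rho_1(\mapT(x))\,|\det\nabla\mapT(x)|$.

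Finally I would check injectivity and conclude. Two points on distinct orbits are sent to distinct orbits; on a single orbit, the displayed identity forces $\int_0^u\rho_0(\state{r}{x})\jacoarg{r}{x}\ud r=0$ whenever $\mapT(\state{u}{x})=\mapT(x)$ — hence $u=0$ — using injectivity of $t\mapsto\state{t}{x}$ and strict monotonicity of $P_1(x,\cdot)$. Thus $\mapT$ is a diffeomorphism onto its open image, and for every bounded measurable $\varphi\ge0$ the change of variables together with the identity of the previous step gives
\[
\int_\dom\varphi(\mapT(x))\rho_0(x)\ud x=\int_{\mapT(\dom)}\varphi(y)\rho_1(y)\ud y\le\int_\dom\varphi(y)\rho_1(y)\ud y .
\]
Hence $\mapT\#\rho_0\le\rho_1$ as measures; since both are probability measures, $\mapT\#\rho_0=\rho_1$, which is the claim.
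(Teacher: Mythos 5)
Your proposal follows essentially the same route as the paper: define $\timeT$ implicitly through the mass-balance equation, use the zero-variance identity \eqref{eqn::zero::var::2} to show the equation has a unique root, invoke the implicit function theorem for $C^1$ regularity, differentiate the defining relation along the flow (using the cocycle identities) to obtain $\rho_0(x)=\rho_1(\mapT(x))\,\jacoarg{\timeT(x)}{x}\,\big(1+\inner{\nabla\timeT(x)}{\dynb(x)}\big)$, and identify the right-hand factor with $\abs{\det\nabla\mapT(x)}$ via the matrix determinant lemma together with $\corrz{t}{0}(x)\dynb(x)=\dynb(\state{t}{x})$. These computations are correct and coincide with the paper's Steps 1--2; your explicit injectivity-plus-equal-mass argument at the end is in fact a sharper justification of the final pushforward claim than the paper's terse conclusion from the pointwise Jacobian identity.

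The genuine gap is exactly the step you flag and then wave through: locally uniform decay of the integrands $\rho_k(\state{s}{x})\jacoarg{s}{x}$ does not by itself license differentiating under the integral in $\nabla_x\Phi$, nor the continuity of $\nabla_x\Phi$ that the implicit function theorem requires. One needs a dominating, integrable-in-$s$ bound on $\nabla_x\big(\rho_k(\state{s}{x})\jacoarg{s}{x}\big)$, uniform for $x$ in a small ball and $s\in(-\infty,0]$, and this involves the tangent flow $\corrz{s}{0}(x)=\nabla_x\state{s}{x}$ (and, for continuity of the gradient, its spatial derivative as well). Such bounds do hold — off a null set the backward orbit converges to a nondegenerate local minimum of $V$, where $\nabla^2V>0$ forces exponential decay of both $\jacoarg{s}{x}$ and $\norm{\corrz{s}{0}(x)}$ uniformly on a small ball — but establishing them is the bulk of the paper's proof (its Step 3, including the estimate on $\nabla_z\corrz{s}{0}(z)$ in \lemref{lem::Hderi_bound}), and they are strictly stronger than the estimates used in the proof of \propref{prop::existence}, which you cite but which only give decay of the integrand and continuity of the integrals, not differentiability. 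So the plan is the right one, but this regularity step must be carried out rather than referenced.
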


The proof is given in  \appref{subsec::proof::existence::generator}.
{Note that  we consider again $\dynb=\nabla V$ on the torus for technical simplicity: the statement of the proposition should hold in general for a zero-variance dynamics $\dynb$.}
The solution of~\eqref{eqn::optimal_time} is particularly simple in one-dimension, where we can take $\dynb(x) = 1$, and straightforwardly verify that $\timeT(x) = \mapT(x) - x$  with
\begin{equation}
    \label{eq:1d:map}
    \mapT(x) = F_1^{-1}(F_0(x)) \qquad \text{where} \quad F_i(x)=\int_{-\infty}^x \rho_i(y) \ud y, \ \ i=0,1.
\end{equation}
We also illustrate the statement of \propref{prop::existence_generator}
via numerical examples in \appref{subsec::generator-examples}.

To avoid confusion, we stress that we will \textit{not} use the transport map $\mapT$ of Proposition~\ref{prop::existence_generator} in the examples below.
Indeed, using this map would require identifying $\timeT$, which introduces an unnecessary additional calculation which we can avoid using the NEIS estimator directly. In addition, the NEIS estimator will likely have better properties than those based on transport maps, as we can we can think of NEIS as using a time-parameterized family of transport maps rather than a single one.
In particular, the variance of the NEIS estimator will be small  if samples likely on $\rho_0$ become likely on $\rho_1$ \textit{sometime} along the NEIS trajectories,  rather than at the same fixed time for all samples. The former seems easier to fulfill than the latter. For example, in one-dimension, the NEIS estimator has zero variance for any $\dynb$ bounded away from zero, whereas building a transport map from $\rho_0$ to $\rho_1$ is already nontrivial in that simple case since it requires solving~\eqref{eq:1d:map}.

\section{Variational formulations}
\label{sec::variational}

The Poisson equation~\eqref{eqn::poisson} admits a variational formulation:
\begin{align}
\label{eq:var:poisson}
	\begin{aligned}
		&\min_{V} \int_{\dom} \tfrac12 |\nabla V|^2 \sigmamat + V (\rho_1 - \rho_0).
	\end{aligned}
\end{align}
If $\sigmamat$ is chosen to be a probability density function (for example
$\sigmamat = \rho_0$ or $\sigmamat= \frac12 (\rho_1+\rho_0)$), the two terms in
the objective in~\eqref{eq:var:poisson} are expectations which can be estimated
via sampling (using \eg{}, direct sampling for the expectation with
respect to $\rho_0$ and NEIS for the one with respect to $\rho_1$).
This means that we can in principle use an MCMC estimator of~\eqref{eq:var:poisson} as  empirical loss, and minimize it over all $V$ in some parametric class.
Here however, we will follow a different strategy that allows us  to directly parametrize $\dynb$  instead of $V$ (i.e. relax the requirement that $\dynb=\nabla V$) and simply use the variance of the estimator as objective function.

Specifically, since we  quantify the performance of the estimators based on \eqref{eqn::noneq_sample::finite} and \eqref{eqn::noneq_sample::infinite} by their variance, we can view these quantities as functionals of $\dynb$ that we wish to minimize. Since the estimators are unbiased, these objectives are
\begin{align}
\label{eqn::var}
    \begin{aligned}
    \newvarfin(\dynb) &= \newmsecfin(\dynb) - \partition_1^2,\ & \text{(finite-time)};\\
    \newvarinf(\dynb) &= \newmsecinf(\dynb) - \partition_1^2,\ & \text{(infinite-time)},
\end{aligned}
\end{align}
where we defined the second moments $\newmsecfin(\dynb) := \ee_{0} \big[|\newafin|^2\big]$ and $\newmsecinf(\dynb) := \ee_{0} \big[|\newainf|^2\big]$.
With the finite-time objective,  we know that with $\dynb = \vectorzero_{\dimn}$, \eqref{eqn::noneq_sample::finite} reduces to \eqref{eqn::direct_importance}.
Therefore minimizing $\newmsecfin(\dynb) $ over $\dynb$ by gradient descent starting from $\dynb$ near $\vectorzero_{\dimn}$ will necessarily produce a better estimator: while we cannot guarantee that the variance of this optimized estimator will be zero, the experiments conducted below indicate that it can be {a several} order of magnitude below that of the vanilla estimator.

For the infinite-time objective, we know that for any $\dynb$, \eqref{eqn::ainf} leads to an estimator with a lower variance than the one based on~\eqref{eqn::direct_importance} \cite{rotskoff_dynamical_2019}.  Minimizing $\newvarinf(\dynb) $ over $\dynb$ using gradient descent leads to a local minimum; the next result shows that all such local minima are global:

\begin{proposition}[Global minimum]
	\label{prop::local}
	Under some technical assumptions listed in~\propref{prop::local::full}, if  $\dynb_* \in \mani{}$ is a  local minimum of $\newvarinf(\dynb)$ where  the functional derivative of  $\newvarinf(\dynb)$ with respect to $\dynb$ vanishes, \ie{}, $\delta \newvarinf(\dynb_*)/\delta  \dynb = \vectorzero_{\dimn}$ on $\Omega$, 	then $\dynb_* $ is a global minimum and  $\newvarinf(\dynb_*) = 0$.
\end{proposition}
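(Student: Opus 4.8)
The plan is to compute the first variation of the second moment $\newmsecinf(\dynb)$ directly, and show that the stationarity condition $\delta \newvarinf(\dynb_*)/\delta\dynb = \vectorzero_\dimn$ forces $\newainf_{\dynb_*}$ to be essentially constant along every flowline of $\dynb_*$; combined with the known fact (from Proposition~\ref{prop::formulation}) that $\ee_0\newainf = \partition_1$, this will pin the constant to $\partition_1$ $\rho_0$-a.e., hence zero variance. To set this up, I would first recall the structure of $\newainf$ from \eqref{eqn::ainf}: writing $N(x) = \int_\Real \testfunc{1}_t(x)\ud t$ and $D(x) = \int_\Real \testfunc{0}_t(x)\ud t$, we have $\newainf = N/D$, and the key structural fact (already implicit in \propref{prop::invariance_b}) is that $\newainf$ is constant along each flowline: indeed, a reparameterization/shift of the trajectory $t\mapsto \state{t}{x}$ changes both $N$ and $D$ by the same Jacobian factor, so $\newainf(\state{r}{x}) = \newainf(x)$ for all $r$. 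This is the lever: zero variance is equivalent to $\newainf$ being constant \emph{across} flowlines as well.

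Next I would perturb $\dynb_* \rightsquigarrow \dynb_* + \eps\, \bm{c}$ with $\bm{c}\in\mani{}$ and differentiate $\newmsecinf$ at $\eps = 0$. This requires the linearization of the flow map $\state{t}{x}$ and of the Jacobian $\jacoarg{t}{x}$ with respect to the velocity field — standard first-variation-of-an-ODE computations giving a variational equation for $\delta\state{t}{x}$ and, through \eqref{eq:jacob}, for $\delta\log\jacoarg{t}{x} = \int_0^t \nabla\cdot\bm{c}(\state{s}{x}) + \nabla(\nabla\cdot\dynb_*)\cdot\delta\state{s}{x}\,\ud s$. Feeding these into $N$, $D$ and then into $\newmsecinf(\dynb) = \ee_0[(N/D)^2]$ produces $\delta\newmsecinf/\delta\dynb$ as an integral against $\bm{c}$; the "technical assumptions" referenced in \propref{prop::local::full} are presumably exactly what is needed to differentiate under the integral sign and to discard boundary terms after integration by parts (using $\dynb\cdot\nn = 0$ on $\partial\dom$). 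Setting this functional derivative to zero and using that $\bm{c}$ is arbitrary in $\mani{}$, I expect to extract a pointwise identity saying, roughly, that a certain weighted average of $\nabla\newainf$ along flowlines vanishes — more precisely that $\newainf$ has zero directional derivative transverse to the flow, weighted by $\rho_0$-mass, so that $\nabla\newainf \perp$ (nothing nontrivial), forcing $\newainf$ locally constant on a full-measure set.

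The main obstacle — and the step I would spend the most care on — is going from "the functional derivative vanishes" to "$\newainf$ is globally constant." The variational identity will naturally give that $\newainf$ is constant along flowlines (which we already knew for free) plus some orthogonality that, taken at face value, only gives constancy in directions that the perturbations $\bm{c}$ can "see." One must argue that the family of flowlines, as $x$ ranges over a positive-$\rho_0$-measure set, together with the freedom in $\bm{c}$, sweeps out enough of $\dom$ that constancy propagates to a full-measure connected region; this is where connectedness of $\dom$ (or the torus structure) and presumably an assumption that $\dynb_*$ is nondegenerate enough (no whole region of equilibria, or the flow is "mixing" in a weak topological sense) enter. Concretely I would: (i) show $\newainf$ is constant along each flowline; (ii) show the stationarity condition implies that for $\rho_0$-a.e.\ pair of nearby flowlines the values of $\newainf$ agree, by choosing $\bm{c}$ supported near a transversal connecting them; (iii) conclude $\newainf \equiv \text{const} =: c$ on a full-measure set; (iv) integrate against $\rho_0$ and use $\ee_0\newainf = \partition_1$ to get $c = \partition_1$; (v) conclude $\newvarinf(\dynb_*) = \ee_0[\newainf^2] - \partition_1^2 = 0$. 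Step (ii) is the delicate one and is where I expect the hypotheses of \propref{prop::local::full} to do the real work; everything else is bookkeeping on the first variation of a flow and Jensen-type manipulations already present in the NEIS framework.
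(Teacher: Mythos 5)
Your plan breaks down at step (ii), and the break is structural, not technical. The functional derivative you propose to compute is exactly the one in \propref{prop::stationary_inf}, and it has a \emph{product} form:
\begin{equation*}
\derivarinf(x)=\frac{2\nabla \newainf(x)}{\binf(x)}\Big(\int_{0}^{\infty}\testfuncarg{0}{t}{x}\ud t\int_{-\infty}^{0}\testfuncarg{1}{t}{x}\ud t-\int_{-\infty}^{0}\testfuncarg{0}{t}{x}\ud t\int_{0}^{\infty}\testfuncarg{1}{t}{x}\ud t\Big),
\end{equation*}
so stationarity only says that at each point \emph{either} $\nabla\newainf=\vectorzero_\dimn$ \emph{or} the bracketed balance factor vanishes. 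Testing against perturbations $\bm{c}$ supported near a transversal therefore does not force the values of $\newainf$ on nearby flowlines to agree: wherever the balance factor vanishes, such test fields see nothing. This is not a hypothetical loophole. Any $\dynb$ with $\inner{\dynb(x)}{\nabla(U_1-U_0)(x)}\equiv 0$ has $U_1-U_0$ constant along flowlines, hence $\testfuncarg{1}{t}{x}=e^{-(U_1-U_0)(x)}\testfuncarg{0}{t}{x}$, the balance factor vanishes identically, and $\derivarinf\equiv\vectorzero_\dimn$ — yet $\newainf(x)=e^{-(U_1-U_0)(x)}$ is nonconstant and $\newvarinf(\dynb)=\varmax$ is the \emph{maximum} of the variance (\propref{prop::var_dynb}). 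So no argument that uses stationarity alone, as your steps (i)–(v) do, can reach the conclusion; the local-minimum hypothesis must be used in an essential, second-order way.

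This is precisely where the paper's proof differs. It first shows (Lemmas~\ref{lem::cond_equiv} and \ref{prop::cond_stationary}) that vanishing of the balance factor on an open set $\domsubset$ is equivalent to the orthogonality $\inner{\dynb}{\nabla(U_1-U_0)}=0$ on $\domsubset$, and then excludes this scenario at a local minimum by explicitly constructing a compactly supported perturbation aligned with $\nabla(U_1-U_0)$ near a $\dynb$-stable point, which strictly decreases the second moment (\propref{prop::not_local_min}); \propref{prop::perturbation} is needed to guarantee that the perturbed field stays in $\maniinf$. The technical assumptions of \propref{prop::local::full} (that $\dynb$-unstable points and critical points of $U_1-U_0$ form null sets, and $\effdom(\dynb)=\dom$) exist to make that perturbation construction possible — not, as you guessed, merely to justify differentiation under the integral and integration by parts. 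Only after the orthogonality alternative is eliminated does one know $\nabla\newainf=\vectorzero_\dimn$ off a null set, and then a connectedness argument (your step (iii), which does match the paper's final step, though the paper must also handle trajectories crossing between the two regions) gives $\newainf\equiv\partition_1$ and zero variance. To salvage your proposal you would need to add this perturbation/strict-decrease argument; as written, the proof cannot be completed.
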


The expression of the functional derivative $\delta \newvarinf(\dynb_*)/\delta  \dynb $ is given in  \propref{prop::stationary_inf}.
The technical assumptions under which \propref{prop::local} hold are explained in \appref{sec::space_inf} and the proof is given in \appref{sec::proof::local}.

\section{Training towards the optimal \texorpdfstring{$\dynb$}{b}}
\label{sec::numerics}

Here we discuss how to use deep learning techniques to find the optimal $\dynb$; these techniques will be illustrated  on numerical examples in Section~\ref{sec::numerical_experiments}. Some technical details are deferred to \appref{app::numerics_details}.

\newparagraph{Objective}
We use the finite-time objective $\newmsecfinarg{\tm}{\tp}(\dynb)$
in \eqref{eqn::noneq_sample::finite} with
{$\tm \in [-1,0], \tp = \tm+1$.}
{Two natural choices are $\tm = 0$ and $\tm=-1/2$, which will be used below  in the numerical experiments.}
This leads to no loss of generality \textit{a~priori} since in the training scheme we put no restriction on the magnitude that $\dynb$ can reach, and with large $\dynb$ the flow line can travel a large distance even  during $t\in[-1,1]$ (the range of integration in $s,t$ in~\eqref{eq::afin});
see the discussion in Appendix~\ref{subsec::relation} for more details.
In practice, we use a time-discretized version of~\eqref{eq::afin} with {$2N_t$} discretization points, and use the standard Runge-Kutta scheme of order 4 (RK4) to integrate the ODE~\eqref{eqn::ode} over $t\in[-1,1]$ using {uniform time step ($\Delta t = 1/N_t$)}.
{We note that this numerical discretization introduces a bias. However, this bias can be systematically controlled by changing the time step or using higher order integrator.  In our experiments, we observed that the RK4 integrator led to  negligible errors, see Table~\ref{table::eg}.}

\newparagraph{Neural architecture}
In our experiments, we either parameterize $\dynb$ by a neural network directly, or we assume that $\dynb$  is a gradient field,
\begin{align*}
	\begin{aligned}
		\dynb = \nabla V & \qquad \text{(gradient form)},\\
	\end{aligned}
\end{align*}
and parameterize the potential $V$ by a neural network.
We always use an $\ell$-layer neural network with width $m$ for all inner layers; therefore, from now on, we simply refer the neural network structure by a pair $(\ell,\mode)$;
{see \appref{subsec::numerical_details} for more details}.
When we parametrize the potential function $V$ instead of $\dynb$, the only difference is that the output dimension of the neural network becomes $1$ instead of $\dimn$.
The activation function is chosen as the \myemph{softplus} function (a smooth version of ReLU) that gave better empirical results compared to the sigmoid function.
At initialization the neural parameters were randomly generated.
Theoretical results about the gradient of $\newmsecfinarg{\tm}{\tp}(\dynb)$ with respect to parameters are given in \appref{sec::optimal_flow_finite} and corresponding numerical implementations are explained in \appref{app::numerics_details}.

\newparagraph{Direct training method}
We minimize $\newmsecfinarg{\tm}{\tp}(\dynb)$ with respect to the parameters in the neural network using stochastic gradient descent (SGD) in which we evaluate the loss and its gradient empirically using mini-batches of data drawn from $\rho_0$ at every iteration step.

\newparagraph{Assisted training method}
When local minima of $U_1$ are far away from the local minimum of $U_0$ at $x=0$, the direct training method by sampling data from $\rho_0$ and minimizing $\newmsecfinarg{\tm}{\tp}(\dynb) $ fails, because
{the flowlines may not reach the importance region of $\rho_1$ due to poor initializations of $\dynb$. More specifically, if along almost all trajectories, $e^{-U_1(\state{s}{x})} \approx 0$ for $s\in [-1,1]$, then with large probability $\newainf_{\tm,\tp}(x)\approx 0$ where $x\sim \rho_0$; as a result, the empirical variance of the estimator can  be extremely small if the number of samples is small, while the true variance could be extremely large.}
Such a \emph{mode collapse} phenomenon is common in rare event simulations.

To get around this difficulty, recall that ideally we would like to find a dynamics $\dynb$ such that $\newainf$ is approximately a constant function in the infinite-time case. That is, if $\dynb$ is a zero-variance dynamics, then $\newainf$ is  a constant function and $\ee_p\big[(\newainf - (\ee_p\newainf)^2)\big] = 0$ for \textit{any} distribution $p$, and we are not constrained to use the base distribution $\rho_0$ and minimize the functional $\dynb\to\var(\dynb)$ in \eqref{eqn::var}.
Motivated by this idea, we use an assisted training scheme in which, at iteration $i$ of SGD, the loss function is
\begin{equation}
    \label{eq:biasedloss}
    \ee_{p_i}\Big[\big(\newainf_{\tm,\tp} - \ee_{p_i}\newainf_{\tm,\tp}\big)^2\Big].
\end{equation}
Here $\ee_{p_i}$ denotes expectation with respect to the probability density $p_i$ defined as
\begin{align}
	\label{eqn::bias}
	\begin{aligned}
	p_i &=  (1- c_i) \rho_0 + c_i \bm{Z} {\#} \rho_0,\qquad
	c_i = \max\big\{c - i \frac{c}{\upsilon L}, 0\big\},
	\end{aligned}
\end{align}
where $\upsilon\in (0,1)$ controls the number of steps during which the training is assisted, $c\in (0,1)$, $L$ is the total number of training steps, and $\bm{Z} := \bm{Z}_{t=1}$ is the time-1 map of the ODE $\Dot{\bm{Z}}_{t}(x) = -\varsigma\nabla U_1\big(\stateD{t}{x}\big)$ with $\stateD{t=0}{x} = x$ and $\varsigma >0$ is a parameter.
In essence, using \eqref{eq:biasedloss} means that, for the first $\upsilon L$ training steps, there is a probability $c_i$ that the data $x\sim \rho_0$ are replaced by $\bm{Z}(x)$, so that the training method can better explore important regions near local minima of $U_1$. Subsequently, the assistance is turned off so that some subtle adjustment can be made. If some samples from $\rho_1= e^{-U_1}/\partition_1$ were available beforehand, we could equivalently replace $\bm{Z} {\#} \rho_0$ in~\eqref{eqn::bias} by the empirical distribution over these samples.
{We emphasize that the assisted training method is only used to guide the training initially and the NEIS estimator for $\ratio$ is unbiased.}

\section{Numerical experiments}
\label{sec::numerical_experiments}

We consider three benchmark examples to illustrate the effectiveness of NEIS assisted with training.
The first two examples involve Gaussian mixtures, for which we use NEIS with $\tm=0$; the third example is Neal's funnel distribution, for which we use NEIS with $\tm=-1/2$.
In all examples, we compare the performance of NEIS with those of  annealed importance sampling (AIS) \cite{neal_annealed_2001}; the number of transition steps in AIS is denoted as $K$ and we refer to this method as AIS-$K$ below; for more details see~\appref{subsec::ais}.
For the comparison, we choose to record the query costs to $U_1$ and $\nabla U_1$ as the measurement of computational complexity,
which connects to the framework in the \emph{theory of information complexity} (see \eg{}, \cite{novak_deterministic_1988}).
The runtime could depend on coding, machine condition, etc., whereas query complexity more or less only depends on the computational problem ($U_0$, $U_1$ and $\dynb$) itself;
for most examples of interest, $U_0$ is simple whereas $U_1$ and its derivatives will be expensive to compute.

{For simplicity, we always use as base density $\rho_0(x) = (2\pi)^{-\dimn/2} e^{-\frac12 \abs{x}^2}$.
We remark that a better choice of $\rho_0$ (i.e., more adapted to $\rho_1$) can significantly improve the sampling performance; our $\rho_0$ is precisely used to validate the performance of NEIS in situations where $\rho_0$ is \textit{not} well chosen.
It would be interesting to study how to adapt the choice of $\rho_0$ for easier training in NEIS, but this is left for future investigations.}

When presenting results, we rescale the estimates so that the exact value is $\ratio=1$ for all examples.
More implementation details about training are deferred to \appref{subsec::numerical_details}.
All trainings and estimates of $\ratio$ are conducted on a laptop with CPU i7-12700H; we use 15 threads at maximum.
The runtime of training ranges approximately between 40$\sim$80 seconds for Gaussian mixture (2D), 9.5$\sim$12 minutes for Gaussian mixture (10D); 
for the Funnel distribution (10D), the runtime is around 25 minutes for a generic linear ansatz and around 2 minutes for a two-parametric ansatz.
\appref{subsec::train_figure} includes additional figures about training.
When computing the gradient of the variance with respect to parameters, we
use an integration-based method when $\tm=-1/2$ (for the convenience of numerical implementation) and use an ODE-based method when $\tm=0$ (for higher accuracy);
details about these two approximation methods are given in \appref{subsec::integral_method} and \appref{subsec::ode_method} respectively.
The codes are accessible on
\url{https://github.com/yucaoyc/NEIS}.

\newparagraph{An asymmetric $2$-mode Gaussian mixture in 2D}
As a first illustration, we consider an asymmetric $2$-mode Gaussian mixture
	\begin{align}
		\label{eqn::eg_two_mode}
		\begin{aligned}
			e^{-U_1}\ &\propto\ \frac{1}{5} \gauss{\lambda \basis_1}{\sigma_1^2 \idmat_{\dimn}} + \frac{4}{5} \gauss{-\lambda \basis_2}{\sigma_1^2 \idmat_{\dimn}},
		\end{aligned}
\end{align}
where $\basis_1 = \begin{bsmallmatrix}1 \\ 0 \end{bsmallmatrix}$, $\basis_2 = \begin{bsmallmatrix}0 \\ 1\end{bsmallmatrix}$, $\sigma_1 = \sqrt{0.1}$, $\lambda = 5$.
With this choice of parameters, the variance of the vanilla estimator based on \eqref{eqn::direct_importance} is approximately $1.85\times 10^6$.
We use NEIS with $\tm=0$ and set the time step to $\Delta t= 1/50$ for ODE discretization during both training and estimation of $\ratio$.
We train over $L=50$ SGD steps using the loss~\eqref{eq:biasedloss} by imposing bias in the first $60\%$ the training period only (\ie{}, with $\upsilon=0.6$).
The evolution of the variance during the training is shown in \figref{fig::train::1} in Appendix; 
{the best optimized flow has a variance of about $1$, as opposed to $10^6$ for the vanilla estimator.}
Since $p_i$ in \eqref{eqn::bias} is quite different from $\rho_0$ during the assisted learning period, it may happen that the empirical variance significantly exceeds the variance of the vanilla importance sampling; this does not contradict with \propref{prop::var_dynb} below.
As seen in \figref{fig::train::1}, at the end of the assisted period,
the variance is already quite small and in most cases, the variance continues to reduce as $\dynb$ gets further optimized.

After training, 
we estimate $\ratio$ using NEIS with the optimized flow and compare its performance with AIS-10 and AIS-100.
We first record the query cost for training and then set a total number of queries to $U_1, \nabla U_1$ as budgets.
Given the query budget, we estimate $\ratio$ using each method $10$ times, leading to the results given in Table~\ref{table::eg} below. 
When we determine the estimation cost of NEIS, we deduct the query cost of training from the total query budget for fairer comparison.
Note that NEIS uses less total queries to produce more accurate estimate of $\ratio$: in particular, the standard deviation of estimating $\ratio$ by NEIS method is around 1 magnitude smaller than AIS-100.
Moreover, the bias from ODE discretization appears to be negligible.
\figref{fig::cmp} shows an optimized flow and also provides a visual comparison of NEIS with AIS under fixed query budget; more comparisons using various ansatzes or architectures can be found in Figures~\ref{fig::cmp::1::generic} and \ref{fig::cmp::1::grad}.

\begin{figure}[h!]
	\centering
	\subfigure[Asymmetric Gaussian mixture (2D): gradient ansatz, $\ell=2$, $\mode=20$]{\includegraphics[width=0.9\textwidth]{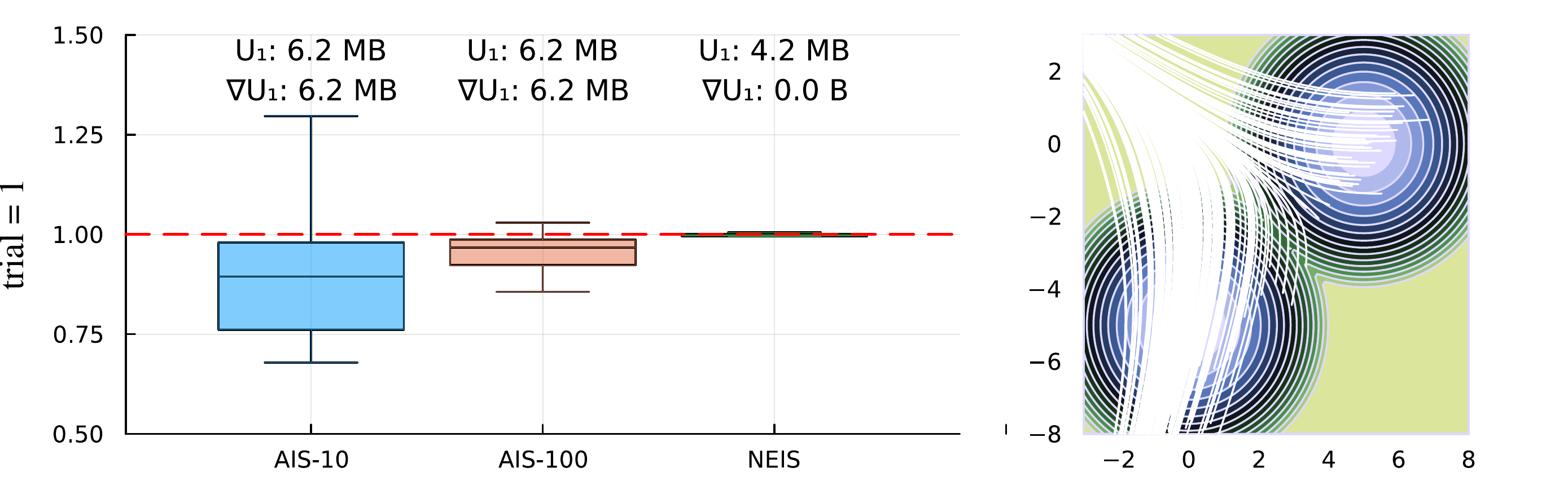}}
	\subfigure[Symmetric Gaussian mixture (10D): gradient ansatz, $\ell=2$, $\mode=30$]{\includegraphics[width=0.9\textwidth]{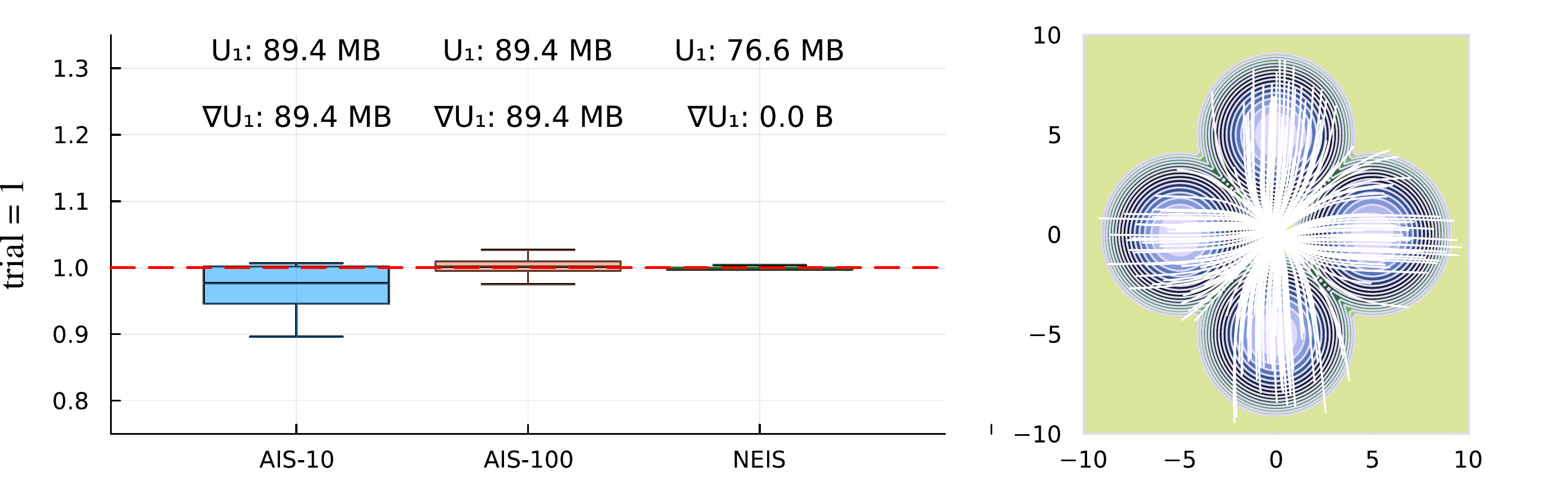}}
	\subfigure[Funnel distribution (10D): generic linear ansatz]{\includegraphics[width=0.9\textwidth]{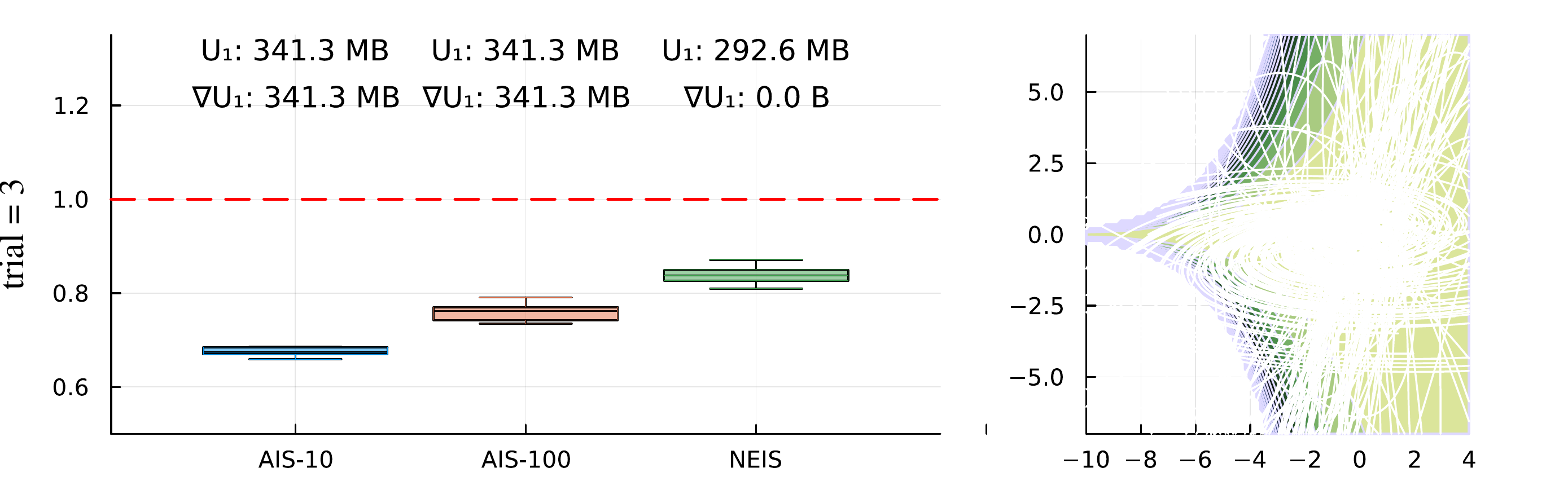}}
	\subfigure[Funnel distribution (10D): two-parametric ansatz]{\includegraphics[width=0.9\textwidth]{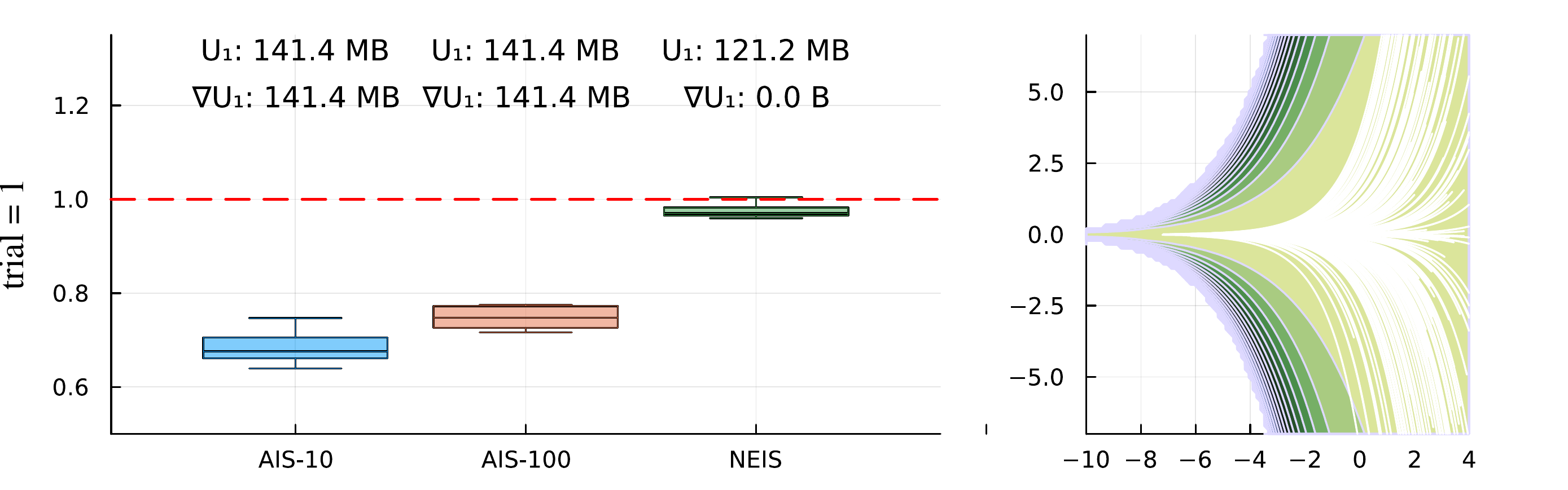}}
	\caption{Selective comparison results for various models. Left panels: estimates of  $\ratio$ by AIS and NEIS with optimized flow under the fixed query budget shown above the panels; we repeat these calculations $10$ times and show boxplots of these $10$ estimates for each method. The trial number refers to the index for randomly chosen initialization. The query numbers refer to the queries used for each estimate of $\ratio$ for each method; $1 \text{MB} = 10^6$. The dashed red lines show the exact value $\ratio = 1$. Right panels: streamlines of optimized flows atop the contours of $U_1$, both projected into the $x_1$-$x_2$ plane for the two 10D examples.  Full comparison and figures can be found in Figures~\ref{fig::cmp::1::generic},\ref{fig::cmp::1::grad},\ref{fig::cmp::2::grad}, and \ref{fig::cmp::3::ln} in Appendix.}
	\label{fig::cmp}
\end{figure}

\newparagraph{A symmetric $4$-mode Gaussian mixture in 10D}
Next we consider a symmetric $4$-mode Gaussian mixture in $d=10$ dimension with 
\begin{align}
		\label{eqn::eg3}
		e^{-U_1(x)}\ &\propto\  
		\sum_{i=1}^4 \gauss{\mu_i}{\Sigma},
\end{align}
where the vector $\mu_i = \begin{bsmallmatrix}
	\lambda \cos(\frac{i\pi}{2}) &
	\lambda \sin(\frac{i\pi}{2}) & 
	0 &
	0 & 
	\cdots & 
	0 \\
\end{bsmallmatrix}$ 
and 
$\Sigma = \text{Diag}\begin{bsmallmatrix} \sigma_1^2 & \sigma_1^2 & \sigma_2^2 & \sigma_2^2 & \cdots & \sigma_2^2 \end{bsmallmatrix}$ is a diagonal matrix.
The parameters are 
$\dimn = 10$, 
$\sigma_1 = \sqrt{0.1}$, $\sigma_2 = \sqrt{0.5}$ and $\lambda = 5$.
With this choice of parameters, the variance of the vanilla estimator based on \eqref{eqn::direct_importance}  is {approximately $2.15\times 10^6$}.
We use NEIS with $\tm=0$ and the time step $\Delta t= 1/60$ is used for ODE discretization during both training and estimation of $\ratio$.
We show the training result in \figref{fig::train::2}. 
The variance reduces to about $10$ after $60$ SGD steps for the gradient ansatz (here we only considered this ansatz as it produces more promising empirical results).

Similar to the last example, we compare NEIS using the optimized $\dynb$ with AIS, under fixed query budgets; see Table \ref{table::eg}. The best result from NEIS has an estimator with the standard deviation less than $1/3$ of the one from AIS-100,
even though NEIS has less query cost. 
This comparison suggests that AIS-100 needs more than $9$ times more resources than NEIS with optimized flow in order to achieve similar precision and the cost spent on training indeed pays off if we require an accurate estimate of $\ratio$ (meaning less fluctuation for Monte Carlo estimates). Moreover, this table also shows that the bias from ODE discretization is negligible.

Figure~\ref{fig::cmp} shows a particular optimized flow: as can be seen, the mass near the origin flows towards four local minima of $U_1$, as we would intuitively expect. More optimized flows and comparisons can be found in \figref{fig::cmp::2::grad} in Appendix.

\newparagraph{A funnel distribution in 10D} 
We consider the following 10D funnel distribution studied in \cite{arbel_annealed_2021,neal_slice_2003}:
for the state $x=[x_1,x_2,\ldots, x_{10}]\in\Real^{10}$, 
\begin{align*}
	x_1 \sim \gauss{0}{9}, \qquad x_i \sim \gauss{0}{e^{x_1}}, \qquad 2\le i\le \dimn.
\end{align*}
For numerical stability, we consider the above funnel distribution restricted to a unit ball centered at the origin with radius $25$.
Instead of heuristically parameterizing the dynamics via neural-networks, we consider a generic linear ansatz and a two-parametric linear ansatz:
\begin{subequations}
\begin{align}
	\dynb(x) &= W_1 x + b_1,\qquad &  W_1\in \Real^{10\times 10}, b_1\in\Real^{10},\label{eqn::ln} \\
	\dynb(x) &= -[
		\beta, \alpha x_2, \alpha x_3, \ldots, \alpha x_{10}],\qquad	& \alpha,\beta\in\Real.\label{eqn::ln_ansatz}
\end{align}	
\end{subequations}
The generic linear ansatz can be regarded as a neural network without inner layers.
With~\eqref{eqn::ln}, we drawn the entries in the matrix $W_1$ randomly  and we set $b_1=\vectorzero_{10}$ initially, and we use the assisted training method; with~\eqref{eqn::ln_ansatz},
we set $\alpha=\beta = 2$ initially, and we use the direct training method. 
In both cases, we choose the finite-NEIS scheme with $\tm=-1/2$. 
We notice that the asymmetric choice $\tm=0$ can also leads into more optimal dynamics, but its performance is not as competitive as the symmetric case $\tm=-1/2$. It is very likely that such a difference is due to the structure of funnel distribution: each coordinate $x_i$ ($1\le i\le 10$) has mean $0$ and therefore, a symmetric version can probably better weight the contribution from both forward and backward flowlines.

The training results are shown in Figures~\ref{fig::train::3::ln} and \ref{fig::train::3::ln_2} in Appendix. In \figref{fig::train::3::ln_2}, we can observe that both error and variance are overall decreasing during the training and the parameters $\alpha, \beta$ tend to increase with a similar speed.
We use the same protocol as in the two previous examples to compare NEIS with AIS.
As can be be observed in Table~\ref{table::eg}, the two-parametric ansatz~\eqref{eqn::ln_ansatz} gives the best estimate; the generic linear ansatz~\eqref{eqn::ln} is not as competitive as the two-parametric ansatz (probably due to over-parameterization), but it still outperforms the AIS-100, under fixed query budget. 
Figure~\ref{fig::cmp} shows these optimized flows; more results can be found in Figure~\ref{fig::cmp::3::ln} in Appendix.
The apparent gap between estimates and the ground truth in Figure~\ref{fig::cmp} (or see Table~\ref{table::eg}) comes from insufficient sample size.

\section{Conclusion and outlook}
\label{sec::discussion}

In this work, we revisited the NEIS strategy proposed in~\cite{rotskoff_dynamical_2019} and analyzed its capabilities, both from  theoretical  and computational standpoints.
Regarding the former, we showed that NEIS leads to a zero-variance estimator for a velocity field $\dynb=\nabla V$ with a potential $V$ that satisfies a certain Poisson equation with the difference between the target and the base density as source.
Moreover, a zero-variance dynamics can be used to construct a transport map from $\rho_0$ to $\rho_1$.
In turn, we highlighted the connection and difference between NEIS and importance sampling strategies based on the normalizing flows (NF).

On the computational side, we showed that the variance of the NEIS estimator can be used as objective function to train the velocity field $\dynb$. This training procedure can be performed in practice by approximating the velocity field by a neural network, and optimizing the parameters in this network using SGD, similar to what is done in the context of neural ODE but with a different objective. Our numerical experiments showed that this strategy is effective
and can lower the variance of a vanilla estimator for $\partition_1$ by several orders of magnitude.

While the numerical examples we used in the present paper are somewhat academic, the results suggest that NEIS has potential in more realistic settings. In order to explore other applications, it would be interesting to investigate how to best parametrize $\dynb$ (e.g., less parameters and non-stiff energy landscape with respect to these parameters) and how to best initiate the training procedure. It would also be interesting to ask whether we can improve the performance of NEIS by optimizing certain parameters in the base density $\rho_0$ in concert with $\dynb$. The answers to these questions are probably  model specific and are left for future work.

 \section*{Acknowledgment} We would like to thank Jonathan Weare and
Fang-Hua Lin for helpful discussions, and the anonymous referees for their useful comments and suggestions. The work of EVE is supported by the National Science Foundation  under awards DMR-1420073, DMS-2012510, and DMS-2134216, by the Simons Collaboration on Wave Turbulence, Grant No. 617006, and by a Vannevar Bush Faculty Fellowship.

 \newpage
 \bibliographystyle{plain}
 \bibliography{reference}

 \newpage
 \appendix
\onecolumn
\section{The functional space for the infinite-time case}
\label{sec::space_inf}

\subsection{Assumptions}

For simplicity, we make:
\begin{assumption}
	\label{assump::U}
    \begin{enumerate}[label=(\roman*)]
	\item The domain $\Omega$ is either
	\begin{itemize}[leftmargin=4ex]
		\item an open and connected subset of $\Real^\dimn$ with smooth boundary,
		\item or a $\dimn$-dimensional torus (without boundary).
	\end{itemize}
	\item $U_0,\ U_1\in C^\infty(\dom, \Real)$.
	\item $\partition_0 := \int_{\dom} e^{-U_0} = 1$, $\partition_1 < \infty$ and $\varmax \in (0,\infty)$, where
	\begin{align*}
	   \varmax := \ee_{\rho_0}[e^{-2(U_1 - U_0)}] - (\ratio)^2
	\end{align*}
	is the variance for the  vanilla importance sampling method.
\end{enumerate}
\end{assumption}

\subsubsection*{Notations}
The open ball around $x$ with radius $r$ is denoted as $\ball{r}{x}:=\big\{y\in\dom\ \big\rvert\  \abs{y-x}<r\big\}$.
For any subset $\domsubset \subset \dom$, let $\fhit{\domsubset}(x)$ and $\bhit{\domsubset}(x)$ be the first hitting times to the boundary $\partial\domsubset$ in the forward and backward directions, respectively.
More specifically,
\begin{align}
	\label{eqn::hitting_time}
	\begin{aligned}
		\fhit{\domsubset}(x) &:= \inf\big\{t\ge 0: \state{t}{x}\in \partial\domsubset\big\},\\
		\bhit{\domsubset}(x) &:= \sup\big\{t\le 0: \state{t}{x}\in \partial\domsubset\big\}.
	\end{aligned}
\end{align}

For later convenience, let us denote
\begin{align}
	\label{eqn::ainf_binf}
	\binf(x) := \intreal \testfuncarg{0}{t}{x} \ud t,
\end{align}
and thus
\begin{align*}
	\newainf(x)\ \myeq{\eqref{eqn::ainf}}\ \frac{\intreal \testfuncarg{1}{t}{x} \ud t}{\binf(x)}.
\end{align*}

\subsection{Preliminaries}

\begin{lemma}
	If $\dynb \in \mani{}$ in \eqref{eqn::b_space}, then for any $t, s\in \Real$, $x\in \dom$ and $k = 0,1$,
	\begin{align}
		\label{eqn::testfunc_trans}
		\begin{aligned}
			\jacoargbig{t}{\state{s}{x}} = \jacoarg{t+s}{x}/\jacoarg{s}{x}, \qquad
			\testfuncargbig{k}{t}{\state{s}{x}} = \testfuncarg{k}{t+s}{x}/\jacoarg{s}{x}.
		\end{aligned}
	\end{align}
\end{lemma}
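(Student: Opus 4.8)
The plan is to deduce both identities in \eqref{eqn::testfunc_trans} from the semigroup (cocycle) property of the flow, namely $\statebig{t}{\state{s}{x}} = \state{t+s}{x}$ for all $t,s\in\Real$ and $x\in\dom$, after which both statements become one-line computations from the definitions \eqref{eq:jacob} and \eqref{eqn::testfunc}. First I would establish the cocycle property. Fixing $s$ and $x$, compare the two curves $t\mapsto \statebig{t}{\state{s}{x}}$ and $t\mapsto \state{t+s}{x}$. Both are defined for all $t\in\Real$: the bound $\sup_{x\in\dom}|\nabla\dynb(x)|<\infty$ in \eqref{eqn::b_space} makes $\dynb$ globally Lipschitz, and the no-flux condition $\dynb\cdot\nn{}|_{\partial\dom}=0$ keeps trajectories inside $\overline{\dom}$ (there is no boundary at all in the torus case), so the flow $\bm{X}_t$ is complete. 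Both curves solve the autonomous ODE $\td W_t = \dynb(W_t)$, and both equal $\state{s}{x}$ at $t=0$; by uniqueness of solutions of a Lipschitz ODE they coincide, which gives the cocycle property.

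Next I would read off the Jacobian identity from the exponential representation in \eqref{eq:jacob}: evaluating it at the base point $\state{s}{x}$, then using the cocycle property inside the integrand and the change of variables $v = u+s$,
\[
	\jacoargbig{t}{\state{s}{x}}
	= \exp\!\left(\int_0^t \nabla\cdot\dynb\big(\statebig{u}{\state{s}{x}}\big)\,\ud u\right)
	= \exp\!\left(\int_0^t \nabla\cdot\dynb\big(\state{u+s}{x}\big)\,\ud u\right)
	= \exp\!\left(\int_s^{t+s} \nabla\cdot\dynb\big(\state{v}{x}\big)\,\ud v\right).
\]
Since \eqref{eq:jacob} also gives $\jacoarg{t+s}{x}/\jacoarg{s}{x} = \exp\!\big(\int_s^{t+s}\nabla\cdot\dynb(\state{v}{x})\,\ud v\big)$ and $\jacoarg{s}{x}>0$, the first identity follows. (Equivalently, one can differentiate $\state{t+s}{x} = \statebig{t}{\state{s}{x}}$ with the chain rule, $\nabla_x\state{t+s}{x} = \big(\nabla_y\state{t}{y}\big)\big|_{y=\state{s}{x}}\,\nabla_x\state{s}{x}$, and take absolute values of determinants.)

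Finally, the identity for $\testfunc{k}$ follows by combining the two facts with the definition \eqref{eqn::testfunc}:
\[
	\testfuncargbig{k}{t}{\state{s}{x}}
	= e^{-U_k\left(\statebig{t}{\state{s}{x}}\right)}\jacoargbig{t}{\state{s}{x}}
	= e^{-U_k\left(\state{t+s}{x}\right)}\frac{\jacoarg{t+s}{x}}{\jacoarg{s}{x}}
	= \frac{\testfuncarg{k}{t+s}{x}}{\jacoarg{s}{x}},
\]
for $k=0,1$. Everything here is elementary; the only point that deserves a little care — and the natural place for the argument to break if one is not careful — is the global-in-time well-posedness of the flow used in the first step, which is exactly where the two defining conditions on $\mani{}$ in \eqref{eqn::b_space} (the uniform bound on $\nabla\dynb$ and the no-flux boundary condition) are needed.
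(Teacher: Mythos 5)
Your proof is correct. The paper states only that ``this lemma can be verified easily by definition and thus the proof is omitted,'' and the verification you supply — the cocycle property $\statebig{t}{\state{s}{x}} = \state{t+s}{x}$ from ODE uniqueness (with completeness of the flow guaranteed by the Lipschitz bound and no-flux condition in $\mani{}$), followed by reading both identities off the exponential formula \eqref{eq:jacob} and the definition \eqref{eqn::testfunc} — is exactly the intended elementary argument.
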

  This lemma can be verified easily by definition and thus the proof is
    omitted. A simple consequence of \eqref{eqn::testfunc_trans} is the
    following result that we also state without proof:

\begin{lemma}
	For any $s\in \Real$ and $x\in \dom$,
	\begin{align}
		\label{eqn::time_trans_ainf}
		\binf\big(\state{s}{x}\big) = \frac{\binf(x)}{\jacoarg{s}{x}},\qquad \newainf\big(\state{s}{x}\big) = \newainf(x),
	\end{align}
	provided that these terms are well-defined.
\end{lemma}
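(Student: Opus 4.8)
The plan is to prove the two identities in~\eqref{eqn::time_trans_ainf} by straightforward substitution using the transformation rules~\eqref{eqn::testfunc_trans} from the previous lemma. The key observation is that shifting the base point along a flowline merely reparametrizes the integrals along that flowline, with a Jacobian correction that is constant along the line.

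First I would handle $\binf$. By definition~\eqref{eqn::ainf_binf}, $\binf(\state{s}{x}) = \intreal \testfuncargbig{0}{t}{\state{s}{x}}\ud t$. Applying the second identity in~\eqref{eqn::testfunc_trans} with $k=0$, the integrand equals $\testfuncarg{0}{t+s}{x}/\jacoarg{s}{x}$. Pulling the $x$-dependent (and $t$-independent) factor $1/\jacoarg{s}{x}$ out of the integral, and substituting $t' = t+s$ (a translation of the integration variable over all of $\Real$, so the limits are unchanged), gives $\binf(\state{s}{x}) = \jacoarg{s}{x}^{-1}\intreal \testfuncarg{0}{t'}{x}\ud t' = \binf(x)/\jacoarg{s}{x}$, which is the first claim. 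The same computation with $k=1$ shows $\intreal \testfuncargbig{1}{t}{\state{s}{x}}\ud t = \jacoarg{s}{x}^{-1}\intreal \testfuncarg{1}{t'}{x}\ud t'$. Then, writing $\newainf$ as the ratio of these two infinite-time integrals as recalled just after~\eqref{eqn::ainf_binf}, the common factor $1/\jacoarg{s}{x}$ cancels, yielding $\newainf(\state{s}{x}) = \newainf(x)$.

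The only subtlety — and the closest thing to an obstacle — is the well-definedness caveat already flagged in the statement: the manipulations require the infinite-time integrals $\binf$ and $\intreal \testfunc{1}_t$ to converge (at least for almost every $x$), which is exactly the hypothesis under which $\newainf$ is defined in~\eqref{eqn::ainf}; since $\jacoarg{s}{x}$ is a finite positive number for each fixed $s$ and $x$ (it is an exponential of a finite integral by~\eqref{eq:jacob}, using $\sup_x|\nabla\dynb(x)|<\infty$ from~\eqref{eqn::b_space}), convergence at $\state{s}{x}$ is equivalent to convergence at $x$, so no circularity arises. I expect the proof to be short enough that the paper states it without detail, but the substitution $t\mapsto t+s$ together with the cancellation of $\jacoarg{s}{x}$ is the entire content.
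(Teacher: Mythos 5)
Your proof is correct and matches the intended argument: the paper states this lemma without proof, noting only that it is ``a simple consequence of \eqref{eqn::testfunc_trans},'' and your substitution $t\mapsto t+s$ followed by cancellation of the constant factor $\jacoarg{s}{x}$ is precisely that consequence spelled out.
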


\subsection{The functional space}
For the infinite-time case, we consider the following family of vector
fields denoted as $\maniinf{}$.
\begin{definition}
	\label{defn::maniinf}
	$\maniinf{}$ is a set that contains all $\dynb\in \mani{}$ such
        that $\dom\backslash\effdom(\dynb)$ has Lebesgue measure zero, where
        $\effdom(\dynb)\subset\dom$ is the collection of points $x$ at which
    the functions
	\begin{align}
		\label{eqn::func_cts}
		z\rightarrow\int_{0}^{\infty} \testfuncarg{k}{t}{z}\ud t, \qquad  z\rightarrow\int_{-\infty}^{0} \testfuncarg{k}{t}{z}\ud t
	\end{align}
	are continuous on a local neighborhood of $x$ for $k = 0, 1$.
\end{definition}

We use the notation $\effdom(\dynb)$ because in general, such a subset depends on the choice of $\dynb$.
The main reason behind the above definition is that we need functions in \eqref{eqn::func_cts} to behave nicely almost everywhere.
The integrability of $t\rightarrow \testfuncarg{k}{t}{x}$ depends on the long-term behavior of the flow, which is not easy to characterize in general; thus we simply include the integrability into the assumption.
However, we can indeed expect that the above conditions should  hold for most interesting examples, for instance:

\begin{example}
	If $U_0(x) = \frac{\abs{x}^2}{2}+\gausscst{\dimn}$ and $U_1(x)
        = \frac{\abs{x}^2}{2\sigma^2}$ (so that both $\rho_0$ and $\rho_1$ are
    Gaussian densities),
    one optimal flow is $\dynb(x) =  x$ (cf. \appref{app::eg} below). For this choice,  we can direct compute that when $z\neq \vectorzero_{\dimn}$, $\state{t}{z} = e^t z$ and hence
	\begin{align*}
		\int_{0}^{\infty} \testfuncarg{1}{t}{z}\ud t &= \frac{1}{2} \Big(\frac{\abs{z}^2}{2\sigma^2}\Big)^{-\dimn/2} \int_{\frac{\abs{z}^2}{2\sigma^2}}^{\infty} s^{\frac{\dimn}{2}-1} e^{-s}\ud s,\\
		\int_{-\infty}^{0} \testfuncarg{1}{t}{z}\ud t &= \frac{1}{2} \Big(\frac{\abs{z}^2}{2\sigma^2}\Big)^{-\dimn/2} \int_{0}^{\frac{\abs{z}^2}{2\sigma^2}} s^{\frac{\dimn}{2}-1} e^{-s}\ud s,
	\end{align*}
	and similar expressions hold for $\testfunc{0}$ by letting $\sigma=1$ in above equations.
	Then clearly, $\effdom(\dynb) = \dom\backslash\{\vectorzero_{\dimn}\}$ and such a dynamics belongs to $\maniinf$.
However, the constant function $\dynb = \vectorzero_{\dimn}\notin \maniinf{}$: we can easily verify that e.g., for this dynamics, $\int_{0}^{\infty}\testfuncarg{1}{t}{z}\ud t = \infty$ for any $z$ and hence  $\effdom(\vectorzero_{\dimn}) = \emptyset$; see also \lemref{lem::G} below.
\end{example}

Here are a few immediate properties of the set $\effdom(\dynb)$:
\begin{lemma}
	\label{lem::property_Omega_b}
	We have:
	\begin{enumerate}[label=(\roman*)]
        \item \label{lem::property:open}
            $\effdom(\dynb)$ is an open subset of $\dom$.

        \item \label{lem::property:close}
            The set $\effdom(\dynb)$ is closed under the evolution of the dynamics $\dynb$, i.e., if $\xst\in \effdom(\dynb)$, then $\state{t}{\xst}\in
		\effdom(\dynb)$ for any $t\in \Real$.

    \item \label{lem::property:close2}
        If $\xst\in \overline{\effdom(\dynb)}$, then
        $\state{t}{\xst}\in \overline{\effdom(\dynb)}$ for any $t\in\Real$,
    where $\overline{\effdom(\dynb)}$ is the closure of $\effdom(\dynb)$ in
$\Rd$.

    \item \label{lem::property:cts} $x\rightarrow\intreal \testfuncarg{k}{t}{x}\ud t$ is continuous on $\effdom(\dynb)$.

    \item \label{lem::property:ctsxt}
        $(x,t)\rightarrow \int_{t}^{\infty}\testfuncarg{k}{s}{x}\ud s$ is continuous on $\effdom(\dynb)\times \Real$.
	\end{enumerate}
\end{lemma}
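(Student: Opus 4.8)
The plan is to take the five items roughly in the order listed. Items (i) and (iv) are immediate from the definition of $\effdom(\dynb)$, item (ii) carries the real content, and items (iii) and (v) follow from (ii) together with routine flow‑continuity and compactness arguments. For (i): if $x\in\effdom(\dynb)$, the definition supplies an open neighborhood $N\ni x$ on which all four maps in \eqref{eqn::func_cts} (for $k=0,1$ and for the forward and backward half‑lines) are continuous; that same $N$ then witnesses $y\in\effdom(\dynb)$ for every $y\in N$, so $N\subseteq\effdom(\dynb)$ and $\effdom(\dynb)$ is open. For (iv): on $\effdom(\dynb)$ both $x\mapsto\int_{0}^{\infty}\testfuncarg{k}{t}{x}\ud t$ and $x\mapsto\int_{-\infty}^{0}\testfuncarg{k}{t}{x}\ud t$ are continuous by definition, hence so is their sum $x\mapsto\intreal\testfuncarg{k}{t}{x}\ud t$.

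The key step is (ii). Here I would invoke the translation identity \eqref{eqn::testfunc_trans}, $\testfuncargbig{k}{t}{\state{s}{z}}=\testfuncarg{k}{t+s}{z}/\jacoarg{s}{z}$, to write, for $s\ge 0$,
\[
\int_{0}^{\infty}\testfuncargbig{k}{t}{\state{s}{z}}\ud t
=\frac{1}{\jacoarg{s}{z}}\Big(\int_{0}^{\infty}\testfuncarg{k}{u}{z}\ud u-\int_{0}^{s}\testfuncarg{k}{u}{z}\ud u\Big),
\]
with analogous splittings for $s<0$ and for the backward half‑line. In every case the genuinely finite‑time piece ($\int_{0}^{s}$, $\int_{s}^{0}$, etc.) is continuous in $z$ on all of $\dom$ because $(u,z)\mapsto\testfuncarg{k}{u}{z}$ is jointly continuous and the $u$‑range is compact; the half‑line piece is continuous for $z$ in the neighborhood $N$ supplied by $\xst\in\effdom(\dynb)$; and $z\mapsto 1/\jacoarg{s}{z}$ is smooth and strictly positive. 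Hence $z\mapsto\int_{0}^{\infty}\testfuncargbig{k}{t}{\state{s}{z}}\ud t$, and likewise the backward integral and the $k=0$ case, are continuous on $N$. Since $\state{s}{\cdot}$ is a smooth diffeomorphism of $\dom$ with inverse $\state{-s}{\cdot}$, composing with $\state{-s}{\cdot}$ shows the four maps of \eqref{eqn::func_cts} are continuous on the open set $\state{s}{N}\ni\state{s}{\xst}$, that is, $\state{s}{\xst}\in\effdom(\dynb)$.

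Items (iii) and (v) are then short. For (iii): $\effdom(\dynb)\subseteq\dom$ gives $\overline{\effdom(\dynb)}\subseteq\overline{\dom}$, and since $\dynb$ has bounded gradient on $\overline\dom$ and satisfies $\dynb\cdot\nn|_{\partial\dom}=0$, the flow $\state{t}{\cdot}$ extends to a diffeomorphism of $\overline{\dom}$ for each fixed $t$; given $\xst\in\overline{\effdom(\dynb)}$ and $x_n\in\effdom(\dynb)$ with $x_n\to\xst$, part (ii) gives $\state{t}{x_n}\in\effdom(\dynb)$ and continuity of the flow on $\overline\dom$ gives $\state{t}{x_n}\to\state{t}{\xst}$, so $\state{t}{\xst}\in\overline{\effdom(\dynb)}$. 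For (v): write $\int_{t}^{\infty}\testfuncarg{k}{s}{x}\ud s=\int_{0}^{\infty}\testfuncarg{k}{s}{x}\ud s-\int_{0}^{t}\testfuncarg{k}{s}{x}\ud s$; the first term is continuous in $x$ on $\effdom(\dynb)$ by definition and constant in $t$, and the second is jointly continuous in $(x,t)$ because $(s,x)\mapsto\testfuncarg{k}{s}{x}$ is continuous on $\dom\times\Real$ hence bounded on compacts, so a standard split of $\int_{0}^{t'}\testfuncarg{k}{s}{x'}\ud s-\int_{0}^{t}\testfuncarg{k}{s}{x}\ud s$ into a $|t'-t|$ term and a dominated‑convergence term goes to zero; the difference is therefore jointly continuous on $\effdom(\dynb)\times\Real$.

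The main obstacle I anticipate is the bookkeeping in (ii): one must handle all combinations of the sign of $s$, the index $k\in\{0,1\}$, and the forward/backward half‑lines simultaneously, cleanly isolating the half‑line part (continuous only on $N$) from the finite‑time part (continuous on all of $\dom$), and then transfer "continuity at $\state{s}{\xst}$" back to "continuity on a neighborhood of $\xst$" through the flow diffeomorphism while keeping that neighborhood open. Everything else is formal once this is in place.
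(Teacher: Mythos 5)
Your proposal is correct, and for the central item (ii) it is essentially the paper's own argument: use the translation identity \eqref{eqn::testfunc_trans} to write the half-line integral at $\state{s}{z}$ as $\bigl(\text{half-line integral at } z\bigr.$ minus a finite-time piece$\bigl.\bigr)/\jacoarg{s}{z}$, note that the finite-time piece and the Jacobian are continuous on all of $\dom$ while the half-line piece is continuous on the neighborhood $N$ furnished by $\xst\in\effdom(\dynb)$, and transport this through the flow diffeomorphism to get continuity on $\state{s}{N}$; parts (i) and (iv) are read off the definition in both treatments. The only genuine deviations are in (iii) and (v), and both are harmless simplifications: for (iii) you argue directly with a sequence $x_n\to\xst$, $x_n\in\effdom(\dynb)$, plus continuity of the flow on $\overline\dom$, where the paper runs a proof by contradiction using the same two ingredients; for (v) you split $\int_{t}^{\infty}\testfuncarg{k}{s}{x}\ud s=\intreal^{\,}_{\!}\hspace{-1.3em}\int_{0}^{\infty}\testfuncarg{k}{s}{x}\ud s-\int_{0}^{t}\testfuncarg{k}{s}{x}\ud s$ at the base point itself, so joint continuity follows from the definition of $\effdom(\dynb)$ plus a routine compactness estimate, whereas the paper instead writes $\int_{t}^{\infty}\testfuncarg{k}{s}{x}\ud s=\jacoarg{t}{x}\int_{0}^{\infty}\testfuncargbig{k}{s}{\state{t}{x}}\ud s$ and leans on part (ii); your version is marginally more economical since it does not route (v) through the flow-invariance of $\effdom(\dynb)$, while the paper's version reuses machinery it has already set up. (Your implicit use of finiteness of the half-line integrals on $\effdom(\dynb)$ is legitimate, since continuity in Definition \ref{defn::maniinf} presupposes the integrals are finite there.)
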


\begin{proof}
    Part \ref{lem::property:open} follows easily from the definition of
    $\effdom(\dynb)$ and part \ref{lem::property:cts} also trivially holds.
    For part \ref{lem::property:close}, notice that if $y=\state{t}{x}$, then
	\begin{align*}
		\int_{0}^{\infty} \testfuncarg{k}{s}{y}\ud s &= \int_{0}^{\infty} \testfuncargbig{k}{s}{\state{t}{x}}\ud s
		\myeq{\eqref{eqn::testfunc_trans}} \Big(\int_{0}^{\infty} \testfuncarg{k}{t+s}{x}\ud s\Big)/\jacoarg{t}{x} \\
		&= \Big(\int_{0}^{\infty} \testfuncarg{k}{s}{x}\ud s - \int_{0}^{t}\testfuncarg{k}{s}{x}\ud s\Big)/\jacoarg{t}{x}\\
		&= \Big(\int_{0}^{\infty} \testfuncargbig{k}{s}{\state{-t}{y}}\ud s - \int_{0}^{t}\testfuncargbig{k}{s}{\state{-t}{y}}\ud s\Big)/\jacoargbig{t}{\state{-t}{y}}.
	\end{align*}
	Since $(x,t)\rightarrow \state{t}{x}$, $(x,t)\rightarrow \jacoarg{t}{x}$ are continuous by the smoothness assumption of $\dynb$, it is clear that $\jacoargbig{t}{\state{-t}{y}}$ is continuous on $\dom$. The continuity of $y\rightarrow \int_{0}^{t}\testfuncargbig{k}{s}{\state{-t}{y}}\ud s$ also trivially holds on $\dom$. Next, the continuity of $y\rightarrow \int_{0}^{\infty} \testfuncargbig{k}{s}{\state{-t}{y}}\ud s$ in a local neighborhood of $\state{t}{\xst}$ comes from the assumption that $\xst\in \effdom(\dynb)$. Thus, $y\to \int_{0}^{\infty} \testfuncarg{k}{s}{y}\ud s$ is continuous in a local neighborhood of $\state{t}{\xst}$, if $\xst\in \effdom(\dynb)$. The other case for $y\to \int_{-\infty}^{0} \testfuncarg{k}{s}{y}\ud s$ can be similarly verified.

    Part \ref{lem::property:close2} is an immediate consequence of 
    part \ref{lem::property:close}. Let us prove it by contradiction.
    Assume that the conclusion in part \ref{lem::property:close2} does not
    hold, then there exists $\xst\in \overline{\effdom(\dynb)}$ such that
    $y^\star :=
    \state{t}{\xst}\notin \overline{\effdom(\dynb)}$ for some $t\in\Real$.
    By part \ref{lem::property:close}, we know this $\xst\not\in
    \effdom(\dynb)$ and thus $\xst\in \partial \effdom(\dynb)$ is in the
    boundary.
    As $\overline{\effdom(\dynb)}$ is closed and $y^\star\notin
    \overline{\effdom(\dynb)}$, there must exist an $\eps>0$ such that
    for any $y\in\Rd$ with $\abs{y-y^\star}<\eps$, we have
    $y\notin \overline{\effdom(\dynb)}$.
    By the smoothness assumption on $\dynb$, we know there exists a $\delta>0$
    such that for any $x\in \ball{\delta}{\xst}$, we have
$\abs{\state{t}{x}-y^\star}<\eps$ outside of $\overline{\effdom(\dynb)}$.
    Since $\xst$ is in the boundary of $\effdom(\dynb)$,
    we must be able to find an $x\in \effdom(\dynb)$ such that $x\in
    \ball{\delta}{\xst}$ and by part \ref{lem::property:close},
    $\state{t}{x}\in\effdom(\dynb)$. Thus, we reach a contradiction.

    For part \ref{lem::property:ctsxt},
	\begin{align*}
		\int_{t}^{\infty} \testfuncarg{k}{s}{x}\ud s = \int_{0}^{\infty} \testfuncarg{k}{t+s}{x}\ud s \myeq{\eqref{eqn::testfunc_trans}} \jacoarg{t}{x}\int_{0}^{\infty}  \testfuncargbig{k}{s}{\state{t}{x}}\ud s.
	\end{align*}
	Because $(x,t)\rightarrow \state{t}{x}$ is continuous (by the smoothness of
    $\dynb\in \mani{}$) and $z\rightarrow\int_{0}^{\infty}
    \testfuncarg{k}{s}{z}\ud s$ is continuous in a local neighborhood of
    $\state{t}{x}\in \effdom(\dynb)$ by part \ref{lem::property:close},
    it is then clear that $\int_{0}^{\infty}  \testfuncargbig{k}{s}{\state{t}{x}}\ud s$ is continuous with respect to $(x,t)$, and hence the conclusion follows easily.
\end{proof}

A notable family of points that are excluded from $\effdom(\dynb)$ are points at which $\int_{-\infty}^{\infty} \testfuncarg{k}{t}{x}\ud t = \infty$. For instance, these include stationary points of $\dynb$ and periodic orbits, which we state as:

\begin{lemma}
	\label{lem::G}
    Given a vector field $\dynb\in \mani{}$, we have $x\notin \effdom(\dynb)$,
	\begin{enumerate}[label=(\roman*)]
		\item if $x$ is a stationary point of $\dynb$ (\ie{}, $\dynb(x) = \vectorzero_{\dimn}$), or
		\item if $x$ is on a periodic orbit.
	\end{enumerate}
\end{lemma}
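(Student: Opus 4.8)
The plan is to reduce both cases to a single observation: by \defref{defn::maniinf}, if for some $k\in\{0,1\}$ either $\int_0^\infty \testfuncarg{k}{t}{x}\ud t$ or $\int_{-\infty}^0 \testfuncarg{k}{t}{x}\ud t$ equals $+\infty$, then the corresponding function in \eqref{eqn::func_cts} is not even finite at $x$, hence cannot be continuous on a neighborhood of $x$, and so $x\notin\effdom(\dynb)$. It therefore suffices to show that at a stationary point, and at a point on a periodic orbit, at least one of these two half-line integrals diverges. Since every integrand $\testfuncarg{k}{t}{\cdot}=e^{-U_k(\state{t}{\cdot})}\jacoarg{t}{\cdot}$ is strictly positive, divergence follows from crude lower bounds.

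For (i), let $\dynb(x)=\vectorzero_\dimn$. Smoothness of $\dynb\in\mani{}$ makes the ODE \eqref{eqn::ode} uniquely solvable, so the constant curve $\state{t}{x}\equiv x$ is the flow through $x$; then \eqref{eq:jacob} gives $\jacoarg{t}{x}=e^{t\lambda}$ with $\lambda:=\nabla\cdot\dynb(x)$, hence $\testfuncarg{k}{t}{x}=e^{-U_k(x)}e^{t\lambda}$. Whatever the sign of $\lambda$, at least one of $\int_0^\infty e^{t\lambda}\ud t$ and $\int_{-\infty}^0 e^{t\lambda}\ud t$ is infinite (the first if $\lambda\ge0$, the second if $\lambda\le0$), and since $e^{-U_k(x)}>0$ the corresponding integral of $\testfuncarg{k}{t}{x}$ is infinite; in fact $\binf(x)=\infty$.

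For (ii), let $\state{\mathcal{T}}{x}=x$ with $\mathcal{T}>0$. Using the flow property, $s\mapsto\nabla\cdot\dynb(\state{s}{x})$ is $\mathcal{T}$-periodic, so \eqref{eq:jacob} gives $\jacoarg{n\mathcal{T}}{x}=J^n$ where $J:=\jacoarg{\mathcal{T}}{x}>0$. Splitting the forward integral over the intervals $[n\mathcal{T},(n+1)\mathcal{T}]$ and applying the translation identity \eqref{eqn::testfunc_trans} with $\state{n\mathcal{T}}{x}=x$ yields $\int_0^\infty \testfuncarg{k}{t}{x}\ud t=\big(\int_0^{\mathcal{T}}\testfuncarg{k}{t}{x}\ud t\big)\sum_{n\ge0}J^n$, the interchange of sum and integral being justified by nonnegativity (Tonelli). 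The prefactor $\int_0^{\mathcal{T}}\testfuncarg{k}{t}{x}\ud t$ is strictly positive (continuous positive integrand on a set of positive measure), so the forward integral is infinite as soon as $J\ge1$. The same bookkeeping over negative times gives $\int_{-\infty}^0 \testfuncarg{k}{t}{x}\ud t=\big(\int_0^{\mathcal{T}}\testfuncarg{k}{t}{x}\ud t\big)\sum_{n\ge1}J^{-n}$, infinite whenever $J\le1$. As one of $J\ge1$ or $J\le1$ always holds, at least one half-line integral diverges, so $x\notin\effdom(\dynb)$.

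All steps are elementary and I do not anticipate a genuine obstacle. The only points to watch are: reading \defref{defn::maniinf} so that a value $+\infty$ precludes continuity on any neighborhood of $x$; justifying the termwise integration via nonnegativity; and checking that the argument is uniform in $k$, so that it also records the stronger statement $\int_{-\infty}^\infty\testfuncarg{k}{t}{x}\ud t=\infty$ (for $k=0,1$) at such points, as asserted in the text preceding the lemma.
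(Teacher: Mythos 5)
Your proof is correct and follows essentially the same route as the paper's: for a stationary point you use the constant trajectory and the sign of $\nabla\cdot\dynb(x)$ to force one half-line integral of $\testfunc{k}$ to diverge, and for a periodic orbit you split time into periods and use the per-period Jacobian $J$ to show divergence of the forward branch when $J\ge 1$ and of the backward branch when $J\le 1$. The only (cosmetic) difference is that you compute the period sums exactly as geometric series via the translation identity \eqref{eqn::testfunc_trans}, whereas the paper settles for crude lower bounds with $\max/\min$ of $U_k$ and the accumulated divergence over one period; both yield the same conclusion.
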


\begin{proof}
	If $x$ is a stationary point of $\dynb$, the trajectory is $\state{t}{x} = x$ for all $t\in \Real$. Then it is obvious that $\intreal\testfuncarg{k}{t}{x}\ud t = e^{-U_k(x)}\intreal e^{(\div\dynb)(x) t}\ud t = \infty$, which establishes part (i).
	For the  part (ii), let us assume that the period is $\tau$. If $c := \int_{0}^{\tau} \nabla \cdot \dynb\big(\state{s}{x}\big)\ud s\ge 0$, then
	\begin{align*}
		\int_{0}^{\infty} \testfuncarg{k}{t}{x}\ud t &= \int_{0}^{\infty} e^{-U_k\big(\state{t}{x}\big) + \int_{0}^{t}\div\dynb\big(\state{s}{x}\big)\ud s}\ud t \\
		&\ge \int_{0}^{\infty} e^{-\max_{0\le s\le \tau} U_k\big(\state{s}{x}\big) + c \floor{t/\tau}+\int_{\tau \floor{t/\tau}}^{t-\tau \floor{t/\tau}} \nabla \cdot \dynb \big(\state{s}{x}\big)\ud s}\ud t \\
		&\ge C \int_{0}^{\infty} e^{c(t/\tau -1)}\ud t = \infty,
	\end{align*}
	where the constant $C = e^{-\max_{0\le s\le \tau} U_k\big(\state{s}{x}\big)} e^{\min_{0\le s\le \tau} \int_{0}^{s} \nabla \cdot \dynb\big(\state{r}{x}\big)\ud r}$.
	Therefore, $\intreal \testfuncarg{k}{t}{x} = \infty$.
	If $c := \int_{0}^{\tau} \nabla \cdot \dynb\big(\state{s}{x}\big)\ud s < 0$, we can similarly show that $\int_{-\infty}^{0} \testfuncarg{0}{t}{x}\ud t = \infty$, and the same conclusion holds.
\end{proof}

\subsection{Perturbation of the dynamics}

Next we investigate the following question:
given $\dynb\in \maniinf$ and $\delta\dynb\in C_c^{\infty}(\dom,\Rd)$, can we guarantee that  $\dynb+\eps\delta\dynb\in \maniinf{}$ for small enough $\eps$?
Such a conclusion trivially holds for the finite-time case;
however, more underlying structures are needed for the infinite-time case, due to the fact that the long-term behavior of the flow $\dynb$
can sensitively depend on the local perturbation $\delta\dynb$.
This question is probably unavoidable in order to understand the mathematical structure of $\maniinf{}$. 

\smallskip
{\noindent {\bf Notation:}} We shall use the notation $\state{t}{\cdot}$ to represent the flow map under $\dynb$, and use $\statesup{t}{\cdot}{\eps}$ to represent the flow map under $\dynb^{\eps} := \dynb + \eps \delta \dynb$ in this section below. Moreover, we shall use $\testfuncepsarg{k}{\eps}{\cdot}{\cdot}$ to denote the function defined in \eqref{eqn::testfunc} for the dynamics $\dynb^{\eps}$ and the Jacobian of the flow $\jacoargsup{(\cdot)}{\cdot}{\eps}$ is similarly defined.

\begin{definition}[$\dynb$-stability]
	\label{defn::int_cond_dynb}
Given $\dynb\in \maniinf$:
\begin{enumerate}[label=(\alph*)]
\item (For an open bounded set).
A nonempty open bounded set $\domsubset \subset \effdom(\dynb)$
  is said to be \emph{$\dynb$-stable} if:
\begin{enumerate}[label=(\roman*)]
\item
There exists a point $\xst\in \domsubset$ and $\zeta \in (0,1)$ such that
\begin{align}
  \label{eqn::int_cond_dynb::1}
  \abs{\dynb(\xst)}>0, \qquad
  \abs{\dynb(x) - \dynb(\xst)} <\zeta \abs{\dynb(\xst)}, \qquad \forall x\in \domsubset;
\end{align}
\item For any $x\in \domsubset$, the trajectory $\big\{\state{t}{x}\big\}_{t\in \Real}$ intersects with the boundary $\partial \domsubset$ at exactly two points.
\end{enumerate}

\item (For a point). A point $x\in \effdom(\dynb)$ is said to be $\dynb$-stable if there exists a neighborhood $\ball{\eps}{x}$ such that the region $\ball{\eps}{x}$ is $\dynb$-stable. Otherwise, the point $x$ is said to be $\dynb$-unstable.
\end{enumerate}
\end{definition}

The assumption in part (i) ensures that the trajectory $t\rightarrow\state{t}{x}$ will leave this region $\domsubset$ within a finite amount of time;
see \lemref{lem::exit_time} below.
Part (ii)
is used to ensure that the trajectory is not (infinitely) recurrent to $\domsubset$; once the trajectory leaves $\domsubset$, it will not return to $\domsubset$ again.

\begin{lemma}
	\label{lem::exit_time}
	Suppose $\dynb\in \mani{}$ and
	$\domsubset\subset\dom$ is open and bounded.
	If there exists a point $\xst\in \domsubset$ and $\zeta \in (0,1)$ such that
	\begin{align*}
		\abs{\dynb(\xst)}>0, \qquad
		\abs{\dynb(x) - \dynb(\xst)} <\zeta \abs{\dynb(\xst)}, \qquad \forall x\in \domsubset;
	\end{align*}
	then for any $x\in \domsubset$, we have
	\begin{align*}
		\fhit{\domsubset}(x) - \bhit{\domsubset}(x) \le \frac{\diam(\domsubset)}{(1-\xi) \abs{\dynb(\xst)}} < \infty.
	\end{align*}
\end{lemma}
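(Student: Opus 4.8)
The plan is to show that the flow crosses the bounded set $\domsubset$ in a uniformly bounded time by comparing the true velocity $\dynb$ to the constant reference velocity $\dynb(\xst)$, and tracking how far the trajectory can drift in the direction of $\dynb(\xst)$ before it must have exited $\domsubset$. Write $v := \dynb(\xst)$ and $u := v/\abs{v}$ the unit vector in that direction. For $x\in\domsubset$, consider the trajectory $\state{t}{x}$ while it remains in $\domsubset$, i.e.\ for $t\in(\bhit{\domsubset}(x),\fhit{\domsubset}(x))$, and look at the scalar projection $\phi(t) := \inner{\state{t}{x}}{u}$.

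First I would compute $\dot\phi(t) = \inner{\dynb(\state{t}{x})}{u}$. Writing $\dynb(\state{t}{x}) = v + \big(\dynb(\state{t}{x}) - v\big)$ and using $\inner{v}{u} = \abs{v}$ together with the hypothesis $\abs{\dynb(\state{t}{x}) - v} < \zeta\abs{v}$ (valid since $\state{t}{x}\in\domsubset$) and Cauchy--Schwarz, we get
\begin{align*}
	\dot\phi(t) = \abs{v} + \inner{\dynb(\state{t}{x}) - v}{u} \ge \abs{v} - \zeta\abs{v} = (1-\zeta)\abs{v} > 0.
\end{align*}
So $\phi$ is strictly increasing along the trajectory inside $\domsubset$, at a rate bounded below by $(1-\zeta)\abs{v} = (1-\zeta)\abs{\dynb(\xst)}$. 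On the other hand, $\phi$ is a coordinate function, so for any two times $t_1 < t_2$ in that interval, $\phi(t_2) - \phi(t_1) = \inner{\state{t_2}{x} - \state{t_1}{x}}{u} \le \abs{\state{t_2}{x} - \state{t_1}{x}} \le \diam(\domsubset)$, since both endpoints lie in $\domsubset$ (or its closure). Integrating the lower bound on $\dot\phi$ over $(t_1,t_2)$ gives $(1-\zeta)\abs{\dynb(\xst)}\,(t_2 - t_1) \le \phi(t_2) - \phi(t_1) \le \diam(\domsubset)$, hence $t_2 - t_1 \le \diam(\domsubset)/\big((1-\zeta)\abs{\dynb(\xst)}\big)$.

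The remaining point is to pass to the endpoints $\bhit{\domsubset}(x)$ and $\fhit{\domsubset}(x)$ themselves. Since $\domsubset$ is open and bounded and $\dynb$ is smooth (so the flow has at least local-in-time existence), the trajectory through $x\in\domsubset$ genuinely exits in both time directions in finite time — which is exactly what the displayed bound will certify once we take a limit — so taking $t_1 \downarrow \bhit{\domsubset}(x)$ and $t_2 \uparrow \fhit{\domsubset}(x)$ and using continuity of $t\mapsto\state{t}{x}$ (the exit points lie on $\partial\domsubset\subset\overline{\domsubset}$, and $\diam(\overline{\domsubset}) = \diam(\domsubset)$) yields
\begin{align*}
	\fhit{\domsubset}(x) - \bhit{\domsubset}(x) \le \frac{\diam(\domsubset)}{(1-\zeta)\abs{\dynb(\xst)}} < \infty,
\end{align*}
as claimed (the paper's statement writes $\xi$ where it surely means $\zeta$). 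I do not anticipate a serious obstacle here; the only mild subtlety is being careful that the trajectory does indeed reach $\partial\domsubset$ in finite time in both directions rather than, say, escaping to a point where the flow ceases to be defined — but boundedness of $\domsubset$ together with the lower bound on $\dot\phi$, which forces $\phi$ (and hence the trajectory) out of the bounded region $\domsubset$ within the stated time, rules this out, and one can make this rigorous by a standard maximal-interval-of-existence argument.
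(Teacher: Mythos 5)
Your proof is correct and follows essentially the same route as the paper: you project the trajectory onto the direction of $\dynb(\xst)$ (your $\phi$ is the paper's $h_t(x)=\inner{\dynb(\xst)}{\state{t}{x}-\xst}$ up to normalization by $\abs{\dynb(\xst)}$), bound its derivative below by $(1-\zeta)\abs{\dynb(\xst)}$, and compare with the diameter of $\domsubset$. Your extra care about finite exit times and the limit to the hitting times, as well as the observation that the statement's $\xi$ should read $\zeta$, are both correct.
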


\begin{proof}
	Consider the quantity $h_t(x) := \innerbig{\dynb(\xst)}{\state{t}{x} - \xst}$ for $x\in \domsubset$.
	Then when $t\in \big(\bhit{\domsubset}(x), \fhit{\domsubset}(x)\big)$,
	\begin{align*}
		\td{} h_t(x)  = \innerbig{\dynb(\xst)}{\dynb(\state{t}{x}) - \dynb(\xst) + \dynb(\xst) } \ge (1-\zeta)\abs{\dynb(\xst)}^2 > 0.
	\end{align*}
	Therefore,
	\begin{align*}
		(1-\xi) \abs{\dynb(\xst)}^2 \big(\fhit{\domsubset}(x) - \bhit{\domsubset}(x)\big) &\le \int_{\bhit{\domsubset}(x)}^{\fhit{\domsubset}(x)} \td h_t(x)\ud t \\
		&= h_{\fhit{\domsubset}(x)}(x) - h_{\bhit{\domsubset}(x)}(x)\le \abs{\dynb(\xst)}\ \diam(\domsubset).
	\end{align*}
	Then the conclusion can be immediately obtained.
\end{proof}

We now state the main result of this section:

\begin{proposition}
\label{prop::perturbation}
Suppose $\dynb \in \maniinf$ and $\xst \in \effdom(\dynb)$ is $\dynb$-stable.
Then there exists an open bounded neighborhood of $\xst$, denoted as $\domsubset\subset\effdom(\dynb)$, such that for an arbitrary $\delta\dynb\in C_c^{\infty}(\domsubset, \Rd)$,
there exists an $\eps_0>0$ and  $\dynb + \eps \delta \dynb\in \maniinf$ for any $\eps\in (0, \eps_0)$.
\end{proposition}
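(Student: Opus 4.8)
The plan is to show that the two families of integrals defining membership in $\maniinf$—namely $z\mapsto \int_0^\infty \testfuncarg{k}{t}{z}\,\mathrm dt$ and $z\mapsto \int_{-\infty}^0 \testfuncarg{k}{t}{z}\,\mathrm dt$ for $k=0,1$—remain well-defined and locally continuous on a full-measure subset of $\dom$ after the perturbation $\dynb\mapsto\dynb^\eps:=\dynb+\eps\delta\dynb$, provided $\delta\dynb$ is supported in a small enough $\dynb$-stable neighborhood $\domsubset$ of $\xst$. Since $\xst$ is $\dynb$-stable, by \defref{defn::int_cond_dynb} there is an open bounded $\domsubset\subset\effdom(\dynb)$ containing $\xst$ on which $\abs{\dynb(x)-\dynb(\xst)}<\zeta\abs{\dynb(\xst)}$ for some $\zeta\in(0,1)$, and such that every $\dynb$-trajectory crosses $\partial\domsubset$ exactly twice. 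Fix $\delta\dynb\in C_c^\infty(\domsubset,\Rd)$. The first step is to choose $\eps_0$ small enough that $\domsubset$ is still "$\dynb^\eps$-stable" in the relevant sense: by continuity of the drift in $\eps$ (with $\delta\dynb$ bounded on its compact support) we can shrink $\zeta$-type constants and apply \lemref{lem::exit_time} to $\dynb^\eps$ to conclude that every $\dynb^\eps$-trajectory spends only a uniformly bounded time $\tau_{\max}:=\diam(\domsubset)/((1-\zeta')\abs{\dynb(\xst)})$ inside $\domsubset$, and—using that the perturbed and unperturbed flows agree outside $\domsubset$ together with the two-crossing property—that once a $\dynb^\eps$-trajectory exits $\domsubset$ it never returns.

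The second step is the bookkeeping that turns this "finite residence time, no recurrence" picture into integrability and continuity of the defining integrals for $\dynb^\eps$. Outside $\domsubset$ the flows $\statesup{t}{\cdot}{\eps}$ and $\state{t}{\cdot}$ coincide and the Jacobians $\jacoargsup{t}{\cdot}{\eps}$, $\jacoarg{t}{\cdot}$ differ only by the bounded multiplicative factor accumulated during the single (bounded-time) passage through $\domsubset$; concretely, for $z$ whose forward $\dynb^\eps$-trajectory exits $\domsubset$ at time $t_+(z)\le\tau_{\max}$ at a point $w=\statesup{t_+(z)}{z}{\eps}\in\partial\domsubset$, one has $\int_{t_+(z)}^\infty \testfuncepsarg{k}{\eps}{t}{z}\,\mathrm dt = \jacoargsup{t_+(z)}{z}{\eps}\int_0^\infty \testfuncargbig{k}{s}{\statesup{t_+(z)}{z}{\eps}}\,\mathrm ds$ via the translation identity \eqref{eqn::testfunc_trans}, and the remaining integral is the \emph{unperturbed} quantity $\int_0^\infty \testfuncarg{k}{s}{w}\,\mathrm ds$ since $w$'s forward orbit never re-enters $\domsubset$. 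That unperturbed integral is finite and locally continuous for $w$ ranging over $\partial\domsubset\subset\effdom(\dynb)$, the portion of the $\dynb^\eps$-trajectory inside $\domsubset$ contributes a finite amount (bounded time, smooth integrand), and the map $z\mapsto(t_+(z),\statesup{t_+(z)}{z}{\eps})$ is continuous wherever the exit is transversal. Running the same argument backward in time and for $k=0,1$, and excluding the measure-zero set of non-transversal exits plus the preimage (under the $\dynb^\eps$-flow) of $\dom\backslash\effdom(\dynb)$—which has measure zero by Liouville/change-of-variables and the fact that the flow preserves the bad set up to null sets—gives $\dom\backslash\effdom(\dynb^\eps)$ of Lebesgue measure zero, i.e. $\dynb^\eps\in\maniinf$.

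I expect the main obstacle to be the \textbf{transversality/measure-zero} part: making precise that the exit time $t_+(z)$ depends continuously (indeed smoothly) on $z$ except on a null set, and that trajectories which enter $\domsubset$ tangentially to $\partial\domsubset$, or which are launched from the flow-preimage of the original bad set $\dom\backslash\effdom(\dynb)$, together form a negligible set. The two-crossing hypothesis in \defref{defn::int_cond_dynb}(a)(ii) is exactly what prevents pathological re-entry, but one still has to handle trajectories that \emph{start} inside $\domsubset$ (only a forward or only a backward exit may be "clean") and the fact that $\effdom(\dynb)$ is merely open rather than all of $\dom$; the cleanest route is to decompose $\dom$ according to whether a point's forward/backward $\dynb^\eps$-orbit ever meets $\domsubset$, treat each piece separately, and invoke the regularity of solutions of ODEs with smooth right-hand side together with \lemref{lem::property_Omega_b} to propagate continuity from $\partial\domsubset$ outward along the flow.
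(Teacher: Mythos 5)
Your overall architecture is the same as the paper's: localize the perturbation in a $\dynb$-stable region, use Lemma~\ref{lem::exit_time} to get a uniform residence-time bound together with no return, use \eqref{eqn::testfunc_trans} to rewrite the tail integrals as unperturbed ones evaluated after the trajectory has left the perturbed region, and conclude that $\dom\backslash\effdom(\dynb+\eps\delta\dynb)$ is null. The genuine gap is exactly where you flag it: your continuity argument for $z\mapsto\int_0^\infty\testfuncarg{k,\eps}{t}{z}\ud t$ is routed through the exit map $z\mapsto\big(t_{+}(z),\statesup{t_{+}(z)}{z}{\eps}\big)$, and it needs (i) this map to be continuous off a null set, (ii) the set of points whose orbits exit $\partial\domsubset$ tangentially to be Lebesgue-null, and (iii) $\overline{\domsubset}\subset\effdom(\dynb)$ so that the unperturbed integrals are continuous at exit points on $\partial\domsubset$. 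None of these is established, and (ii) is not a soft fact: showing that the union of tangentially exiting trajectories is null is essentially the same ``flowlines through a codimension-two set'' argument that constitutes the hard part of the measure-zero bookkeeping, so the proposal defers the main difficulty rather than resolving it. Note also that your reduction ``the remaining integral is the unperturbed quantity at the exit point'' silently requires that the unperturbed forward orbit of the exit point never re-enters $\domsubset$; this follows from the two-crossing property only when the exit is transversal, i.e., it breaks down on precisely the exceptional set you have not controlled.

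The paper closes this by one extra geometric choice that your proposal does not make: it fixes a $\dynb$-stable ball $\ball{\theta}{\xst}$, a cross-section $S$ through $\xst$ orthogonal to $\dynb(\xst)$, and the streamtube $\tube$ of $\dynb$-flowlines through $S$, and then takes $\domsubset$ to be a small ball inside $\tube\cap\ball{\theta/2}{\xst}$. Every $x\in\tube$ can be written as $\statesup{s}{y}{\eps}$ with $y$ near the cross-section, and the uniform exit bound $\tau$ from the analogue of your first step lets one split $\int_0^\infty\testfuncarg{k,\eps}{t}{z}\ud t$ at the \emph{fixed} time $\tau-s$ and shift by the smooth flow map via \eqref{eqn::testfunc_trans}; continuity then follows from smooth dependence on initial conditions plus continuity of the unperturbed integrals on $\effdom(\dynb)$ (Lemma~\ref{lem::property_Omega_b}), with no exit-time regularity or transversality entering anywhere. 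The bad set is then contained in $\big(\dom\backslash\effdom(\dynb)\big)\cup\partial\tube$, and $\partial\tube$ is null because it is a union of flowlines through the $(\dimn-2)$-dimensional relative boundary of $S$. If you adopt this choice of $\domsubset$ (or otherwise supply proofs of (i)--(iii)), your argument goes through; as written, it does not.
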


\begin{proof}
The main idea is that if the path $\big\{\state{t}{x}\big\}_{t\in\Real}$ passes through $\domsubset$, then a small perturbation within a bounded time period does not affect the long-term behavior; if the path does not pass through $\domsubset$, then $\dynb^{\eps} = \dynb$ along this path and therefore, $\int_{0}^{\infty}\testfuncarg{k,\eps}{t}{x}\ud t=\int_{0}^{\infty} \testfuncarg{k}{t}{x}\ud t$; hence, the continuity is also preserved locally around $x$.
\smallskip

{\noindent \bf {Step (\rom{1}):}} Setup and the choice of $\domsubset$.

Since $\xst\in \effdom(\dynb)$, we know $\abs{\dynb(\xst)}>0$ by \lemref{lem::G}.
Moreover, we can find a small $\dynb$-stable ball $\ball{\theta}{\xst}$ by \defref{defn::int_cond_dynb} with a parameter $\zeta\in (0,1)$ in \eqref{eqn::int_cond_dynb::1}. Then consider the following cross-section within $\ball{\theta}{\xst}$
\begin{align*}
	S :=  \big\{ y:\ \abs{y-\xst} < \theta/2, \qquad \inner{y-\xst}{\dynb(\xst)} =0\big\},
\end{align*}
and define the streamtube $\tube$ passing through $S$ as
\begin{align*}
	\tube := \big\{\state{t}{y}:\ y\in S,\ t\in \Real\big\}.
\end{align*}
It is not hard to see that $\tube$ is an open subset of $\effdom(\dynb)$ by \lemref{lem::property_Omega_b}.
Then let us choose $\domsubset$ as an arbitrary open ball around $\xst$ such that
\begin{align*}
\domsubset\subset \tube\cap \ball{\theta/2}{\xst}.
\end{align*}

Next let us consider an arbitrary $\delta\dynb\in C_c^{\infty}({D},\Rd)$ and from now on, we shall fix $\delta\dynb$.
It is easy to verify that $\dynb+\eps\delta\dynb\in \mani{}$ for any $\eps\in\Real$. The non-trivial part is to check that
$\Omega\backslash\effdom(\dynb^\eps)$ has measure zero
for sufficiently small $\eps$ and hence $\dynb^{\eps}\in \maniinf{}$.

\smallskip
{\noindent \bf {Step (\rom{2}): }}Choice of $\eps_0$.

Let us pick
\begin{align}
	\label{eqn::epsilon_0}
	\eps_0 = \min\Big\{ \frac{\abs{\dynb(\xst)}}{1 + \abs{\delta \dynb(\xst)}},\
	\frac{(\zeta^{\star} - \zeta)\abs{\dynb(\xst)}}{2 \norm{\delta \dynb}_{L^{\infty}(\domsubset)} + \zeta^{\star} \abs{\delta \dynb(\xst)}}\Big\} > 0,
\end{align}
for any $\zeta^{\star}\in (\zeta, 1)$.
The main motivation is that we need \eqref{eqn::int_cond_dynb::1} to hold for $\dynb^{\eps}$ as well. Indeed, if $\eps \le \eps_0$,
\begin{align*}
	\abs{\dynb^{\eps}(\xst)} = \abs{\dynb(\xst) + \eps \delta \dynb(\xst)} \ge \abs{\dynb(\xst)} - \eps \abs{\delta\dynb(\xst)}\ \myge{\eqref{eqn::epsilon_0}}\ \frac{\abs{\dynb(\xst)}}{1 + \abs{\delta\dynb(\xst)}}>  0,
\end{align*}
and for any $x\in \ball{\theta}{\xst}$,
\begin{align*}
	\abs{\dynb^{\eps}(x) - \dynb^{\eps}(\xst)} &\le \abs{\dynb(x) - \dynb(\xst)} + \eps \abs{\delta \dynb(x) - \delta \dynb(\xst)}\\
	&\myle{\eqref{eqn::int_cond_dynb::1}} \zeta \abs{\dynb(\xst)} + \eps (2\norm{\delta\dynb}_{L^{\infty}(\domsubset)})\\
	&\myless{\eqref{eqn::epsilon_0}} \zeta^{\star} \abs{\dynb(\xst)} - \zeta^{\star} \eps \abs{\delta\dynb(\xst)} \\
	&\le \zeta^{\star} \abs{\dynb^{\eps}(\xst)}.
\end{align*}
As discussed above, this property ensures that any trajectory $\big\{\statesup{t}{x}{\eps}\big\}_{t\in\Real}$ with $x\in \ball{\theta}{\xst}$ will pass through the boundary $\partial \ball{\theta}{\xst}$, and at the same time, since $\ball{\theta}{\xst}$ is $\dynb$-stable, the condition (ii) in Definition \ref{defn::int_cond_dynb} ensures that such a trajectory only intersects with the boundary $\partial \ball{\theta}{\xst}$ at exactly $2$ points.

Let us denote
\begin{align*}
	\tau := \sup_{\eps\in [0, \eps_0)}\frac{\diam(\domsubset)}{(1-\xi^{\star}) \abs{\dynb^{\eps}(\xst)}} \le \frac{\diam(\domsubset) \big(1+\abs{\delta\dynb(\xst)}\big)}{(1-\xi^{\star})\abs{\dynb(\xst)}} < \infty.
\end{align*}
By \lemref{lem::exit_time}, we know
\begin{align}
	\label{eqn::exit}
	\statesup{t}{x}{\eps}\notin \ball{\theta}{\xst}, \qquad \forall x\in \ball{\theta}{\xst},\ \forall \eps\in [0,\eps_0),\ \forall t \in (-\infty,-\tau]\cup [\tau,\infty).
\end{align}

\smallskip
{\noindent \bf Step (\rom{3}):} {Prove that  $\Omega\backslash\effdom(\dynb^{\eps})$ has measure zero for any $\eps\in (0,\eps_0)$.}

We will prove that for any $\eps\in (0,\eps_0)$,
\begin{align*}
	\tube \subset \effdom(\dynb^{\eps}), \qquad \effdom(\dynb)\backslash \overline{\tube}\subset \effdom(\dynb^{\eps}).
\end{align*}
It could be observed that the boundary $\partial\tube$ has measure zero: the boundary contains flowlines from a hyper-surface with dimension $\dimn-2$, that is, $\partial\tube$ is a set of flowlines passing through
$
	\big\{y:\ \abs{y-\xst}=\theta/2,\ \inner{y-\xst}{\dynb(\xst)}=0\big\}.
$
Hence, provided that the above equation holds, we immediately know that
\begin{align*}
\Omega\backslash\effdom(\dynb^{\eps}) &\equiv  \effdom(\dynb^\eps)^c \subset \big(\tube \cup (\effdom(\dynb)\backslash \overline{\tube})\big)^c
= \tube^c \cap (\effdom(\dynb)\cap \overline{\tube}^c)^c
= \tube^c \cap (\effdom(\dynb)^c \cup \overline{\tube})\\
& = (\tube^c \cap \effdom(\dynb)^c\big)\cup (\tube^c \cap \overline{\tube}) \subset \effdom(\dynb)^c \cup \partial\tube
= \big(\Omega\backslash\effdom(\dynb)\big)\cup \partial\tube
\end{align*}
has measure zero, where the superscript $c$ means set complement.
As a remark, from now on, we shall fix $\eps\in (0,\eps_0)$.

\medskip
{\noindent \emph{Proof of $\tube \subset \effdom(\dynb^{\eps})$}}.
Let us pick an arbitrary $x\in \tube$ and we shall prove that  $z\rightarrow \int_{0}^{\infty} \testfuncarg{k,\eps}{t}{z}\ud t$ is continuous locally near $x$. Similarly, we can verify that $z\rightarrow \int_{-\infty}^{0} \testfuncarg{k,\eps}{t}{z}\ud t$ is locally continuous near $x$.
Therefore, $x\in \effdom(\dynb^{\eps})$ and thus $\tube\subset \effdom(\dynb^{\eps})$.

Next we return to verify that $z\rightarrow \int_{0}^{\infty} \testfuncarg{k,\eps}{t}{z}\ud t$ is continuous locally near $x$.
We claim that there exists a $y\in S \cup \domsubset\in \ball{\theta/2}{\xst}$ and $s\in\Real$ such that $\statesup{s}{y}{\eps}=x$.
To prove this, we need to discuss two cases:

\begin{itemize}[leftmargin=4ex]
	\item 	Suppose $\big\{\statesup{t}{x}{\eps}\big\}_{t\in\Real}$ never enters $\domsubset$. Because $\dynb^{\eps} = \dynb$ on $\dom\backslash\domsubset$, we know $\statesup{t}{x}{\eps} = \state{t}{x}$ for any $t\in\Real$. By the definition of the streamtube $\tube$,  there exists a $y\in S$ and $s\in\Real$ such that $y = \state{-s}{x} = \statesup{-s}{x}{\eps}$, which immediately gives $\statesup{s}{y}{\eps} = x$.

	\item Suppose $\big\{\statesup{t}{x}{\eps}\big\}_{t\in\Real}$ enters $\domsubset$ at some time. Then the above conclusion follows easily.
\end{itemize}

Because $\dynb^\eps$ is smooth, for small enough $\delta$, we can ensure that $\ball{\delta}{x}\subset \tube$ and also $\statesup{-s}{z}{\eps}\in \ball{\theta/2}{\xst}$ for any $z\in \ball{\delta}{x}$. 
By \eqref{eqn::exit},
\begin{align}
	\label{eqn::exit_time_state}
	\statesup{t-s}{z}{\eps} = \statesupbig{t}{\statesup{-s}{z}{\eps}}{\eps}\notin \ball{\theta}{x^{\star}}, \qquad \forall z\in \ball{\delta}{x},\ \forall t\ge \tau. 
\end{align}

We divide the proof of continuity of $z\rightarrow \int_{0}^{\infty} \testfuncarg{k,\eps}{t}{z}\ud t$ into two cases:
\begin{enumerate}[label=(\alph*)]
	\item If $\tau \le s$, then we already know for any point $z\in \ball{\delta}{x}$, we have $\statesup{t}{z}{\eps} \notin \ball{\theta}{\xst}$ for $t\ge 0$.
	Recall that $\dynb = \dynb^{\eps}$ outside of $\ball{\theta}{\xst}$. Hence $\statesup{t}{z}{\eps} = \state{t}{z}$ for any $t\ge 0$, and 
	\begin{align*}
		\int_{0}^{\infty} \testfuncarg{k,\eps}{t}{z}\ud t = \int_{0}^{\infty} \testfuncarg{k}{t}{z}\ud t,
	\end{align*}
	which is continuous on a neighbor of $x$ by $x\in \effdom(\dynb)$.
	
	\item If $\tau > s$, then by \eqref{eqn::exit_time_state}, 
	we know $ \statesup{t}{\statesup{\tau-s}{z}{\eps}}{\eps}\notin \ball{\theta}{\xst}$ for any $z\in \ball{\delta}{x}$ and $t\ge 0$ and in particular, $\statesup{\tau-s}{z}{\eps}\notin \ball{\theta}{\xst}$ for any $z\in \ball{\delta}{x}$.
	Let us rewrite 
	\begin{align*}
		\int_{0}^{\infty} \testfuncarg{k,\eps}{t}{z}\ud t &= \int_{0}^{\tau-s} \testfuncarg{k,\eps}{t}{z}\ud t + \int_{\tau-s}^{\infty} \testfuncarg{k,\eps}{t}{z}\ud t \\
		&= \int_{0}^{\tau-s} \testfuncarg{k,\eps}{t}{z}\ud t + \int_{0}^{\infty} \testfuncargbig{k,\eps}{t+(\tau-s)}{z}\ud t \\
		&\myeq{\eqref{eqn::testfunc_trans}} \int_{0}^{\tau-s} \testfuncarg{k,\eps}{t}{z}\ud t + \jacoargsup{\tau-s}{z}{\eps}\int_{0}^{\infty} \testfuncargbig{k,\eps}{t}{\statesup{\tau-s}{z}{\eps}}\ud t.
	\end{align*}
	Since $\dynb^{\eps}$ is smooth, apparently $z\to \int_{0}^{\tau-s} \testfuncarg{k,\eps}{t}{z}\ud t$ and $z\to J^{\eps}_{\tau-s}(z)$ are continuous on $\dom$. The continuity of $z\rightarrow \int_{0}^{\infty} \testfuncargbig{k,\eps}{t}{\statesup{\tau-s}{z}{\eps}}\ud t$ locally near $x$ can be exactly proved in the same way as Part (a) above for the new point $\statesup{\tau-s}{x}{\eps}$. 
	
	One small technical result to verify is that $\statesup{\tau-s}{x}{\eps}\in \effdom(\dynb)$ in order to apply the same argument from Part (a). Note that $\statesup{\tau-s}{x}{\eps}= \statesup{\tau}{y}{\eps} = \statesupbig{\tau - \wt{\tau}}{\statesup{\wt{\tau}}{y}{\eps}}{\eps}$ 
	where 
	\begin{align*}
		\wt{\tau} := \inf\big\{t\ge 0\ \big\rvert\ \statesup{t}{y}{\eps} \in \partial B_{\theta}(\xst)\big\}.
	\end{align*} 
Since $\statesup{\wt{\tau}}{y}{\eps}\in \partial \ball{\theta}{\xst}$, we know $\statesup{\wt{\tau}}{y}{\eps}\in \effdom(\dynb)$ (\eg{}, by choosing a small enough $\theta$). Outside of $\ball{\theta}{\xst}$, 
	we know $\dynb^{\eps}=\dynb$ and thus 
	$\statesupbig{\tau - \wt{\tau}}{\statesup{\wt{\tau}}{y}{\eps}}{\eps} 
	= \statebig{\tau - \wt{\tau}}{\statesup{\wt{\tau}}{y}{\eps}}\in \effdom(\dynb)$ by \lemref{lem::property_Omega_b}.
	
\end{enumerate}

\medskip

{\noindent \emph{Proof of $\effdom(\dynb)\backslash \overline{\tube}\subset \effdom(\dynb^{\eps})$}}.
Let us consider an arbitrary point $x\in \effdom(\dynb)\backslash \overline{\tube}$.	
Note that $\dynb^{\eps} = \dynb$ outside of $\domsubset\subset\tube$. 
For a local neighborhood $\ball{\delta}{x}$ outside of  $\overline{\tube}$, we also know for any $y\in \ball{\delta}{x}$, $\statesup{t}{y}{\eps} = \state{t}{y} \notin \overline{\tube}$ for $t\in \Real$, by both the definition of $\tube$ and the construction $\dynb^{\eps}=\dynb$ outside of $\tube$. 
By the same argument as in Part (a) above, it could be readily shown that $x\in \effdom(\dynb^{\eps})$ and thus $\effdom(\dynb)\backslash \overline{\tube}\subset \effdom(\dynb^{\eps})$.
\end{proof}

\section{Supplementary material for \secref{sec::method}}
\label{sec::proof_method}

\subsection{Proof of \propref{prop::formulation}}
\label{subsec::proof::formulation}

	We first prove the finite-time case.
	As $\dynb\in \mani{}$,
	we know that $\int_{\tm}^{\tp} \testfuncarg{0}{-s}{x} \ud s \equiv
    \int_{\tm}^{\tp} e^{-U_0\big(\state{-s}{x}\big)} \jacoarg{-s}{x}\ud
    s < \infty$ by the continuity assumption of $\dynb$ and
    Assumption~\ref{assump::U}.
	Then
    \begin{align*}
		\begin{aligned}
			\ratio &= 
			\int_{\dom} e^{-U_1(x)} \frac{\int_{\tm}^{\tp} \testfuncarg{0}{-t}{x}\ud t}{\int_{\tm}^{\tp} \testfuncarg{0}{-s}{x}\ud s} \ud x \\
			&= \int_{\tm}^{\tp} \int_{\dom} e^{-U_1(x)} \frac{\testfuncarg{0}{-t}{x}}{\int_{\tm}^{\tp} \testfuncarg{0}{-s}{x}\ud s} \ud x \ud t.
		\end{aligned}
	\end{align*}
	By the change of variables $x=\state{t}{\wt{x}}$, we have 
	\begin{align}
		\label{eqn::proof_formulation}
		\begin{aligned}
			\ratio =& \int_{\tm}^{\tp} \int_{\dom}  e^{-U_1\big(\state{t}{\wt{x}}\big)} \frac{\testfuncarg{0}{-t}{\state{t}{\wt{x}}}}{\int_{\tm}^{\tp} \testfuncarg{0}{-s}{\state{t}{\wt{x}}}\ud s} \jacoarg{t}{\wt{x}} \ud \wt{x} \ud t \\
			\myeq{\eqref{eqn::testfunc_trans}}& \int_{\tm}^{\tp} \int_{\dom} e^{-U_1\big(\state{t}{\wt{x}}\big)} \frac{\testfuncarg{0}{0}{\wt{x}}}{\int_{\tm}^{\tp} \testfuncarg{0}{t-s}{\wt{x}}\ud s} \jacoarg{t}{\wt{x}} \ud \wt{x} \ud t\\
			=& \int_{\tm}^{\tp} \int_{\dom} e^{-U_1\big(\state{t}{\wt{x}}\big)} \jacoarg{t}{\wt{x}} \frac{\rho_0(\wt{x})}{\int_{\tm}^{\tp} \testfuncarg{0}{t-s}{\wt{x}}\ud s}  \ud \wt{x} \ud t \\
			=& \ee_{0} \Big[ \int_{\tm}^{\tp} \frac{\testfuncarg{1}{t}{\cdot}}{\int_{t-\tp}^{t-\tm} \testfuncarg{0}{s}{\cdot}\ud s} \ud t\Big] \equiv \ee_0 \big[\newafinarg{\tm}{\tp}\big].
		\end{aligned}
	\end{align}
	Note that as the integrand is non-negative, switching the order of time integration and space integration is justified by Fubini–Tonelli theorem.

	The proof of \propref{prop::formulation} for the infinite-time case is essentially the same as the finite-time case, except the followings: 
\begin{itemize}[leftmargin=4ex]
\item We need to replace the domain $\dom$ in \eqref{eqn::proof_formulation} by $\effdom(\dynb)$ defined in \defref{defn::maniinf}.

\item We need the continuity of
$x\rightarrow\intreal\testfuncarg{0}{t}{x}\ud t$ in order to use
Theorem 2 in \cite{lax_change_2001} to get the first line in \eqref{eqn::proof_formulation}. 
A generalization with discontinuity should be possible by e.g., considering piecewise continuous $\dynb$.
However, we shall not explore this further in this work.
As a remark, the map $x\to\state{t}{x}$ being bijective (due to the nature of ODE flows) on $\overline{\effdom(\dynb)}$ was proved
in \lemref{lem::property_Omega_b} \ref{lem::property:close2}, when
applying \cite[Theorem 2]{lax_change_2001}.

\end{itemize}

\subsection{Proof of \propref{prop::invariance_b}}
\label{subsec::proof_invariance}
Consider
\begin{align*}
	&\td \state{t}{x} = \dynb\big(\state{t}{x}\big),\qquad \state{0}{x} = x;\\
	&\td \stateD{t}{x} = \alpha\big(\stateD{t}{x}\big) \dynb\big(\stateD{t}{x}\big),\qquad  \stateD{0}{x} = x.
\end{align*}
To prove the first conclusion, we need to verify that the trajectory $\big\{\state{t}{x}\big\}_{t\in\Real} = \big\{\stateD{t}{x}\big\}_{t\in \Real}$.
From now on, let us fix $x$ and introduce a scalar-valued function $\theta$ by 
$\theta_t := \int_{0}^{t} \alpha\big(\stateD{s}{x}\big)\ud s$
	(\ie{}, $\td \theta_t = \alpha\big(\stateD{t}{x}\big)$.
	By taking the time derivative, it is not hard to verify that $\stateD{t}{x} = \statebig{\theta_t}{x}$ as both satisfy the same ODE.
	Of course, $\theta$ also depends on $x$
	but we shall not explicitly specify this dependence for simplicity of notations.
	Thus, the trajectory $t\rightarrow \stateD{t}{x}$ is the same as $t\rightarrow \state{t}{x}$ under time rescaling specified by $\theta$.

	Next we shall prove the following lemma, which immediately leads into the second result in \propref{prop::invariance_b}.
	\begin{lemma} 
		Suppose $g:\dom\rightarrow\Real$ is a non-negative continuous function. For any $x\in \dom$, 
	\begin{align*}
		& \intreal g\big(\stateD{t}{x}\big) \exp\Big(\int_{0}^{t} \nabla \cdot(\alpha \dynb)\big(\stateD{s}{x}\big)\ud s\Big)\ud t \\
		&\qquad = \frac{1}{\alpha(x)} \intreal g\big(\state{t}{x}\big) \exp\Big(\int_{0}^{t} \big(\nabla \cdot\dynb)\big(\state{s}{x}\big)\ud s\Big)\ud t.
	\end{align*}
	\end{lemma}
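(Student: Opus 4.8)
The plan is to derive the identity from the time reparametrization $\stateD{t}{x} = \statebig{\theta_t}{x}$ already established, where $\theta_t := \int_{0}^{t}\alpha\big(\stateD{s}{x}\big)\,\ud s$, by changing variables $\tau = \theta_t$ in the outer $t$-integral and $r = \theta_s$ in the inner $s$-integral sitting inside the exponential. First I would record two preliminaries. Since $\inf_{x\in\dom}\alpha(x) > 0$, the map $t\mapsto\theta_t$ is a smooth strictly increasing bijection of $\Real$ onto $\Real$ (indeed $\theta_t \ge (\inf_x\alpha)\,t$ for $t\ge 0$ and $\theta_t\le(\inf_x\alpha)\,t$ for $t\le 0$), with $\tfrac{\ud}{\ud t}\theta_t = \alpha(\stateD{t}{x}) = \alpha(\statebig{\theta_t}{x})$. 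Also, by the product rule $\nabla\cdot(\alpha\dynb) = \alpha\,\nabla\cdot\dynb + \dynb\cdot\nabla\alpha$, so dividing by $\alpha>0$,
\[
\frac{1}{\alpha}\,\nabla\cdot(\alpha\dynb) \;=\; \nabla\cdot\dynb \;+\; \dynb\cdot\nabla\log\alpha .
\]

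Second, I would rewrite the exponent. Substituting $r = \theta_s$ (so $\ud s = \ud r/\alpha(\statebig{r}{x})$, with $r$ running from $0$ to $\theta_t$) in $\int_0^t\nabla\cdot(\alpha\dynb)(\stateD{s}{x})\,\ud s$ and using the displayed identity gives
\[
\int_{0}^{t}\nabla\cdot(\alpha\dynb)\big(\stateD{s}{x}\big)\,\ud s
\;=\; \int_{0}^{\theta_t}\!\Big(\nabla\cdot\dynb + \dynb\cdot\nabla\log\alpha\Big)\big(\statebig{r}{x}\big)\,\ud r .
\]
Because $\tfrac{\ud}{\ud r}\statebig{r}{x} = \dynb(\statebig{r}{x})$, the chain rule gives $\dynb\cdot\nabla\log\alpha\big(\statebig{r}{x}\big) = \tfrac{\ud}{\ud r}\log\alpha\big(\statebig{r}{x}\big)$, whose integral from $0$ to $\theta_t$ is $\log\alpha(\statebig{\theta_t}{x}) - \log\alpha(x)$. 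Exponentiating,
\[
\exp\!\Big(\int_{0}^{t}\nabla\cdot(\alpha\dynb)\big(\stateD{s}{x}\big)\,\ud s\Big)
\;=\; \frac{\alpha\big(\statebig{\theta_t}{x}\big)}{\alpha(x)}\;
\exp\!\Big(\int_{0}^{\theta_t}\nabla\cdot\dynb\big(\statebig{r}{x}\big)\,\ud r\Big).
\]

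Finally I would substitute $\tau = \theta_t$ in the outer integral, using $\ud t = \ud\tau/\alpha(\statebig{\tau}{x})$, the bijectivity of $\theta$ to keep the limits $(-\infty,\infty)$, and $g(\stateD{t}{x}) = g(\statebig{\tau}{x})$. The factor $\alpha(\statebig{\tau}{x})$ produced by the exponent cancels exactly the $\alpha(\statebig{\tau}{x})$ in $\ud t$, leaving
\[
\frac{1}{\alpha(x)}\,\intreal g\big(\statebig{\tau}{x}\big)\,
\exp\!\Big(\int_{0}^{\tau}\nabla\cdot\dynb\big(\statebig{r}{x}\big)\,\ud r\Big)\,\ud\tau ,
\]
which is the right-hand side (with $\tau$ as dummy variable). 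Since $g\ge 0$, all these manipulations of possibly infinite integrals are legitimate by Tonelli's theorem. I do not anticipate a real obstacle: the one delicate point is the surjectivity of $t\mapsto\theta_t$ onto $\Real$, which is precisely where the hypothesis $\inf_{x\in\dom}\alpha(x)>0$ (equivalently, $\alpha\dynb\in\mani{}$ with $\alpha$ bounded below) enters, together with the smoothness of $\alpha$ and $\dynb$ needed to justify the two changes of variables.
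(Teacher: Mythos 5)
Your proposal is correct and follows the same core strategy as the paper: the time reparametrization $\stateD{t}{x} = \statebig{\theta_t}{x}$ with $\theta_t := \int_0^t \alpha(\stateD{s}{x})\,\ud s$ and a change of variables $\tau=\theta_t$, $r=\theta_s$. The only substantive difference is in how the resulting exponent is matched to the target: the paper arrives (after the substitution) at an integrand $\psi_1(t) = \frac{1}{\alpha(\state{t}{x})}\exp\big(\int_0^t \frac{\nabla\cdot(\alpha\dynb)}{\alpha}(\state{s}{x})\,\ud s\big)$ and shows it equals $\psi_2(t) = \frac{1}{\alpha(x)}\exp\big(\int_0^t \nabla\cdot\dynb(\state{s}{x})\,\ud s\big)$ by verifying both satisfy the same linear first-order ODE with the same initial datum, whereas you split $\frac{1}{\alpha}\nabla\cdot(\alpha\dynb) = \nabla\cdot\dynb + \dynb\cdot\nabla\log\alpha$ via the product rule and integrate the second term explicitly by the chain rule, so the $\alpha(\state{\theta_t}{x})/\alpha(x)$ factor appears directly and cancels the Jacobian $\ud t = \ud\tau/\alpha(\state{\tau}{x})$. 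Your version is a bit more elementary — it replaces an ODE-uniqueness argument with a telescoping logarithm — but the two computations encode the same algebra (the paper's $\td\psi_1/\psi_1 = \nabla\cdot\dynb$ is exactly your product-rule-plus-chain-rule identity written in differential form), so there is nothing to flag beyond this cosmetic reorganization.
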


	\begin{proof}
	Because $\alpha$ is strictly positive,
	$\theta_\cdot:\Real\to\Real$ is bijective, and by the inverse function theorem
	\begin{align*}
		\td\theta^{-1}_t = \frac{1}{\alpha\big(\stateD{\theta^{-1}_t}{x}\big)} = \frac{1}{\alpha\big(\state{t}{x}\big)}.
	\end{align*}
	Then by the change of variables $\wt{t} = \theta_t$ and $\wt{s} = \theta_s$,
	\begin{align*}
		&\ \intreal g\big(\stateD{t}{x}\big) \exp\Big(\int_{0}^{t} \nabla \cdot(\alpha \dynb)\big(\stateD{s}{x}\big)\ud s\Big)\ud t \\
		=&\ \intreal g\big(\state{\theta_t}{x}\big) \exp\Big(\int_{0}^{t} \nabla \cdot(\alpha \dynb)\big(\state{\theta_s}{x}\big)\ud s\Big)\ud t\\
		=&\ \intreal g\big(\state{\wt{t}}{x}\big) \frac{1}{\alpha\big(\state{\wt{t}}{x}\big)} \exp\Big(\int_{0}^{\wt{t}} \div(\alpha\dynb)\big(\state{\wt{s}}{x}\big)\frac{1}{\alpha\big(\state{\wt{s}}{x}\big)}\ud \wt{s} \Big)\ud \wt{t}.
	\end{align*}
	It is then sufficient to show that $\psi_1(t) = \psi_2(t)$ for any $t\in \Real$, where
	\begin{align*}
		\psi_1(t) &:= \frac{1}{\alpha\big(\state{t}{x}\big)} \exp\Big(\int_{0}^{t} \div(\alpha\dynb)\big(\state{s}{x}\big)\frac{1}{\alpha\big(\state{s}{x}\big)}\ud s \Big), \\
		\psi_2(t) &:= \frac{1}{\alpha(x)} \exp\Big(\int_{0}^{t} (\nabla \cdot\dynb)\big(\state{s}{x}\big)\ud s\Big).
	\end{align*}
	It is easy to observe that $\psi_1(0) = \psi_2(0)$.
	Let us consider the time derivative of $\psi_1$
	\begin{align*}
		\td {\psi}_1(t) &= \psi_1(t)\Big( -\frac{1}{\alpha(\state{t}{x})} \innerbig{\nabla\alpha(\state{t}{x})}{\dynb(\state{t}{x})} + \nabla \cdot(\alpha \dynb)\big(\state{t}{x}\big) \frac{1}{\alpha\big(\state{t}{x}\big)} \Big) \\
		&= \psi_1(t) \big(\nabla \cdot \dynb)\big(\state{t}{x}\big).
	\end{align*}
	It is clear that $\psi_2(t)$ satisfies the same ODE and thus $\psi_1 = \psi_2$.
	\end{proof}

\subsection{Remarks on the discrete-time analogy of \eqref{eqn::noneq_sample::finite}}
\label{sec::remark-discrete}

Let us briefly explain how \eqref{eqn::noneq_sample::finite} connects to the method in \cite{thin_neo_2021} by time discretization.
Suppose $N_{-} = \tm/h$ and $N_{+} = \tp/h$, where $h\ll 1$ is the time step size;
for simplicity of notation,
let us assume that $N_{-}$ and $N_{+}$ are simply integers.
By discretizing the time integration for the \ftneis{} scheme in \eqref{eqn::noneq_sample::finite},
\begin{align*}
	\ee_{x\sim\rho_0}\Big[\int_{{\tm}}^{{\tp}} \frac{\testfuncarg{1}{t}{x}}{\int_{t-T_{+}}^{t-T_{-}} \testfuncarg{0}{s}{x} \ud s} \ud t \Big] 
	&\approx \ee_{x\sim\rho_0} \bigg[ \sum_{k=N_{-}}^{N_{+}}  \frac{e^{-U_1\big(\state{kh}{x}\big)}\jacoarg{kh}{x}}{\sum_{j = k - N_{+}}^{k-N_{-}} e^{-U_0\big(\state{jh}{x}\big)} \jacoarg{jh}{x}} \bigg]\\
	&= \ee_{x\sim \rho_0} \bigg[\sum_{k=N_{-}}^{N_{+}}  \frac{e^{-(U_1-U_0)\big(\mapT^{-k}(x)\big)} \rho_0\big(\mapT^{-k}(x)\big)\ \jacomap{\mapT^{-k}}{x}}{\sum_{j = k-N_{+}}^{k-N_{-}} \rho_0\big(\mapT^{-j}(x)\big) \jacomap{\mapT^{-j}}{x}}  \bigg],
\end{align*}
where we denote $\mapT(\cdot) = \state{-h}{\cdot}$, and $\jacomap{\mapT^{-j}}{x} := \Abs{\det\Big(\nabla \mapT^{-j}(x)\Big)}$ is the Jacobian for the map $\mapT^{-j}$.
Then by choosing $N_{-} = -K$ and $N_{+} = 0$, we have
\begin{align*}
	\ee_{x\sim\rho_0} \bigg[ \sum_{k=0}^{K}  e^{-(U_1-U_0)\big(\mapT^k(x)\big)} w_k(x)\bigg], 
	\qquad 
	w_k(x) = \frac{\rho_0\big(\mapT^k(x)\big) \jacomap{\mapT^k}{x}}{\sum_{j = -k}^{-k+K} \rho_0\big(\mapT^{-j}(x)\big) \jacomap{\mapT^{-j}}{x}},
\end{align*}
which are Eqs. (8) and (10) in arXiv v1 of \cite{thin_neo_2021}.

\subsection{Remark on the relation between finite-time and infinite-time NEIS}
\label{subsec::relation}

In what follows, we briefly elaborate on the relation between the finite-time and infinite-time NEIS schemes.
Suppose we fix $-\infty < \tm < 0 < \tp < \infty$ and consider a fixed valid flow $\dynb$ for the infinite-time NEIS, i.e.,
\begin{align*}
  \newainf^\dynb(x)  \equiv \frac{\int_\Real\exp\big(-U_1(\state{t}{x}) +\int_{0}^{t} \nabla\cdot\dynb(\state{r}{x})\ud r\big)\ud t}{\int_\Real \exp\big(-U_0(\state{t}{x}) +\int_{0}^{t} \nabla\cdot\dynb(\state{r}{x})\ud r\big)\ud t}
\end{align*}
is well-defined for almost all $x\in \dom$, where
the state $\state{t}{x}$ solves the ODE $\frac{\ud}{\ud t}\state{t}{x} = \dynb(\state{t}{x})$ for any $x\in\dom$. The superscript in $\newainf^\dynb$ is used to emphasize that it is the estimator for this particular flow $\dynb$ and similar notations are used below.

Next we consider a family of rescaled flow $\dynb^\alpha$ parameterized by $\alpha > 0$, defined as
\begin{align*}
  \dynb^\alpha(x) = \alpha \dynb(x), \qquad \forall x\in \dom.
\end{align*}
Let us study its estimator for the finite-time NEIS:
\begin{align}
  \label{eqn::afin_rescale_alpha}
  \newafin^{\dynb^\alpha}(x)\ & \myeq{\eqref{eq::afin}}\ \int_{\tm}^{\tp} \frac{\exp\big(-U_1(\stateD{t}{x}) + \int_{0}^{t} \nabla\cdot\dynb^\alpha(\stateD{r}{x})\ud r\big)}{\int_{t-\tp}^{t-\tm} \exp\big(-U_0(\stateD{s}{x})+\int_{0}^{s} \nabla \cdot \dynb^\alpha(\stateD{r}{x})\ud r\big) \ud s} \ud t,
\end{align}
where $\stateD{t}{x}$ solves the ODE $\frac{\ud}{\ud t}\stateD{t}{x} = \dynb^\alpha(\stateD{t}{x})$ for any $x\in\dom$.
From Appendix~\ref{subsec::proof_invariance}, we already know that $\stateD{t}{x} = \state{\alpha t}{x}$ for any $x\in\dom$ and $t\in\Real$.
By change of time variables $t=\wt{t}/\alpha$, $s=\wt{s}/\alpha$ and $r=\wt{r}/\alpha$, we have
\begin{align*}
  \newafin^{\dynb^\alpha}(x) &=
  \int_{\alpha \tm}^{\alpha \tp}
  \frac{\exp\big(-U_1(\state{\wt{t}}{x}) + \int_{0}^{\wt{t}} \nabla\cdot\dynb(\state{\wt{r}}{x})\ud \wt{r}\big)}{\int_{\wt{t}-\alpha \tp}^{\wt{t}-\alpha \tm} \exp\big(-U_0(\state{\wt{s}}{x}) +\int_{0}^{\wt{s}} \nabla\cdot\dynb(\state{\wt{r}}{x})\ud \wt{r}\big)\ud \wt{s}} \ud \wt{t}\\
 &= \int_{\Real} \frac{\indi_{[\alpha\tm,\alpha\tp]}(\wt{t})\ \exp\big(-U_1(\state{\wt{t}}{x}) + \int_{0}^{\wt{t}} \nabla\cdot\dynb(\state{\wt{r}}{x})\ud \wt{r}\big)}{\int_{\wt{t}-\alpha \tp}^{\wt{t}-\alpha \tm} \exp\big(-U_0(\state{\wt{s}}{x}) +\int_{0}^{\wt{s}} \nabla\cdot\dynb(\state{\wt{r}}{x})\ud \wt{r}\big)\ud \wt{s}} \ud \wt{t}.
\end{align*}
For any $\wt{t}\in\Real$, as $\alpha\to\infty$, the integrand in the last equation converges pointwise to
\begin{align*}
  \frac{\exp\big(-U_1(\state{\wt{t}}{x}) + \int_{0}^{\wt{t}} \nabla\cdot\dynb(\state{\wt{r}}{x})\ud \wt{r}\big)}{\int_{\Real} \exp\big(
    -U_0(\state{\wt{s}}{x}) + \int_{0}^{\wt{s}}
      \nabla\cdot\dynb(\state{\wt{r}}{x})\ud \wt{r}
  \big)\ud \wt{s}}.
\end{align*}
If we heuristically swap the order of taking the limit $\alpha\to\infty$ and the integral with respect to $\wt{t}$ (which should generally hold for most examples),
we end up with an identity:
\begin{align*}
  \lim_{\alpha\to\infty} \newafin^{\dynb^\alpha}(x) = \int_{\Real}
\frac{\exp\big(-U_1(\state{\wt{t}}{x}) + \int_{0}^{\wt{t}} \nabla\cdot\dynb(\state{\wt{r}}{x})\ud \wt{r}\big)}{\int_{\Real} \exp\big(
    -U_0(\state{\wt{s}}{x}) + \int_{0}^{\wt{s}}
      \nabla\cdot\dynb(\state{\wt{r}}{x})\ud \wt{r}
  \big)\ud \wt{s}}\ud \wt{t} \equiv \newainf^\dynb(x).
  \end{align*}

The above relation heuristically justifies that due to the space-time rescaling, in the limit of large magnitude of the flow (i.e., $\alpha\to\infty$ above), it does not matter how $\tm, \tp$ are chosen as long as $\tm < 0 < \tp$. In particular, if the flow $\dynb$ is a zero-variance dynamics, i.e., $\newainf^\dynb(x) = \ratio$ for $x\in\dom$ almost surely,
then in the finite-time NEIS, the flow $\dynb^\alpha = \alpha \dynb$ (with $\alpha\gg 1$) should be approximately a zero-variance dynamics for the finite-time scheme.
The finite-time NEIS scheme may not have explicit analytical results about zero-variance dynamics in the same way as the infinite-time NEIS scheme; however, due to the above discussed relation,
the finite-time version still possesses the ability to handle and learn an approximately zero-variance dynamics, which is good enough in practice, e.g., during training the optimal flow in \secref{sec::numerics}.

\section{The first-order perturbation of the variance for the finite-time scheme}
\label{sec::optimal_flow_finite}

{Here we study how the variance (or equivalently, the second moment) of the estimator depends on~$\dynb$, since the performance of the \ftneis{} scheme
\eqref{eqn::noneq_sample::finite} largely depends on this choice.}
{More specifically,} in the following proposition, we study how the second moment changes under a small perturbation of $\dynb$.
The expression \eqref{eqn::pert_M_2} below will be useful for training optimal dynamics in \secref{sec::numerics} (see also Appendix \ref{app::numerics_details} for details).

\begin{proposition}
	\label{prop::pert_M_2}
	Suppose $\dynb \in \mani{}$ and
	for any perturbation $\delta \dynb\in C_c^{\infty}(\dom,\Rd)$,
	denote $\dynb^{\eps} := \dynb + \eps \delta \dynb$.
	Then,
	\begin{align}
		\label{eqn::pert_M_2}
		\begin{aligned}
			& \frac{\ud}{\ud\epsilon}\newmsecfin(\dynb+\eps\delta \dynb)\Big\rvert_{\eps=0} \\
			=\ & 2\ee_{x\sim\rho_0} \bigg[\newafin(x)\ \Big(\int_{\tm}^{\tp}  \frac{\deritestfuncarg{1}{t}{x} \int_{t-\tp}^{t-\tm} \testfuncarg{0}{s}{x} \ud s - \testfuncarg{1}{t}{x} \int_{t-\tp}^{t-\tm} \deritestfuncarg{0}{s}{x} \ud s}{\Big(\int_{t-\tp}^{t-\tm} \testfuncarg{0}{s}{x} \ud s\Big)^2} \ud t\Big)\bigg],
		\end{aligned}
	\end{align}
	where for $k\in \{0,1\}$, we define
	\begin{align}
		\label{eqn::deri_test_fun}
		\deritestfuncarg{k}{t}{x} 
		:=  \testfuncarg{k}{t}{x} \left(\begin{aligned}
			\innerbig{-\nabla U_k\big(\state{t}{x}\big)}{\deristate{t}{x}} 
			 &+ \int_{0}^{t} \innerbig{\nabla (\nabla \cdot \dynb)\big(\state{s}{x}\big)}{\deristate{s}{x}} \ud s  \\
			&+ \int_{0}^{t} (\nabla \cdot \delta \dynb)\big(\state{s}{x}\big)\ud s 
		\end{aligned}\right),
	\end{align}
	and $\deristate{t}{x}$ is the solution of the following ODE:
	\begin{align}
		\label{eqn::deriX}
		\td \deristate{t}{x} = \nabla \dynb\big(\state{t}{x}\big) \deristate{t}{x} + \delta \dynb\big(\state{t}{x}\big),\qquad \deristate{0}{x} = 0.
	\end{align}
\end{proposition}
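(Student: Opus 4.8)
The plan is to differentiate the second moment $\newmsecfin(\dynb^\eps) = \ee_{x\sim\rho_0}[|\newafinarg{\tm}{\tp}^{\dynb^\eps}(x)|^2]$ under the expectation sign, reducing everything to the $\eps$-derivative of $\newafinarg{\tm}{\tp}^{\dynb^\eps}(x)$ at $\eps=0$. Since $\delta\dynb\in C_c^\infty(\dom,\Rd)$ and the time integrals run over the compact range $[\tm,\tp]$ (and correspondingly $[t-\tp,t-\tm]$), the integrand and its $\eps$-derivative are bounded uniformly in $\eps$ near $0$ by standard smooth dependence of ODE flows on parameters; this justifies both interchanging $\frac{d}{d\eps}$ with $\ee_{\rho_0}$ and with the $t$- and $s$-integrals. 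So the first step is simply the chain rule: $\frac{d}{d\eps}\newmsecfin(\dynb^\eps)|_{\eps=0} = 2\,\ee_{\rho_0}[\newafinarg{\tm}{\tp}(x)\,\partial_\eps\newafinarg{\tm}{\tp}^{\dynb^\eps}(x)|_{\eps=0}]$, which already matches the outer structure of~\eqref{eqn::pert_M_2}; it remains to compute the inner derivative and identify it with the displayed bracket.

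Next I would compute $\partial_\eps\newafinarg{\tm}{\tp}^{\dynb^\eps}(x)|_{\eps=0}$ by differentiating the quotient in~\eqref{eq::afin} under the $t$-integral. Writing $N(t) = \testfuncarg{1}{t}{x}$ and $D(t) = \int_{t-\tp}^{t-\tm}\testfuncarg{0}{s}{x}\ud s$ (both now depending on $\eps$ through the flow $\statesup{\cdot}{x}{\eps}$), the quotient rule gives $\partial_\eps(N/D) = (N'D - ND')/D^2$ with $N' = \partial_\eps N|_{\eps=0}$ and $D' = \int_{t-\tp}^{t-\tm}\partial_\eps\testfuncepsarg{0}{\eps}{s}{x}|_{\eps=0}\ud s$. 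So the whole task collapses to computing $\partial_\eps\testfuncepsarg{k}{\eps}{t}{x}|_{\eps=0}$ for $k\in\{0,1\}$ and showing it equals $\deritestfuncarg{k}{t}{x}$ as defined in~\eqref{eqn::deri_test_fun}. Recall $\testfuncepsarg{k}{\eps}{t}{x} = e^{-U_k(\statesup{t}{x}{\eps})}\exp(\int_0^t\nabla\cdot\dynb^\eps(\statesup{s}{x}{\eps})\ud s)$. Differentiating the product: the $U_k$ term contributes $-\inner{\nabla U_k(\state{t}{x})}{\deristate{t}{x}}$, where $\deristate{t}{x} := \partial_\eps\statesup{t}{x}{\eps}|_{\eps=0}$ is the first-variation of the flow; and the exponent $\int_0^t\nabla\cdot\dynb^\eps(\statesup{s}{x}{\eps})\ud s$ contributes, by differentiating both the integrand's explicit $\eps$ (giving $\int_0^t(\nabla\cdot\delta\dynb)(\state{s}{x})\ud s$) and its dependence through the trajectory (giving $\int_0^t\inner{\nabla(\nabla\cdot\dynb)(\state{s}{x})}{\deristate{s}{x}}\ud s$). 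Collecting these three terms and factoring out $\testfuncarg{k}{t}{x}$ reproduces~\eqref{eqn::deri_test_fun} exactly.

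The one remaining ingredient is the variational equation~\eqref{eqn::deriX} for $\deristate{t}{x}$. This follows by differentiating the ODE $\td\statesup{t}{x}{\eps} = \dynb^\eps(\statesup{t}{x}{\eps}) = \dynb(\statesup{t}{x}{\eps}) + \eps\,\delta\dynb(\statesup{t}{x}{\eps})$ with respect to $\eps$ at $\eps=0$ and interchanging $\partial_\eps$ with $\td$: the chain rule on the first term gives $\nabla\dynb(\state{t}{x})\deristate{t}{x}$, the second term gives $\delta\dynb(\state{t}{x})$, and the initial condition $\statesup{0}{x}{\eps} = x$ independent of $\eps$ gives $\deristate{0}{x} = 0$. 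This is a standard linear variational ODE; its well-posedness (hence the legitimacy of all the differentiations above) rests on the $C^\infty$ regularity and the bound $\sup_x|\nabla\dynb(x)| < \infty$ built into the definition~\eqref{eqn::b_space} of $\mani{}$, which controls the fundamental solution on $[\tm,\tp]$, together with compact support of $\delta\dynb$. I expect the main (only real) obstacle to be the bookkeeping in the chain-rule differentiation of $\testfuncepsarg{k}{\eps}{t}{x}$ — in particular correctly splitting the derivative of the cumulative-divergence exponent into its ``explicit'' part $\int_0^t\nabla\cdot\delta\dynb$ and its ``trajectory'' part $\int_0^t\inner{\nabla(\nabla\cdot\dynb)}{\deristate{}{}}$ — rather than any genuine analytic difficulty; the interchange-of-limits justifications are routine given the compact time window and the hypotheses on $\mani{}$.
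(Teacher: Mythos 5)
Your proposal is correct and follows essentially the same route as the paper's proof: differentiate under the expectation and time integrals, apply the quotient rule to \eqref{eq::afin}, compute $\partial_\eps\testfuncepsarg{k}{\eps}{t}{x}\big\rvert_{\eps=0}$ via the chain rule (splitting the cumulative-divergence exponent into its explicit $\delta\dynb$ part and its trajectory part), and obtain $\deristate{t}{x}$ from the standard variational ODE with zero initial condition. The only difference is cosmetic: you make the interchange-of-limits justification explicit, whereas the paper carries the $O(\eps)$ cross term (which vanishes at $\eps=0$) through the computation before setting $\eps=0$.
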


The expression of the functional derivative $\derimsec$ (not presented in this work) for the finite-time case can be derived in the same way as \lemref{lem::functional_derivative_inf_v1} below for the infinite-time case. However, it appears that the expression of $\derimsec$ is too complicated to provide useful analytical results.

\begin{proof}
	
Let us perturb $\dynb$ by $\epsilon\delta \dynb$ where $\eps\ll 1$.
Let us consider 
\begin{align*}
	&\td \state{t}{x} = \dynb\big(\state{t}{x}\big),\\
	&\td \statesup{t}{x}{\eps} = \dynb^{\eps}\big(\statesup{t}{x}{\eps}\big) \equiv  (\dynb+\eps\delta \dynb)\big(\statesup{t}{x}{\eps}\big),
\end{align*} 
with a fixed initial condition $\state{0}{x}= \statesup{0}{x}{\eps} = x$. For a small $\eps$, we can expect that $\state{t}{x} \approx \statesup{t}{x}{\eps}$ and also we know $\statesup{t}{x}{0} \equiv \state{t}{x}$.
Define $\testfuncepsarg{k}{\eps}{t}{x}$ for the dynamics $\dynb^{\eps}$ in the same way as in \eqref{eqn::testfunc}, namely,
\begin{align*}
\testfuncepsarg{k}{\eps}{t}{x} := \exp\Big(-U_k\big(\statesup{t}{x}{\eps}\big) + \int_{0}^{t} (\nabla \cdot \dynb^{\eps})\big(\statesup{s}{x}{\eps}\big) \ud s\Big).
\end{align*}
By these notations,
\begin{align*}
\newmsecfin(\dynb^{\eps})
= \ee_{x\sim\rho_0} \bigg[\Big(\int_{\tm}^{\tp} \frac{\testfuncepsarg{1}{\eps}{t}{x}}{\int_{t-\tp}^{t-\tm} \testfuncepsarg{0}{\eps}{s}{x} \ud s} \ud t\Big)^2\bigg].
\end{align*}
Then we take the derivative of $\newmsecfin(\dynb^{\eps})$ with respect to $\eps$:
\begin{align*}
&\ \frac{\ud}{\ud\epsilon}\newmsecfin(\dynb^{\eps}) \\
=&\ 2\ee_{x\sim\rho_0} \left[\begin{aligned}
& \Big(\int_{\tm}^{\tp} \frac{\testfuncepsarg{1}{\eps}{t}{x}}{\int_{t-\tp}^{t-\tm} \testfuncepsarg{0}{\eps}{s}{x} \ud s} \ud t \Big) \\
& \qquad \times\Big(\int_{\tm}^{\tp}  \frac{\frac{\ud}{\ud\epsilon}\testfuncepsarg{1}{\eps}{t}{x}\int_{t-\tp}^{t-\tm} \testfuncepsarg{0}{\eps}{s}{x} \ud s - \testfuncepsarg{1}{\eps}{t}{x} \int_{t-\tp}^{t-\tm} \frac{\ud}{\ud\eps} \testfuncepsarg{0}{\eps}{s}{x} \ud s}{\Big(\int_{t-\tp}^{t-\tm} \testfuncepsarg{0}{\eps}{s}{x} \ud s\Big)^2} \ud t\Big)
\end{aligned}
\right].
\end{align*}
Next, we need to compute $\frac{\ud}{\ud\eps}\testfuncepsarg{k}{\eps}{t}{x}$.
Let us first consider the perturbation to the trajectory.
Let $\deristatesup{t}{x}{\eps} := \frac{\ud}{\ud \epsilon} \big(\statesup{t}{x}{\eps}\big)$ and then
\begin{align*}
\begin{aligned}
\td \deristatesup{t}{x}{\eps}&= \frac{\ud}{\ud\eps} \Big(\big(\dynb + \eps \delta \dynb)\big(\statesup{t}{x}{\eps}\big)\Big) \\
&= \nabla \dynb\big(\statesup{t}{x}{\eps}\big) \deristatesup{t}{x}{\eps} + \delta \dynb\big(\statesup{t}{x}{\eps}\big) + \eps \Big(\nabla \delta \dynb\big(\statesup{t}{x}{\eps}\big) \Big)\deristatesup{t}{x}{\eps}, \\
\deristatesup{0}{x}{\eps} &= 0.
\end{aligned}
\end{align*}
When $\eps = 0$, $\deristatesup{t}{x}{0}$ is the solution to
\begin{align*}
\td \deristatesup{t}{x}{0} = \nabla \dynb\big(\state{t}{x}\big) \deristatesup{t}{x}{0} + \delta \dynb\big(\state{t}{x}\big),\qquad \deristatesup{0}{x}{0} = 0.
\end{align*}

Now we are ready to explicitly write down $\frac{\ud}{\ud\eps}\testfuncepsarg{k}{\eps}{t}{x}$. It is straightforward to derive that
\begin{align*}
\begin{aligned}
&\hspace{3ex} \frac{\ud}{\ud\eps}\testfuncepsarg{k}{\eps}{t}{x} \\
&= \testfuncepsarg{k}{\eps}{t}{x} \frac{\ud}{\ud\eps}\Big(-U_k\big(\statesup{t}{x}{\eps}\big) + \int_{0}^{t} \nabla \cdot (\dynb + \eps \delta \dynb)\big(\statesup{s}{x}{\eps}\big) \ud s\Big)\\
&\ = \testfuncepsarg{k}{\eps}{t}{x} \left(
	\begin{aligned}
	& \innerbig{-\nabla U_k\big(\statesup{t}{x}{\eps}\big) }{ \deristatesup{t}{x}{\eps}} \\
	&\qquad  + \int_{0}^{t} \innerBig{\nabla (\nabla \cdot \dynb)\big(\statesup{s}{x}{\eps}\big)}{ \deristatesup{s}{x}{\eps}}
+ (\nabla \cdot \delta \dynb)\big(\statesup{s}{x}{\eps}\big) \ud s \\
&\qquad + \eps \int_{0}^{t}\innerBig{\nabla (\nabla \cdot \delta \dynb)\big(\statesup{s}{x}{\eps}\big)}{\deristatesup{s}{x}{\eps}} \ud s
\end{aligned}\right).
\end{aligned}
\end{align*}
When we let $\eps=0$, we have
\begin{align*}
\deritestfuncarg{k}{t}{x} &:= \frac{\ud}{\ud\eps}\testfuncepsarg{k}{\eps}{t}{x}\Big\rvert_{\eps=0} \\
&=\  \testfuncarg{k}{t}{x} 
\left(\begin{aligned}
&\innerbig{-\nabla U_k\big(\state{t}{x}\big)}{ \deristatesup{t}{x}{0}} \\
&\qquad + \int_{0}^{t} \innerbig{\nabla (\nabla \cdot \dynb)\big(\state{s}{x}\big)}{ \deristatesup{s}{x}{0}}
+ (\nabla \cdot \delta \dynb)\big(\state{s}{x}\big)\ud s 
\end{aligned}\right).
\end{align*}
Finally, we arrive at the conclusion by combining previous results and dropping the superscript in $\deristatesup{t}{x}{0}$ for simplicity.
\end{proof}

\section{The first-order perturbation of the variance for the infinite-time scheme}
\label{sec::perturb::infinite}

The goal  of this section is to derive the functional derivative of $\newmsecinf(\dynb)$ with respect to $\dynb$, denoted as $\derimsecinf$, defined as follows: for any $\delta\dynb\in C_c^{\infty}(\dom,\Rd)$ such that $\dynb + \eps \delta\dynb\in \maniinf$
for small enough $\eps$, we have 
\begin{align}
	\label{eqn::func_deri_defn}
	\frac{\ud}{\ud\epsilon}\newmsecinf(\dynb + \eps\delta \dynb)\Big\rvert_{\eps=0} = \int_{\dom} \innerBig{\derimsecinf }{ \delta \dynb}.
\end{align}
Since $\newmsecinf$ and $\newvarinf$ only differ by a constant (which is independent of $\dynb$), it is apparent that $\derivarinf \equiv \derimsecinf$.

\begin{proposition}
	\label{prop::stationary_inf}
	The functional derivative $\derimsecinf:\dom\to\Rd$ has the following form
	\begin{align}
		\label{eqn::func_deri_inf}
		\begin{aligned}
			&\derivarinf(x) \equiv \derimsecinf(x) \\
			&=
			\frac{2\nabla \newainf(x)}{\partitionzero\binf(x)}\left(
			\int_{0}^{\infty}
			\testfuncarg{0}{t}{x}\ud t \int_{-\infty}^{0} \testfuncarg{1}{t}{x}\ud t 
			- \int_{-\infty}^{0} \testfuncarg{0}{t}{x}\ud t  \int_{0}^{\infty} \testfuncarg{1}{t}{x}\ud t \right).
		\end{aligned}
	\end{align}
\end{proposition}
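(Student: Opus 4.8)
The plan is to differentiate $\newmsecinf(\dynb+\eps\delta\dynb) = \ee_{x\sim\rho_0}\big[|\newainf^{\eps}(x)|^2\big]$ under the integral sign, where $\newainf^{\eps}(x) = \big(\intreal \testfuncepsarg{1}{\eps}{t}{x}\ud t\big)/\big(\intreal \testfuncepsarg{0}{\eps}{t}{x}\ud t\big)$, and then recast the resulting expression as an integral against $\delta\dynb$ over $\dom$ by using the flow-invariance identities \eqref{eqn::time_trans_ainf}. First I would note that, by \propref{prop::perturbation}, for any $\dynb$-stable base point the perturbed field stays in $\maniinf$ for small $\eps$, so the quantities are well-defined; differentiability in $\eps$ of the integrands follows from the smoothness of $\dynb,\delta\dynb,U_k$ together with the integrability built into $\maniinf$ (as in the proof of \propref{prop::pert_M_2}). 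Writing $\deristate{t}{x}$ for the solution of the variational equation \eqref{eqn::deriX}, the same computation as in \propref{prop::pert_M_2} gives
\[
\frac{\ud}{\ud\eps}\testfuncepsarg{k}{\eps}{t}{x}\Big\rvert_{\eps=0}
= \testfuncarg{k}{t}{x}\Big(\inner{-\nabla U_k(\state{t}{x})}{\deristate{t}{x}}
+ \int_0^t \inner{\nabla(\nabla\cdot\dynb)(\state{s}{x})}{\deristate{s}{x}}\ud s
+ \int_0^t (\nabla\cdot\delta\dynb)(\state{s}{x})\ud s\Big)
\equiv \deritestfuncarg{k}{t}{x},
\]
now with the $t$-integrals running over all of $\Real$.

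Next I would carry out the quotient-rule differentiation: with $\binf(x) = \intreal\testfuncarg{0}{t}{x}\ud t$ and $\newainf(x) = \binf(x)^{-1}\intreal\testfuncarg{1}{t}{x}\ud t$,
\[
\frac{\ud}{\ud\eps}\newmsecinf(\dynb^{\eps})\Big\rvert_{\eps=0}
= \frac{2}{\partitionzero}\ee_{x\sim\rho_0}\Big[\frac{\newainf(x)}{\binf(x)}\Big(\intreal\deritestfuncarg{1}{t}{x}\ud t - \newainf(x)\intreal\deritestfuncarg{0}{t}{x}\ud t\Big)\Big].
\]
The key structural simplification is that the three terms defining $\deritestfuncarg{k}{t}{x}$ combine, after using $\td\big(\testfuncarg{k}{t}{x}\big) = \testfuncarg{k}{t}{x}\,\big({-}\inner{\nabla U_k(\state{t}{x})}{\dynb(\state{t}{x})} + (\nabla\cdot\dynb)(\state{t}{x})\big)$ and integration by parts in $t$, into a total derivative plus a term proportional to $\div\delta\dynb$ along the flow; more precisely one expects an identity of the shape $\intreal \deritestfuncarg{k}{t}{x}\ud t = \intreal \testfuncarg{k}{t}{x}\,\div\big(\delta\dynb\big)(\state{t}{x})\,\ud t$ up to boundary terms at $t=\pm\infty$ that vanish because $\testfuncarg{k}{t}{x}\to 0$ (integrability in $\maniinf$). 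I would then substitute this back, split each $\intreal$ into $\int_{-\infty}^0 + \int_0^\infty$, and use the change of variables $x\mapsto\state{s}{x}$ together with the transformation rules \eqref{eqn::testfunc_trans}, \eqref{eqn::time_trans_ainf} to push the expectation onto a spatial integral: the flow-invariance of $\newainf$ and the cocycle property of $\binf$ let one "unfold" the time integral into an integral over $\dom$, turning $\int_0^\infty \testfuncarg{k}{t}{x}\ud t$ against $\int_{-\infty}^0\testfuncarg{j}{t}{x}\ud t$ into exactly the antisymmetric combination appearing in \eqref{eqn::func_deri_inf}, and converting $\div(\delta\dynb)$ into $\inner{\nabla(\cdot)}{\delta\dynb}$ via an integration by parts over $\dom$ (the boundary term vanishes since $\delta\dynb$ has compact support, or using $\dynb\cdot\nn=0$ on $\partial\dom$). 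Matching coefficients against the definition \eqref{eqn::func_deri_defn} then yields the stated formula, with the gradient $\nabla\newainf(x)$ emerging from the $\inner{\nabla(\cdot)}{\delta\dynb}$ rearrangement.

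The main obstacle I anticipate is the bookkeeping in the unfolding step: justifying the interchange of the $\ee_{x\sim\rho_0}$ (i.e.\ the $\int_\dom \rho_0\,\ud x$) with the time integrals and the change of variables $x\mapsto\state{s}{x}$ on $\effdom(\dynb)$ — this requires the bijectivity and continuity established in \lemref{lem::property_Omega_b}, the measure-zero complement in \defref{defn::maniinf}, and Fubini--Tonelli, exactly as in the proof of \propref{prop::formulation} — and making sure the boundary contributions at $t=\pm\infty$ genuinely vanish rather than merely being formal. The algebraic identity reducing $\deritestfuncarg{k}$ to a divergence term is the other delicate point; it is the infinite-time analogue of what is implicit in \propref{prop::pert_M_2}, and I would verify it by differentiating $\testfuncarg{k}{t}{x}$ in $t$ and carefully tracking the variational term $\deristate{t}{x}$, whose own evolution \eqref{eqn::deriX} is precisely what makes the telescoping work.
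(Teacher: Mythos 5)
Your overall plan---differentiate $\newmsecinf(\dynb+\eps\delta\dynb)$ at $\eps=0$, rewrite $\deritestfuncarg{k}{t}{x}$, unfold the time integrals via the change of variables $x\mapsto\state{s}{x}$, and integrate by parts over $\dom$ to expose $\nabla\newainf$---does match the structure of the paper's proof (\lemref{lem::perturb_M_2_infinite} through \lemref{lem::opsg}). But the pivotal intermediate identity you posit,
\begin{align*}
\intreal\deritestfuncarg{k}{t}{x}\ud t \ \stackrel{?}{=}\ \intreal\testfuncarg{k}{t}{x}\,(\nabla\cdot\delta\dynb)\big(\state{t}{x}\big)\ud t,
\end{align*}
is false, and the rest of your argument does not survive without it. Take $\dimn=1$, $\dynb\equiv 1$, $\delta\dynb\in C_c^\infty(\Real)$: then $\state{t}{x}=x+t$, $\jacoarg{t}{x}=1$, $\deristate{t}{x}=\int_0^t\delta\dynb(x+s)\ud s$, and a single integration by parts in $t$ (which does telescope against the $-U_k'\cdot\deristate{t}{x}$ piece, so that part of your intuition is sound) gives
\begin{align*}
\intreal\deritestfuncarg{k}{t}{x}\ud t = -\delta\dynb(x)\intreal e^{-U_k(x+t)}\ud t,
\end{align*}
which depends on $x$ through $\delta\dynb(x)$. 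Your claimed right-hand side is $\intreal e^{-U_k(x+t)}\delta\dynb'(x+t)\ud t = \int_\Real e^{-U_k(y)}\delta\dynb'(y)\ud y$, a constant in $x$. Feeding your identity into \lemref{lem::perturb_M_2_infinite} in this example produces the generically nonzero value $2\partition_1^2\int_\Real(\rho_1-\rho_0)\delta\dynb'$, whereas the correct answer (and what the correct formula for $\intreal\deritestfuncarg{k}{t}{x}\ud t$ gives upon substitution) is zero, since $\newainf\equiv\partition_1$ when $\dynb\equiv1$ and hence $\nabla\newainf\equiv0$.

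What is missing: the $t$-integration by parts only \emph{partially} telescopes the $\delta\dynb$-dependent pieces; it does not reduce everything to a $\nabla\cdot\delta\dynb$ term along the flow. After rewriting $\deritestfuncarg{k}{t}{x}$ through the explicit kernel $\corrg{t}{s}{k}$ (\lemref{lem::Z} and \eqref{eqn::deritestfunc}) and exchanging the order of $(s,t)$ integration, one necessarily retains \emph{both} a term paired with $\delta\dynb(\state{s}{x})$ and a term paired with $(\nabla\cdot\delta\dynb)(\state{s}{x})$; the paper's $\opSinf$ and $\opGinf$ in \lemref{lem::functional_derivative_inf_v1} precisely track these two contributions. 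Collapsing the $\opSinf$-part into $\nabla\newainf$ then hinges on the gradient identity $\testfuncarg{k}{t}{x}\,\corrg{t}{0}{k}(x)=\nabla_x\testfuncarg{k}{t}{x}$ from \lemref{lem::deri_X_t} together with the covariance relations \eqref{eqn::time_trans_corrz} and \eqref{eqn::time_trans_ainf}, none of which appear in your outline. The antisymmetric combination in \eqref{eqn::func_deri_inf} emerges from how these two pieces recombine with the quotient-rule expansion of $\nabla(\newainf/\binf)$, not from splitting a single integral into its $t>0$ and $t<0$ halves. You would need to replace the shortcut identity with this kernel-based bookkeeping to make the proof go through.
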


\begin{remark}
	The proof of the last formula is slightly formal: for instance, conditions on $\dynb$ to ensure the existence of $\nabla \newainf$ are not discussed.
\end{remark}

Recall the expression of $\binf{}$ from \eqref{eqn::ainf_binf}.
The proof of~ Proposition~\ref{prop::stationary_inf} is given in
    Appendix~\ref{app:proof:10}: it relies on a few explicit formulas, that we
state first.

\subsection{Some explicit formulas}

We need some explicit formulas for $\deristate{t}{x}$ \eqref{eqn::deriX} and $\deritestfuncarg{k}{t}{x}$ \eqref{eqn::deri_test_fun}.
We notice that $\deritestfunc{k}$ depends on $\deristate{\cdot}{\cdot}$ and $\delta \dynb$ linearly,
and $\deristate{t}{x}$ also depends on $\delta \dynb$ linearly.
Therefore, the first step is to rewrite the expression of $\deristate{t}{x}$ more explicitly 
in terms of $\delta\dynb$.

\begin{lemma}
	\label{lem::Z}
	Suppose the dynamics $\dynb\in \mani{}$ and $\delta\dynb\in C_c^{\infty}(\dom,\Rd)$. Then we have
	\begin{align}
	\label{eqn::Zt_sol}
	\deristate{t}{x} = \int_{0}^{t} \corrz{t}{s}(x)\ \delta \dynb\big(\state{s}{x}\big) \ud s, \qquad \forall x\in \dom.
	\end{align}
	The kernel $\corrz{t}{s}(x)\in \Real^{\dimn\times\dimn}$ has the following form
	\begin{align}
	\label{eqn::corrz}
	\corrz{t}{s}(x) = \left\{
	\begin{aligned}
	\exp_{\chrontime} \Big(\int_{s}^{t} \nabla \dynb\big(\state{r}{x}\big)\ud r\Big),\qquad & \text{ if } t\ge s \ge 0,\\
	\bigg(\exp_{\chrontime} \Big(\int_{t}^{s} \nabla \dynb\big(\state{r}{x}\big)\ud r\Big)\bigg)^{-1} ,\qquad & \text{ if } t \le s \le 0,
	\end{aligned}\right.
	\end{align}
	where $\exp_{\chrontime}$ is the chronological time-ordered operator exponential.
\end{lemma}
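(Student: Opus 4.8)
The plan is to treat \eqref{eqn::deriX} as a linear inhomogeneous ODE with bounded coefficients and solve it by variation of parameters (Duhamel's principle), then identify the resulting propagator with the kernel $\corrz{t}{s}$ in \eqref{eqn::corrz}.

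The first step would be to introduce the Jacobian of the flow map, $M_t(x) := \nabla_x \state{t}{x}$. Differentiating \eqref{eqn::ode} in $x$ shows that $M_t(x)$ solves the variational equation $\td M_t(x) = \nabla\dynb\big(\state{t}{x}\big)\,M_t(x)$ with $M_0(x) = \idmat_\dimn$. Since $\dynb\in\mani{}$ enforces $\sup_{x\in\dom}|\nabla\dynb(x)|<\infty$, this linear ODE has a unique solution on all of $\Real$, and $\det M_t(x) = \jacoarg{t}{x}>0$ by Liouville's formula together with \eqref{eq:jacob}, so $M_t(x)$ is invertible for every $t$. Fixing $s$ and setting $N_t := M_t(x)\,M_s(x)^{-1}$, one checks $\td N_t = \nabla\dynb\big(\state{t}{x}\big)\,N_t$ with $N_s = \idmat_\dimn$; by the defining property of the chronological time-ordered exponential and uniqueness of linear ODEs, $N_t = \exp_{\chrontime}\big(\int_s^t\nabla\dynb(\state{r}{x})\ud r\big)$ for $t\ge s$. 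Comparing with \eqref{eqn::corrz}, and noting that $\exp_{\chrontime}\big(\int_t^s\nabla\dynb(\state{r}{x})\ud r\big) = M_s(x)\,M_t(x)^{-1}$ when $s\ge t$, whose inverse is $M_t(x)\,M_s(x)^{-1}$, I would conclude that on both branches $\corrz{t}{s}(x) = M_t(x)\,M_s(x)^{-1}$. (Only these two configurations arise in \eqref{eqn::Zt_sol}, since there $s$ lies between $0$ and $t$.)

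The second step is the Duhamel computation itself. Define $W_t(x) := M_t(x)\int_0^t M_s(x)^{-1}\,\delta\dynb\big(\state{s}{x}\big)\ud s$, which equals the right-hand side of \eqref{eqn::Zt_sol} after substituting $\corrz{t}{s}(x) = M_t(x)\,M_s(x)^{-1}$. Then $W_0(x)=0$, and differentiating the product gives $\td W_t(x) = \big(\td M_t(x)\big)\int_0^t M_s(x)^{-1}\delta\dynb(\state{s}{x})\ud s + \delta\dynb\big(\state{t}{x}\big) = \nabla\dynb\big(\state{t}{x}\big)\,W_t(x) + \delta\dynb\big(\state{t}{x}\big)$, which is precisely \eqref{eqn::deriX} with the same initial condition. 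By uniqueness of solutions to this linear ODE, $\deristate{t}{x} = W_t(x) = \int_0^t\corrz{t}{s}(x)\,\delta\dynb\big(\state{s}{x}\big)\ud s$, which is \eqref{eqn::Zt_sol}.

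The only genuinely delicate point I anticipate is the bookkeeping around the time-ordered exponential for negative times — justifying $\corrz{t}{s}(x) = M_t(x)M_s(x)^{-1}$ on the branch $t\le s\le 0$ and the inverse relation appearing in \eqref{eqn::corrz} — together with the global-existence and invertibility claims for $M_t(x)$; all of these rest on the uniform bound $\sup_{x\in\dom}|\nabla\dynb(x)|<\infty$ built into the definition of $\mani{}$. Everything else is the routine variation-of-parameters calculation.
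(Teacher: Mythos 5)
Your proof is correct and follows essentially the same route as the paper: both arguments solve the linear inhomogeneous ODE \eqref{eqn::deriX} by variation of parameters, with the kernel $\corrz{t}{s}(x)$ identified as the propagator of the variational equation $\partial_t \corrz{t}{s}(x) = \nabla\dynb\big(\state{t}{x}\big)\corrz{t}{s}(x)$, $\corrz{s}{s}(x)=\idmat_\dimn$, written as a time-ordered exponential. The paper phrases this by plugging the ansatz \eqref{eqn::Zt_sol} into \eqref{eqn::deriX} and reading off the kernel ODE, while you construct the Duhamel solution explicitly via $M_t(x)=\nabla_x\state{t}{x}$ and invoke uniqueness; these are the same computation, and your handling of the negative-time branch and of invertibility via Liouville's formula is consistent with \eqref{eqn::corrz}.
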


\begin{proof}
	By plugging the ansatz \eqref{eqn::Zt_sol} into \eqref{eqn::deriX}, we immediately know that $\corrz{s}{s}(x) = \idmat_\dimn$ for all $s\in\Real$ and 
	\begin{align}
		\label{eqn::corrz_evol}
		\partial_t \corrz{t}{s}(x) = \nabla \dynb\big(\state{t}{x}\big) \corrz{t}{s}(x).
	\end{align}
	This linear ODE has an explicit solution as in \eqref{eqn::corrz}, by introducing the time-ordered operator exponential.
\end{proof}

Next, we shall rewrite $\deritestfuncarg{k}{t}{x}$.
\begin{lemma}
	We can rewrite $\deritestfuncarg{k}{t}{x}$ as follows
	\begin{align}
	\label{eqn::deritestfunc}
	\deritestfuncarg{k}{t}{x} = \testfuncarg{k}{t}{x} \Big(\int_{0}^{t}
	\innerBig{\corrg{t}{s}{k}(x) }{ \delta \dynb\big(\state{s}{x}\big)} + (\nabla \cdot \delta\dynb)\big(\state{s}{x}\big) \ud s\Big),
	\end{align}
	where $\corrg{t}{s}{k}(x)$ is defined as 
	\begin{align}
	\label{eqn::corrg}
	\corrg{t}{s}{k}(x) := - \corrz{t}{s}(x)^{T} \nabla U_k\big(\state{t}{x}\big)  + \int_{s}^{t}\corrz{r}{s}(x)^{T} \nabla(\nabla \cdot \dynb)\big(\state{r}{x}\big)\ud r.
	\end{align}
\end{lemma}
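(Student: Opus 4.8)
The plan is to feed the explicit representation of the perturbed trajectory from \lemref{lem::Z} into the defining expression \eqref{eqn::deri_test_fun} for $\deritestfuncarg{k}{t}{x}$, and to reorganize each of its three terms into a single integral of an inner product against $\delta\dynb\big(\state{s}{x}\big)$ over $s\in[0,t]$. Collecting the coefficients of $\delta\dynb\big(\state{s}{x}\big)$ will then reproduce the kernel $\corrg{t}{s}{k}(x)$ of \eqref{eqn::corrg} and hence \eqref{eqn::deritestfunc}.

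Concretely, I would proceed term by term. For the first term, substituting $\deristate{t}{x}=\int_{0}^{t}\corrz{t}{s}(x)\,\delta\dynb\big(\state{s}{x}\big)\ud s$ from \eqref{eqn::Zt_sol} and using the adjoint identity $\inner{u}{Mv}=\inner{M^{T}u}{v}$ for the Euclidean inner product gives $\innerbig{-\nabla U_k\big(\state{t}{x}\big)}{\deristate{t}{x}}=\int_{0}^{t}\innerbig{-\corrz{t}{s}(x)^{T}\nabla U_k\big(\state{t}{x}\big)}{\delta\dynb\big(\state{s}{x}\big)}\ud s$, which is exactly the first piece of $\corrg{t}{s}{k}(x)$. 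For the second term, applying \lemref{lem::Z} with $t$ replaced by $s$ yields $\deristate{s}{x}=\int_{0}^{s}\corrz{s}{r}(x)\,\delta\dynb\big(\state{r}{x}\big)\ud r$, so that $\int_{0}^{t}\innerbig{\nabla(\nabla\cdot\dynb)\big(\state{s}{x}\big)}{\deristate{s}{x}}\ud s=\int_{0}^{t}\!\!\int_{0}^{s}\innerbig{\corrz{s}{r}(x)^{T}\nabla(\nabla\cdot\dynb)\big(\state{s}{x}\big)}{\delta\dynb\big(\state{r}{x}\big)}\ud r\,\ud s$; interchanging the order of the iterated integrals via the identity $\int_{0}^{t}\!\!\int_{0}^{s}(\cdots)\ud r\,\ud s=\int_{0}^{t}\!\!\int_{r}^{t}(\cdots)\ud s\,\ud r$ and then relabeling the outer variable $r\mapsto s$ and the inner variable $s\mapsto r$ turns this into $\int_{0}^{t}\innerbig{\int_{s}^{t}\corrz{r}{s}(x)^{T}\nabla(\nabla\cdot\dynb)\big(\state{r}{x}\big)\ud r}{\delta\dynb\big(\state{s}{x}\big)}\ud s$, the second piece of $\corrg{t}{s}{k}(x)$. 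The third term $\int_{0}^{t}(\nabla\cdot\delta\dynb)\big(\state{s}{x}\big)\ud s$ is already in the desired form. Adding the three contributions and pulling out the common factor $\testfuncarg{k}{t}{x}$ gives \eqref{eqn::deritestfunc}.

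The only genuinely delicate point — and the step I expect to be the main obstacle — is bookkeeping with orientations, since $t$ may be negative and all the time integrals above are signed. One has to check that the Fubini-type exchange $\int_{0}^{t}\!\!\int_{0}^{s}(\cdots)\ud r\,\ud s=\int_{0}^{t}\!\!\int_{r}^{t}(\cdots)\ud s\,\ud r$ is valid for signed iterated integrals (it is, since both sides integrate over the same oriented triangle), and that the kernel $\corrz{r}{s}(x)$ appearing after the exchange always has arguments of the same sign — if $t>0$ one is in the regime $0\le s\le r\le t$ and if $t<0$ in the regime $t\le r\le s\le 0$ — so that it is well-defined by \eqref{eqn::corrz}. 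Once these orientation issues are dispatched, the remainder is a routine application of linearity of the inner product together with \lemref{lem::Z}.
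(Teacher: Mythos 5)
Your proposal is correct and follows essentially the same route as the paper's proof: substitute the representation of $\deristate{\cdot}{x}$ from \lemref{lem::Z} into \eqref{eqn::deri_test_fun}, move $\corrz{}{}$ onto the gradient by transposition, exchange the order of the iterated integrals, and collect the kernel $\corrg{t}{s}{k}(x)$. Your additional remarks on signed integrals for $t<0$ and on the well-definedness of $\corrz{r}{s}(x)$ in the relevant regimes are sound bookkeeping that the paper leaves implicit.
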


\begin{proof}
	Recall the expression of $\deritestfuncarg{k}{t}{x}$ from  \eqref{eqn::deri_test_fun}. After plugging \eqref{eqn::Zt_sol}, we have
	\begin{align*}
	\deritestfuncarg{k}{t}{x} &\myeq{\eqref{eqn::deri_test_fun}}\   \testfuncarg{k}{t}{x} \left(\begin{aligned} & \innerbig{-\nabla U_k\big(\state{t}{x}\big)}{\deristate{t}{x}} + \int_{0}^{t} \innerbig{\nabla (\nabla \cdot \dynb)\big(\state{s}{x}\big)}{\deristate{s}{x}} \ud s \\
		&\qquad + \int_{0}^{t}(\nabla \cdot \delta \dynb)\big(\state{s}{x}\big) \ud s\end{aligned}\right)\\
	&\myeq{\eqref{eqn::Zt_sol}} \testfuncarg{k}{t}{x}\left(\begin{aligned}
	&-\int_{0}^{t} \innerBig{\corrz{t}{s}(x)^{T} \nabla U_k\big(\state{t}{x}\big)}{ \delta \dynb\big(\state{s}{x}\big)}\ud s + \int_{0}^{t} (\nabla \cdot \delta \dynb)\big(\state{s}{x}\big)\ud s \\
	& \qquad + \int_{0}^{t} \int_{0}^{s} \innerBig{\corrz{s}{r}(x)^{T} \nabla (\nabla \cdot \dynb)\big(\state{s}{x}\big)}{ \delta \dynb\big(\state{r}{x}\big)} \ud r\ud s
	\end{aligned}\right)\\
	&= \testfuncarg{k}{t}{x}\left(\begin{aligned}
	&-\int_{0}^{t} \innerBig{\corrz{t}{s}(x)^{T} \nabla U_k\big(\state{t}{x}\big)}{ \delta \dynb \big(\state{s}{x}\big)}\ud s + \int_{0}^{t} (\nabla \cdot \delta \dynb)\big(\state{s}{x}\big)\ud s \\
	& \qquad + \int_{0}^{t} \innerBig{\int_{s}^{t} \corrz{r}{s}(x)^{T} \nabla (\nabla \cdot \dynb)\big(\state{r}{x}\big) \ud r}{ \delta \dynb\big(\state{s}{x}\big)} \ud s
	\end{aligned}\right)\\
	&= \testfuncarg{k}{t}{x} \Big(\int_{0}^{t} \innerbig{\corrg{t}{s}{k}(x) }{ \delta \dynb \big(\state{s}{x}\big)} + (\nabla \cdot \delta \dynb)\big(\state{s}{x}\big) \ud s\Big).
	\end{align*}
\end{proof}

Then we present a few properties, which will be useful when computing the functional derivative $\derimsecinf$.
The following lemma shows how $\corrz{t}{-s}(\cdot)$ and $\corrg{t}{-s}{k}(\cdot)$ change under the dynamical evolution $\state{s}{\cdot}$.

\begin{lemma}
	When $t\le -s  \le 0$ or $t \ge -s \ge 0$,
	\begin{align}
	\label{eqn::time_trans_corrz}
	\corrz{t}{-s}\big(\state{s}{x}\big) = \corrz{t+s}{0}(x),\qquad
	\corrg{t}{-s}{k}\big(\state{s}{x}\big) = \corrg{t+s}{0}{k}(x).
	\end{align}
\end{lemma}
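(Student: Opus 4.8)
The plan is to establish the transformation rule for $\corrz{}{}$ first, then obtain the one for $\corrg{}{}{k}$ from it together with the flow semigroup property $\state{t}{\state{s}{x}} = \state{t+s}{x}$. Throughout, fix $x\in\dom$ and $s\in\Real$ and regard the two sides of each claimed identity as matrix- (resp.\ vector-) valued functions of the single variable $t$, ranging over the interval specified in the hypothesis.

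\textbf{Step 1: the $\corrz{}{}$ identity.} Set $A(t) := \corrz{t}{-s}\big(\state{s}{x}\big)$ and $B(t) := \corrz{t+s}{0}(x)$. First I would check that these agree at $t=-s$: indeed $A(-s) = \corrz{-s}{-s}\big(\state{s}{x}\big) = \idmat_\dimn$ and $B(-s) = \corrz{0}{0}(x) = \idmat_\dimn$. Next, using the defining ODE \eqref{eqn::corrz_evol} for the kernel together with the semigroup property of the flow, $\partial_t A(t) = \nabla\dynb\big(\state{t}{\state{s}{x}}\big)A(t) = \nabla\dynb\big(\state{t+s}{x}\big)A(t)$, while the chain rule and \eqref{eqn::corrz_evol} give $\partial_t B(t) = \nabla\dynb\big(\state{t+s}{x}\big)B(t)$. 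Since $\nabla\dynb$ is bounded ($\dynb\in\mani{}$), this linear matrix ODE has a unique solution through the common value $\idmat_\dimn$ at $t=-s$, so $A\equiv B$. One should also note that the sign conditions $t\le-s\le0$ or $t\ge-s\ge0$ are exactly what make the two branches of the piecewise formula \eqref{eqn::corrz} (equivalently, the time-ordered exponential) match up with the respective base points $-s$ and $0$ throughout the interval of interest, so the ODE characterization is valid there.

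\textbf{Step 2: the $\corrg{}{}{k}$ identity.} Starting from the definition \eqref{eqn::corrg},
\[
\corrg{t}{-s}{k}\big(\state{s}{x}\big) = -\,\corrz{t}{-s}\big(\state{s}{x}\big)^{T}\,\nabla U_k\big(\state{t}{\state{s}{x}}\big) + \int_{-s}^{t}\corrz{r}{-s}\big(\state{s}{x}\big)^{T}\,\nabla(\nabla\cdot\dynb)\big(\state{r}{\state{s}{x}}\big)\ud r.
\]
I would then insert the Step 1 identity ($\corrz{t}{-s}(\state{s}{x}) = \corrz{t+s}{0}(x)$ and, for $r$ in the integration range, $\corrz{r}{-s}(\state{s}{x}) = \corrz{r+s}{0}(x)$), use $\state{t}{\state{s}{x}}=\state{t+s}{x}$ and $\state{r}{\state{s}{x}}=\state{r+s}{x}$, and finally change variables $r\mapsto r+s$ in the integral, under which the limits $-s$ and $t$ become $0$ and $t+s$. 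The right-hand side then collapses term by term to exactly $\corrg{t+s}{0}{k}(x)$.

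\textbf{Main obstacle.} There is no real difficulty; the only point demanding care is the bookkeeping of the domains of definition — checking that, under the stated sign constraints on $t$ and $s$, the functions $A$, $B$, and the two applications of the Step 1 identity inside the integral are all defined on the same $t$-interval where the linear ODE and its uniqueness theorem apply, so that the piecewise structure of \eqref{eqn::corrz} introduces no mismatch.
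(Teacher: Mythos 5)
Your argument is correct. On the first identity you take a slightly different route from the paper: the paper applies the semigroup property $\state{r}{\state{s}{x}}=\state{r+s}{x}$ directly inside the chronological exponential formula \eqref{eqn::corrz} and shifts the limits of the integral, whereas you verify that $A(t)=\corrz{t}{-s}(\state{s}{x})$ and $B(t)=\corrz{t+s}{0}(x)$ both solve the linear matrix ODE \eqref{eqn::corrz_evol} with forcing $\nabla\dynb(\state{t+s}{x})$ and coincide at $t=-s$, and invoke uniqueness. These are equivalent, but your version is a touch more self-contained in that it never needs the explicit time-ordered-exponential representation, only the defining ODE and Lipschitz/uniqueness; the paper's version is more direct once one is comfortable manipulating $\exp_{\chrontime}$. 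Your Step~2 is essentially identical to the paper: plug in the $\corrz{}{}$ identity, use the flow semigroup property, and shift $r\mapsto r+s$ in the integral. Your closing remark on the sign constraints is also on target — they keep every kernel appearing ($A$, $B$, and $\corrz{r}{-s}(\state{s}{x})$ for $r$ in the integration range) within a single branch of the piecewise definition \eqref{eqn::corrz}.
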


\begin{proof}
	We first consider the term $\corrz{t}{-s}\big(\state{s}{x}\big)$.
	When $t\ge -s \ge 0$, 
	\begin{align*}
	    \corrz{t}{-s}\big(\state{s}{x}\big) =& 
	    \exp_{\chrontime}\Big(\int_{-s}^{t} \nabla\dynb\big(\statebig{r}{\state{s}{x}}\big)\ud r\Big) \\
	    =& \exp_{\chrontime}\Big(\int_{0}^{t+s} \nabla\dynb(\state{r}{x})\ud r\Big) = \corrz{t+s}{0}(x).
	\end{align*}
	The case for $t\le -s \le 0$ can be similarly verified.
	
	Recall from \eqref{eqn::corrg} that
	\begin{align*}
	&\ \corrg{t}{-s}{k}\big(\state{s}{x}\big) \\
	\myeq{\eqref{eqn::corrg}}& - \corrz{t}{-s}\big(\state{s}{x}\big)^{T} \nabla U_k\Big(\statebig{t}{\state{s}{x}}\Big)  + \int_{-s}^{t}\corrz{r}{-s}\big(\state{s}{x}\big)^{T} \nabla(\nabla \cdot \dynb) \Big(\statebig{r}{\state{s}{x}}\Big)\ud r\\
	=& -\corrz{t+s}{0}(x)^{T} \nabla U_k\big(\state{t+s}{x}\big) + \int_{0}^{t+s} \corrz{r}{0}(x)^{T} \nabla (\nabla \cdot \dynb)\big(\state{r}{x}\big) \ud r \\
	\myeq{\eqref{eqn::corrg}}&\ \corrg{t+s}{0}{k}(x),
	\end{align*}
	where to get the third line, we use the above formula \eqref{eqn::time_trans_corrz} about $\corrz{t}{-s}\big(\state{s}{x}\big)$.
\end{proof}

The following lemma connects $\corrz{t}{0}$ and $\corrg{t}{0}{k}$ with gradients.

\begin{lemma}
	\label{lem::deri_X_t}
	For any $x\in \dom$ and $t\in \Real$, we have
	\begin{align}
	\label{eqn::deri_X_t}
	\nabla_x \state{t}{x} := \begin{bmatrix}
	\frac{\partial \stateidx{t}{x}{i}}{\partial x_j}\end{bmatrix}_{i,j} = \corrz{t}{0}(x),\\
	\label{eqn::deri_F_t}
	\testfuncarg{k}{t}{x} \corrg{t}{0}{k}(x) = \nabla_x \testfuncarg{k}{t}{x},\qquad \text{for } k\in \{0,1\}.
	\end{align}
{As a consequence, $\det\big(\corrz{t}{0}(x)\big) = \jacoarg{t}{x}$.}
\end{lemma}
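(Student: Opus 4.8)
The plan is to obtain all three claims from the first-order variational equation for the flow $\state{t}{x}$, matched against the linear matrix ODE characterizing $\corrz{t}{0}$ established in \lemref{lem::Z}.

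First I would prove \eqref{eqn::deri_X_t}. Set $M_t(x):=\nabla_x\state{t}{x}$. Because $\dynb\in\mani{}$ is smooth with $\sup_{x}\abs{\nabla\dynb(x)}<\infty$, one may differentiate the flow equation $\td\state{t}{x}=\dynb(\state{t}{x})$ with respect to $x$ and exchange $\partial_t$ with $\nabla_x$, obtaining the matrix linear ODE $\td M_t(x)=\nabla\dynb(\state{t}{x})\,M_t(x)$ with $M_0(x)=\idmat_{\dimn}$. By \lemref{lem::Z}, and specifically by \eqref{eqn::corrz_evol} with $s=0$ together with $\corrz{0}{0}(x)=\idmat_{\dimn}$, the matrix $\corrz{t}{0}(x)$ solves the very same initial value problem; uniqueness of solutions of linear ODEs then gives $M_t(x)=\corrz{t}{0}(x)$, which is \eqref{eqn::deri_X_t}.

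Next, for \eqref{eqn::deri_F_t}, I would write $\testfuncarg{k}{t}{x}=\exp\!\big(-U_k(\state{t}{x})+\int_{0}^{t}(\nabla\cdot\dynb)(\state{s}{x})\,\ud s\big)$ using the Jacobian identity in \eqref{eq:jacob}, so that $\nabla_x\testfuncarg{k}{t}{x}=\testfuncarg{k}{t}{x}\,\nabla_x\!\big(-U_k(\state{t}{x})+\int_{0}^{t}(\nabla\cdot\dynb)(\state{s}{x})\,\ud s\big)$. The chain rule and the identity just established give $\nabla_x\big(-U_k(\state{t}{x})\big)=-\corrz{t}{0}(x)^{T}\nabla U_k(\state{t}{x})$, and, after moving $\nabla_x$ inside the $s$-integral (again justified by smoothness), $\nabla_x\!\int_{0}^{t}(\nabla\cdot\dynb)(\state{s}{x})\,\ud s=\int_{0}^{t}\corrz{s}{0}(x)^{T}\nabla(\nabla\cdot\dynb)(\state{s}{x})\,\ud s$. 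The sum of these two terms is exactly $\corrg{t}{0}{k}(x)$ as defined in \eqref{eqn::corrg} with $s=0$, whence $\nabla_x\testfuncarg{k}{t}{x}=\testfuncarg{k}{t}{x}\,\corrg{t}{0}{k}(x)$.

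Finally, for the consequence I would invoke Liouville's (Jacobi's) formula for the matrix ODE satisfied by $\corrz{t}{0}(x)$: $\td\det\big(\corrz{t}{0}(x)\big)=\text{tr}\big(\nabla\dynb(\state{t}{x})\big)\det\big(\corrz{t}{0}(x)\big)=(\nabla\cdot\dynb)(\state{t}{x})\,\det\big(\corrz{t}{0}(x)\big)$, with $\det\big(\corrz{0}{0}(x)\big)=1$. Integrating this scalar equation yields $\det\big(\corrz{t}{0}(x)\big)=\exp\!\big(\int_{0}^{t}(\nabla\cdot\dynb)(\state{s}{x})\,\ud s\big)$, which by \eqref{eq:jacob} equals $\jacoarg{t}{x}$; combined with \eqref{eqn::deri_X_t} this also shows that the absolute value in the definition of $\jacoarg{t}{x}$ is redundant. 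The whole argument is the classical variational equation for an ODE flow, so there is no genuinely hard step; the only care-points are the two exchanges of differentiation with a limit (settled by the $C^\infty$ regularity and the uniform bound on $\nabla\dynb$ encoded in $\mani{}$) and the orientation of the determinant (settled by the Liouville computation rather than by the absolute value).
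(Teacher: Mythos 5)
Your proof is correct and follows essentially the same route as the paper's: the paper establishes \eqref{eqn::deri_X_t} column-by-column by perturbing the initial condition along $\basis_j$ and matching the resulting linear ODE with \eqref{eqn::corrz_evol}, which is exactly your variational-equation argument written in matrix form, and your derivation of \eqref{eqn::deri_F_t} (chain rule plus differentiation under the integral, then identification with \eqref{eqn::corrg}) is the paper's computation verbatim. Your Liouville-formula step for $\det\big(\corrz{t}{0}(x)\big)=\jacoarg{t}{x}$ merely fills in a detail the paper leaves implicit after \eqref{eq:jacob}, and correctly settles the sign question.
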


\begin{proof}
	We fix an index $1\le j\le \dimn$ and consider the dynamics $\td \statesupT{t}{x}{\eps} = \dynb\big(\statesupT{t}{x}{\eps}\big)$,
	$\statesupT{0}{x}{\eps} = x + \eps \basis_j$
	where $\basis_j$ is a vector with the $j^{\text{th}}$ element to be one,
	and all other elements to be zero. Clearly, $\statesupT{t}{x}{0} = \state{t}{x}$.
	
	Let $\deristatesupT{t}{x}{\eps} := \frac{\ud }{\ud\eps}\statesupT{t}{x}{\eps}$. Then
	\begin{align*}
	\td \deristatesupT{t}{x}{\eps} = \frac{\ud}{\ud \eps} \dynb\big(\statesupT{t}{x}{\eps}\big) = \nabla \dynb\big(\statesupT{t}{x}{\eps}\big) \deristatesupT{t}{x}{\eps},\qquad \deristatesupT{0}{x}{\eps} = \basis_j.
	\end{align*}
	Moreover, when $\eps = 0$, we have
	\begin{align*}
	\td \deristatesupT{t}{x}{0} = \nabla \dynb\big(\state{t}{x}\big) \deristatesupT{t}{x}{0}, \qquad \deristatesupT{0}{x}{0} = \basis_j,
	\end{align*}
	whose solution is simply the
	$j^{\text{th}} \text{ column of } \corrz{t}{0}(x)$.
	Besides, the $j^{\text{th}}$ column of $\nabla_x \state{t}{x}$ is given by
	\begin{align*}
	\lim_{\eps\rightarrow 0} \frac{\state{t}{x+\eps \basis_j} - \state{t}{x}}{\eps} 
	= \lim_{\eps\rightarrow 0} \frac{\statesupT{t}{x}{\eps} - \statesupT{t}{x}{0}}{\eps} = \frac{\ud}{\ud \eps} \statesupT{t}{x}{\eps}\Big\rvert_{\eps=0}  = \deristatesupT{t}{x}{0}.
	\end{align*}
	By combining above results, we easily know that $\nabla_x \state{t}{x} = \corrz{t}{0}(x)$.
	
	Next, for $k\in \{0,1\}$ and any $x\in \dom$,
	\begin{align*}
	\nabla \testfuncarg{k}{t}{x} 
	=&\ \testfuncarg{k}{t}{x} \Big(-\big(\nabla_x \state{t}{x}\big)^T\nabla U_k\big(\state{t}{x}\big) + \int_{0}^{t} \big(\nabla_x \state{s}{x}\big)^{T} \nabla (\nabla \cdot \dynb)\big(\state{s}{x}\big)\ud s \Big) \\
	=&\  \testfuncarg{k}{t}{x} \Big(-\corrz{t}{0}(x)^T\nabla U_k\big(\state{t}{x}\big) + \int_{0}^{t} \corrz{s}{0}(x)^{T} \nabla (\nabla \cdot \dynb)\big(\state{s}{x}\big)\ud s \Big) \\
	\myeq{\eqref{eqn::corrg}}&\ \testfuncarg{k}{t}{x} \corrg{t}{0}{k}(x).
	\end{align*}
\end{proof}

\subsection{Proof of \propref{prop::stationary_inf}}
\label{app:proof:10}

We list without proof the following result for the infinite-time case, which can be derived in the same way as \propref{prop::pert_M_2}.
\begin{lemma}
	\label{lem::perturb_M_2_infinite}
	Let $\dynb^{\eps} := \dynb + \eps \delta \dynb$. Suppose that for small enough
	$\eps$, we have $\dynb^{\eps}\in \maniinf$. Then
	\begin{align*}
		\frac{\ud}{\ud\epsilon} \newmsecinf (\dynb+\eps\delta \dynb) \Big\rvert_{\eps=0}
		= 2\ee_{x\sim\rho_0} \bigg[\frac{\newainf(x)}{\binf(x)}
		\Big(\intreal \deritestfuncarg{1}{t}{x}\ud t -
		\newainf(x) \intreal \deritestfuncarg{0}{t}{x} \ud t\Big)
		\bigg],
	\end{align*}
	where $\deritestfuncarg{k}{t}{x}$ is defined in \eqref{eqn::deri_test_fun} for $k = 0, 1$.
\end{lemma}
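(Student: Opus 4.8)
The plan is to mirror the proof of \propref{prop::pert_M_2}, replacing the finite window $[\tm,\tp]$ by all of $\Real$ and keeping track of the extra care needed to differentiate under integrals over an unbounded domain. For $k\in\{0,1\}$ write $A^\eps_k(x):=\intreal\testfuncepsarg{k}{\eps}{t}{x}\ud t$, so that $A^0_0=\binf$, $A^0_1=\newainf\,\binf$, and, by the definition of $\newmsecinf$ together with \eqref{eqn::ainf}, $\newmsecinf(\dynb^\eps)=\ee_{x\sim\rho_0}\big[(A^\eps_1(x)/A^\eps_0(x))^2\big]$. Since $\dynb^\eps\in\maniinf$ for $\abs{\eps}$ small by hypothesis, these integrands are well defined for $\rho_0$-a.e.\ $x$ (on $\effdom(\dynb^\eps)$, whose complement is null) and the ratio $A^\eps_1/A^\eps_0$ is finite a.e. First I would establish that $\eps\mapsto A^\eps_k(x)$ is differentiable at $\eps=0$ for a.e.\ $x$, with
\[ \frac{\ud}{\ud\eps}A^\eps_k(x)\Big\rvert_{\eps=0}=\intreal\frac{\ud}{\ud\eps}\testfuncepsarg{k}{\eps}{t}{x}\Big\rvert_{\eps=0}\ud t=\intreal\deritestfuncarg{k}{t}{x}\ud t, \]
where the final identity is exactly the pointwise-in-$t$ computation already carried out in the proof of \propref{prop::pert_M_2}, which produces the expression \eqref{eqn::deri_test_fun} together with the variational equation \eqref{eqn::deriX} for $\deristate{t}{x}$; nothing there uses boundedness of the time interval.

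Granting this, the quotient rule gives, at $\eps=0$,
\[ \frac{\ud}{\ud\eps}\Big(\frac{A^\eps_1(x)}{A^\eps_0(x)}\Big)\Big\rvert_{\eps=0}=\frac{1}{\binf(x)}\Big(\intreal\deritestfuncarg{1}{t}{x}\ud t-\newainf(x)\intreal\deritestfuncarg{0}{t}{x}\ud t\Big), \]
using $A^0_1/A^0_0=\newainf$. Differentiating $\newmsecinf(\dynb^\eps)=\ee_{x\sim\rho_0}[(A^\eps_1/A^\eps_0)^2]$ under the expectation and applying the chain rule then gives
\[ \frac{\ud}{\ud\eps}\newmsecinf(\dynb^\eps)\Big\rvert_{\eps=0}=2\,\ee_{x\sim\rho_0}\Big[\frac{\newainf(x)}{\binf(x)}\Big(\intreal\deritestfuncarg{1}{t}{x}\ud t-\newainf(x)\intreal\deritestfuncarg{0}{t}{x}\ud t\Big)\Big], \]
which is the claimed identity.

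The two interchanges of derivative and integral are where the real work lies, and I expect the domination estimates---not the algebra---to be the main obstacle. For the $t$-integral one needs $\big|\partial_\eps\testfuncepsarg{k}{\eps}{t}{x}\big|$ to be bounded, uniformly for $\abs{\eps}$ small, by a function integrable in $t$; the delicate contributions are the terms $\int_0^t(\nabla\cdot\delta\dynb)(\state{s}{x})\ud s$ and $\int_0^t\inner{\nabla(\nabla\cdot\dynb)(\state{s}{x})}{\deristate{s}{x}}\ud s$ appearing in \eqref{eqn::deri_test_fun}, which grow at most polynomially in $t$, so one needs $\testfuncarg{k}{t}{x}$ to decay sufficiently fast along the flow---precisely the long-time control that membership in $\maniinf$ (\defref{defn::maniinf}) and the stability analysis of \secref{sec::space_inf} are designed to supply on $\effdom(\dynb)$, whose complement is $\rho_0$-null. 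For the expectation one similarly needs an $L^1(\rho_0)$ majorant for the difference quotient of $(A^\eps_1/A^\eps_0)^2$, which should follow from controlling $\newainf$ and its perturbation by the finiteness of the vanilla variance $\varmax$ in Assumption~\ref{assump::U}. In keeping with the Remark following \propref{prop::stationary_inf}, I would carry these steps out at the same level of rigor as \propref{prop::pert_M_2}, noting that a fully rigorous argument would require quantitative tail bounds on $t\mapsto\testfuncarg{k}{t}{x}$.
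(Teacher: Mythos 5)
Your proposal is correct and follows exactly the route the paper intends: the paper states \lemref{lem::perturb_M_2_infinite} without proof, noting it ``can be derived in the same way as \propref{prop::pert_M_2}'', and your argument is precisely that adaptation, with the pointwise identity $\frac{\ud}{\ud\eps}\testfuncepsarg{k}{\eps}{t}{x}\rvert_{\eps=0}=\deritestfuncarg{k}{t}{x}$ imported from the finite-time proof and the quotient/chain rule yielding the stated formula. Your candid flagging of the two derivative--integral interchanges as the only genuinely delicate points matches the paper's own admission (the remark after \propref{prop::stationary_inf}) that this computation is carried out at a formal level.
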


\begin{lemma}
	\label{lem::functional_derivative_inf_v1}
	The functional derivative of the second moment for the infinite-time case is 
	\begin{align}
		\label{eqn::functional_derivative_inf}
		\begin{aligned}
			\derimsecinf(x) &=
			2 \bigg(\intreal \testfuncarg{0}{s}{x} \opSinfargbig{-s}{\state{s}{x}}
			- \nabla_{x}\Big(\testfuncarg{0}{s}{x} \opGinfargbig{-s}{\state{s}{x}} \Big)\ud s \bigg),
		\end{aligned}
	\end{align}
	where
	\begin{align*}
		& \opSinfarg{s}{x} := \left\{\begin{aligned}
			\frac{\newainf(x)}{\binf(x)} \int_{s}^{\infty} \testfuncarg{1}{t}{x} \corrg{t}{s}{1}(x) \ud t 
			- \frac{\big(\newainf(x)\big)^2}{\binf(x)} \int_{s}^{\infty} \testfuncarg{0}{t}{x} \corrg{t}{s}{0}(x)\ud t,\qquad & \text{ if } s > 0;\\
			-\frac{\newainf(x)}{\binf(x)} \int_{-\infty}^{s} \testfuncarg{1}{t}{x} \corrg{t}{s}{1}(x)\ud t + \frac{\big(\newainf(x)\big)^2}{\binf(x)} \int_{-\infty}^{s} \testfuncarg{0}{t}{x} \corrg{t}{s}{0}(x)\ud t,\qquad & \text{ if } s < 0.
		\end{aligned}\right.\\
		& \opGinfarg{s}{x} := \left\{\begin{aligned}
			\frac{\newainf(x)}{\binf(x)} \int_{s}^{\infty} \testfuncarg{1}{t}{x} \ud t - \frac{\big(\newainf(x)\big)^2}{\binf(x)} \int_{s}^{\infty} \testfuncarg{0}{t}{x} \ud t,\qquad & \text{ if } s > 0;\\
			-\frac{\newainf(x)}{\binf(x)} \int_{-\infty}^{s} \testfuncarg{1}{t}{x} \ud t + \frac{\big(\newainf(x)\big)^2}{\binf(x)} \int_{-\infty}^{s} \testfuncarg{0}{t}{x} \ud t,\qquad & \text{ if } s < 0.
		\end{aligned}\right.
	\end{align*}
\end{lemma}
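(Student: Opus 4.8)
The plan is to start from the first-order perturbation identity already recorded in \lemref{lem::perturb_M_2_infinite},
\begin{align*}
\frac{\ud}{\ud\eps}\newmsecinf(\dynb+\eps\delta\dynb)\Big\rvert_{\eps=0} = 2\ee_{x\sim\rho_0}\bigg[\frac{\newainf(x)}{\binf(x)}\Big(\intreal\deritestfuncarg{1}{t}{x}\ud t - \newainf(x)\intreal\deritestfuncarg{0}{t}{x}\ud t\Big)\bigg],
\end{align*}
and to substitute into it the explicit expression \eqref{eqn::deritestfunc} for $\deritestfuncarg{k}{t}{x}$, which writes it as $\testfuncarg{k}{t}{x}$ multiplied by the inner integral $\int_0^t\big(\innerbig{\corrg{t}{s}{k}(x)}{\delta\dynb(\state{s}{x})} + (\nabla\cdot\delta\dynb)(\state{s}{x})\big)\ud s$. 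After this substitution both $\delta\dynb$ and $\nabla\cdot\delta\dynb$ appear linearly, evaluated along the flowline $s\mapsto\state{s}{x}$, which is the form we need before reading off a functional derivative.

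The first step is to interchange the $s$- and $t$-integrations, being careful to split the $t$-integral into its $t>0$ and $t<0$ halves, on which $\int_0^t\,\ud s$ carries opposite orientation. Collecting the coefficients $\tfrac{\newainf(x)}{\binf(x)}$ (from the $\testfunc{1}$ term) and $-\tfrac{\newainf(x)^2}{\binf(x)}$ (from the $\testfunc{0}$ term), the $\corrg{t}{s}{k}$ pieces assemble, for each sign of $s$, precisely into the matrix $\opSinfarg{s}{x}$ of the statement, and the $(\nabla\cdot\delta\dynb)$ pieces into the scalar $\opGinfarg{s}{x}$; the $s>0$ and $s<0$ branches in the definitions of $\opSinf$ and $\opGinf$ correspond to the two halves of the $t$-integral. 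This produces the intermediate identity
\begin{align*}
\frac{\ud}{\ud\eps}\newmsecinf(\dynb+\eps\delta\dynb)\Big\rvert_{\eps=0} = 2\ee_{x\sim\rho_0}\bigg[\intreal \innerbig{\opSinfarg{s}{x}}{\delta\dynb(\state{s}{x})} + (\nabla\cdot\delta\dynb)(\state{s}{x})\,\opGinfarg{s}{x}\ \ud s\bigg].
\end{align*}

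Next I would remove the dependence of $\delta\dynb$ on $\state{s}{x}$ by the time-$s$ change of variables $y=\state{s}{x}$, i.e.\ $x=\state{-s}{y}$, under which $\rho_0(x)\,\ud x = e^{-U_0(\state{-s}{y})}\jacoarg{-s}{y}\,\ud y = \testfuncarg{0}{-s}{y}\,\ud y$ by \eqref{eq:jacob} and \eqref{eqn::testfunc}; the bijectivity of $\state{s}{\cdot}$ on (a full-measure subset of) $\dom$ that this requires is the content of \lemref{lem::property_Omega_b} part~\ref{lem::property:close2}, and is applied via \cite{lax_change_2001} exactly as in \appref{subsec::proof::formulation}. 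After this substitution the first term of the intermediate identity becomes $\int_\dom\testfuncarg{0}{-s}{x}\innerbig{\opSinfargbig{s}{\state{-s}{x}}}{\delta\dynb(x)}\ud x$, while the second becomes $\int_\dom\testfuncarg{0}{-s}{x}(\nabla\cdot\delta\dynb)(x)\,\opGinfargbig{s}{\state{-s}{x}}\ud x$; on the latter I integrate by parts in $x$, the boundary term vanishing because $\delta\dynb\in C_c^{\infty}(\dom,\Rd)$, to transfer $\nabla$ onto $\testfuncarg{0}{-s}{x}\,\opGinfargbig{s}{\state{-s}{x}}$. Relabelling the integration variable $s\mapsto-s$ and comparing with the definition \eqref{eqn::func_deri_defn} then yields \eqref{eqn::functional_derivative_inf}.

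The main obstacle is analytic rather than algebraic, and is the reason the statement is prefaced with a remark that its proof is formal: one must justify the repeated applications of Fubini's theorem over the unbounded time axis, the differentiation under the expectation sign, and the global change of variables $y=\state{s}{x}$ (which is really carried out on $\effdom(\dynb)$). These all hinge on integrability of the maps $t\mapsto\testfuncarg{k}{t}{x}$ along flowlines, a property we have only built into the definition of $\maniinf$ rather than established quantitatively, so we take it for granted here. By contrast, the combinatorics of the two branches of $\opSinf$ and $\opGinf$ and the signs produced by orienting $\int_0^t$ are the only bookkeeping that requires genuine attention.
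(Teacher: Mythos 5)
Your proposal follows essentially the same route as the paper's own proof: start from \lemref{lem::perturb_M_2_infinite}, insert \eqref{eqn::deritestfunc}, swap the $s$- and $t$-integrations (with the $t>0$ and $t<0$ halves producing the two branches of $\opSinf$ and $\opGinf$), push the $x$-integral along the flow via $\wt x=\state{s}{x}$ so that $\rho_0(x)\ud x=\testfuncarg{0}{-s}{\wt x}\ud\wt x$, integrate the divergence term by parts using the compact support of $\delta\dynb$, relabel $s\mapsto -s$, and read off the derivative from \eqref{eqn::func_deri_defn}. The bookkeeping of signs, coefficients, and the formal nature of the interchange/change-of-variables steps all match the paper, so the argument is correct as written.
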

When $s = 0$, $\opSinfarg{0}{\cdot}, \opGinfarg{0}{\cdot}$ are not specified above, because they will not affect the functional derivative $\derimsecinf$ by changing values at a single point.

\begin{proof}
	In \lemref{lem::perturb_M_2_infinite}, we need to simplify the term $\intreal \deritestfuncarg{k}{t}{x}\ud t$. By plugging the formula of $\deritestfunc{k}$ from \eqref{eqn::deritestfunc}, we have
	\begin{align*}
		&\ \intreal \deritestfuncarg{k}{t}{x}\ud t \\
		\myeq{\eqref{eqn::deritestfunc}}&\ \intreal \testfuncarg{k}{t}{x} \Big(\int_{0}^{t}
		\innerBig{\corrg{t}{s}{k}(x) }{ \delta \dynb\big(\state{s}{x}\big)} + (\nabla \cdot \delta\dynb)\big(\state{s}{x}\big) \ud s\Big) \ud t \\
		=&\ \int_{0}^{\infty} \int_{s}^{\infty} \testfuncarg{k}{t}{x} \innerBig{\corrg{t}{s}{k}(x) }{ \delta \dynb\big(\state{s}{x}\big)} + \testfuncarg{k}{t}{x} (\nabla \cdot \delta\dynb)\big(\state{s}{x}\big) \ud t \ud s \\
		&\qquad - \int_{-\infty}^{0} \int_{-\infty}^{s} \testfuncarg{k}{t}{x} \innerBig{\corrg{t}{s}{k}(x)}{ \delta \dynb\big(\state{s}{x}\big)} + \testfuncarg{k}{t}{x} (\nabla \cdot \delta\dynb)\big(\state{s}{x}\big) \ud t \ud s.
	\end{align*}
	By plugging the last equation into \lemref{lem::perturb_M_2_infinite} and with straightforward simplification, we can verify that 
	\begin{align*}
		& \frac{\ud}{\ud\epsilon} \newmsecinf (\dynb+\eps\delta \dynb) \Big\rvert_{\eps=0} \\
		=&\ 2 \ee_{x\sim\rho_0} \Big[\intreal \innerBig{\opSinfarg{s}{x}}{\delta\dynb\big(\state{s}{x}\big)}\ud s + \intreal \opGinfarg{s}{x}(\nabla \cdot \delta\dynb)\big(\state{s}{x}\big)\ud s \Big] \\
		=&\ 2 \intreal \int_{\dom} \rho_0(x) \Big( \innerBig{\opSinfarg{s}{x}}{\delta\dynb\big(\state{s}{x}\big)} +  \opGinfarg{s}{x} (\nabla \cdot \delta\dynb)\big(\state{s}{x}\big) \Big)\ud x\ud s\\
		\myeq{$\wt{x}=\state{s}{x}$}& \hspace{1.5em}  2 \intreal \int_{\dom} \bigg( \rho_0\big(\state{-s}{\wt{x}}\big) \innerBig{\opSinfargbig{s}{\state{-s}{\wt{x}}} }{\delta\dynb(\wt{x})} \jacoarg{-s}{\wt{x}} \\
		&\hspace{10em} + \rho_0\big(\state{-s}{\wt{x}}\big) \opGinfargbig{s}{\state{-s}{\wt{x}}} (\div\delta\dynb)(\wt{x}) \jacoarg{-s}{\wt{x}}\bigg) \ud \wt{x}\ud s \\
		\myeq{\eqref{eqn::testfunc}}&\ 2
		\intreal \int_{\dom} \testfuncarg{0}{-s}{x} \innerBig{\opSinfargbig{s}{\state{-s}{x}}}{\delta\dynb({x})} - \innerBig{\nabla \Big(\testfuncarg{0}{-s}{x}\opGinfargbig{s}{\state{-s}{x}}\Big)}{\delta\dynb(x)}\ud x \ud s.
	\end{align*}
	The integration by parts in the last line is valid because $\delta\dynb$ vanishes at the boundary $\partial\dom$. By comparing the last equation with \eqref{eqn::func_deri_defn}, we can immediately obtain \eqref{eqn::functional_derivative_inf} after straightforward simplifications.
\end{proof}

Then we need to simplify $\opSinfargbig{-s}{\state{s}{x}}$ and $\opGinfargbig{-s}{\state{s}{x}}$.
\begin{lemma}
	\label{lem::opsg}
	$\opSinfargbig{-s}{\state{s}{x}}$ and $\opGinfargbig{-s}{\state{s}{x}}$ have the following form 
	\begin{align*}
		\opSinfargbig{-s}{\state{s}{x}} &= \left\{
		\begin{aligned}
			-\frac{\newainf(x)}{\binf(x)} \int_{-\infty}^{0} \nabla \testfuncarg{1}{t}{x} \ud t
			+ \frac{\big(\newainf(x)\big)^2}{\binf(x)} \int_{-\infty}^{0} \nabla \testfuncarg{0}{t}{x} \ud t,\qquad & \text{ if } s > 0,\\
			\frac{\newainf(x)}{\binf(x)}\int_{0}^{\infty} \nabla \testfuncarg{1}{t}{x} \ud t
			- \frac{\big(\newainf(x)\big)^2}{\binf(x)} \int_{0}^{\infty} \nabla \testfuncarg{0}{t}{x} \ud t,\qquad & \text{ if } s < 0.
		\end{aligned}\right.\\
		\opGinfargbig{-s}{\state{s}{x}} &= \left\{
		\begin{aligned}
			-\frac{\newainf(x)}{\binf(x)} \int_{-\infty}^{0} \testfuncarg{1}{t}{x} \ud t
			+ \frac{\big(\newainf(x)\big)^2}{\binf(x)} \int_{-\infty}^{0} \testfuncarg{0}{t}{x} \ud t,\qquad & \text{ if } s > 0,\\
			\frac{\newainf(x)}{\binf(x)}\int_{0}^{\infty} \testfuncarg{1}{t}{x} \ud t
			- \frac{\big(\newainf(x)\big)^2}{\binf(x)} \int_{0}^{\infty} \testfuncarg{0}{t}{x} \ud t,\qquad & \text{ if } s < 0.
		\end{aligned}\right.
	\end{align*}
\end{lemma}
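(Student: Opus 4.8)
The plan is to evaluate $\opSinfargbig{-s}{\state{s}{x}}$ and $\opGinfargbig{-s}{\state{s}{x}}$ by substituting $x\mapsto\state{s}{x}$ into the piecewise definitions from \lemref{lem::functional_derivative_inf_v1} and then collapsing the result with the translation identities already in hand. First I would collect the four ingredients: \eqref{eqn::time_trans_ainf} gives $\newainf(\state{s}{x})=\newainf(x)$ and $\binf(\state{s}{x})=\binf(x)/\jacoarg{s}{x}$; \eqref{eqn::testfunc_trans} gives $\testfuncargbig{k}{t}{\state{s}{x}}=\testfuncarg{k}{t+s}{x}/\jacoarg{s}{x}$; \eqref{eqn::time_trans_corrz} gives $\corrg{t}{-s}{k}(\state{s}{x})=\corrg{t+s}{0}{k}(x)$ on the relevant time ranges; and \eqref{eqn::deri_F_t} gives $\testfuncarg{k}{r}{x}\corrg{r}{0}{k}(x)=\nabla_x\testfuncarg{k}{r}{x}$.

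Next I would split into the cases $s>0$ and $s<0$. Since the ``time'' slot of $\opSinf$ and $\opGinf$ is occupied by $-s$, for $s>0$ one uses the branch of the definition valid for negative argument (integration over $(-\infty,-s)$), and for $s<0$ the branch valid for positive argument (integration over $(-s,\infty)$). In both cases the integration range is exactly the regime $t\le -s\le 0$ (resp. $t\ge -s\ge 0$) in which the $\corrg$-translation identity \eqref{eqn::time_trans_corrz} applies. After inserting the four identities and performing the substitution $r=t+s$ --- which sends the finite endpoint $t=-s$ to $r=0$ and leaves the infinite endpoint untouched --- each integral turns into $\int_{-\infty}^{0}$ or $\int_{0}^{\infty}$ of $\testfuncarg{k}{r}{x}\corrg{r}{0}{k}(x)$ (for $\opSinf$) or of $\testfuncarg{k}{r}{x}$ (for $\opGinf$). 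The key cancellation is that the factor $1/\jacoarg{s}{x}$ produced by $\testfunc{k}$ is exactly undone by the factor $\jacoarg{s}{x}$ produced by $1/\binf(\state{s}{x})$, so the outcome depends on $s$ only through its sign. A last application of \eqref{eqn::deri_F_t} replaces $\testfuncarg{k}{r}{x}\corrg{r}{0}{k}(x)$ by $\nabla\testfuncarg{k}{r}{x}$ in the $\opSinf$ case, giving the stated formulas; the $\opGinf$ case is the same computation without that last step.

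This is essentially a bookkeeping argument, so I do not expect a real obstacle; the only place demanding attention is keeping the branch selection straight after the sign flip $s\mapsto-s$ and confirming that the integration ranges stay inside the domain of validity of \eqref{eqn::time_trans_corrz}. The interchange of limits and the change of variables implicit in the manipulation are at the same (somewhat formal) level of rigor already adopted for \propref{prop::stationary_inf}, and finiteness of the one-sided integrals follows from $\dynb\in\maniinf$ via \eqref{eqn::func_cts}.
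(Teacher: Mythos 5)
Your proposal is correct and follows essentially the same route as the paper's proof: plug $\state{s}{x}$ into the piecewise formulas of \lemref{lem::functional_derivative_inf_v1}, use \eqref{eqn::testfunc_trans}, \eqref{eqn::time_trans_ainf}, \eqref{eqn::time_trans_corrz} (with the $\jacoarg{s}{x}$ factors cancelling), shift the time variable so the finite endpoint becomes $0$, and finish with \eqref{eqn::deri_F_t} for the $\opSinf$ case. The branch selection and ranges of validity you flag are exactly those used in the paper, so nothing is missing.
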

\begin{proof}
	When $s > 0$,
	\begin{align*}
		&\ \opSinfargbig{-s}{\state{s}{x}} \\
		=&\ -\frac{\newainf\big(\state{s}{x}\big)}{\binf\big(\state{s}{x}\big)} \int_{-\infty}^{-s} \testfuncargbig{1}{t}{\state{s}{x}} \corrg{t}{-s}{1}\big(\state{s}{x}\big) \ud t \\
		& \qquad + \frac{\Big(\newainf\big(\state{s}{x}\big)\Big)^2}{\binf\big(\state{s}{x}\big)} \int_{-\infty}^{-s} \testfuncargbig{0}{t}{\state{s}{x}} \corrg{t}{-s}{0}\big(\state{s}{x}\big) \ud t \\
		\myeq{\eqref{eqn::testfunc_trans},\eqref{eqn::time_trans_ainf},\eqref{eqn::time_trans_corrz}}&\hspace{1em} -\frac{\newainf(x)}{\binf(x)} \int_{-\infty}^{-s} \testfuncarg{1}{t+s}{x} \corrg{t+s}{0}{1}(x) \ud t
		+ \frac{\big(\newainf(x)\big)^2}{\binf(x)} \int_{-\infty}^{-s} \testfuncarg{0}{t+s}{x} \corrg{t+s}{0}{0}(x) \ud t \\
		=&\ -\frac{\newainf(x)}{\binf(x)} \int_{-\infty}^{0} \testfuncarg{1}{t}{x} \corrg{t}{0}{1}(x) \ud t
		+ \frac{\big(\newainf(x)\big)^2}{\binf(x)} \int_{-\infty}^{0} \testfuncarg{0}{t}{x} \corrg{t}{0}{0}(x) \ud t\\
		\myeq{\eqref{eqn::deri_F_t}}&\ -\frac{\newainf(x)}{\binf(x)} \int_{-\infty}^{0} \nabla\testfuncarg{1}{t}{x} \ud t
		+ \frac{\big(\newainf(x)\big)^2}{\binf(x)} \int_{-\infty}^{0} \nabla \testfuncarg{0}{t}{x} \ud t,
	\end{align*}
	which is clearly independent of $s$ from this expression.
	Similarly, when $s < 0$,
	\begin{align*}
		&\ \opSinfargbig{-s}{\state{s}{x}} \\
		=&\ \frac{\newainf\big(\state{s}{x}\big)}{\binf\big(\state{s}{x}\big)} \int_{-s}^{\infty} \testfuncargbig{1}{t}{\state{s}{x}} \corrg{t}{-s}{1}\big(\state{s}{x}\big) \ud t \\
		&\qquad - \frac{\Big(\newainf\big(\state{s}{x}\big)\Big)^2}{\binf\big(\state{s}{x}\big)} \int_{-s}^{\infty} \testfuncargbig{0}{t}{\state{s}{x}} \corrg{t}{-s}{0}\big(\state{s}{x}\big) \ud t\\
		=&\ \frac{\newainf(x)}{\binf(x)}\int_{-s}^{\infty} \testfuncarg{1}{t+s}{x} \corrg{t+s}{0}{1}(x) \ud t
		- \frac{\big(\newainf(x)\big)^2}{\binf(x)} \int_{-s}^{\infty} \testfuncarg{0}{t+s}{x} \corrg{t+s}{0}{0}(x) \ud t \\
		=&\ \frac{\newainf(x)}{\binf(x)}\int_{0}^{\infty} \testfuncarg{1}{t}{x} \corrg{t}{0}{1}(x) \ud t
		- \frac{\big(\newainf(x)\big)^2}{\binf(x)} \int_{0}^{\infty} \testfuncarg{0}{t}{x} \corrg{t}{0}{0}(x) \ud t\\
		=&\ \frac{\newainf(x)}{\binf(x)}\int_{0}^{\infty} \nabla \testfuncarg{1}{t}{x} \ud t
		- \frac{\big(\newainf(x)\big)^2}{\binf(x)} \int_{0}^{\infty} \nabla\testfuncarg{0}{t}{x} \ud t,
	\end{align*}
	which is again independent of $s$. We can similarly simplify $\opGinfargbig{-s}{\state{s}{x}}$.
\end{proof}

By plugging the formula in \lemref{lem::opsg} into \eqref{eqn::functional_derivative_inf}, 
\begin{align*}
	& \intreal \testfuncarg{0}{s}{x} \opGinfargbig{-s}{\state{s}{x}} \ud s \\
	=& \int_{0}^{\infty} \testfuncarg{0}{s}{x} \ud s\ 
	\Big( -\frac{\newainf(x)}{\binf(x)} \int_{-\infty}^{0} \testfuncarg{1}{t}{x} \ud t
	+ \frac{\big(\newainf(x)\big)^2}{\binf(x)} \int_{-\infty}^{0} \testfuncarg{0}{t}{x} \ud t
	\Big) \\
	&\qquad  + \int_{-\infty}^{0} \testfuncarg{0}{s}{x} \ud s\ 
	\Big(\frac{\newainf(x)}{\binf(x)}\int_{0}^{\infty} \testfuncarg{1}{t}{x} \ud t
	- \frac{\big(\newainf(x)\big)^2}{\binf(x)} \int_{0}^{\infty} \testfuncarg{0}{t}{x} \ud t\Big)\\
	=& \frac{\newainf(x)}{\binf(x)} \Big(\int_{-\infty}^{0} \testfuncarg{0}{t}{x} \ud t \int_{0}^{\infty} \testfuncarg{1}{t}{x} \ud t - \int_{0}^{\infty} \testfuncarg{0}{t}{x} \ud t \int_{-\infty}^{0} \testfuncarg{1}{t}{x} \ud t \Big).
\end{align*}
Besides,
\begin{align*}
	&  \intreal \testfuncarg{0}{s}{x} \opSinfargbig{-s}{\state{s}{x}} \ud s \\
	=& -\frac{\newainf(x)}{\binf(x)} \int_{0}^{\infty} \testfuncarg{0}{s}{x} \ud s  \int_{-\infty}^{0} \nabla \testfuncarg{1}{t}{x} \ud t
	+ \frac{\big(\newainf(x)\big)^2}{\binf(x)} \int_{0}^{\infty} \testfuncarg{0}{s}{x} \ud s  \int_{-\infty}^{0} \nabla \testfuncarg{0}{t}{x} \ud t\\
	& \qquad + \frac{\newainf(x)}{\binf(x)} \int_{-\infty}^{0} \testfuncarg{0}{s}{x} \ud s  \int_{0}^{\infty} \nabla \testfuncarg{1}{t}{x} \ud t
	- \frac{\big(\newainf(x)\big)^2}{\binf(x)} \int_{-\infty}^{0} \testfuncarg{0}{s}{x} \ud s  \int_{0}^{\infty} \nabla \testfuncarg{0}{t}{x} \ud t.
\end{align*}

By combining previous results,
\begin{align*}
	&\ \ \ \ 
	\frac{1}{2}\times \derimsecinf(x)\\
	&\myeq{\eqref{eqn::functional_derivative_inf}}
	\intreal \testfuncarg{0}{s}{x} \opSinfargbig{-s}{\state{s}{x}} \ud s - \nabla \Big(\intreal \testfuncarg{0}{s}{x} \opGinfargbig{-s}{\state{s}{x}} \ud s\Big)\\
	&= -\frac{\newainf(x)}{\binf(x)} \int_{0}^{\infty} \testfuncarg{0}{t}{x} \ud t  \int_{-\infty}^{0} \nabla \testfuncarg{1}{t}{x} \ud t
	+ \frac{\big(\newainf(x)\big)^2}{\binf(x)} \int_{0}^{\infty} \testfuncarg{0}{t}{x} \ud t  \int_{-\infty}^{0} \nabla \testfuncarg{0}{t}{x} \ud t\\
	&\ \ + \frac{\newainf(x)}{\binf(x)} \int_{-\infty}^{0} \testfuncarg{0}{t}{x} \ud t  \int_{0}^{\infty} \nabla \testfuncarg{1}{t}{x} \ud t
	- \frac{\big(\newainf(x)\big)^2}{\binf(x)} \int_{-\infty}^{0} \testfuncarg{0}{t}{x} \ud t  \int_{0}^{\infty} \nabla\testfuncarg{0}{t}{x} \ud t\\
	& \ \ - \nabla \Big(\frac{\newainf(x)}{\binf(x)}\Big) \Big(\int_{-\infty}^{0} \testfuncarg{0}{t}{x} \ud t  \int_{0}^{\infty} \testfuncarg{1}{t}{x} \ud t - \int_{0}^{\infty} \testfuncarg{0}{t}{x} \ud t \int_{-\infty}^{0} \testfuncarg{1}{t}{x} \ud t \Big)\\
	& \ \ - \frac{\newainf(x)}{\binf(x)} \nabla \Big(\int_{-\infty}^{0} \testfuncarg{0}{t}{x} \ud t \int_{0}^{\infty} \testfuncarg{1}{t}{x} \ud t - \int_{0}^{\infty} \testfuncarg{0}{t}{x} \ud t \int_{-\infty}^{0} \testfuncarg{1}{t}{x} \ud t \Big)\\
	&= \frac{\big(\newainf(x)\big)^2}{\binf(x)} \Big(\int_{0}^{\infty} \testfuncarg{0}{t}{x} \ud t \int_{-\infty}^{0} \nabla \testfuncarg{0}{t}{x} \ud t -  \int_{-\infty}^{0} \testfuncarg{0}{t}{x} \ud t  \int_{0}^{\infty} \nabla\testfuncarg{0}{t}{x} \ud t \Big)  \\
	&\ \ - \nabla \Big(\frac{\newainf(x)}{\binf(x)}\Big) \Big(\int_{-\infty}^{0} \testfuncarg{0}{t}{x} \ud t  \int_{0}^{\infty} \testfuncarg{1}{t}{x} \ud t - \int_{0}^{\infty} \testfuncarg{0}{t}{x} \ud t \int_{-\infty}^{0} \testfuncarg{1}{t}{x} \ud t \Big)\\
	& \ \ + \frac{\newainf(x)}{\binf(x)} \Big(\int_{0}^{\infty} \nabla \testfuncarg{0}{t}{x}\ud t \int_{-\infty}^{0} \testfuncarg{1}{t}{x}\ud t - \int_{-\infty}^{0} \nabla \testfuncarg{0}{t}{x} \ud t \int_{0}^{\infty} \testfuncarg{1}{t}{x}\ud t \Big).
\end{align*}

It can be directly verify that
\begin{align*}
	\nabla \big(\frac{\newainf}{\binf}\big)(x) =
	\frac{\newainf(x)}{\binf(x)}
	\Big( \frac{\intreal\nabla\testfuncarg{1}{t}{x}\ud t}{\intreal\testfuncarg{1}{t}{x}\ud t} - 2 \frac{\intreal\nabla\testfuncarg{0}{t}{x}\ud t}{\intreal\testfuncarg{0}{t}{x}\ud t}\Big).
\end{align*}
By plugging this into the expression of the functional derivative and dividing both sides by $\frac{\newainf}{\binf}$,
\begin{align*}
	&\hspace{1em} \frac{\partitionzero \binf(x)}{2\newainf(x)} \times \derimsecinf(x)\\
	&= \frac{\intreal\testfuncarg{1}{t}{x}\ud t}{\binf(x)} \Big(\int_{0}^{\infty} \testfuncarg{0}{t}{x}\ud t  \int_{-\infty}^{0} \nabla \testfuncarg{0}{t}{x} \ud t -  \int_{-\infty}^{0} \testfuncarg{0}{t}{x} \ud t  \int_{0}^{\infty} \nabla\testfuncarg{0}{t}{x} \ud t \Big)  \\
	&\ \ -\frac{\intreal\nabla\testfuncarg{1}{t}{x}\ud t}{\intreal\testfuncarg{1}{t}{x}\ud t} \Big(\int_{-\infty}^{0} \testfuncarg{0}{t}{x} \ud t  \int_{0}^{\infty} \testfuncarg{1}{t}{x} \ud t - \int_{0}^{\infty} \testfuncarg{0}{t}{x} \ud t  \int_{-\infty}^{0} \testfunc{1}_t \ud t \Big)\\
	&\ \ +2 \frac{\intreal\nabla\testfuncarg{0}{t}{x}\ud t}{\intreal\testfuncarg{0}{t}{x}\ud t} \Big(\int_{-\infty}^{0} \testfuncarg{0}{t}{x} \ud t \int_{0}^{\infty} \testfuncarg{1}{t}{x} \ud t - \int_{0}^{\infty} \testfuncarg{0}{t}{x} \ud t  \int_{-\infty}^{0} \testfuncarg{1}{t}{x} \ud t \Big)\\
	& \ \ + \Big(\int_{0}^{\infty} \nabla \testfuncarg{0}{t}{x}\ud t \int_{-\infty}^{0} \testfuncarg{1}{t}{x}\ud t - \int_{-\infty}^{0} \nabla \testfuncarg{0}{t}{x} \ud t \int_{0}^{\infty} \testfuncarg{1}{t}{x}\ud t \Big).
\end{align*}
We keep the terms involving $\int\nabla\testfuncarg{1}{t}{x}\ud t$ untouched and we only try to simplify terms involving $\int\nabla\testfuncarg{0}{t}{x}\ud t$.
The coefficient for $\int_{0}^{\infty} \nabla \testfuncarg{0}{t}{x}\ud t$ is
\begin{align*}
	&  \frac{1}{\binf(x)}\left(
	\begin{aligned}
		& -\intreal \testfuncarg{1}{t}{x}\ud t \int_{-\infty}^{0} \testfuncarg{0}{t}{x}\ud t
		+ 2 \int_{-\infty}^{0} \testfuncarg{0}{t}{x} \ud t  \int_{0}^{\infty} \testfuncarg{1}{t}{x} \ud t \\
		& - 2\int_{0}^{\infty} \testfuncarg{0}{t}{x} \ud t \int_{-\infty}^{0} \testfuncarg{1}{t}{x} \ud t
		+ \intreal\testfuncarg{0}{t}{x}\ud t \int_{-\infty}^{0} \testfuncarg{1}{t}{x}\ud t
	\end{aligned}
	\right) \\
	=& \frac{1}{\binf(x)} \Big(\int_{-\infty}^0 \testfuncarg{0}{t}{x}\ud t \int_{0}^{\infty} \testfuncarg{1}{t}{x}\ud t - \int_{0}^{\infty} \testfuncarg{0}{t}{x} \ud t \int_{-\infty}^{0} \testfuncarg{1}{t}{x}\ud t \Big).
\end{align*}
Similarly, the coefficient for $\int_{-\infty}^{0} \nabla \testfuncarg{0}{t}{x}\ud t$ is
\begin{align*}
	& \frac{1}{\binf(x)}\left(
	\begin{aligned}
		&    \intreal \testfuncarg{1}{t}{x}\ud t \int_{0}^{\infty} \testfuncarg{0}{t}{x}\ud t
		+ 2 \int_{-\infty}^{0} \testfuncarg{0}{t}{x} \ud t  \int_{0}^{\infty} \testfuncarg{1}{t}{x} \ud t \\
		& - 2\int_{0}^{\infty} \testfuncarg{0}{t}{x} \ud t \int_{-\infty}^{0} \testfuncarg{1}{t}{x} \ud t
		- \intreal \testfuncarg{0}{t}{x}\ud t  \int_{0}^{\infty} \testfuncarg{1}{t}{x}\ud t
	\end{aligned}
	\right)\\
	=& \frac{1}{\binf(x)}  \Big(\int_{-\infty}^{0} \testfuncarg{0}{t}{x}\ud t  \int_{0}^{\infty} \testfuncarg{1}{t}{x}\ud t - \int_{0}^{\infty} \testfuncarg{0}{t}{x}\ud t \int_{-\infty}^{0} \testfuncarg{1}{t}{x}\ud t\Big)
\end{align*}
Hence,
\begin{align*}
	&\ \ \ \  \frac{\partitionzero \binf(x)}{2\newainf(x)} \times \derimsecinf(x)\\
	&= \Big(\int_{-\infty}^{0} \testfuncarg{0}{t}{x}\ud t  \int_{0}^{\infty} \testfuncarg{1}{t}{x}\ud t - \int_{0}^{\infty} \testfuncarg{0}{t}{x}\ud t  \int_{-\infty}^{0} \testfuncarg{1}{t}{x}\ud t\Big) \times \\
	&\qquad \qquad\qquad   \Big(\frac{\intreal \nabla \testfuncarg{0}{t}{x}\ud t}{\intreal \testfuncarg{0}{t}{x}\ud t} - \frac{\intreal \nabla \testfuncarg{1}{t}{x}\ud t}{\intreal\testfuncarg{1}{t}{x}\ud t}\Big)\\
	&= \Big(\int_{-\infty}^{0} \testfuncarg{0}{t}{x}\ud t \int_{0}^{\infty} \testfuncarg{1}{t}{x}\ud t - \int_{0}^{\infty} \testfuncarg{0}{t}{x}\ud t \int_{-\infty}^{0} \testfuncarg{1}{t}{x}\ud t\Big)
	\big(-\nabla \ln (\newainf)(x)\big).
\end{align*}
After straightforward simplification, we obtain \eqref{eqn::func_deri_inf}. \hfill $\square$

\section{Proof of Proposition \ref{prop::existence} and discussion
about its assumptions and implications}
\label{sec:proof:zero-variance}

In this section, we prove Proposition~\ref{prop::existence}, and discuss its assumptions (in particular the Morse function condition) as well as some of it implications (including {the} settings that go beyond the ones in the proposition). In particular, we solve the Poisson equation~\eqref{eqn::poisson} when $\rho_0$ is the standard Gaussian density in~$\Rd$ and $\rho_1$ is the density of a Gaussian mixture distribution.

\subsection{Proof of \propref{prop::existence} when $\sigmamat = 1$}
\label{app::proof::existence}

{We proceed in three steps:}

{\bf Step 1: }{We shall first establish the limiting behavior of the dynamics.}\medskip

More specifically, the trajectory $t\to\state{t}{x}$ will converge to a local maximum of $V$ in the forward direction and converge to a local minimum in the backward direction, except at a set of points with measure zero.

  \begin{lemma}
  	\label{lemma::critical_pts}
  	For any $x\in \dom$, we have  $\lim_{\abs{t}\rightarrow\infty}\abs{\dynb(\state{t}{x})} = 0$.
  \end{lemma}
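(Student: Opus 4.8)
The plan is to run a Barbalat-type argument on $g(t):=\abs{\dynb(\state{t}{x})}^2$. First I would note that since $\dom=[0,1]^\dimn$ is a compact torus and $\dynb=\nabla V\in C^\infty(\dom,\Rd)$, the flow $\state{t}{x}$ is defined (and smooth) for all $t\in\Real$ and all $x\in\dom$, so there is no issue of finite-time blow-up. Then, for fixed $x$, the flow is ascending for $V$:
\[
  \td V(\state{t}{x})=\innerbig{\nabla V(\state{t}{x})}{\dynb(\state{t}{x})}=\abs{\nabla V(\state{t}{x})}^2=g(t)\ge 0 ,
\]
so $t\mapsto V(\state{t}{x})$ is nondecreasing. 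Since $V$ is continuous on the compact set $\dom$ it is bounded, hence $V(\state{t}{x})$ has finite limits as $t\to\pm\infty$ and
\[
  \int_{-\infty}^{\infty} g(t)\,\ud t=\lim_{t\to\infty}V(\state{t}{x})-\lim_{t\to-\infty}V(\state{t}{x})<\infty .
\]

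The key step is to upgrade integrability of $g$ to $g(t)\to 0$ as $\abs t\to\infty$, and for this it suffices to show that $g$ is uniformly continuous on $\Real$. Differentiating and using the symmetry of the Hessian,
\[
  g'(t)=2\,\innerbig{\nabla^2V(\state{t}{x})\,\nabla V(\state{t}{x})}{\dynb(\state{t}{x})}=2\,\innerbig{\nabla^2V(\state{t}{x})\,\nabla V(\state{t}{x})}{\nabla V(\state{t}{x})},
\]
and since $\nabla^2V=\nabla\dynb$ and $\nabla V=\dynb$ are bounded on $\dom$ (continuous functions on a compact set; recall also the defining property of $\mani{}$), there is a constant $C$ with $\abs{g'(t)}\le C$ for all $t$. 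Hence $g$ is Lipschitz, in particular uniformly continuous. A standard elementary lemma then gives that a nonnegative, uniformly continuous function with finite integral over $[0,\infty)$ tends to $0$ at $+\infty$ — otherwise one could find $t_n\to\infty$ and $\delta>0$ with $g(t_n)\ge\delta$, and uniform continuity would force $g\ge\delta/2$ on disjoint intervals of a fixed positive length, contradicting $\int g<\infty$ — and the same argument applies on $(-\infty,0]$. Combining, $g(t)\to0$, i.e. $\abs{\dynb(\state{t}{x})}\to0$, as $\abs{t}\to\infty$.

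I do not expect a genuine obstacle: the only points needing care are completeness of the flow and boundedness of $\nabla^2 V$, both immediate from compactness of the torus and smoothness of $V$, and the elementary Barbalat lemma. Note that the Morse hypothesis of \propref{prop::existence} is not used for this lemma; it will be needed at the next step, where convergence of $g$ is strengthened to convergence of $\state{t}{x}$ itself to a single critical point of $V$ (using that the $\omega$- and $\alpha$-limit sets of the gradient flow are nonempty, connected, flow-invariant, and consist of critical points, hence reduce to a single point when critical points are isolated).
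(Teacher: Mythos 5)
Your proof is correct and is essentially the same as the paper's: both hinge on the observation that $V(\state{t}{x})$ is monotone and bounded (so $\int |\dynb(\state{t}{x})|^2\,\ud t<\infty$) together with boundedness of $\nabla^2 V$ to control the decay rate of $g(t)=|\dynb(\state{t}{x})|^2$, forcing $g\to 0$. The only difference is packaging: you deduce a two-sided Lipschitz bound on $g$ and invoke Barbalat's lemma, whereas the paper argues by contradiction using Gr\"onwall to get a one-sided lower bound $g(t)\ge g(t_k)e^{-2C(t-t_k)}$ and sums contributions over disjoint unit intervals $[t_k,t_k+1]$; the substance is identical.
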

\begin{proof}
	We only need to show one direction $t\rightarrow\infty$ and the other case follows similarly. If this does not hold, then there exists $\eps>0$ and a monotone increasing sequence $\{t_k\}_{k=1}^{\infty}$ such that $\absbig{\dynb\big(\state{t_k}{x}\big)}\ge \eps$ and $\lim_{k\rightarrow\infty} t_k = \infty$.
	Consider 
	\begin{align*}
		\td \absbig{\dynb\big(\state{t}{x}\big)}^2 &= \td \absbig{\nabla V\big(\state{t}{x}\big)}^2 \\
		&= 2\innerBig{\nabla V\big(\state{t}{x}\big)}{\nabla^2 V\big(\state{t}{x}\big)\nabla V\big(\state{t}{x}\big)} 
		\ge -2C \absbig{\dynb\big(\state{t}{x}\big)}^2,
	\end{align*}
	where $C := \sup_{x\in \dom} \norm{\nabla^2 V(x)}<\infty$. 
	By Gr{\"o}nwall's inequality,
	\begin{align*}
		\absbig{\dynb\big(\state{t}{x}\big)} \ge \absbig{\dynb\big(\state{s}{x}\big)} e^{-C(t-s)},
	\end{align*} for all $t\ge s\ge 0$. 
	Without loss of generality, we can ensure that $t_k - t_{k-1} \ge 1$. 
	Hence,
	\begin{align*}
		V\big(\state{t}{x}\big) - V\big(\state{0}{x}\big) &= \int_{0}^{t} \absbig{\dynb\big(\state{s}{x}\big)}^2\ud s 
		\ge \sum_{k=1}^{\infty} \indi_{[0,t]}(t_{k+1}) \int_{t_k}^{t_{k}+1} \eps e^{-C (t-t_k)}\ud t \\
		&= \sup\{k: t_{k+1} \le t\}  \frac{\eps\big(1-e^{-C}\big)}{C},
	\end{align*}
	which will diverge to infinity as $t\rightarrow\infty$. This contradicts with the boundedness of $V$ and thus the assumption does not hold. As a remark, the notation $\indi_{A}()$ is an indicator function for the set $A$.
\end{proof}

\begin{lemma}
	\label{lemma::limiting_behavior}
	Suppose $x$ is not on the stable or unstable manifold of a saddle point. 
	Then the trajectory $\{\state{t}{x}\}_{t\in\Real}$ must converge to a local maximum of $V$ in the forward direction and a local minimum of $V$ in the backward direction.
\end{lemma}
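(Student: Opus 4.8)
The plan is to combine the gradient structure of the flow with a Morse-theoretic argument on the compact torus $\dom=[0,1]^\dimn$. First I would record that, since $\dynb=\nabla V$, the flow \eqref{eqn::ode} is gradient ascent: $\td V(\state{t}{x})=\abs{\nabla V(\state{t}{x})}^2\ge 0$, so $V$ is nondecreasing along every trajectory. Because $\dom$ is a compact manifold the flow is complete and every forward orbit is precompact, hence the $\omega$-limit set $\omega(x)$ is nonempty, compact, connected and invariant; the same holds for the $\alpha$-limit set $\alpha(x)$ in backward time. By \lemref{lemma::critical_pts}, $\abs{\nabla V}\equiv 0$ on $\omega(x)\cup\alpha(x)$, so both limit sets consist only of critical points of $V$. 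Since $V$ is a Morse function these critical points are nondegenerate, hence isolated, and on the compact torus there are only finitely many of them; a connected subset of a discrete set is a single point. Therefore $\omega(x)=\{p\}$ and $\alpha(x)=\{q\}$ for critical points $p,q$, \ie{}, $\state{t}{x}\to p$ as $t\to+\infty$ and $\state{t}{x}\to q$ as $t\to-\infty$.

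Next I would classify $p$ and $q$ using hyperbolicity. At a critical point $y_0$ the linearization of \eqref{eqn::ode} is $\td Y=\nabla^2 V(y_0)\,Y$, and the Morse condition makes $\nabla^2 V(y_0)$ invertible, so $y_0$ is a hyperbolic equilibrium and the stable/unstable manifold theorem applies: the set $W^s(y_0)$ of points whose forward orbit converges to $y_0$ is an injectively immersed submanifold of dimension equal to the number of negative eigenvalues of $\nabla^2 V(y_0)$, and $W^u(y_0)$ is likewise the set of points whose backward orbit converges to $y_0$, with dimension the number of positive eigenvalues. If $p$ were a local minimum, $\nabla^2 V(p)$ would be positive definite and $W^s(p)=\{p\}$, forcing $x=p$; if $p$ were a genuine saddle, \ie{}, $\nabla^2 V(p)$ had at least one negative eigenvalue, then $x\in W^s(p)$, contradicting the hypothesis. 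Hence, for $x$ not itself a critical point, $p$ is a local maximum of $V$. Reversing time and running the same argument with $W^u$ in place of $W^s$ shows $q$ is a local minimum.

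The step I expect to be the crux is the reduction of each limit set to a single point: it relies on connectedness of the $\omega$-limit set together with the Morse hypothesis (isolatedness of critical points), and it genuinely fails when $V$ has a continuum of critical points. Two minor loose ends remain. If $x$ is itself a critical point the orbit is constant, and a constant orbit at a local minimum does not converge to a local maximum in forward time; such $x$ are finitely many and of measure zero, so they can be excluded from the statement or absorbed into the ``for almost all $x\sim\rho_0$'' qualification under which the lemma is used in \propref{prop::existence}. Finally, one should note that the union over all saddle points of $W^s\cup W^u$ is a finite union of immersed submanifolds of dimension at most $\dimn-1$, hence Lebesgue-null, which is exactly what makes the hypothesis ``$x$ not on such a manifold'' hold for almost every $x$.
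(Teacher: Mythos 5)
Your proof is correct, and its skeleton matches the paper's: compactness of the torus, \lemref{lemma::critical_pts} to force every limit point to be a critical point, and Morse nondegeneracy to conclude. The packaging differs in a way that actually tightens two steps the paper treats informally. First, the paper extracts a convergent subsequence by Bolzano--Weierstrass and then upgrades to convergence of the whole trajectory by invoking the "basin of attraction" of $\xst$; you instead get single-point convergence directly from the connectedness of the $\omega$- and $\alpha$-limit sets together with the finiteness of the critical set of a Morse function on a compact manifold, which is cleaner. Second, the paper dismisses the possibility that the forward limit is a local minimum "by the assumption," although the hypothesis only excludes stable/unstable manifolds of saddles; your argument via the stable manifold theorem (for gradient ascent, $W^s$ of a nondegenerate local minimum is the point itself, so $x$ would have to equal it) supplies the missing reason, and symmetrically for the backward limit. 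Your observation that the statement fails verbatim when $x$ is itself a critical point (e.g.\ a local minimum gives a constant orbit) is a genuine, if harmless, gap in the lemma as stated: such points are finitely many, hence $\rho_0$-null, so the conclusion of \propref{prop::existence}, which only needs the statement for almost all $x$, is unaffected.
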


\begin{proof}
	Since the torus $\dom = [0,1]^\dimn$ is bounded, the trajectory $\{\state{t}{x}\}_{t\ge 0}$ must be bounded and there exists an increasing sequence $\{t_k\}_{k=1}^{\infty}$ such that $\big\{\state{t_k}{x}\big\}_{k=1}^{\infty}$ is convergent by Bolzano-Weierstrass theorem and let us denote the limit as $\xst$. By \lemref{lemma::critical_pts}, we know $\dynb(\xst) = \vectorzero_{\dimn}$ and thus $\xst$ is a critical point. By the assumption, $\xst$ is not a saddle point nor a local minimum, that is, $\xst$ must be a local maximum of $V$. 
	By the assumption that $V$ is a Morse function, the critical point $\xst$ has a non-degenerate Hessian. 
	After the trajectory enters its basin of attraction (containing an open ball around $\xst$), the trajectory $\{\state{t}{x}\}_{t\in\Real}$ will eventually converge to $\xst$. The backward direction can be proved in a similar way.
\end{proof}

\begin{lemma}
	\label{lem::conv_testfunc}
	Under the same assumption as in \lemref{lemma::limiting_behavior},
	we know $\int_{0}^{\infty} \testfuncarg{k}{t}{x}\ud t < \infty$ and $\int_{-\infty}^{0} \testfuncarg{k}{t}{x}\ud t < \infty$. 
	In particular, $\lim_{\abs{t}\rightarrow\infty} \testfuncarg{k}{t}{x} = 0$ and $\lim_{\abs{t}\rightarrow\infty} \jacoarg{t}{x} = 0$.
\end{lemma}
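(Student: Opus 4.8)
The plan is to read off everything from the limiting behavior of the gradient flow established in \lemref{lemma::limiting_behavior}. Since $\dynb=\nabla V$, the flow is gradient ascent on $V$, so for the $x$ under consideration (those off the stable/unstable manifolds of the saddle points) the trajectory $\state{t}{x}$ converges to a local maximum $\xst$ of $V$ as $t\to\infty$ and to a local minimum of $V$ as $t\to-\infty$. Recall $\jacoarg{t}{x}=\exp\big(\int_0^t(\nabla\cdot\dynb)(\state{s}{x})\ud s\big)$, that $\nabla\cdot\dynb=\Delta V=\operatorname{tr}(\nabla^2 V)$ because $\dynb=\nabla V$, and that $\testfuncarg{k}{t}{x}=e^{-U_k(\state{t}{x})}\jacoarg{t}{x}$. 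The key point is that, $V$ being a Morse function, $\nabla^2 V(\xst)$ is negative definite at the limiting maximum, so $\operatorname{tr}(\nabla^2 V(\xst))<0$; this makes $\int_0^t(\nabla\cdot\dynb)(\state{s}{x})\ud s$ tend to $-\infty$ at a linear rate, forcing exponential decay of $\jacoarg{t}{x}$, and since $U_k$ is bounded on the compact torus $\dom$ this transfers to exponential decay of $\testfuncarg{k}{t}{x}$, which is more than enough for integrability.

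Concretely, I would first record two facts: (i) because $U_k\in C^\infty(\dom,\Real)$ and $\dom$ is compact, $e^{-U_k}$ is bounded on $\dom$, say $e^{-U_k(y)}\le M_k<\infty$; and (ii) writing $\operatorname{tr}(\nabla^2 V(\xst))=-2c$ with $c>0$, continuity of $\Delta V$ together with $\state{t}{x}\to\xst$ gives a $T>0$ such that $(\nabla\cdot\dynb)(\state{t}{x})\le -c$ for all $t\ge T$. Then for $t\ge T$,
\[
\int_0^t (\nabla\cdot\dynb)(\state{s}{x})\ud s \;=\; \underbrace{\int_0^T (\nabla\cdot\dynb)(\state{s}{x})\ud s}_{=:C_1<\infty}\;+\;\int_T^t (\nabla\cdot\dynb)(\state{s}{x})\ud s \;\le\; C_1 - c\,(t-T),
\]
so $\jacoarg{t}{x}\le e^{C_1+cT}e^{-ct}$ and $\testfuncarg{k}{t}{x}\le M_k e^{C_1+cT}e^{-ct}$ for $t\ge T$. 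Since $t\mapsto\testfuncarg{k}{t}{x}$ is continuous (hence bounded) on $[0,T]$, this yields $\int_0^\infty\testfuncarg{k}{t}{x}\ud t<\infty$, and also $\lim_{t\to\infty}\testfuncarg{k}{t}{x}=0$ and $\lim_{t\to\infty}\jacoarg{t}{x}=0$ (the latter from the bound $\jacoarg{t}{x}\le e^{C_1+cT}e^{-ct}$ directly). The backward direction is entirely symmetric: as $t\to-\infty$ the trajectory converges to a local minimum of $V$, where $\nabla^2 V$ is positive definite, so $\nabla\cdot\dynb$ is bounded below by some $c'>0$ near it; choosing $T'>0$ with $(\nabla\cdot\dynb)(\state{t}{x})\ge c'$ for $t\le -T'$ gives, for such $t$,
\[
\int_0^t (\nabla\cdot\dynb)(\state{s}{x})\ud s \;=\; -\int_t^0 (\nabla\cdot\dynb)(\state{s}{x})\ud s \;\le\; C_2 - c'\,\absbig{t}
\]
for a finite constant $C_2$, and the same conclusions follow for $t\to-\infty$.

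I do not expect a real obstacle here; the argument is a routine Grönwall-free estimate once \lemref{lemma::limiting_behavior} is in hand. The only points needing a little care are: getting the sign of $\nabla\cdot\dynb=\operatorname{tr}(\nabla^2 V)$ right at maxima (negative) versus minima (positive), which is exactly where the Morse hypothesis is used to guarantee non-degeneracy and hence strict definiteness; the interval splitting $\int_0^t=\int_0^T+\int_T^t$ to absorb the transient part of the integral into a finite constant; and noting that the set of $x$ for which the argument would break down is precisely the union of stable/unstable manifolds of the saddle points, which is already excluded by the standing hypothesis inherited from \lemref{lemma::limiting_behavior}.
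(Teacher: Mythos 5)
Your proof is correct and takes essentially the same route as the paper's: bound $e^{-U_k}$ on the compact torus, use the Morse condition to get $\Delta V(\xst)<0$ (resp.\ $>0$) at the limiting maximum (resp.\ minimum), find a time beyond which $\nabla\cdot\dynb$ is bounded away from zero along the trajectory, and split the integral there to extract exponential decay of $\jacoarg{t}{x}$. The only differences from the paper's version are notational (the paper uses $\tau,\beta$ where you use $T,c$).
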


\begin{proof}
Without loss of generality, we only consider the forward branch. 
Since $V$ is assumed to be a Morse function, 
the Hessian $\nabla^2 V(\xst) < 0$ is non-degenerate and $0 > \tr\big(\nabla^2 V(\xst)\big) = \Laplace V(\xst)$. Therefore, 
\begin{align}
	\label{eqn::limit_divg}
	\lim_{t\rightarrow\infty} \div\dynb\big(\state{t}{x}\big) = \div\dynb(\xst) = \Laplace V(\xst) < 0.
\end{align} 
Since $\rho_k = e^{-U_k}/\partition_k$ are bounded {on the torus}, we know 
\begin{align*}
	\int_{0}^{\infty} \testfuncarg{k}{t}{x}\ud t = \int_{0}^{\infty} e^{-U_k\big(\state{t}{x}\big)} \jacoarg{t}{x}\ud t \le C \int_{0}^{\infty} \jacoarg{t}{x}\ \ud t = C\int_{0}^{\infty} e^{\int_{0}^{t} \div\dynb\big(\state{s}{x}\big)\ud s}\ud t,
\end{align*}
where $C := \sup_{x\in\dom} \max\{e^{-U_0(x)}, e^{-U_1(x)}\} < \infty$ herein.
From \eqref{eqn::limit_divg}, there exists $\beta > 0$ and $\tau>0$ such that $\div\dynb(\state{s}{x})\le -\beta$ for all $s\ge \tau$, then if $t\ge \tau$,
\begin{align*}
	\int_{0}^{t}\div\dynb\big(\state{s}{x}\big)\ud s \le \int_{0}^{\tau}\div\dynb\big(\state{s}{x}\big)\ud s - \beta(t-\tau),
\end{align*}
and therefore, 
\begin{align*}
	\int_{0}^{\infty} \testfuncarg{k}{t}{x}\ud t\le C\Big(\int_{0}^{\tau} e^{\int_{0}^{t} \div\dynb\big(\state{s}{x}\big)}\ud t + \int_{\tau}^{\infty} e^{\int_{0}^{\tau}\div\dynb\big(\state{s}{x}\big)\ud s} e^{-\beta(t-\tau)}\ud t \Big)< \infty.
\end{align*}
In particular, when $t\ge \tau$, 
\begin{align*}
	\jacoarg{t}{x} \le e^{\int_{0}^{\tau}\div\dynb\big(\state{s}{x}\big)\ud s} e^{-\beta(t-\tau)},
\end{align*}
which converges to zero exponentially fast as $t\rightarrow\infty$. 
The same conclusion holds for $\testfuncarg{k}{t}{x}\equiv e^{-U_k\big(\state{t}{x}\big)}\jacoarg{t}{x}$ when $t\rightarrow\infty$.
\end{proof}

{\bf Step 2:}{ We verify that $\dynb=\nabla V$ is a zero-variance dynamics.}
\medskip

We need to show that $\intreal (\rho_1-\rho_0)\big(\state{t}{x}\big) \jacoarg{t}{x} \ud t = 0$ almost everywhere on $\dom$.

{Under the same assumption as \lemref{lemma::limiting_behavior}}, let us consider  
\begin{align*}
	\intreal (\rho_1-\rho_0)\big(\state{t}{x}\big) \jacoarg{t}{x} \ud t &= \intreal \Laplace V\big(\state{t}{x}\big) e^{\int_{0}^{t} \Laplace V\big(\state{s}{x}\big)\ud s}\ud t \\
	&= \intreal \frac{\ud}{\ud t}\jacoarg{t}{x}\ud t \\
	&= \lim_{t\rightarrow\infty}\jacoarg{t}{x} - \lim_{t\rightarrow-\infty}\jacoarg{t}{x} = 0.
\end{align*}
The last line comes from \lemref{lem::conv_testfunc}. 
{The validity of the above equation almost everywhere on $\dom$ will be explained in Step 3.}

\medskip
{\bf Step 3:} {We  prove that $\dynb = \nabla V \in \maniinf{}$ in the sense of \defref{defn::maniinf}, \ie{}, such a gradient ascent dynamics is a valid one for the \itneis{} scheme.}

If we can find two open subsets $\domsubset_1, \domsubset_2$ such that $x\to\int_{0}^{\infty}\testfuncarg{k}{t}{x}\ud t$ is continuous on $\domsubset_1$, $x\to\int_{-\infty}^{0}\testfuncarg{k}{t}{x}\ud t$ is continuous on $\domsubset_2$, and both $\dom\backslash\domsubset_1$ and $\dom\backslash\domsubset_2$ have Lebesgue measure zero, then clearly $\mho(\dynb)\supset \domsubset_1\cap \domsubset_2$ and $\dynb\in\maniinf{}$.

Due to the symmetric role of forward and backward branches of trajectories, it is then sufficient to prove the following lemma.
\begin{lemma}
	\label{lem::continuity_F_on_D}
 There exists an open subset $\domsubset\subset \dom$ such that $x\rightarrow\int_{0}^{\infty} \testfuncarg{k}{t}{x}$ is continuous and $\dom\backslash\domsubset$ has measure zero.
\end{lemma}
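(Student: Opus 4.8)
The plan is to take $\domsubset$ to be the basin of attraction, under the gradient ascent flow $\dynb=\nabla V$, of the set $M$ of local maxima of $V$ --- equivalently, the set of $x$ whose forward orbit converges to a local maximum --- and to check the three required properties (openness, full measure, continuity of $x\mapsto\int_0^\infty\testfuncarg{k}{t}{x}\ud t$) in turn, simultaneously for $k=0,1$. Since $V$ is Morse on the compact torus $\dom$ it has finitely many critical points, each hyperbolic for $\dynb=\nabla V$; the local maxima (where $\nabla^2V<0$) are asymptotically stable equilibria, the local minima are totally repelling, and the saddles have at least one positive and one negative Hessian eigenvalue. By \lemref{lemma::critical_pts}, $|\dynb(\state{t}{x})|\to 0$ as $t\to\infty$ for every $x$; since the $\omega$-limit set of a forward orbit on a compact space is connected and is here contained in the (finite, hence discrete) critical set, each forward orbit converges to a single critical point $p(x)$. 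A short argument using that $t\mapsto V(\state{t}{x})$ is nondecreasing and that a Morse local minimum is a strict local minimum rules out $p(x)$ being a local minimum unless $x$ equals that minimum. Writing $W^s(\cdot)$ for the stable set of the flow, this yields the partition $\dom=\big(\bigcup_{m\in M}W^s(m)\big)\sqcup\{\text{local minima}\}\sqcup\bigcup_{p\ \text{saddle}}W^s(p)$, and I set $\domsubset:=\bigcup_{m\in M}W^s(m)$.

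For each $m\in M$, $W^s(m)$ is the region of attraction of an asymptotically stable equilibrium, hence open, so $\domsubset$ is open. Its complement consists of the finitely many local minima together with the stable manifolds of saddles; by the stable manifold theorem (applicable because every critical point is hyperbolic, by the Morse assumption) each such $W^s(p)$ is an injectively immersed submanifold of dimension equal to the number of negative Hessian eigenvalues at $p$, which is at most $\dimn-1$ for a saddle. A finite union of such sets is Lebesgue-null, so $\dom\setminus\domsubset$ has measure zero.

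For continuity, fix $x_0\in\domsubset$, with forward orbit converging to some $m\in M$. Since $\nabla^2V(m)<0$ I may choose $r>0$ so small that $B:=\ball{r}{m}$ is forward-invariant and $\div\dynb=\Laplace V\le-\beta<0$ on $\overline B$ for some $\beta>0$; then for $z\in B$ forward-invariance gives $\jacoarg{t}{z}=\exp(\int_0^t\div\dynb(\state{s}{z})\ud s)\le e^{-\beta t}$, hence $\testfuncarg{k}{t}{z}\le Ce^{-\beta t}$ with $C:=\sup_{\dom}\max_{k}e^{-U_k}<\infty$. Because $\state{\tau}{x_0}\in\interior B$ for some finite $\tau\ge0$, continuity of the flow provides a neighborhood $N\subset\domsubset$ of $x_0$ with $\state{\tau}{y}\in\interior B$ for all $y\in N$, so $\state{t}{y}\in B$ for all $t\ge\tau$ by forward-invariance. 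Using the transfer identity \eqref{eqn::testfunc_trans},
\[
\int_0^\infty \testfuncarg{k}{t}{y}\ud t \;=\; \int_0^{\tau}\testfuncarg{k}{t}{y}\ud t \;+\; \jacoarg{\tau}{y}\int_0^\infty \testfuncargbig{k}{t}{\state{\tau}{y}}\ud t .
\]
The first term is continuous in $y$ (a finite-time integral of a jointly continuous integrand), $\jacoarg{\tau}{y}$ is continuous, the map $z\mapsto\int_0^\infty\testfuncarg{k}{t}{z}\ud t$ is continuous on $B$ by dominated convergence with dominating function $Ce^{-\beta t}$, and composing it with the continuous map $y\mapsto\state{\tau}{y}$ shows the last factor is continuous on $N$. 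Hence $x\mapsto\int_0^\infty\testfuncarg{k}{t}{x}\ud t$ is continuous at $x_0$, and since $x_0\in\domsubset$ was arbitrary, on all of $\domsubset$; this also shows $\dynb=\nabla V\in\maniinf$.

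The main obstacle is the measure-zero step: this is exactly where the Morse hypothesis is used, via the stable manifold theorem, to guarantee that the exceptional stable manifolds of saddles have positive codimension, and one must additionally be careful to exclude orbits limiting onto local minima and to invoke openness of the basins of local maxima. Once $B$ is chosen forward-invariant with strictly negative divergence, the continuity step is routine because a single exponential bound dominates the tail uniformly over a neighborhood, reducing everything to dominated convergence.
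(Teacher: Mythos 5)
Your proof is correct and follows essentially the same route as the paper: take $\domsubset$ to be the union of the basins of attraction of the local maxima of $V$, argue the complement (local minima plus stable sets of saddles) is Lebesgue-null using the Morse/hyperbolicity assumption, and prove continuity by splitting $\int_0^\infty\testfuncarg{k}{t}{\cdot}\ud t$ at a finite time after which the orbit lies in a trapping region around the limiting maximum where $\div\dynb$ is strictly negative, giving a uniform exponential bound on the tail. The only differences are cosmetic: you close the continuity step with dominated convergence and invoke the stable manifold theorem explicitly, where the paper uses an explicit $\eps/3$ estimate and states the measure-zero fact for saddle manifolds more informally.
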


\begin{proof}
	Let us denote the local maxima of $V$ as $\pts_1, \pts_2, \cdots, \pts_r$. 
	The index $r <\infty$ because $V$ is a Morse function and $\dom$ is compact.
	Since $\nabla^2 V(\pts_i)<0$ for $1\le i \le r$, there exists a local neighborhood $\ball{\delta_i}{\pts_i}$ such that $\lim_{t\rightarrow\infty} \state{t}{z} = \pts_i$ if $z\in \ball{\delta_i}{\pts_i}$. Hence, it is not hard to characterize the basin of attraction of $\pts_i$
	\begin{align*}
		O_i = \Big\{\state{t}{x}:\ x\in \ball{\delta_i}{\pts_i},\ t\le 0\Big\}
	\end{align*}
	which is open. Then define an open subset $\domsubset := \cup_{i=1}^{r} O_i$. By \lemref{lemma::limiting_behavior}, we know $\dom\backslash\domsubset$ has Lebesgue measure zero, 
	since there is only a finite number of critical points and the {stable/unstable} manifold of saddle points has measure zero.
	Next, we still need to verify $z\rightarrow\int_{0}^{\infty} \testfuncarg{k}{t}{z}\ud t$ is continuous at an arbitrary point $x\in\domsubset$. Since $\domsubset=\cup_{i=1}^{r} O_i$ and $O_i$ are open and disjoint, it is sufficient to verify this conclusion for $x\in O_i$ for an arbitrary index $i$.
	
	By the smoothness of $V$, there exists a local neighborhood $O_{\delta} := \big\{z\in O_i:\ V(\pts_i) - \delta < V(z)\le V(\pts_i)
	\big\}$ such that 
	$\frac{(\div\dynb)(z)}{(\div\dynb)(\pts_i)} \in (\frac{1}{2}, \frac{3}{2})$ for every $z\in O_{\delta}$.
	{Let $M = \sup\big\{\max\{e^{-U_1(x)},e^{-U_0(x)}\}: x\in \dom\big\}$. We know $M < \infty$ because $\dom$ is compact and $U_0, U_1$ are smooth.}
	Define $\tau := \inf\{t\ge 1:\ \state{t}{x}\in O_{\delta}\}$. 
	Due to the smoothness of $\dynb$, for any integer $j\ge 2$, we can choose a small neighborhood $\ball{\delta_j}{x}$ such that $\state{t}{z}\in O_{\delta}$ for all $t\ge j\tau$ and for all $z\in \ball{\delta_j}{x}$ (note that $O_{\delta}$ is automatically a trapping region of $\dynb$ by construction). Then {when $z\in \ball{\delta_j}{x}$,}
	\begin{align*}
		\int_{j\tau}^{\infty} \testfuncarg{k}{t}{z}\ud t
		& \le \int_{j\tau}^{\infty} M \jacoarg{t}{z}\ud t\\
		& \le \int_{j\tau}^{\infty} M \jacoarg{j\tau}{z} e^{(\div\dynb)(\pts_i)\frac{1}{2}(t - j\tau)}\ud t\\
		&\le 2M \jacoarg{j\tau}{z} \frac{1}{-(\div\dynb)(\pts_i)}\\
		&=  \frac{2M}{-(\div\dynb)(\pts_i)} \Big(\jacoarg{j\tau}{z} - \jacoarg{j\tau}{x} +  \jacoarg{j\tau}{x}\Big).
	\end{align*}
	For an arbitrary $\eps>0$, by \lemref{lem::conv_testfunc}, we can pick $j$ large enough such that $\jacoarg{j\tau}{x} < \frac{-(\div\dynb)(\pts_i)}{2M} \frac{\eps}{6}$. Next we can accordingly pick $\delta_j$ small enough such that $\jacoarg{j\tau}{z} - \jacoarg{j\tau}{x} < \frac{-(\div\dynb)(\pts_i)}{2M} \frac{\eps}{6}$ for all $z\in \ball{\delta_j}{x}$. In this way, we can ensure that 
	\begin{align}
		\label{eqn::tail_int}
		\int_{j\tau}^{\infty} \testfuncarg{k}{t}{z}\le \frac{\eps}{6} + \frac{\eps}{6} = \frac{\eps}{3}, \qquad \forall z\in \ball{\delta_j}{x}.
	\end{align}
	Furthermore, {due to the smoothness of $\dynb$,} we can choose $\delta_j$ (possibly even smaller) so that 
	\begin{align}
		\label{eqn::diff_int}
		\Abs{\int_{0}^{j\tau} \testfuncarg{k}{t}{z}\ud t - \int_{0}^{j\tau} \testfuncarg{k}{t}{x}\ud t} \le \frac{\eps}{3}, \qquad \forall z\in \ball{\delta_j}{x}.
	\end{align}
	The continuity of $z\to \int_{0}^{j\tau} \testfuncarg{k}{t}{z}\ud t$ can be easily established due to the differentiability of $z\to\testfuncarg{k}{t}{z}$. 
	By combining previous results, for each $\eps>0$, we can find a $\delta_j$ such that for any $z\in \ball{\delta_j}{x}$, 
	\begin{align*}
		&\ \Abs{\int_{0}^{\infty} \testfuncarg{k}{t}{z}\ud t - \int_{0}^{\infty} \testfuncarg{k}{t}{x}\ud t}\\
		\le &\ \Abs{\int_{0}^{j\tau} \testfuncarg{k}{t}{z}\ud t - \int_{0}^{j\tau} \testfuncarg{k}{t}{x}\ud t} + \int_{j\tau}^{\infty} \testfuncarg{k}{t}{z}\ud t + \int_{j\tau}^{\infty} \testfuncarg{k}{t}{x}\ud t\ \\  \myle{\eqref{eqn::tail_int},\eqref{eqn::diff_int}}&\ \  \frac{\eps}{3} + \frac{\eps}{3} + \frac{\eps}{3} = \eps.
	\end{align*}
	This proves the continuity of $z\to\int_{0}^{\infty} \testfuncarg{k}{t}{z}\ud t$ at the point $x$.
\end{proof}

\subsection{A remark about the general case}
\label{subsec::remark::existence}

By \propref{prop::invariance_b}, to prove that $\nabla V$ is a zero-variance dynamics, it is equivalent to prove that $\dynb = \sigmamat \nabla V$ is a zero-variance dynamics where $V$ solves \eqref{eqn::poisson}.

Notice that $\jacoarg{t}{x} = e^{\int_{0}^{t} \nabla \cdot (\sigmamat V)\big(\state{s}{x}\big)\ud s}$ and  
\begin{align*}
\intreal \big(\rho_1(\state{t}{x}) - \rho_0(\state{t}{x})\big) \jacoarg{t}{x}\ud t &= \intreal \nabla \cdot (\sigmamat \nabla V) (\state{t}{x}) \jacoarg{t}{x}\ud t\\ 
&= \intreal \td \jacoarg{t}{x}\ud t  = \lim_{t\rightarrow\infty} \jacoarg{t}{x} - \lim_{t\rightarrow-\infty} \jacoarg{t}{x}.
\end{align*}
As long as $\jacoarg{t}{x}$ vanishes when $\abs{t}\to\infty$, such a dynamics $\dynb = \sigmamat \nabla V$ is indeed a zero-variance dynamics. 

{For a point $x\in\dom$, suppose the gradient ascent trajectory under $\nabla V$ will converge to a (non-degenerate) local maximum of $V$, denoted as $\xst$;
by \propref{prop::invariance_b}, the trajectory initiated from $x$ under $\dynb=\sigmamat\nabla V$ is the same and $\state{t}{x} \to \xst$ as $t\to\infty$ under the flow $\dynb$.
Since $\sigmamat$ is strictly positive, 
it also does not change the concavity of local extreme points: when $\xst$ is a local maximum of $V$ (with $\nabla V(\xst) =\vectorzero_{\dimn}$ and $\nabla^2 V(\xst) < 0$), then 
\begin{align*}
	\nabla \dynb(\xst) = \nabla V(\xst) \nabla\sigmamat(\xst)^{T} + \sigmamat(\xst) \nabla^2 V(\xst) = \sigmamat(\xst)\nabla^2 V(\xst) < 0,
\end{align*}
which implies that as $t\to\infty$,
\begin{align*}
	\nabla\cdot\dynb\big(\state{t}{x}\big)\to \nabla\cdot\dynb(\xst) = \tr\big(\nabla\dynb(\xst)\big) < 0.
\end{align*}
By the same argument as in the case $\sigmamat = 1$ (\ie{}, \lemref{lem::conv_testfunc}), we can establish the validity that $\jacoarg{t}{x}\to 0$ as $\abs{t}\to \infty$.
}

\subsection{A remark about the existence of Morse function}
\label{subsec::proof::existence_approximate}

In Poisson's equation \eqref{eqn::poisson}, a Morse function $V$ does not always exist for an arbitrary smooth density function $\rho_1$, e.g., when $\rho_1 = \rho_0$, $\sigmamat=1$, we know $V=0$ is the solution of \eqref{eqn::poisson} but $V=0$ is not a Morse function. However, since Morse functions are dense in $C^\infty(\dom,\Real)$ \cite{Audin2014}, we can always find a Morse function such that the dynamics $\dynb =\nabla V$ behaves almost like a zero-variance dynamics, which is summarized in the next proposition.

\begin{proposition}
\label{prop::existence_approximate}
Suppose $\dom=[0,1]^\dimn$ is a torus and $U_0, U_1\in C^\infty(\dom,\Real)$.
Without loss of generality, assume $\partition_0 = \partition_1 = 1$. 
For any $\eps\in (0,1)$, there exists a Morse function $V$ such that the dynamics $\dynb=\nabla V$ provides an estimator $1-\eps \le \newainf(x) \le 1 + \eps$ for almost all $x\in\dom$ in the infinite-time NEIS method. Consequently, the variance $\newvarinf(\dynb) \le \eps^2$.
\end{proposition}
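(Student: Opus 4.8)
The plan is to run the argument of \propref{prop::existence} with the exact solution of Poisson's equation replaced by a nearby Morse function, and to track the residual this substitution creates. Since $\partition_0=\partition_1=1$ we have $\rho_i=e^{-U_i}$ and $\int_\dom(\rho_1-\rho_0)=0$, so on the torus there is a unique $V_0\in C^\infty(\dom)$ with $\int_\dom V_0=0$ solving $\Laplace V_0=\rho_1-\rho_0$. Morse functions are dense in $C^\infty(\dom,\Real)$ in the $C^2$ topology \cite{Audin2014}, so for a parameter $\delta>0$ to be fixed later I would choose a $C^\infty$ Morse function $V$ with $\norm{V-V_0}_{C^2(\dom)}<\delta$ and set $g:=(\rho_1-\rho_0)-\Laplace V=\Laplace(V_0-V)$, so that $\norm{g}_{L^\infty(\dom)}\le \dimn\,\delta$.

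First I would check that $\dynb:=\nabla V$ is admissible, i.e.\ $\dynb\in\maniinf$ in the sense of \defref{defn::maniinf}, and that $\lim_{\abs{t}\to\infty}\jacoarg{t}{x}=0$ for a.e.\ $x$. The key observation is that \lemref{lemma::critical_pts}, \lemref{lemma::limiting_behavior}, \lemref{lem::conv_testfunc} and \lemref{lem::continuity_F_on_D} use only that $V$ is a Morse function on the compact torus with $U_0,U_1$ smooth — nowhere is it used that $V$ solves Poisson's equation — so they apply verbatim and yield $\dynb\in\maniinf$ together with the vanishing of $\jacoarg{t}{x}$ (and of $\testfuncarg{k}{t}{x}$) as $\abs{t}\to\infty$, off a set of measure zero. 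Exactly as in Step 3 of the proof of \propref{prop::existence}, this makes $\newainf$ well defined a.e.\ and, via \propref{prop::formulation}, an unbiased estimator: $\ee_0\newainf=\partition_1=1$.

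The core step is the residual estimate, mirroring Step 2 of the proof of \propref{prop::existence}. For a.e.\ $x$, since $\td\jacoarg{t}{x}=\Laplace V(\state{t}{x})\jacoarg{t}{x}$ and $\jacoarg{t}{x}\to0$ at $\pm\infty$, we get $\intreal\Laplace V(\state{t}{x})\jacoarg{t}{x}\ud t=0$; writing $\rho_1-\rho_0=\Laplace V+g$ and using $\partition_0=1$ (so that $\testfuncarg{k}{t}{x}=\rho_k(\state{t}{x})\jacoarg{t}{x}$ and $\binf(x)=\intreal\rho_0(\state{t}{x})\jacoarg{t}{x}\ud t$),
\begin{equation*}
\newainf(x)-1=\frac{\intreal(\rho_1-\rho_0)(\state{t}{x})\jacoarg{t}{x}\ud t}{\binf(x)}=\frac{\intreal g(\state{t}{x})\jacoarg{t}{x}\ud t}{\binf(x)}.
\end{equation*}
Setting $m_0:=\inf_{x\in\dom}\rho_0(x)>0$ (positive by compactness of the torus), one has $\intreal\jacoarg{t}{x}\ud t\le m_0^{-1}\binf(x)$, so the numerator is at most $\norm{g}_{L^\infty(\dom)}\,m_0^{-1}\binf(x)$, whence $\abs{\newainf(x)-1}\le \norm{g}_{L^\infty(\dom)}/m_0$ a.e. Choosing $\delta$ so small that $\dimn\,\delta/m_0\le\eps$ gives $1-\eps\le\newainf(x)\le1+\eps$ for a.e.\ $x$, and then, by unbiasedness, $\newvarinf(\dynb)=\ee_0[(\newainf-1)^2]\le\eps^2$.

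The only part that is more than bookkeeping is the second paragraph: one must verify that the limiting-behavior and admissibility lemmas of \appref{sec:proof:zero-variance} genuinely depend only on $V$ being Morse on the torus and not on it solving \eqref{eqn::poisson}. Once that is granted, the rest — the $L^\infty$ bound on $\Laplace(V_0-V)$, the tail estimates already contained in those lemmas, and the elementary variance inequality — is routine, and the density of Morse functions takes care of the existence of $V$.
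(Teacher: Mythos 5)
Your proof is correct and follows essentially the same route as the paper's: perturb an (approximate) solution of the Poisson equation to a nearby Morse function $V$, note that the limiting-behavior/admissibility lemmas only need $V$ Morse on the compact torus, use $\intreal \Laplace V\big(\state{t}{x}\big)\jacoarg{t}{x}\ud t = 0$ together with $\inf_{\dom}\rho_0>0$ to bound $\abs{\newainf-1}$ by the sup-norm residual, and conclude via unbiasedness. The only difference is cosmetic: the paper constructs its near-solution by truncating the Fourier series of $\rho_1-\rho_0$ and inverting the Laplacian explicitly on trigonometric polynomials, whereas you invoke exact solvability of $\Laplace V_0=\rho_1-\rho_0$ on the torus before perturbing to a Morse $V$; both give the same estimate.
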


\begin{proof}
    Denote $\theta := \inf\{ \rho_0(x): x\in \dom\} \equiv e^{-\sup\{U_0(x):\ x\in \dom\}} > 0$ since $U_0$ is smooth and $\dom$ is compact.
    Since both $\rho_k = e^{-U_k}$ are smooth for $k=0,1$, one could approximate $\rho_1 - \rho_0$ by trigonometric polynomials $T_N(x) = \sum_{\abs{\mu}_{\infty}\le N, \mu\neq \vectorzero_\dimn} a_{\mu} e^{i 2\pi \inner{\mu}{x}}$ such that  \begin{align*}
    \norm{(\rho_1 - \rho_0) - T_N}_{C^0(\dom)} < \frac{\eps\theta}{2},
    \end{align*}
    where {$a_\mu = \int_{\dom} e^{-i2\pi\inner{\mu}{x}}(\rho_1-\rho_0)(x)\ud x\in \mathbb{C}$} are Fourier coefficients, $\mu\in \Int^\dimn$ and $N\in \Natural$; see \eg{}, \cite[Theorem 16]{braunling_2004}.
    Let $\Psi_N(x) = \sum_{\abs{\mu}_{\infty} \le N, \mu\neq \vectorzero_\dimn} \frac{a_\mu}{-4\pi^2\abs{\mu}^2} e^{i 2\pi \inner{\mu}{x}}\in C^\infty(\dom,\Real)$. 
    It is clear that $\Laplace \Psi_N = T_N$.
    As Morse functions are dense, we can find a Morse function $V$ such that $\norm{V - \Psi_N}_{C^2(\dom)} < \frac{\eps\theta}{2}$ \cite[Proposition 1.2.4]{Audin2014}, and in particular, $\norm{\Laplace V - \Laplace \Psi_N}_{C^0(\dom)} < \frac{\eps\theta}{2}$.
    Therefore, 
    \begin{align}
    \label{eqn::diff_rho_1_tilde}
    \norm{\Laplace V - (\rho_1 - \rho_0)}_{C^0(\dom)} \le \norm{\Laplace V - \Laplace \Psi_N}_{C^0(\dom)} + \norm{\Laplace \Psi_N - (\rho_1 - \rho_0)}_{C^0(\dom)} \le \eps \theta.
    \end{align}
    
    By \propref{prop::existence}, we know that $\dynb = \nabla V$ is a zero-variance dynamics for $\wt{\rho}_1 := \rho_0 + \Laplace V$. 
    The \propref{prop::existence} is proved under the assumption that densities are positive smooth functions for convenience and it is straightforward to verify that it also holds if $\rho_1$ is an arbitrary smooth function in \propref{prop::existence}. In particular, using the same argument in Appendix \ref{app::proof::existence} Step 2, we have for almost all $x \sim \rho_0$, 
    \begin{align}
    \label{eqn::zero_var_rho_tilde}
    \frac{\int_{\Real} \wt{\rho_1}(\state{t}{x}) \jacoarg{t}{x}\ud t}{\int_{\Real} \rho_0(\state{t}{x}) \jacoarg{t}{x}\ud t} = 1 + \frac{\int_\Real \Laplace V(\state{t}{x})\jacoarg{t}{x}\ud t}{{\int_\Real \rho_0(\state{t}{x})\jacoarg{t}{x}\ud t}} = 1 + \frac{\jacoarg{t}{x}\rvert_{t=-\infty}^{t=\infty}}{\int_\Real \rho_0(\state{t}{x})\jacoarg{t}{x}\ud t} = 1.
    \end{align}
    Hence, 
    \begin{align*}
        \abs{\newainf(x) - 1} & \myeq{\eqref{eqn::ainf}} \abs{\frac{\int_{\Real} \rho_1(\state{t}{x}) \jacoarg{t}{x}\ud t}{\int_{\Real} \rho_0(\state{t}{x}) \jacoarg{t}{x}\ud t} - 1} \\
        &= \abs{\frac{\int_{\Real} \wt{\rho}_1(\state{t}{x}) \jacoarg{t}{x}\ud t}{\int_{\Real} \rho_0(\state{t}{x}) \jacoarg{t}{x}\ud t} + \frac{\int_{\Real} (\rho_1-\wt{\rho}_1)(\state{t}{x}) \jacoarg{t}{x}\ud t}{\int_{\Real} \rho_0(\state{t}{x}) \jacoarg{t}{x}\ud t} - 1}\\
        & \myeq{\eqref{eqn::zero_var_rho_tilde}} \abs{\frac{\int_{\Real} (\rho_1-\wt{\rho}_1)(\state{t}{x}) \jacoarg{t}{x}\ud t}{\int_{\Real} \rho_0(\state{t}{x}) \jacoarg{t}{x}\ud t}}\\
        & \le \frac{\int_{\Real} \abs{(\rho_1-\wt{\rho}_1)(\state{t}{x})} \jacoarg{t}{x}\ud t}{\int_{\Real} \rho_0(\state{t}{x}) \jacoarg{t}{x}\ud t} \\
        & \myle{\eqref{eqn::diff_rho_1_tilde}} \frac{\int_\Real \eps\theta \jacoarg{t}{x}\ud t}{\int_\Real \theta \jacoarg{t}{x}\ud t} = \eps.
    \end{align*}
    Since we assumed $\partition_1 = 1$, 
    \begin{align*}
        \newvarinf(\dynb) &= \ee_0[\abs{\newainf}^2] - \big(\partition_1\big)^2
        = {\ee_0\abs{\newainf-\partition_1}^2 \le \eps^2.}
    \end{align*}
\end{proof}

\subsection{Solution of Poisson's equation \eqref{eqn::poisson} for Gaussian mixtures}

\begin{lemma}
	One solution of the Poisson's equation $\Laplace V = C e^{-\frac{\abs{x-\mu}^2}{2\sigma^2}}$ with $\dimn\ge 2$ on $\dom=\Rd$ is 
	$V(x) = f(\abs{x - \mu})$, where the function $f:\Real^{+}\rightarrow\Real$ has the derivative 
	\begin{align*}
		f'(r) = C 2^{\dimn/2-1} \sigma^d r^{1-d} \int_{0}^{\frac{r^2}{2\sigma^2}} t^{\dimn/2-1} e^{-t}\ud t \equiv C 2^{\dimn/2-1} \sigma^d r^{1-d}\lowergamma{\dimn/2}{\frac{r^2}{2\sigma^2}},
	\end{align*}
where $\lowergamma{a}{x}:=\int_{0}^x t^{a-1}e^{-t}\ud t$ is the lower incomplete gamma function.
\end{lemma}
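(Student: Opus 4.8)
The plan is to look for a radially symmetric solution centred at $\mu$. After the translation $x\mapsto x-\mu$ we may assume $\mu=\vectorzero_{\dimn}$ and seek $V(x)=f(r)$ with $r=\abs{x}$. Recall that for a radial function one has $\Laplace V = f''(r)+\tfrac{\dimn-1}{r}f'(r) = r^{1-\dimn}\big(r^{\dimn-1}f'(r)\big)'$, so the equation $\Laplace V = C\, e^{-r^2/(2\sigma^2)}$ is equivalent to the first-order ODE $\big(r^{\dimn-1}f'(r)\big)' = C\, r^{\dimn-1} e^{-r^2/(2\sigma^2)}$ on $(0,\infty)$.

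Next I would integrate this identity from $0$ to $r$. Since the candidate $f'$ below satisfies $r^{\dimn-1}f'(r)\to 0$ as $r\downarrow 0$ (checked in the last step), the boundary term vanishes and one obtains $r^{\dimn-1}f'(r) = C\int_0^r s^{\dimn-1}e^{-s^2/(2\sigma^2)}\ud s$. The change of variables $t=s^2/(2\sigma^2)$ sends $s^{\dimn-1}\ud s$ to $2^{\dimn/2-1}\sigma^{\dimn}\,t^{\dimn/2-1}\ud t$, so the right-hand side equals $C\,2^{\dimn/2-1}\sigma^{\dimn}\lowergamma{\dimn/2}{r^2/(2\sigma^2)}$; dividing by $r^{\dimn-1}$ gives exactly the stated formula for $f'$, and any antiderivative $f$ of it (defined on $[0,\infty)$) yields a candidate $V(x)=f(\abs{x})$.

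It then remains to check that this $V$ really is a classical ($C^2$) solution on all of $\Rd$. Away from the origin this is immediate, since the integrand $s^{\dimn-1}e^{-s^2/(2\sigma^2)}$ is smooth and $r^{1-\dimn}$ is smooth there, so the reduction above is reversible. At the origin I would use the small-argument asymptotics $\lowergamma{\dimn/2}{u}=\tfrac{2}{\dimn}u^{\dimn/2}\big(1+o(1)\big)$ as $u\downarrow0$, which give $f'(r)=\tfrac{C}{\dimn}\,r\,(1+o(1))$. Hence $r^{\dimn-1}f'(r)\to0$ (which retroactively justifies the integration from $0$), $f'$ extends continuously to $r=0$ with $f'(0)=0$, $f'(r)/r$ has a finite limit, so $V\in C^1$ near $0$; a short expansion of $f''$ near $0$ (using $\dimn\ge 2$) then shows $V\in C^2$ with $\Laplace V(0)=C=C\,e^{-\abs{0}^2/(2\sigma^2)}$. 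Combining the two regions, $V$ solves $\Laplace V = C\, e^{-\abs{x-\mu}^2/(2\sigma^2)}$ on $\Rd$.

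The only genuine obstacle is this regularity bookkeeping at $r=0$: one must be sure that the $r^{1-\dimn}$ prefactor in the formula for $f'$ does not create a singularity, and that $f(\abs{\,\cdot\,})$ is twice differentiable at the origin despite $\abs{\,\cdot\,}$ not being smooth there. The small-$u$ behaviour of the lower incomplete gamma function resolves both points, and the remaining computations are routine; in particular, no uniqueness statement is required, since the lemma only asserts that this is \emph{one} solution.
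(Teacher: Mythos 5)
Your proposal is correct and follows essentially the same route as the paper's proof: reduce to the radial ODE $f''(r)+\tfrac{\dimn-1}{r}f'(r)=Ce^{-r^2/(2\sigma^2)}$, integrate $\big(r^{\dimn-1}f'\big)'$ from $0$ to $r$, and substitute $t=s^2/(2\sigma^2)$ to obtain the lower incomplete gamma function. The only difference is that you additionally verify $C^2$ regularity of $V$ at the origin via the small-argument asymptotics of $\lowergamma{\dimn/2}{\cdot}$, a point the paper's terser computation leaves implicit.
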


\begin{proof}
	Without loss of generality, let $\mu=\vectorzero_{\dimn}$. Then a natural radial solution is given as $V(x) = f(\abs{x})$ for some scalar-valued function $f$. The above Poisson's equation becomes
	\begin{align*}
		f''(\abs{x}) + f'(\abs{x})\frac{\dimn-1}{\abs{x}} = C e^{-\frac{\abs{x}^2}{2\sigma^2}}.
	\end{align*}
	By some straightforward computation,
	\begin{align*}
		f'(r) &= C r^{-(d-1)}\int_{0}^{r} s^{\dimn-1} e^{-s^2/(2\sigma^2)}\ud s 
		= C 2^{\dimn/2-1} \sigma^d r^{1-d} \underbrace{\int_{0}^{\frac{r^2}{2\sigma^2}} t^{\dimn/2-1} e^{-t}\ud t}_{\equiv \lowergamma{\dimn/2}{\frac{\abs{r}^2}{2\sigma^2}}}.
	\end{align*}
\end{proof}

\begin{proposition}
	\label{prop::dynb_poisson_gaussian}
	Suppose $V$ solves the following Poisson's equation on $\dom=\Rd$
	\begin{align*}
		\Laplace V = \rho_1 - \rho_0
	\end{align*}
	where 
	\begin{align*}
		\rho_0(x) &= \frac{1}{\sqrt{2\pi}^d} \exp\Big(-\frac{\abs{x}^2}{2}\Big),\\
		\rho_1(x) &=  \sum_{i=1}^{n} \omega_i \frac{1}{\sqrt{2\pi\sigma_i^2}^d} \exp\Big(-\frac{\abs{x-\mu_i}^2}{2\sigma_i^2}\Big), \qquad n\in \Natural,\ \mu_i\in\Rd, \sigma_i\in\Real^{+},\ \forall 1\le i \le n, 
	\end{align*}
	and $\mu_i\neq \mu_j$ if $i\neq j$.
	Then one solution for the gradient flow dynamics $\dynb = \nabla V$ is given as 
	\begin{align}
		\label{eqn::dynb_poisson_gaussian}
		\begin{aligned}
			\dynb(x) = 2^{-1}\pi^{-\dimn/2} \bigg(\sum_{i=1}^{n} \omega_i \abs{x-\mu_i}^{-\dimn} \lowergamma{\dimn/2}{\frac{\abs{x-\mu_i}^2}{2\sigma_i^2}} (x - \mu_i) 
			- \abs{x}^{-\dimn} \lowergamma{\dimn/2}{\frac{\abs{x}^2}{2}} x\bigg).
		\end{aligned}
	\end{align}
\end{proposition}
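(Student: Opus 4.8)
The plan is to combine the radial solution formula just established with the linearity of the Laplacian. First I would rewrite the source as $\rho_1-\rho_0 = \sum_{i=1}^n C_i\, e^{-|x-\mu_i|^2/(2\sigma_i^2)} - C_0\, e^{-|x|^2/2}$ with $C_i = \omega_i (2\pi\sigma_i^2)^{-d/2}$ and $C_0 = (2\pi)^{-d/2}$, and use that $\Delta$ commutes with translations to conclude that $V(x) = \sum_{i=1}^n V_i(x) - V_0(x)$ solves $\Delta V = \rho_1-\rho_0$, where each $V_i$ (resp.\ $V_0$) is the radial solution $V_i(x)=f_i(|x-\mu_i|)$ of $\Delta V_i = C_i e^{-|x-\mu_i|^2/(2\sigma_i^2)}$ (resp.\ $V_0$) furnished by the preceding lemma. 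Differentiating termwise is legitimate since each term is $C^1$.

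Next I would compute the gradient of a single radial piece. By the chain rule $\nabla V_i(x) = f_i'(|x-\mu_i|)\,\frac{x-\mu_i}{|x-\mu_i|}$, and plugging in the lemma's expression for $f_i'$ gives $\nabla V_i(x) = C_i\, 2^{d/2-1}\sigma_i^d\, |x-\mu_i|^{-d}\,\gamma\big(d/2, |x-\mu_i|^2/(2\sigma_i^2)\big)\,(x-\mu_i)$. Substituting $C_i = \omega_i(2\pi)^{-d/2}\sigma_i^{-d}$ and simplifying the numerical prefactor, $(2\pi)^{-d/2}\,2^{d/2-1} = 2^{-1}\pi^{-d/2}$ and $\sigma_i^{-d}\sigma_i^d = 1$, so $\nabla V_i(x) = 2^{-1}\pi^{-d/2}\,\omega_i\, |x-\mu_i|^{-d}\,\gamma\big(d/2, |x-\mu_i|^2/(2\sigma_i^2)\big)\,(x-\mu_i)$. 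The same computation with $\omega=1$, $\sigma=1$, $\mu=0$ gives $\nabla V_0(x) = 2^{-1}\pi^{-d/2}\,|x|^{-d}\,\gamma(d/2,|x|^2/2)\,x$. Summing $\sum_i \nabla V_i - \nabla V_0$ reproduces exactly \eqref{eqn::dynb_poisson_gaussian}, hence $\dynb = \nabla V$ as claimed.

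For completeness I would include two routine checks. The first underlies the preceding lemma: $f$ solves $f'' + \frac{d-1}{r}f' = C e^{-r^2/(2\sigma^2)}$ because multiplying by the integrating factor $r^{d-1}$ turns the left side into $(r^{d-1}f')'$; integrating from $0$ with the regularity condition $f'(0)=0$ yields $f'(r) = C r^{1-d}\int_0^r s^{d-1}e^{-s^2/(2\sigma^2)}\,\ud s$, and the change of variables $t=s^2/(2\sigma^2)$ produces the stated incomplete-gamma form. The second is that, despite the apparent $|x-\mu_i|^{-d}$ factor, $\dynb$ is actually continuous (indeed smooth) at each center $\mu_i$ and at the origin: since $\gamma(d/2,\rho)\sim \tfrac{2}{d}\rho^{d/2}$ as $\rho\to 0^+$, one has $|x-\mu_i|^{-d}\gamma\big(d/2,|x-\mu_i|^2/(2\sigma_i^2)\big)\to \tfrac{2}{d}(2\sigma_i^2)^{-d/2}$ there.

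There is no serious obstacle here: the statement is essentially bookkeeping on top of the preceding lemma and linearity, so the main thing to get right is the constant-chasing in the Gaussian normalizations. The only genuine caveat is the restriction $d\ge 2$ inherited from that lemma; the one-dimensional case must be handled separately, using the elementary antiderivative of the Gaussian in place of the incomplete gamma function (and for $d=2$ one uses $\sum_i\omega_i=1$ so that the potential does not grow, though its gradient $\dynb$ is well-behaved regardless). Note finally that uniqueness of $V$, hence of $\dynb$, is neither claimed nor needed, since $V$ is determined only up to an additive harmonic function and we merely exhibit one valid choice.
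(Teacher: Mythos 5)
Your proposal is correct and follows essentially the same route as the paper: apply the preceding radial lemma to each Gaussian term by linearity and use $\nabla V_i(x)=f_i'(\abs{x-\mu_i})\frac{x-\mu_i}{\abs{x-\mu_i}}$, with the prefactor bookkeeping $(2\pi)^{-\dimn/2}\,2^{\dimn/2-1}\sigma_i^{-\dimn}\sigma_i^{\dimn}=2^{-1}\pi^{-\dimn/2}$ carried out correctly. Your added checks (derivation of $f'$ via the integrating factor $r^{\dimn-1}$, smoothness of $\dynb$ at the centers, and the $\dimn\ge 2$ caveat from the lemma) are sound, just more detailed than the paper's one-line argument.
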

\begin{proof}
	We just need to apply the last lemma and the formula $\nabla V(x) = \frac{f'(\abs{x})}{\abs{x}} x$ if $V(x) = f(\abs{x})$.
\end{proof}

\begin{proposition}
	\label{prop::dynb_poisson_gaussian_limit}
	Suppose $\dynb$ is given in \eqref{eqn::dynb_poisson_gaussian}. Then 
	\begin{align*}
		&\lim_{\big(\max_{i=1}^{n} \sigma_i\big)\rightarrow 0}\dynb(x) \\
		=& \left\{
		\begin{aligned}
			2^{-1}\pi^{-\dimn/2}\bigg(\sum_{i\neq j} \omega_i \abs{x-\mu_i}^{-\dimn} \Gamma(\dimn/2) (x - \mu_i) 
			- \abs{x}^{-\dimn} \lowergamma{\dimn/2}{\frac{\abs{x}^2}{2}} x\bigg), \qquad \text{ if } x = \mu_j,\\
			2^{-1}\pi^{-\dimn/2} \bigg(\sum_{i}^{} \omega_i \abs{x-\mu_i}^{-\dimn} \Gamma(\dimn/2) (x - \mu_i) 
			- \abs{x}^{-\dimn} \lowergamma{\dimn/2}{\frac{\abs{x}^2}{2}} x\bigg), \qquad \text{otherwise}.
		\end{aligned}\right.
	\end{align*}
The limiting dynamics $x\to \lim_{\big(\max_{i=1}^{n} \sigma_i\big)\rightarrow 0}\dynb(x)$ is continuous on the region $\Rd\backslash\{\mu_i\}_{i=1}^{n}$.
\end{proposition}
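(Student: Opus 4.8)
The plan is to pass to the limit term by term in \eqref{eqn::dynb_poisson_gaussian}, using only two elementary facts about the lower incomplete gamma function: $\lim_{t\to\infty}\lowergamma{a}{t}=\Gamma(a)$, and the crude bound $\lowergamma{a}{t}\le t^{a}/a$ valid for all $t\ge 0$ (since $e^{-s}\le 1$ on $[0,t]$), which controls the behaviour near $t=0$.

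First I would fix $x\in\Rd$ and treat each of the $n+1$ terms separately as $\max_{i}\sigma_i\to 0$. The term coming from $\rho_0$, namely $-2^{-1}\pi^{-\dimn/2}\abs{x}^{-\dimn}\lowergamma{\dimn/2}{\abs{x}^2/2}\,x$, does not involve any $\sigma_i$, hence is unchanged; it is the gradient of a smooth radial potential centred at the origin, so its scalar prefactor $\abs{x}^{-\dimn}\lowergamma{\dimn/2}{\abs{x}^2/2}$ stays bounded as $x\to 0$ by the bound above and the term extends continuously by $\vectorzero_{\dimn}$ at $x=0$. For the $i$-th mixture term: if $x\ne\mu_i$ then $\abs{x-\mu_i}^2/(2\sigma_i^2)\to+\infty$, so $\lowergamma{\dimn/2}{\abs{x-\mu_i}^2/(2\sigma_i^2)}\to\Gamma(\dimn/2)$ and the term converges to $\omega_i\,2^{-1}\pi^{-\dimn/2}\abs{x-\mu_i}^{-\dimn}\Gamma(\dimn/2)(x-\mu_i)$. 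If instead $x=\mu_j$, I would argue the $j$-th term is already $\vectorzero_{\dimn}$ for every $\sigma_j>0$: with $r=\abs{x-\mu_j}$, the bound $\lowergamma{\dimn/2}{r^2/(2\sigma_j^2)}\le \tfrac{2}{\dimn}\big(r^2/(2\sigma_j^2)\big)^{\dimn/2}$ shows the $j$-th term has norm $\le C(\sigma_j)\,r\to 0$ as $x\to\mu_j$, so, being the gradient of the smooth radial function centred at $\mu_j$, its value at $\mu_j$ must be $\vectorzero_{\dimn}$. Summing these termwise limits reproduces exactly the two cases of the statement: for $x\notin\{\mu_i\}_{i=1}^n$ all $n$ mixture terms survive with the complete $\Gamma(\dimn/2)$, while for $x=\mu_j$ the $j$-th one drops out and the $i\ne j$ terms survive with $\Gamma(\dimn/2)$.

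It remains to check that the limiting field $g$ is continuous on $\Rd\setminus\{\mu_i\}_{i=1}^n$. On that open set each map $x\mapsto\abs{x-\mu_i}^{-\dimn}(x-\mu_i)$ is a quotient of continuous functions with nonvanishing denominator, hence continuous; and $x\mapsto\abs{x}^{-\dimn}\lowergamma{\dimn/2}{\abs{x}^2/2}\,x$ is continuous away from the origin and, by the removable-singularity argument above, also at $x=0$ whenever $0\notin\{\mu_i\}$. Thus $g$ is a finite linear combination of functions continuous on $\Rd\setminus\{\mu_i\}_{i=1}^n$, which proves the claim. I expect the only delicate bookkeeping to be at the base points $\mu_j$ (and at the origin for the $\rho_0$ term), i.e. verifying that the offending singular factors are genuinely removable; that is precisely where I would lean on $\lowergamma{a}{t}\le t^{a}/a$ together with the fact that each summand is the gradient of a genuinely smooth potential, while everything else is a plain pointwise limit.
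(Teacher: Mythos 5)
Your proposal is correct and follows essentially the same route as the paper's proof: pass to the limit term by term, using $\lowergamma{\dimn/2}{t}\to\Gamma(\dimn/2)$ as $t\to\infty$ for every term with $x\neq\mu_i$, and observe that the $j$-th summand contributes $\vectorzero_{\dimn}$ at $x=\mu_j$. You in fact supply details the paper leaves implicit, namely the small-argument bound $\lowergamma{a}{t}\le t^a/a$ justifying the removable singularity at each $\mu_j$ (and at the origin for the $\rho_0$ term) and the explicit verification of continuity on $\Rd\backslash\{\mu_i\}_{i=1}^{n}$.
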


\begin{proof}
If $x = \mu_j$, then 
\begin{align*}
	\dynb(x) = \frac{1}{2\pi^{\dimn/2}}\bigg(\sum_{i\neq j} \omega_i \abs{x-\mu_i}^{-\dimn} \lowergamma{\dimn/2}{\frac{\abs{x-\mu_i}^2}{2\sigma_i^2}} (x - \mu_i) 
	- \abs{x}^{-\dimn} \lowergamma{\dimn/2}{\frac{\abs{x}^2}{2}} x\bigg).
\end{align*}
When $\sigma_i\rightarrow 0$ for all $i$,we know $\lowergamma{\dimn/2}{\frac{\abs{\mu_j-\mu_i}^2}{2\sigma_i^2}} \rightarrow \Gamma(\dimn/2)$ when $i\neq j$ and hence we have the above result.
Similarly, we can obtain the expression when $x\neq \mu_j$ for any $j$.
\end{proof}

\subsection{Example: Poisson's equation yields a zero-variance dynamics}
\label{subsec::eg_poisson}
{
The example for \figref{fig::torus} is 
\begin{align}
\label{eqn::torus_eg}
	\begin{aligned}
		\rho_0(x) = e^{-U_0(x)} &= 1,\\
		\rho_1(x)\propto e^{-U_1(x)} &= \frac{\phi\big(x-\begin{bsmallmatrix} 0.3\\ 0.3\end{bsmallmatrix}\big) 
			+ \phi\big(x-\begin{bsmallmatrix}0.7\\ 0.3\end{bsmallmatrix}\big) 
			+ \phi(x-\begin{bsmallmatrix}0.3\\ 0.7\end{bsmallmatrix}\big)}{3},\\
		\phi(x) &= e^{2\cos(2\pi x_1) + 2 \cos(2\pi x_2)}.
	\end{aligned}
\end{align}
}

The periodic boundary condition in \propref{prop::existence} helps to ease the technicalities in proving that the gradient dynamics $\dynb=\nabla V$ from solving the Poisson's equation \eqref{eqn::poisson} is a zero-variance dynamics, by removing the effect from the boundary $\partial\dom$. The same conclusion, however, should hold if $V$ solves the Poisson's equation with Neumann boundary condition: 
\begin{align}
	\label{eqn::poisson_vn}
	\Laplace V = \rho_1 -\rho_0,\qquad \nabla V\cdot \nn{} = 0 \text{ on } \partial\dom,
\end{align}
{where $\nn{}$ is the normal vector of the boundary $\partial\dom$.}
We consider the same model \eqref{eqn::torus_eg} and $\dom = (0,1)^2$. 
The potential $V$ and flowlines of $\dynb=\nabla V$ are visualized in \figref{fig::vn_2d::V} and 
we can numerically verify that {$\newainf(x)=\ratio$} for almost all $x\in \dom$.

\begin{figure}[h!]
	\centering
     \includegraphics[width=0.5\textwidth]{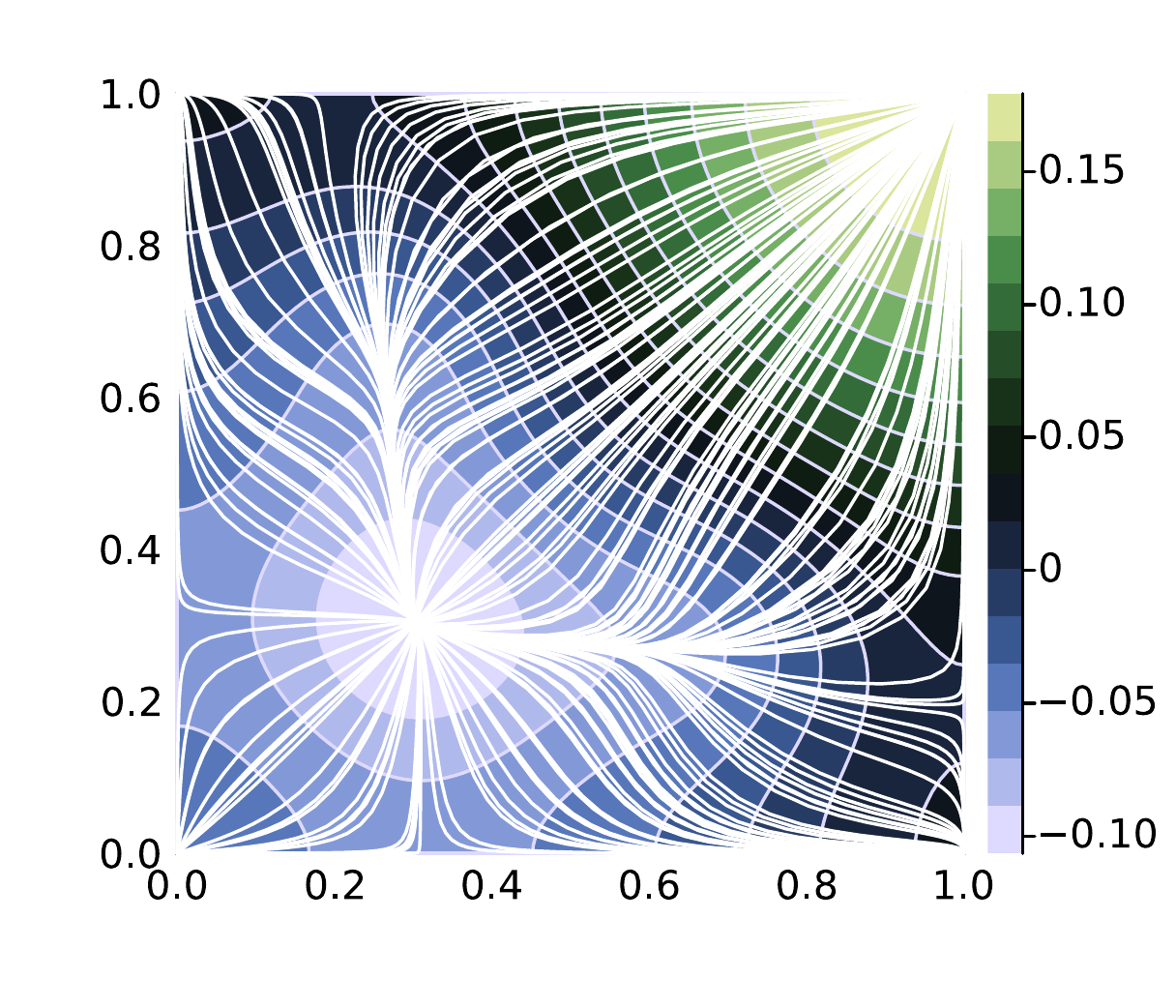}
	\caption{Contour plot of $V$ and flowlines of $\dynb = \nabla V$ for the model \eqref{eqn::torus_eg} on the domain $\dom=(0,1)^2$ with Neumann boundary condition.}
	\label{fig::vn_2d::V}
\end{figure}

\subsection{Non-uniqueness of zero-variance dynamics}

Recall from \propref{prop::invariance_b} that there are certain degrees of freedom to choose the dynamics:
for a given $\dynb_1$, if we choose $\dynb_2 = \alpha \dynb_1$
where $\alpha\in C^{\infty}(\dom,\Real)$ is strictly positive,
then this function $\alpha$ can be absorbed into the time rescaling
and it does not affect the variance of the sampling scheme.
However, even if we remove this parameterization redundancy,
zero-variance dynamics may still not be unique, \eg{}, due to the {geometric rotational symmetry}.

\begin{proposition}[Non-uniqueness]
	\label{prop::non-unique}
	For given $\rho_0$ and $\rho_1$, there might exist more than one zero-variance dynamics (let us say $\dynb_1, \dynb_2$) but there is no scalar-valued function $\alpha$ such that $\dynb_2 = \alpha \dynb_1$.
\end{proposition}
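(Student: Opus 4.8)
The statement is an existence claim, so the plan is to exhibit a single pair $(\rho_0,\rho_1)$ together with two zero-variance dynamics that are not related by a positive scalar factor, exploiting rotational symmetry as the proposition hints. Concretely, I would take $\dom=\Rd$ with $\dimn\ge 2$, let $\rho_0=\gauss{\vectorzero_{\dimn}}{\idmat_{\dimn}}$ and $\rho_1\propto\gauss{\vectorzero_{\dimn}}{\sigma^2\idmat_{\dimn}}$ with $\sigma\neq 1$ (so the example is not a trivial one), and compare the radial field $\dynb_1(x)=x$ with the spiralling field $\dynb_2(x):=(\idmat_{\dimn}+cA)x$, where $c>0$ and $A\in\Real^{\dimn\times\dimn}$ is a fixed nonzero antisymmetric matrix. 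Both fields are smooth with constant, hence bounded, Jacobian, so $\dynb_1,\dynb_2\in\mani{}$; what remains is to check that each is zero-variance and that no scalar function $\alpha$ satisfies $\dynb_2=\alpha\dynb_1$.

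First I would record that $\dynb_1$ is zero-variance: its flow is $\state{t}{x}=e^{t}x$ and $\jacoarg{t}{x}=e^{\dimn t}$, so the substitution $v=e^{t}\abs{x}$ rewrites \eqref{eqn::ainf} as the ratio of the radial integrals $\int_0^\infty\rho_1(v\hat x)v^{\dimn-1}\ud v$ and $\int_0^\infty\rho_0(v\hat x)v^{\dimn-1}\ud v$ with $\hat x=x/\abs{x}$; by rotational invariance these no longer depend on $\hat x$, the unit-sphere area factor cancels, and the ratio equals $\partition_1/\partition_0=\partition_1$ for every $x\neq\vectorzero_{\dimn}$ (and $\dynb_1\in\maniinf$ with $\effdom(\dynb_1)=\Rd\backslash\{\vectorzero_{\dimn}\}$ is precisely the Gaussian example of \appref{sec::space_inf}). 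Then I would show that $\dynb_2$ produces the exact same NEIS integrand: since $\idmat_{\dimn}$ and $A$ commute, the flow of $\dynb_2$ is $\state{t}{x}=e^{t(\idmat_{\dimn}+cA)}x=e^{t}\,e^{ctA}x$ with $e^{ctA}$ orthogonal (as $A$ is antisymmetric), so $\jacoarg{t}{x}=e^{t\tr(\idmat_{\dimn}+cA)}=e^{\dimn t}$, identical to that of $\dynb_1$, and $\rho_k(\state{t}{x})=\rho_k(e^{t}e^{ctA}x)=\rho_k(e^{t}x)$ for $k=0,1$ by rotation invariance, so $\testfuncarg{k}{t}{x}=e^{-U_k(\state{t}{x})}\jacoarg{t}{x}$ coincides pointwise with the corresponding quantity for $\dynb_1$. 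Hence the integrability and local-continuity requirements of \defref{defn::maniinf} carry over verbatim, $\dynb_2\in\maniinf$ with $\effdom(\dynb_2)=\Rd\backslash\{\vectorzero_{\dimn}\}$, and $\newainf_{\dynb_2}\equiv\newainf_{\dynb_1}\equiv\partition_1$ almost everywhere, so $\dynb_2$ is zero-variance as well.

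Finally I would rule out a reparametrization. If $\dynb_2=\alpha\dynb_1$ for some $\alpha\in C^\infty(\dom,\Real)$, then $(\idmat_{\dimn}+cA)x=\alpha(x)x$, i.e.\ $cAx=(\alpha(x)-1)x$, which forces $Ax$ to be parallel to $x$ for every $x\neq\vectorzero_{\dimn}$; choosing $x_0\notin\ker A$ (possible since $A\neq 0$), antisymmetry gives $\inner{Ax_0}{x_0}=x_0^{T}Ax_0=0$ while $Ax_0\neq\vectorzero_{\dimn}$, so $Ax_0$ is a nonzero vector orthogonal to $x_0$ and cannot be a scalar multiple of it — a contradiction. (Equivalently, the flowlines of $\dynb_1$ are rays and those of $\dynb_2$ are logarithmic spirals, which cannot be reparametrizations of one another by \propref{prop::invariance_b}.) This gives two genuinely distinct zero-variance dynamics for the same $(\rho_0,\rho_1)$, which proves the proposition; more generally the same device shows $g_{*}\dynb$ is zero-variance whenever $\dynb$ is and $g$ is any isometry fixing both $\rho_0$ and $\rho_1$. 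I do not anticipate a genuine obstacle here: the only slightly technical point is the membership $\dynb_2\in\maniinf$, and it is automatic because $\dynb_2$ and $\dynb_1$ generate literally the same functions $\testfuncarg{k}{t}{\cdot}$, so no new estimates are required.
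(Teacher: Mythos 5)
Your proof is correct and takes a genuinely different route from the paper's.

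The paper's proof works in $\dimn=2$ and constructs a non-Gaussian target: $e^{-U_1(x)} = e^{-U_0(x)} + \tfrac{1}{2\pi}x_1 x_2 e^{-x_1^4 - x_2^4}$, with $\rho_0$ the standard Gaussian. It then checks that the two constant fields $\dynb=\basis_1$ and $\dynb=\basis_2$ are both zero-variance; the mechanism is that the perturbation is odd in $x_1$ and in $x_2$ separately, so integrating along either coordinate axis kills it. This gives two orthogonal constant fields for a single, somewhat ad hoc, $(\rho_0,\rho_1)$ pair. Your construction instead keeps both densities isotropic Gaussians (with $\sigma\neq 1$), exhibits the radial field $\dynb_1(x)=x$ and the spiralling field $\dynb_2(x)=(\idmat_\dimn+cA)x$ with $A$ skew-symmetric, and shows that both have literally identical $\testfuncarg{k}{t}{\cdot}$ because $e^{ctA}$ is an isometry preserving $\rho_0$ and $\rho_1$. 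The exclusion of a scalar reparametrization via $x^TAx=0$ is clean. Your approach works uniformly in all dimensions $\dimn\ge 2$, requires no bespoke perturbation, and generalizes immediately (the closing remark that $g_*\dynb$ is zero-variance for any isometry $g$ stabilizing both densities is a nice bonus the paper does not state). The paper's example buys something yours does not: it demonstrates non-uniqueness even for a pair $(\rho_0,\rho_1)$ with no common continuous symmetry group, so the phenomenon is not merely a reflection of an obvious symmetry. Both proofs establish the proposition.

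One cosmetic slip: in the radial-substitution step you write the integrals as $\int_0^\infty \rho_k(v\hat x)v^{\dimn-1}\ud v$ and then claim the ratio is $\partition_1/\partition_0$; with the normalized densities $\rho_k$ the ratio would be $1$, so you mean the unnormalized $e^{-U_k}$ there. The conclusion $\newainf\equiv\partition_1$ is of course unaffected.
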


\begin{proof}
We construct an example to prove the non-uniqueness: let $\dimn=2$, $U_0(x) = \abs{x}^2/2 {+\ln(2\pi)}$ and $U_1$ be given by 
\begin{align*}
	\exp\big(-U_1(x)\big) &= \exp\big(-U_0(x)\big) + {\frac{1}{2\pi}}x_1 x_2 \exp\big(-x_1^4 - x_2^4\big).
\end{align*}
We can easily verify that $\abs{x} e^{-x^4} < \frac{3}{4} e^{-x^2/2}$ for any $x\in \Real$. Then we know 
$\frac{7}{16} e^{-U_0(x)} < e^{-U_1(x)} < \frac{25}{16} e^{-U_0(x)}$.
Therefore, $U_1$ is clearly well-defined and $\ratio = 1$.
For the dynamics $\dynb(x) = \begin{bsmallmatrix} v_1 \\ v_2 \end{bsmallmatrix}$ with $v_1^2 + v_2^2 > 0$, we have $\jacoarg{t}{x} = 1$ for any $x\in\dom, t\in \Real$, and 
\begin{align*}
	& \intreal e^{-U_1\big(\state{t}{x}\big)} \jacoarg{t}{x} \ud t\\ 
	=& \intreal e^{-U_0\big(\state{t}{x}\big)} \jacoarg{t}{x} \ud t \\
	&\qquad +  {\frac{1}{2\pi}}\intreal (x_1 + v_1 t) (x_2 + v_2 t) \exp\big(-(x_1+v_1 t)^4 - (x_2 + v_2 t)^4\big) \ud t.
\end{align*}
When either $v_1 = 0$ or $v_2 = 0$, we can easily verify that
\begin{align*}
	\frac{\intreal e^{-U_1\big(\state{t}{x}\big)} \jacoarg{t}{x} \ud t}{\intreal e^{-U_0\big(\state{t}{x}\big)} \jacoarg{t}{x}\ud t} = 1,\qquad \forall x\in \Real^2.
\end{align*}
Therefore, the variance is zero for two dynamics with orthogonal directions $\dynb = \begin{bsmallmatrix} 1 \\ 0 \end{bsmallmatrix}$ and $\dynb = \begin{bsmallmatrix} 0 \\ 1 \end{bsmallmatrix}$.
It is clear that there is no scalar-valued function $\alpha$ such that $\dynb_2 = \alpha \dynb_1$, and thus the non-uniqueness is established.
\end{proof}

\subsection{Connection to the Beckmann’s problem.}
\label{subsec::beckmann}

The Poisson's equation \eqref{eqn::poisson} with $\sigmamat=1$ is the Euler-Lagrange equation associated with 
\begin{align}
	\label{eq:beckman}
	\min \int_{\dom} \abs{\dynb(x)}^p \ud x \ \ \ \text{subject to}\ \ \ \div\dynb = \rho_1 - \rho_0,
\end{align}
when $p = 2$.
The variational problem in~\eqref{eq:beckman} is known as Beckman's problem of continuous transportation~\cite{Beckmann_1952_continuous}; when $p=1$, it is also related to optimal transport in $W_1$ Wasserstein distance \cite{santambrogio_dacorogna-moser_2014,santambrogio_2015_optimal}.

\section{Explicitly solvable zero-variance dynamics}
\label{app::eg}

In this section, we provide some examples with explicitly solvable zero-variance dynamics. Throughout this section, we consider $\dom = \Rd$.

\begin{table}[!ht]
	\caption{Examples with explicitly solvable zero-variance dynamics for the infinite-time case with the domain $\dom=\Rd$. 
	By \propref{prop::invariance_b}, given a zero-variance dynamics $\dynb$, any dynamics of the form $\alpha \dynb$ for some scalar-valued positive function $\alpha$ is also a zero-variance dynamics. In this table, we have removed such a degree of freedom.}
	\begin{tabular}{p{0.1\textwidth}|p{0.32\textwidth}|p{0.34\textwidth}|p{0.12\textwidth}}
		\toprule
		Dimension & $U_0$ and $U_1$ & $\dynb$  & Details\\
		\hline
		$\dimn=1$ & arbitrary & $\dynb(x) = 1$ & \appref{app::eg::1d}\\
		\hline
		general $\dimn$ & $U_0(x) = \abs{x}^2/2 {+\gausscst{\dimn}}$\par $U_1(x) = (x-\varpi)^{T} \Sigma^{-1} (x-\varpi)/2$ & $\dynb(x) = \Lambda x + v$ with\par $\Lambda = \ln(\Sigma^{-1/2})$,\par $v = -\big(\idmat_\dimn - \Sigma^{1/2}\big)^{-1}\ln(\Sigma^{-1/2}) \varpi.$ &  \appref{subsec::linear_gaussian}\\
		\hline
		general $\dimn$ &  $\rho_0$ and $\rho_1$ have the same marginal distribution on the orthogonal subspace of $\{cv:\ c\in \Real\}$ & $\dynb(x) = v$ & \appref{subsec::parallel_velocity}\\
		\bottomrule
	\end{tabular}

	\label{table::solvable_eg}
\end{table}

\subsection{Some general properties}

Given a $\dynb\in \mani{}$ and a distribution $\rho_0$, 
we study the family of $U_1$ such that $\dynb$ is a zero-variance dynamics.
Given an ODE flow map $\state{\tau}{\cdot}$ based on the dynamics $\dynb$, let us introduce 
\begin{align}
		\label{eqn::Utau}
		U^{\tau}(x) := U_0\big(\state{-\tau}{x}\big) -\log\big(\jacoarg{-\tau}{x}\big).
	\end{align}
Then $\rho \propto e^{-U^{\tau}}$ is the push-forward distribution of the flow map $\state{\tau}{\cdot}$, \ie{}, $\rho = \big(\state{\tau}{\cdot}\big)\#\rho_0$.
The family of distributions that can be written as a linear combination of such pushforward distributions can be characterized by 
\begin{align*}
		\accF :=
		\Big\{U:\
		e^{-U} \in {\text{Span}}
		\big\{e^{-U^{\tau}}\big\}_{\tau\in\Real} \Big\}.
	\end{align*}

We have:
\begin{proposition}
	\label{prop::zero_acc_propty}
	For every $U_1\in \accF$, the variance $\newvarinf(\dynb)$ (if well-defined) is exactly zero, i.e., if the distribution $\rho_1\propto e^{-U_1}$ is a linear combination of push-forward distributions by the ODE flows maps, then $\ratio$ can be estimated with zero-variance by the \itneis{} scheme.
	\end{proposition}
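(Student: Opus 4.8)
The plan is to reduce the statement to an elementary computation that exploits the one‑parameter (translation) structure of the flow encoded in the transformation rule \eqref{eqn::testfunc_trans}, together with linearity. The key observation is that, by the definition \eqref{eqn::testfunc} of $\testfunc{0}$, the pushforward densities appearing in \eqref{eqn::Utau} are exactly the integrands of the infinite‑time NEIS formula evaluated along the backward flow: $e^{-U^\tau(x)} = e^{-U_0(\state{-\tau}{x})}\jacoarg{-\tau}{x} = \testfuncarg{0}{-\tau}{x}$. Consequently, $U_1\in\accF$ means precisely that $e^{-U_1(\cdot)}$ can be written as a superposition $\int_\Real c(\tau)\,\testfuncarg{0}{-\tau}{\cdot}\,\ud\tau$ for some coefficient function $c$ (or, if $\text{Span}$ is read as finite linear combinations, $e^{-U_1(\cdot)} = \sum_i c_i\,\testfuncarg{0}{-\tau_i}{\cdot}$; the two cases are treated identically, replacing $\int c(\tau)\,\ud\tau$ by $\sum_i c_i$ below).

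First I would transport this representation along the flow. Evaluating the representation of $e^{-U_1}$ at $\state{t}{x}$ and applying \eqref{eqn::testfunc_trans} at the base point $\state{t}{x}$, namely $\testfuncargbig{0}{-\tau}{\state{t}{x}} = \testfuncarg{0}{t-\tau}{x}/\jacoarg{t}{x}$, and then multiplying by $\jacoarg{t}{x}$, I obtain $\testfuncarg{1}{t}{x} = \int_\Real c(\tau)\,\testfuncarg{0}{t-\tau}{x}\,\ud\tau$. Integrating over $t\in\Real$, swapping the order of integration by Tonelli (all integrands are nonnegative), and using the translation invariance of Lebesgue measure, the inner integral $\int_\Real \testfuncarg{0}{t-\tau}{x}\,\ud t$ is independent of $\tau$ and equals $\binf(x)$ from \eqref{eqn::ainf_binf}. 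Hence $\int_\Real\testfuncarg{1}{t}{x}\,\ud t = \big(\int_\Real c(\tau)\,\ud\tau\big)\binf(x)$, and dividing by $\binf(x)$ — which is finite and strictly positive away from a set of measure zero once $\dynb\in\maniinf$ — shows via \eqref{eqn::ainf} that $\newainf(x) = \int_\Real c(\tau)\,\ud\tau$ is a constant for almost every $x$.

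To finish, I would identify this constant. Since $\partition_0 = \int_\dom e^{-U_0} = 1$, the change of variables $y=\state{-\tau}{x}$, whose Jacobian is $\jacoarg{-\tau}{x}$, gives $\int_\dom \testfuncarg{0}{-\tau}{x}\,\ud x = \int_\dom e^{-U_0(y)}\,\ud y = 1$ for every $\tau$; integrating the representation of $e^{-U_1}$ over $\dom$ then yields $\partition_1 = \int_\Real c(\tau)\,\ud\tau$. Therefore $\newainf \equiv \partition_1$ almost surely under $\rho_0$, and \eqref{eqn::var} gives $\newvarinf(\dynb) = \ee_0[|\newainf|^2] - \partition_1^2 = 0$. (Alternatively, the unbiasedness in \propref{prop::formulation} forces the constant value of $\newainf$ to equal $\partition_1$.)

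The algebraic core above is short; the real care goes into the clause ``if well-defined'' in the statement. The main obstacle I expect is justifying the hypotheses that legitimize the manipulations: that $\dynb$ lies in $\maniinf$ so that $\newainf$ and $\binf$ exist and are finite and positive away from a null set, that the flow $\state{-\tau}{\cdot}$ is a bijection on (the closure of) $\effdom(\dynb)$ so the change of variables in the last step is valid, and that the Tonelli interchange applies. These are exactly the technical points set up in Appendix~\ref{sec::space_inf}, and they do not interact with the computation itself; given them, the proof is essentially the chain of identities above.
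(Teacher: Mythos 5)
Your proof is correct and is essentially the paper's own argument: the identification $e^{-U^{\tau}(x)} = \testfuncarg{0}{-\tau}{x}$ together with the time-translation invariance $\int_{\Real}\testfuncarg{0}{t-\tau}{x}\ud t = \binf(x)$ is exactly the computation in the paper's pushforward lemma (\lemref{lem::pushforward}), and your superposition step plays the role of its mixture lemma, so you have simply merged the two lemmas into one chain of identities, finishing with unbiasedness to pin the constant at $\ratio$. The only slip is the appeal to Tonelli ``because all integrands are nonnegative'': the coefficients $c(\tau)$ (or the $\omega_i$) may be signed, so in the continuum case the interchange needs, e.g., $\int_{\Real}\abs{c(\tau)}\ud\tau<\infty$ or a split into positive and negative parts, while in the finite-span reading (which you also cover, and which is how the paper's proof operates) only finiteness of each $\binf(x)$ is needed --- a point absorbed by the proposition's ``if well-defined'' caveat.
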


This proposition is proven in \appref{app:proof:E1} below. 
In  words it says that, if we can learn a perfect neural ODE such that $\rho_1 = \state{\tau}{\cdot}\#\rho_0$ for some $\tau$, then such a dynamics is also the optimal one (i.e., zero-variance dynamics) for the \itneis{} scheme.
Conversely, if $U \notin  \accF$,
then is it still possible that $\newvarinf(\dynb) =0$? The answer is positive:

\begin{proposition}
		\label{prop::expressivility}
		The exists a dynamics $\dynb\in \mani{}$ and a $\rho_1 \propto e^{-U_1}$ such that $\ratio$ can be computed with zero-variance but  $\rho_1$ does not need to be a linear combination of $\big\{\state{\tau}{\cdot}\#\rho_0\big\}_{\tau\in\Real}$ (namely, $U_1\notin \accF$).
\end{proposition}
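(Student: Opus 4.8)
The plan is to exhibit a completely explicit one-dimensional example. Take $\dimn = 1$, $\dom = \Real$, $\rho_0 = \gauss{0}{1}$ (so $U_0(x) = \tfrac12 x^2 + \gausscst{1}$), and the constant field $\dynb(x)\equiv 1$, which obviously belongs to $\mani{}$. Since its flow is $\state{t}{x} = x+t$ with $\jacoarg{t}{x}\equiv 1$, the maps in \eqref{eqn::func_cts} reduce to $z\mapsto \int_{z}^{\pm\infty} e^{-U_k(y)}\ud y$, which are continuous, so $\dynb\in\maniinf{}$ (in the sense of \defref{defn::maniinf}) as soon as $\partition_1<\infty$. For this field one computes $\int_{\Real}\testfuncarg{k}{t}{x}\ud t = \int_{\Real} e^{-U_k(x+t)}\ud t = \partition_k$ for every $x$ and $k\in\{0,1\}$, hence $\newainf(x) = \partition_1$ for all $x$ and $\newvarinf(\dynb) = 0$, \emph{whatever} $U_1$ is; this is just the one-dimensional zero-variance fact already noted after \propref{prop::existence_generator}.

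Next I would identify the space $\accF$ attached to this $\dynb$. The pushforward $\state{\tau}{\cdot}\#\rho_0$ has density proportional to $e^{-(x-\tau)^2/2}$, so $e^{-U^\tau}$ in \eqref{eqn::Utau} is a scaled translate of the unit Gaussian, and $\accF$ consists exactly of those $U$ for which $e^{-U}$ is a \emph{finite} linear combination $\sum_{j=1}^{n} c_j\, e^{-(x-\tau_j)^2/2}$. It then suffices to pick a target $\rho_1$ that gives zero variance (automatic by the first step) but lies outside this finite span. I would take $\rho_1\propto e^{-U_1}$ with $e^{-U_1}(x) = e^{-x^2/(2\sigma^2)}$, $\sigma\neq 1$, choosing e.g. $\sigma = \tfrac12$ so that Assumption~\ref{assump::U} holds as well (finiteness of $\varmax$ needs $\sigma^2<2$, and $\partition_1 = \sqrt{2\pi}\,\sigma<\infty$). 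Suppose, for contradiction, $e^{-x^2/(2\sigma^2)} = \sum_{j=1}^{n} c_j e^{-(x-\tau_j)^2/2}$. Taking Fourier transforms, using $\widehat{e^{-(x-\tau)^2/2}}(\xi) = \sqrt{2\pi}\, e^{-\xi^2/2}e^{-i\tau\xi}$ and $\widehat{e^{-x^2/(2\sigma^2)}}(\xi) = \sqrt{2\pi}\,\sigma\, e^{-\sigma^2\xi^2/2}$, gives
\begin{equation*}
\sum_{j=1}^{n} c_j e^{-i\tau_j\xi} \;=\; \sigma\, e^{(1-\sigma^2)\xi^2/2}, \qquad \xi\in\Real .
\end{equation*}
The left-hand side is a trigonometric polynomial, hence bounded on $\Real$, whereas for $\sigma = \tfrac12$ the right-hand side grows like $e^{3\xi^2/8}$ — a contradiction. (For $\sigma>1$ one argues instead that the right-hand side tends to $0$ while a nonzero trigonometric polynomial $p$ satisfies $\tfrac1{2R}\int_{-R}^{R}|p|^2 \to \sum_j|c_j|^2>0$, so cannot vanish at infinity.) Therefore $U_1\notin\accF$, which proves the proposition.

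The only real content is the last, linear-independence, step: that the \emph{algebraic} span of Gaussian translates contains no Gaussian of a different variance. This is the point I expect to require the most care (the Fourier argument above is the cleanest route), while everything else is direct computation. It is worth recording that the statement is sharp: for $\sigma>1$ the target $\gauss{0}{\sigma^2}$ is an \emph{integral} mixture $\int e^{-(x-\tau)^2/2}g(\tau)\ud\tau$ of unit Gaussians, so it lies in the closure but not the algebraic span of $\{e^{-U^\tau}\}$ — precisely the sense in which the \itneis{} scheme is strictly more expressive than the family $\accF$ of \propref{prop::zero_acc_propty}.
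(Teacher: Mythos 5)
Your proof is correct but follows a genuinely different route from the paper's. The paper's proof (Appendix E.8) works in two dimensions with $\dynb=\begin{bsmallmatrix}1\\0\end{bsmallmatrix}$ and constructs $e^{-U_1}(x) = e^{-U^1(x)} + \eps\, x_1 e^{-x_1^4-x_2^4}$: the added term integrates to zero along every flowline (so the zero-variance identity is preserved by Lemma E.6), and the exclusion $U_1\notin\accF$ is argued geometrically by noting that any combination $\int f(\tau)e^{-U^\tau(x)}\ud\tau$ is \emph{separable} in $(x_1,x_2)$ while $e^{-U_1}$ is not. This separability argument has the advantage of being insensitive to whether ``Span'' in \defref{defn::maniinf} is read as the algebraic span or the larger set of integral mixtures $e^{-U_0}*f$ with $f$ a tempered distribution that Appendix E.2 considers. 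Your argument is in one dimension with $\dynb=1$ and a Gaussian target of different variance, ruling out $U_1\in\accF$ by the Fourier growth obstruction: a trigonometric polynomial is bounded while $\sigma e^{(1-\sigma^2)\xi^2/2}$ is not for $\sigma<1$. This is more elementary and, with your specific choice $\sigma=\tfrac12$, it also covers the broader integral interpretation, since the would-be mixing density would have Fourier transform $\propto e^{(1-\sigma^2)\xi^2/2}$, which is not tempered; you implicitly flag this by observing in the closing remark that for $\sigma>1$ the target is an integral mixture and hence in the closure. Both proofs are valid; the paper's has the benefit of making the argument manifestly independent of the ambient notion of span, while yours is lower-dimensional and its key step is a classical Fourier fact.
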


This proposition is proven in \appref{app:proof:E2} below. 

\subsection{Flows for the 1D case}
\label{app::eg::1d}

Let us consider $\dynb = 1$. Then we can compute $\ratio$ via the \itneis{} scheme with zero-variance for arbitrary potentials $U_0$ and $U_1$. 
This could be verified via direct computation: $\jacoarg{t}{x} = 1$ and $\state{t}{x} = x + t$ for any $t, x\in \Real$, and thus for an arbitrary $x$, 
\begin{align*}
\newainf(x) = \frac{\intreal e^{-U_1\big(\state{t}{x}\big)}\jacoarg{t}{x}\ud t}{\intreal e^{-U_0\big(\state{t}{x}\big)}\jacoarg{t}{x}\ud t}
= \frac{\intreal e^{-U_1(x+t)}\ud t}{\intreal e^{-U_0(x+t)}\ud t} = \ratio.
\end{align*}
Therefore, the variance is exactly zero.

Another perspective to understand this comes from \propref{prop::zero_acc_propty}. For $\dynb=1$, we know $e^{-U^{\tau}(x)} = e^{-U_0(x-\tau)}$ in \eqref{eqn::Utau}. By \propref{prop::zero_acc_propty}, if the potential $U_1$ can be expressed as follows
\begin{align*}
e^{-U_1} &= \intreal f(\tau) e^{-U^{\tau}}\ d\tau = (e^{-U_0} * f),
\end{align*}
then $\ratio$ can be computed with zero-variance, where $f$ is a tempered distribution and $*$ means the convolution. Then it is sufficient to show the existence of such a $f$ for a generic $U_1$.

For a given potential $U_1$, we can solve the above equation for $f$ using Fourier transform; more specifically, 
\begin{align*}
f = \fouriertrans^{-1}\Big(\fouriertrans(e^{-U_1})/\fouriertrans(e^{-U_0})\Big),
\end{align*}
where $\fouriertrans^{(-1)}$ are (inverse) Fourier transform.

\subsection{Linear flows for Gaussian distributions}
\label{subsec::linear_gaussian}

\begin{proposition}
	\label{prop::gaussian}
	Suppose $U_0(x) = \abs{x}^2/2{+\gausscst{\dimn}}$ and $U_1(x) = (x-\varpi)^{T} \Sigma^{-1} (x-\varpi)/2$, where the covariance matrix $\Sigma$  is non-degenerate.
	A zero-variance linear dynamics is $\dynb(x) = \Lambda x + v$ with 
	\begin{align}
	\label{eqn::linear_choice}
	\Lambda = \ln(\Sigma^{-1/2}),\qquad v = -\Big(\idmat_\dimn - \Sigma^{1/2}\Big)^{-1}\ln(\Sigma^{-1/2}) \varpi.
	\end{align}
\end{proposition}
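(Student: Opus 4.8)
The plan is to verify directly that with the linear dynamics $\dynb(x) = \Lambda x + v$ and the specific choice \eqref{eqn::linear_choice}, the flow map sends the base Gaussian $\rho_0 = \gauss{\vectorzero_\dimn}{\idmat_\dimn}$ onto the target Gaussian $\rho_1 = \gauss{\varpi}{\Sigma}$ at some time $\tau$, and then invoke \propref{prop::zero_acc_propty}. First I would solve the linear ODE $\td \state{t}{x} = \Lambda\state{t}{x} + v$ explicitly: its solution is $\state{t}{x} = e^{t\Lambda}x + \big(\int_0^t e^{s\Lambda}\ud s\big) v = e^{t\Lambda} x + (\Lambda)^{-1}(e^{t\Lambda} - \idmat_\dimn) v$ (using that $\Lambda = \ln(\Sigma^{-1/2})$ is invertible when $\Sigma \ne \idmat_\dimn$; the degenerate case $\Sigma = \idmat_\dimn$ is handled separately with $\dynb = $ constant as in \appref{subsec::parallel_velocity}). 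Then the Jacobian is $\jacoarg{t}{x} = e^{t\,\tr\Lambda}$, a constant in $x$. Consequently $\state{\tau}{\cdot}\#\rho_0$ is again a Gaussian, with mean $\mu_\tau = \Lambda^{-1}(e^{\tau\Lambda} - \idmat_\dimn)v$ and covariance $C_\tau = e^{\tau\Lambda}(e^{\tau\Lambda})^T = e^{2\tau\Lambda}$ (using that $\Lambda$ is symmetric, being the logarithm of the symmetric positive-definite matrix $\Sigma^{-1/2}$).

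Next I would set $\tau = -1$ and check that $C_{-1} = e^{-2\Lambda} = e^{2\ln(\Sigma^{1/2})} = \Sigma$, which matches the target covariance. For the mean, with $\tau = -1$ we get $\mu_{-1} = \Lambda^{-1}(e^{-\Lambda} - \idmat_\dimn)v = \Lambda^{-1}(\Sigma^{1/2} - \idmat_\dimn)v$; substituting $v = -(\idmat_\dimn - \Sigma^{1/2})^{-1}\Lambda\,\varpi$ from \eqref{eqn::linear_choice} (noting $\Lambda$ and $\Sigma^{1/2}$ commute, so all these matrices commute and the order is immaterial) gives $\mu_{-1} = \Lambda^{-1}(\Sigma^{1/2}-\idmat_\dimn)\cdot\big(-(\idmat_\dimn-\Sigma^{1/2})^{-1}\Lambda\,\varpi\big) = \varpi$, which matches the target mean. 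Hence $\state{-1}{\cdot}\#\rho_0 = \gauss{\varpi}{\Sigma} = \rho_1$, so $U_1 \in \accF$ with the single term $e^{-U_1} = e^{-U^{-1}}$.

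Finally, applying \propref{prop::zero_acc_propty} yields $\newvarinf(\dynb) = 0$, so $\dynb$ is a zero-variance dynamics, provided one checks that $\newvarinf(\dynb)$ (equivalently $\newainf$) is well-defined — i.e. that $\dynb \in \maniinf$ and the integrals $\int_\Real \testfuncarg{k}{t}{x}\ud t$ converge for a.e.\ $x$. This convergence follows from the same exponential-decay argument as in \lemref{lem::conv_testfunc}: the Gaussian factor $e^{-U_k(\state{t}{x})}$ decays like $e^{-c|\state{t}{x}|^2}$ while $\jacoarg{t}{x} = e^{t\tr\Lambda}$ grows at most exponentially, and $|\state{t}{x}|$ grows exponentially in $|t|$ along the eigendirections of $\Lambda$ (the eigenvalues of $\Lambda = -\tfrac12\ln\Sigma$ are nonzero when $\Sigma \ne \idmat_\dimn$), so the integrand is integrable; the one direction $x$ lying on a stable/unstable subspace where this could fail is Lebesgue-null. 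I expect the main (minor) obstacle to be this last integrability/$\maniinf$-membership check on $\Rd$ rather than a torus, which the excerpt already flags as requiring care; the algebraic core — solving the linear ODE and matching the two Gaussian parameters — is routine once one exploits that all the matrices involved commute.
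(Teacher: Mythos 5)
Your proof is correct and follows essentially the same route as the paper: both solve the linear ODE explicitly, identify $\rho_1$ as the pushforward $\state{-1}{\cdot}\#\rho_0$, and conclude via \propref{prop::zero_acc_propty}. The only difference is presentational---you match the mean and covariance of the pushforward Gaussian directly, while the paper equates the quadratic pushforward potential $U^{-1}$ with $U_1$ up to a constant---and you additionally (and correctly) flag the $\maniinf$-membership and integrability check that the paper's short proof leaves tacit.
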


Before presenting detailed proofs, let us make a few remarks about zero-variance dynamics in \eqref{eqn::linear_choice}:
\begin{itemize}[leftmargin=4ex]
	\item 
	
	If we further let $\Sigma = (1 - \eps) \idmat_{\dimn}$ where $\eps\ll 1$ is an asymptotic parameter, 
	then the above choice \eqref{eqn::linear_choice} can be approximated as follows: 
	\begin{align*}
	\td \state{t}{x} = \Lambda \state{t}{x} + v \approx \frac{\eps}{2} \state{t}{x} - \big(1 + \frac{\eps}{4}\big)\varpi + \mathcal{O}(\eps^2). 
	\end{align*}
	The leading order dynamics $\td \state{t}{x} \approx -\varpi$ is consistent with the parallel velocity case below in \appref{subsec::parallel_velocity}.

	\item For the above Gaussian case, the dynamics \eqref{eqn::linear_choice} can be regarded as the gradient flow dynamics of the following quadratic potential 
	\begin{align*}
	V(x) = \frac{1}{2}\Big(x -\big(\idmat_\dimn - \Sigma^{1/2}\big)^{-1}\varpi\Big)^{T} \ln(\Sigma^{-1/2}) \Big(x -\big(\idmat_\dimn - \Sigma^{1/2}\big)^{-1}\varpi\Big).
	\end{align*}
	Indeed, it is not hard to guess that the optimal dynamics might have a linear form in order to transport Gaussian distributions.
	It is natural to guess that $\dynb = -\nabla U_1$ or $\dynb = -\nabla (U_1 - U_0)$ might be zero-variance dynamics, but it could be verified that neither of them are zero-variance dynamics.
\end{itemize}

\begin{proof}[Proof of \propref{prop::gaussian}]
	Suppose we consider a family of linear dynamics
	\begin{align*}
	\td \state{t}{x} = \Lambda \state{t}{x} + v,\qquad \state{0}{x} = x.
	\end{align*}
	Then $\state{t}{x} = e^{\Lambda t} x + \Lambda^{-1} \big(e^{\Lambda t}-\idmat_\dimn\big) v$ and  $(\nabla \cdot \dynb)(x) = \tr(\Lambda)$.
	Therefore, $U^{-\tau}$ defined in \eqref{eqn::Utau} has the following form for any $\tau\in \Real$, 
	\begin{align*}
	U^{-\tau}(x) = U_0\Big(e^{\Lambda\tau} x + \Lambda^{-1} (e^{\Lambda\tau}-\idmat_\dimn) v\Big) - \tr(\Lambda)\tau.
	\end{align*}
	By \propref{prop::zero_acc_propty},
	the variance is zero if 
	\begin{align*}
		U_1(x) = U^{-1}(x) + C,
	\end{align*}
where $C$ is some constant.
	This condition can be simplified as
	\begin{align*}
		(x-\varpi)^{T} \Sigma^{-1} (x-\varpi)/2 = U_0\Big(e^{\Lambda} x + \Lambda^{-1} (e^{\Lambda}-\idmat_\dimn) v\Big) - \tr(\Lambda) + C.
	\end{align*}
	Thus, we just need to ensure 
	\begin{align*}
	-\Lambda^{-1} (\idmat_d - e^{-\Lambda}) v = \varpi, \qquad e^{\Lambda^{T}} e^{\Lambda} = \Sigma^{-1},
	\end{align*}
	by matching {the order of $x$} and, more specifically, we can choose $\Lambda$ and $v$ as in \eqref{eqn::linear_choice}.
\end{proof}

\subsection{Flows with parallel velocity}
\label{subsec::parallel_velocity}

As we have mentioned, in the 1D case, the choice $\dynb = 1$ gives a zero-variance estimator for the \itneis{} scheme.
A straightforward generalization is to consider the following parallel velocity case 
\begin{align*}
\dynb(x) = \alpha(x)v,
\end{align*}
where $v\in \Rd$ and $\alpha$ is a positive scalar-valued function. 
Due to \propref{prop::invariance_b}, it suffices to consider $\dynb(x)= v$. 
As we can always rotate the coordinate without affecting partition functions, without loss of generality, let us assume
$v = \basis_1$
for simplicity, where $\basis_1$ is a vector with the first element to be $1$ and zeros otherwise. 
For an arbitrary initial proposal $x$, only the first coordinate $x_1$ is changing under the dynamics $\dynb = \basis_1$. The estimator essentially works like the 1D case above:
\begin{align*}
\newainf(x) = \frac{ \intreal e^{-U_1\big(\state{t}{x}\big)} \jacoarg{t}{x}\ud t}{\intreal e^{-U_0\big(\state{t}{x}\big)} \jacoarg{t}{x}\ud t } = \frac{\intreal e^{-U_1(q, x_2, x_3, \cdots, x_{\dimn})} \ud q}{\intreal e^{-U_0(q, x_2, x_3, \cdots, x_{\dimn})} \ud q} = \ratio{} \times \frac{\wt{\rho}_{1}(x_2, x_3, \cdots, x_{\dimn})}{\wt{\rho}_0(x_2, x_3, \cdots, x_{\dimn})},
\end{align*}
where $\wt{\rho}_k$ is the marginal distribution of $\rho_k$ for the subspace $\Real^{\dimn-1}$ by tracing out the first coordinate.
Therefore, the dynamics $\dynb=\basis_1$ is a zero-variance dynamics iff $\wt{\rho}_0 = \wt{\rho}_1$.
\begin{proposition}
	Suppose $\dynb(x)=\alpha(x) v$ where $\alpha\in C^{\infty}(\Rd,\Real)$ with $\inf_{x\in\dom} \alpha(x)>0$ and $v\in \Rd$.
	Such a dynamics $\dynb$ gives a zero-variance estimator iff
	$\rho_0$ and $\rho_1$ have the same marginal distribution on the orthogonal space of $\big\{cv:\ c\in \Real\big\}$.
\end{proposition}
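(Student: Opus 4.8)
The plan is to reduce the claim to the single-line computation for $\dynb=\basis_1$ carried out just above, using two successive normalizations. First I would invoke \propref{prop::invariance_b}: since $\alpha\in C^\infty$ and $\inf_x\alpha(x)>0$, the fields $\alpha(\cdot)v$ and $v$ generate the same flowlines, so $\newainf_{\alpha v}=\newainf_{v}$ and it suffices to treat $\dynb=v$. Absorbing the scalar $\abs{v}$ into a positive prefactor (again harmless by \propref{prop::invariance_b}) I may assume $\abs{v}=1$, and an orthogonal change of coordinates — which leaves Lebesgue measure, the two partition functions, and the condition ``$\rho_0,\rho_1$ have equal marginals on $(\mathrm{span}\{v\})^{\perp}$'' all invariant — lets me take $v=\basis_1$.

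For $\dynb=\basis_1$ one has $\nabla\cdot\dynb\equiv 0$, hence $\jacoarg{t}{x}\equiv 1$ and $\state{t}{x}=x+t\basis_1$, so only the first coordinate moves. Writing $x=(x_1,x')$ with $x'=(x_2,\dots,x_\dimn)$ and $\wt{\rho}_k(x'):=\partition_k^{-1}\intreal e^{-U_k(q,x')}\ud q$ for the marginal density of $\rho_k$ on $(\mathrm{span}\{v\})^{\perp}$, the substitution $q=x_1+t$ gives, at every $x$ where the integrals are finite,
\begin{align*}
	\newainf(x)=\frac{\intreal e^{-U_1(x_1+t,\,x')}\ud t}{\intreal e^{-U_0(x_1+t,\,x')}\ud t}=\frac{\intreal e^{-U_1(q,\,x')}\ud q}{\intreal e^{-U_0(q,\,x')}\ud q}=\ratio\,\frac{\wt{\rho}_1(x')}{\wt{\rho}_0(x')}.
\end{align*}
(Membership $\dynb\in\maniinf$, needed for $\newainf$ to be defined at $\rho_0$-almost every $x$, is implicit in the hypothesis of a zero-variance estimator; in the ``if'' direction it follows from $\ratio<\infty$ together with the positivity and integrability of $e^{-U_0}$.) Both directions then follow at once: if $\wt{\rho}_0=\wt{\rho}_1$ then $\newainf\equiv\ratio$ $\rho_0$-a.e., so $\newvarinf(\dynb)=\ee_0\!\big[\abs{\newainf-\ratio}^2\big]=0$; conversely, $\newvarinf(\dynb)=0$ forces $\newainf(x)=\ratio$ for $\rho_0$-a.e.\ $x$, and since $\rho_0>0$ on all of $\Rd$ this gives $\wt{\rho}_1(x')=\wt{\rho}_0(x')$ for Lebesgue-a.e.\ $x'\in\Real^{\dimn-1}$ by Fubini, hence $\wt{\rho}_0\equiv\wt{\rho}_1$ since both marginals are continuous (integrals of the smooth integrable functions $e^{-U_k}$).

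I expect the only genuinely delicate point to be the bookkeeping of well-definedness: one must propagate the pointwise identity for $\newainf$ on a set of full $\rho_0$-measure, verify that the orthogonal reduction transports the marginal hypothesis correctly, and be careful to phrase ``same marginal distribution'' as equality of the $(\dimn-1)$-dimensional marginal measures, equivalently of the continuous densities $\wt{\rho}_0$ and $\wt{\rho}_1$. Everything else reduces to the displayed substitution once \propref{prop::invariance_b} has been applied.
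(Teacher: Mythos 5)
Your proposal is correct and takes essentially the same route as the paper: reduce to $\dynb=v$ via \propref{prop::invariance_b}, rotate so $v=\basis_1$, then observe that $\jacoarg{t}{x}\equiv 1$ and $\state{t}{x}=x+t\basis_1$ collapse $\newainf(x)$ to $\ratio\,\wt{\rho}_1(x')/\wt{\rho}_0(x')$. The paper states the conclusion in one line; your additional remarks on the orthogonal reduction, the a.e.-to-everywhere upgrade via continuity of the marginals, and membership in $\maniinf$ are just a more careful spelling-out of the same argument, not a different one.
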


\subsection{Proof of \propref{prop::zero_acc_propty}}
\label{app:proof:E1}

\begin{lemma}
Fix the potential $U_0$ and a dynamics $\dynb\in \mani{}$.
If $\frac{\intreal e^{-U_k\big(\state{t}{x}\big)}\jacoarg{t}{x}\ud t}{\intreal e^{-U_0\big(\state{t}{x}\big)}\jacoarg{t}{x}\ud t} = C_k$ is a constant function for $k \in \{2, 3\}$,   
then any mixture of $U_2$ and $U_3$, given below, also ensures that $\frac{\intreal e^{-U\big(\state{t}{x}\big)}\jacoarg{t}{x}\ud t}{\intreal e^{-U_0\big(\state{t}{x}\big)}\jacoarg{t}{x}\ud t}$ is a constant function, as long as $U$ is a valid potential function:
\begin{align*}
	U = -\log(\omega_2 e^{-U_2} + \omega_3 e^{-U_3}),\qquad \omega_2, \omega_3\in\Real.
\end{align*}
\end{lemma}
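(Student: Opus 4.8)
The plan is to exploit the fact that, for fixed $U_0$ and fixed $\dynb$, the numerator of the acceptance ratio depends \emph{linearly} on the unnormalized target density $e^{-U_1}$, while the denominator does not see the target at all. So essentially nothing happens except linearity of the integral; the only thing to watch is an a.e.-finiteness point.

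First I would record the pointwise identity coming from the definition of $U$: since $e^{-U}=\omega_2 e^{-U_2}+\omega_3 e^{-U_3}$ (the hypothesis that $U$ is a valid potential is exactly what makes the right-hand side strictly positive, so that $-\log$ is defined), evaluating at $y=\state{t}{x}$ and multiplying by the nonnegative Jacobian $\jacoarg{t}{x}$ gives
\[
e^{-U(\state{t}{x})}\,\jacoarg{t}{x}\;=\;\omega_2\,e^{-U_2(\state{t}{x})}\,\jacoarg{t}{x}\;+\;\omega_3\,e^{-U_3(\state{t}{x})}\,\jacoarg{t}{x},\qquad t\in\Real .
\]
Both summands on the right are continuous, hence measurable, in $t$.

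Next I would integrate this identity over $t\in\Real$ and split the integral. The only care needed is that $\omega_2,\omega_3$ may have arbitrary sign, so one cannot simply invoke Tonelli; instead one uses integrability. Writing $\Phi[U_1](x):=\intreal e^{-U_1(\state{t}{x})}\jacoarg{t}{x}\,\ud t$ (so that $\binf(x)=\Phi[U_0](x)$, cf. \eqref{eqn::ainf_binf}), the hypothesis says $\Phi[U_k](x)=C_k\,\binf(x)$ for $k=2,3$, and $\binf(x)$ is finite for (almost) every $x$ in the domain where the acceptance ratio is meaningful --- in particular on a set of full measure when $\dynb\in\maniinf$, by \defref{defn::maniinf} and \lemref{lem::property_Omega_b}. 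Hence $t\mapsto e^{-U_2(\state{t}{x})}\jacoarg{t}{x}$ and $t\mapsto e^{-U_3(\state{t}{x})}\jacoarg{t}{x}$ lie in $L^1(\Real)$ for such $x$, so linearity of the Lebesgue integral yields $\Phi[U](x)=\omega_2\,\Phi[U_2](x)+\omega_3\,\Phi[U_3](x)=(\omega_2 C_2+\omega_3 C_3)\,\binf(x)$.

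Finally I would divide by $\binf(x)=\Phi[U_0](x)$, which is strictly positive (and finite on the relevant set) because $e^{-U_0}>0$ and $\jacoarg{t}{x}>0$ everywhere, obtaining
\[
\frac{\intreal e^{-U(\state{t}{x})}\jacoarg{t}{x}\,\ud t}{\intreal e^{-U_0(\state{t}{x})}\jacoarg{t}{x}\,\ud t}\;=\;\omega_2 C_2+\omega_3 C_3,
\]
independent of $x$, as claimed. I do not expect a genuine obstacle: the statement is a restatement of linearity of $U_1\mapsto\Phi[U_1]$, and the one subtle point --- breaking the integral of the sum into a sum of integrals --- is already guaranteed by the standing assumptions, which is also why the conclusion, like the hypothesis, should be read as holding for the $x$ at which $\newainf$ is defined, i.e. almost everywhere when $\dynb\in\maniinf$.
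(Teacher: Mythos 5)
Your proof is correct and takes essentially the same route as the paper's: substitute $e^{-U}=\omega_2 e^{-U_2}+\omega_3 e^{-U_3}$ along the trajectory, use linearity of the integral in $t$, and then the hypothesis $\Phi[U_k]=C_k\,\binf$. The only difference is that you spell out the integrability justification for splitting the integral (needed since $\omega_2,\omega_3$ may have either sign), which the paper's one-line computation takes for granted.
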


\begin{proof}
	We can easily observe that
	\begin{align*}
			\intreal e^{-U\big(\state{t}{x}\big)} \jacoarg{t}{x} \ud t &=
			\intreal \omega_2 e^{-U_2(\state{t}{x})} \jacoarg{t}{x} + \omega_3 e^{-U_3(\state{t}{x})} \jacoarg{t}{x} \ud t\\
			&=  (C_2 \omega_2 + C_3 \omega_3) \intreal e^{-U_0(\state{t}{x})} \jacoarg{t}{x} \ud t.
		\end{align*}
	\end{proof}

\begin{lemma}
	\label{lem::pushforward}
		For the distribution $\rho_1 \propto e^{-U^{\tau} + C}$, the partition function $\ratio$ can be computed with zero-variance.
	\end{lemma}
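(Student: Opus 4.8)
The plan is to show directly that $\newainf$ is the constant function $e^{-C}$, which by the unbiasedness of the NEIS estimator (\propref{prop::formulation}) forces this constant to equal $\ratio$ and hence gives zero variance. The starting point is the defining relation $U_1 = U^{\tau} + C$, where by \eqref{eqn::Utau} we have $U^{\tau}(x) = U_0\big(\state{-\tau}{x}\big) - \log\jacoarg{-\tau}{x}$.

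First I would compute $\testfuncarg{1}{t}{x}$ in terms of $\testfunc{0}$. Writing out the definition~\eqref{eqn::testfunc},
\begin{align*}
	\testfuncarg{1}{t}{x} = e^{-U_1(\state{t}{x})}\jacoarg{t}{x} = e^{-C}\, e^{-U^{\tau}(\state{t}{x})}\jacoarg{t}{x} = e^{-C}\, e^{-U_0(\state{-\tau}{\state{t}{x}})}\, \jacoargbig{-\tau}{\state{t}{x}}\, \jacoarg{t}{x}.
\end{align*}
Now I apply the cocycle identities from \eqref{eqn::testfunc_trans}: $\state{-\tau}{\state{t}{x}} = \state{t-\tau}{x}$ and $\jacoargbig{-\tau}{\state{t}{x}} = \jacoarg{t-\tau}{x}/\jacoarg{t}{x}$. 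Substituting, the two Jacobian factors $\jacoarg{t}{x}$ cancel and I obtain the clean identity
\begin{align*}
	\testfuncarg{1}{t}{x} = e^{-C}\, e^{-U_0(\state{t-\tau}{x})}\,\jacoarg{t-\tau}{x} = e^{-C}\,\testfuncarg{0}{t-\tau}{x}, \qquad \forall t\in\Real,\ x\in\dom.
\end{align*}

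Next I would integrate in $t$ over $\Real$ and perform the change of variables $s = t - \tau$ (a simple translation, hence measure-preserving), which yields $\int_{\Real}\testfuncarg{1}{t}{x}\ud t = e^{-C}\int_{\Real}\testfuncarg{0}{s}{x}\ud s$. Dividing by $\binf(x) = \int_{\Real}\testfuncarg{0}{t}{x}\ud t$ as in \eqref{eqn::ainf} gives $\newainf(x) = e^{-C}$ wherever these integrals are finite, i.e., on $\effdom(\dynb)$; by \defref{defn::maniinf} the complement has measure zero, so $\newainf \equiv e^{-C}$ almost everywhere with respect to $\rho_0$. Finally, \propref{prop::formulation} gives $\ratio = \ee_0\newainf = e^{-C}$, so $\newainf(x) = \ratio$ for almost all $x\sim\rho_0$ and $\newvarinf(\dynb) = 0$.

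The computation is essentially mechanical once the cocycle identities are invoked; the only point requiring a word of care is that the translation $s = t-\tau$ matches the numerator integrand to the denominator integrand \emph{exactly}, so no integrability obstruction beyond what is already built into the membership $\dynb\in\maniinf$ arises. If one wants to be fully rigorous one should also note that the hypothesis implicitly assumes $\dynb\in\maniinf$ (so that $\newainf$ is well-defined), which is consistent with the standing assumption that $\ratio$ is being computed via the infinite-time scheme.
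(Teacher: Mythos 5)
Your proposal is correct and follows essentially the same route as the paper: both use the cocycle identities of \eqref{eqn::testfunc_trans} to rewrite $e^{-U^{\tau}(\state{t}{x})}\jacoarg{t}{x}$ as $e^{-U_0(\state{t-\tau}{x})}\jacoarg{t-\tau}{x}$ and then shift the time variable, so the numerator of \eqref{eqn::ainf} equals the denominator up to the constant $e^{-C}$, making $\newainf$ constant and the variance zero. The only cosmetic differences are that the paper first reduces to $C=0$ while you carry the constant through and identify it with $\ratio$ via \propref{prop::formulation}.
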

\begin{proof}
		We only need to verify the case $C=0$. Then
	\begin{align*}
			\intreal e^{-U^{\tau}\big(\state{t}{x}\big)} \jacoarg{t}{x} \ud t &=
			\intreal e^{-U_0\big(\state{t-\tau}{x}\big)} \jacoargbig{-\tau}{\state{t}{x}} \jacoarg{t}{x} \ud t \\
			&\myeq{\eqref{eqn::testfunc_trans}}\ \intreal e^{-U_0\big(\state{t-\tau}{x}\big)} \frac{\jacoarg{t-\tau}{x}}{\jacoarg{t}{x}} \jacoarg{t}{x} \ud t \\
			&= \intreal e^{-U_0\big(\state{t-\tau}{x}\big)} \jacoarg{t-\tau}{x} \ud t \\
			&=  \intreal e^{-U_0\big(\state{t}{x}\big)} \jacoarg{t}{x} \ud t.
		\end{align*}
	This means $\dynb$ is a zero-variance dynamics for the distribution $\rho_1$.
	\end{proof}

\begin{proof}[Proof of \propref{prop::zero_acc_propty}]
		Combine the above two lemmas.
	\end{proof}

\subsection{Proof of \propref{prop::expressivility}} 
\label{app:proof:E2}
Consider a 2D example with $U_0(x) = \frac{\abs{x}^2}{2} {+\ln(2\pi)}$ and $\dynb = \begin{bsmallmatrix} 1 \\ 0\end{bsmallmatrix}$.
Then $\jacoarg{t}{x} = 1$,
and $U^{\tau}(x) = \frac{(x_1-\tau)^2 + x_2^2}{2} {+\ln(2\pi)}$ where $x = \begin{bsmallmatrix}x_1 \\ x_2\end{bsmallmatrix}$.
Let us consider
\begin{align*}
		e^{-U_1(x)} := e^{-U^{1}(x)} + \eps x_1 e^{-x_1^4-x_2^4},
	\end{align*}
where 
$\eps := {\frac{1}{2\pi}}\big(2\sup_{y\in\Real} \abs{y} e^{-y^4 + (y-1)^2/2}\sup_{y\in\Real}  e^{-y^4+y^2/2}\big)^{-1} > 0$.
It could be straightforwardly verified that $U_1$ is a well-defined potential and
\begin{align*}
		\int_{-\infty}^{\infty} e^{-U_1\big(\state{t}{x}\big)} \jacoarg{t}{x} \ud t
		=& \int_{-\infty}^{\infty} e^{-U^{1}\big(\state{t}{x}\big)} \jacoarg{t}{x}\ud t
		+ \eps \int_{-\infty}^{\infty} (x_1+t) e^{-(x_1+t)^4-x_2^4} \ud t\\
		=& \int_{-\infty}^{\infty} e^{-U^{1}\big(\state{t}{x}\big)}\jacoarg{t}{x} \ud t \\ =& \int_{-\infty}^{\infty} e^{-U_0\big(\state{t}{x}\big)} \jacoarg{t}{x} \ud t.
	\end{align*}
{The last equality holds by \lemref{lem::pushforward}.}
Therefore, $\dynb$ is a zero-variance dynamics for $\rho_0$ and $\rho_1$. 
However, $U_1\notin \accF$ because $\intreal f(\tau) e^{-U^{\tau}(x)}\ud \tau$ must be a separable function, whereas $U_1$ is not.

\section{Proof of \propref{prop::existence_generator} and more discussions}

\subsection{Proof of \propref{prop::existence_generator}}
\label{subsec::proof::existence::generator}

{We proceed in three steps:}

{\noindent {\bf Step 1:}} Let us first assume the existence of $\timeT\in C^1(\domsubset,\Real)$ {satisfying \eqref{eqn::optimal_time}}, where $\domsubset$ is an open subset of $\dom$ and $\dom\backslash\domsubset$ has measure zero. We shall verify that $\mapT\#\rho_0 = \rho_1$ almost everywhere.

From \eqref{eqn::optimal_time}, let us replace $x$ by $\state{\eps}{x}$, 
\begin{align*}
	0\ \myeq{\eqref{eqn::testfunc_trans}}&\ \frac{1}{\jacoarg{\eps}{x}}\Big( \int_{-\infty}^0 \rho_0(\state{s+\eps}{x}) \jacoarg{s+\eps}{x}\ud s - \int_{-\infty}^{\timeT(\state{\eps}{x})} \rho_1(\state{s+\eps}{x})\jacoarg{s+\eps}{x}\ud s\Big).
\end{align*}
By straightforward simplification, 
\begin{align*}
	0 = \int_{-\infty}^{0+\eps} \rho_0(\state{s}{x}) \jacoarg{s}{x}\ud s - \int_{-\infty}^{\timeT(\state{\eps}{x})+\eps} \rho_1(\state{s}{x})\jacoarg{s}{x}\ud s.
\end{align*}
By taking the derivative with respect to $\eps$ at $\eps=0$, 
\begin{align*}
	\rho_0(x) &= \rho_1(\state{\timeT(x)}{x})\jacoarg{\timeT(x)}{x} \big(1 + \inner{\nabla\timeT(x)}{\dynb(x)}\big)\\
	&= \rho_1(\mapT(x)) \jacoarg{\timeT(x)}{x} \big(1 + \inner{\nabla\timeT(x)}{\dynb(x)}\big).
\end{align*}
To show that $\mapT\#\rho_0 = \rho_1$, we need to verify that $\rho_0(x) = \rho_1(\mapT(x))\jacomap{\mapT}{x}$ where $\jacoarg{\mapT}{x} = \absbig{\det\big(\nabla_x \state{\timeT(x)}{x}\big)}$. Therefore, it remains to prove that 
\begin{align}
	\label{eqn::jaco_generator}
	\det\big(\nabla_x \state{\timeT(x)}{x}\big) = \jacoarg{\timeT(x)}{x} \big(1 + \innerbig{\nabla\timeT(x)}{\dynb(x)}\big).
\end{align}

By direct computation, 
\begin{align*}
	\nabla_x \state{\timeT(x)}{x} \myeq{\eqref{eqn::deri_X_t}} \corrz{\timeT(x)}{0}(x) + \dynb\big(\state{\timeT(x)}{x}\big) \big(\nabla\timeT(x)\big)^{T}.
\end{align*}
By the matrix determinant lemma, 
\begin{align*}
	\det\big(\nabla_x \state{\timeT(x)}{x}\big) &= \Big(1+\innerBig{\nabla\timeT(x)}{\big(\corrz{\timeT(x)}{0}(x)\big)^{-1} \dynb\big(\state{\timeT(x)}{x}\big)}\Big)\det\big(\corrz{\timeT(x)}{0}(x)\big)\\
	&= \Big(1+\innerBig{\nabla\timeT(x)}{\dynb(x)}\Big) \jacoarg{\timeT(x)}{x},
\end{align*}
where we used \eqref{eqn::corrz} and \lemref{lem::deri_X_t} to get the second line. The last equation verifies \eqref{eqn::jaco_generator} and therefore, $\mapT\#\rho_0 = \rho_1$ almost everywhere.

\smallskip
{\noindent{\bf Step 2:}} We shall explain $\domsubset$ and establish the existence of  $\timeT\in C^1(\domsubset,\Real)$.

Suppose we denote the local minima of $V$ as $\pts_1, \pts_2, \cdots, \pts_{a}$, and local maxima of $V$ as $\ptsy_1, \ptsy_2, \cdots, \ptsy_{b}$, where $a, b\in \Natural$. Then in the proof of \propref{prop::existence}, we have already mentioned that 
\begin{align*}
	\domsubset := \Big\{x\in \dom\ \Big\rvert \lim_{t\to-\infty} \state{t}{x} = \pts_i,\ \lim_{t\to\infty} \state{t}{x} = \ptsy_j,\text{ for some } i, j\Big\}
\end{align*}
is an open subset of $\effdom(\dynb)$ and $\dom\backslash\domsubset$ has measure zero, due to the assumption that $V$ is a Morse function.

Consider the following function $L(x,t):\domsubset\times\Real\to \Real$, defined as 
\begin{align*}
	L(x,t) := \int_{-\infty}^{0} \rho_0(\state{s}{x}) \jacoarg{s}{x}\ud s - \int_{-\infty}^{t} \rho_1(\state{s}{x})\jacoarg{s}{x}\ud s.
\end{align*}
We can observe that 
\begin{itemize}[leftmargin=4ex]
	\item $\lim_{t\to-\infty}L(x,t) = \int_{-\infty}^{0} \rho_0(\state{s}{x}) \jacoarg{s}{x}\ud s > 0$.
	\item 
	\begin{align*}
		\lim_{t\to\infty} L(x,t) &= \int_{-\infty}^{0} \rho_0(\state{s}{x}) \jacoarg{s}{x}\ud s - \int_{-\infty}^{\infty} \rho_1(\state{s}{x})\jacoarg{s}{x}\ud s \\
		&= -\int_{0}^{\infty} \rho_0(\state{s}{x}) \jacoarg{s}{x}\ud s < 0,
		\end{align*} 
	where the second equality comes from the fact that $\dynb = \nabla V$ is a zero-variance dynamics; see e.g., \eqref{eqn::zero::var::2}.
	\item With fixed $x$, the function $t\to L(x,t)$ is continuously differentiable and is strictly monotonically decreasing.
\end{itemize} 
These imply that for each $x\in\domsubset$, there exists a unique $\timeT(x)$ such that $L\big(x, \timeT(x)\big) = 0$ by the intermediate value theorem. Therefore, $\timeT$ is well-defined via \eqref{eqn::optimal_time}.

Next, we need to prove that such a function $\timeT\in C^1(\domsubset,\Real)$,
which can be immediately obtained by the implicit function theorem \cite{rudin_1976_principles}, provided that we can prove $L\in C^1(\domsubset\times\Real,\Real)$.
Due to the smoothness assumption on $\rho_0$ and $\rho_1$, it is clear that $\partial_t L(x,t) = -\rho_1(\state{t}{x})\jacoarg{t}{x} < 0$ exists and is continuous. 
Therefore, the task becomes to prove that $\nabla_x L(x,t)$ exists and is continuous. 
Since it is clear that $\int_{0}^{t}\rho_1(\state{s}{x})\jacoarg{s}{x}\ud s$ is continuously differentiable with respect to $x$, 
it is then sufficient to prove that 
\begin{align*}
	G_k(x):= \int_{-\infty}^0 \rho_k(\state{s}{x})\jacoarg{s}{x}\ud s
\end{align*} 
is continuously differentiable for $k \in\{ 0,1\}$. 

\medskip
{\noindent {\bf Step 3:} Prove that $G_k\in C^1(\domsubset,\Real)$.}

\medskip
In Appendix \ref{app::proof::existence}, we have proved that $G_k$ is continuous; see \lemref{lem::continuity_F_on_D} in particular. Next, we first verify that $G_k$ is differentiable and then verify that $\nabla G_k$ is also continuous.

\medskip
{\noindent\emph{Part 1: $G_k$ is differentiable.}}

We want to verify that $G_k$ is differentiable and in particular
\begin{align}
	\label{eqn::Gx}
	\nabla_x G_k(x) = \int_{-\infty}^{0} \nabla_x \big(\rho_k(\state{s}{x})\jacoarg{s}{x}\big)\ud s.
\end{align}
Let us consider an arbitrary $x\in \domsubset$. Without loss of generality, suppose its limit in the backward branch is $\pts_1 = \lim_{t\to-\infty}\state{t}{x}$. 
Let us focus on 
a local neighborhood $\ball{\delta}{x}$ such that $\lim_{t\to-\infty} \state{t}{z} = \pts_1$ for all $z\in \ball{\delta}{x}$. 
Such a small $\delta$ exists because $\pts_1$ is a strict local minimum and $\dynb =\nabla V$ is smooth from the assumption that $V$ is a Morse function. In order to verify that $G_k$ is differentiable (i.e., \eqref{eqn::Gx}), 
by the Leibniz rule (see \eg{}, \cite[Theorem 6.28]{klenke_2014_probability}), 
it is sufficient to prove that there exists an integrable function $Q:(-\infty,0]\to\Real$ such that 
\begin{align}
	\label{eqn::Q}
	\abs{\nabla_z \big(\rho_k(\state{s}{z})\jacoarg{s}{z}\big)} \le Q(s), \qquad \forall s\in (-\infty,0],\ \forall z\in \ball{\delta}{x}.
\end{align} 
By direct computation, 
\begin{align}
	\label{eqn::deri:H}
	\begin{aligned}
	&\nabla_z \big(\rho_k(\state{s}{z})\jacoarg{s}{z}\big) \\
	=&
	 \rho_k(\state{s}{z})\jacoarg{s}{z}\Big( -(\nabla\state{s}{z})^{T} \nabla U_k(\state{s}{z}) + \int_{0}^{s} \big(\nabla \state{r}{z}\big)^{T} \nabla(\div\dynb)(\state{r}{z})\ud r\Big)\\
	 \myeq{\eqref{eqn::deri_X_t}}& \rho_k(\state{s}{z})\jacoarg{s}{z}\Big( -(\corrz{s}{0}(z))^{T} \nabla U_k(\state{s}{z}) + \int_{0}^{s} \big(\corrz{r}{0}(z)\big)^{T} \nabla(\div\dynb)(\state{r}{z})\ud r\Big).
	 \end{aligned}
\end{align}
Since the domain $\dom$ is assumed to be a torus,
we know $\rho_k$, $\nabla U_k$ and $\nabla(\div\dynb)$ are uniformly bounded on the domain $\dom$. Therefore, 
\begin{align}
	\label{eqn::deri_rho_J_bound}
	\begin{aligned}
	\abs{\nabla_z \big(\rho_k(\state{s}{z})\jacoarg{s}{z}\big)} \lesssim 
	 \jacoarg{s}{z} \Big(\norm{\corrz{s}{0}(z)} + \int_{s}^{0} \norm{\corrz{r}{0}(z)}\ud r\Big).
	\end{aligned}
\end{align}
Recall from \eqref{eq:jacob} and \eqref{eqn::corrz} that 
\begin{align*}
	\partial_s \jacoarg{s}{z} &= (\div\dynb)(\state{s}{z}) \jacoarg{s}{z}, \qquad \jacoarg{0}{z} = 1;\\
	\partial_s \corrz{s}{0}(z) &= \nabla\dynb(\state{s}{z})\corrz{s}{0}(z), \qquad \corrz{0}{0}(z) = \idmat_{\dimn}.
\end{align*}
Recall that $\state{s}{z}\to \pts_1$ as $s\to-\infty$; the value $\div\dynb(\pts_1)$ and the hessian matrix $\nabla\dynb(\pts_1)$ are both strictly positive, which imply that $\jacoarg{s}{x}$ and $\norm{\corrz{s}{0}(z)}$ are both decaying as $s\to-\infty$.

Let us denote $\upsilon>0$ as the smallest eigenvalue of $\nabla\dynb(\pts_1)$. 
For a given $\eps$ with $0 < \eps < \min\big\{\div\dynb(\pts_1), \upsilon\big\}$, let us define 
\begin{align*}
	\mathcal{E} = \big\{z\in\dom: \div\dynb(z) > \nabla\cdot\dynb(\pts_1) - \eps,\ \nabla\dynb(z) > (\upsilon - \eps) \idmat_{\dimn}\big\},
\end{align*}
which is an open neighborhood of $\pts_1$.
We can find a subset of $\mathcal{E}$, denoted as $\mathcal{E}_{\text{trap}}$, such that $\mathcal{E}_{\text{trap}}$ is a trapping region of the dynamics $\dynb$ for the backward branch. Thus we can find a negative time $\tau$ (which possibly depends on $\eps$) such that $\state{s}{x} \in \mathcal{E}_{\text{trap}}$ for all $s\le \tau$.
If we choose a $\delta$ small enough, then we can even ensure that 
\begin{align}
	\label{eqn::state_trap}
	\state{s}{z}\in \mathcal{E}_{\text{trap}}, \qquad \forall s\le \tau, \forall z\in \ball{\delta}{x},
\end{align}
due to the smoothness of $\dynb$ and the construction that $\mathcal{E}_1$ is a trapping region. Therefore, when $s\le \tau$, $\jacoarg{s}{z}$ decays to zero exponentially fast as $s\to-\infty$ with a rate at least $\div\dynb(\pts_1) - \eps$; similarly, the matrix norm $\norm{\corrz{s}{0}(z)}$ also decays to zero exponentially fast as $s\to-\infty$ with a rate at least $\upsilon-\eps$.
Therefore, on the region $(-\infty,0]\times \ball{\delta}{x}$, we can readily obtain
\begin{align}
	\label{eqn::J_C_decay}
	\jacoarg{s}{z} \lesssim e^{\big(\div\dynb(\pts_1) - \eps\big) s}, \qquad \norm{\corrz{s}{0}(z)} \lesssim e^{\big(\upsilon-\epsilon\big) s}.
\end{align}
By plugging the above estimates into \eqref{eqn::deri_rho_J_bound}, we know that 
\begin{align}
	\label{eqn::H_bound}
	\abs{\nabla_z \big(\rho_k(\state{s}{z})\jacoarg{s}{z}\big)} \lesssim \jacoarg{s}{z} \lesssim e^{\big(\div\dynb(\pts_1) - \eps\big) s}.
\end{align}
The function $e^{\big(\div\dynb(\pts_1) - \eps\big) s}$ is integrable on $(-\infty,0]$ and this serves as the function $Q$ needed in \eqref{eqn::Q} with some multiplicative constant.

\medskip
{\noindent \emph{Part 2: $\nabla G_k$ is continuous.}}

Let us denote $H(z,s) := \nabla_z \big(\rho_k(\state{s}{z})\jacoarg{s}{z}\big)$.
From \eqref{eqn::deri:H}, we can observe that $\nabla_z H(z,s)$ is continuous with respect to $z$ with  
\begin{align*}
	\nabla_z H(z,s) =& \frac{H(z,s) \big(H(z,s)\big)^{T}}{\rho_k(\state{s}{z})\jacoarg{s}{z}} + \\ & \rho_k(\state{s}{z})\jacoarg{s}{z} \nabla_z\Big(-(\corrz{s}{0}(z))^{T} \nabla U_k(\state{s}{z})\Big) + \\ 
	& \rho_k(\state{s}{z})\jacoarg{s}{z} \int_{0}^{s}\nabla_z\Big( \big(\corrz{r}{0}(z)\big)^{T} \nabla(\div\dynb)(\state{r}{z})\Big)\ud r.
\end{align*}

\begin{lemma}
	\label{lem::Hderi_bound}
	For a given smooth vector field $W\in C^{\infty}(\dom,\Real^{\dimn})$, there exists a constant $C$ such that for any $z\in \ball{\delta}{x}$ and $s\in (-\infty,0]$,
	 we have 
\begin{align*}
	\norm{\nabla_z \big(\corrz{s}{0}(z)^T W(\state{s}{z})} \le C e^{(\upsilon-\eps) s}.
\end{align*}
\end{lemma}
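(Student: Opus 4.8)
The plan is to differentiate the product $\corrz{s}{0}(z)^{T} W(\state{s}{z})$ in each coordinate $z_j$ and reduce everything to the exponential decay estimate \eqref{eqn::J_C_decay} for $\corrz{s}{0}(z)$, which is already available on $(-\infty,0]\times\ball{\delta}{x}$ (with the same $\eps,\upsilon,\tau,\pts_1,\mathcal{E}_{\mathrm{trap}}$ fixed in Part 1). Using $\nabla_z\state{s}{z}=\corrz{s}{0}(z)$ from \eqref{eqn::deri_X_t},
\begin{align*}
\partial_{z_j}\bigl(\corrz{s}{0}(z)^{T} W(\state{s}{z})\bigr)=\bigl(\partial_{z_j}\corrz{s}{0}(z)\bigr)^{T} W(\state{s}{z})+\corrz{s}{0}(z)^{T}\,\nabla W(\state{s}{z})\,\corrz{s}{0}(z)\,\basis_j .
\end{align*}
Since $\dom$ is a torus, $W$ and $\nabla W$ are bounded, so the second term is $\lesssim\norm{\corrz{s}{0}(z)}^{2}\lesssim e^{2(\upsilon-\eps)s}\le e^{(\upsilon-\eps)s}$ for $s\le0$ (using $\upsilon-\eps>0$). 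It then remains only to bound $\nabla_z\corrz{s}{0}(z)$.

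For that I would set $B^{(j)}(z,s):=\partial_{z_j}\corrz{s}{0}(z)$ and differentiate the linear ODE \eqref{eqn::corrz_evol} in $z_j$: since $\partial_{z_j}\bigl[\nabla\dynb(\state{s}{z})\bigr]=\nabla^{2}\dynb(\state{s}{z})\bigl[\corrz{s}{0}(z)\basis_j\bigr]$ by the chain rule and \eqref{eqn::deri_X_t}, one obtains
\begin{align*}
\partial_s B^{(j)}(z,s)=\nabla\dynb(\state{s}{z})\,B^{(j)}(z,s)+F^{(j)}(z,s),\qquad B^{(j)}(z,0)=0 ,
\end{align*}
with $F^{(j)}(z,s)=\bigl(\nabla^{2}\dynb(\state{s}{z})[\corrz{s}{0}(z)\basis_j]\bigr)\corrz{s}{0}(z)$. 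Because $\nabla^{2}\dynb=\nabla^{3}V$ is bounded on the torus, together with \eqref{eqn::J_C_decay} this gives $\norm{F^{(j)}(z,s)}\lesssim e^{2(\upsilon-\eps)s}$ uniformly in $z\in\ball{\delta}{x}$. By the variation-of-constants formula for this linear system, $B^{(j)}(z,s)=-\int_s^0\corrz{s}{r}(z)\,F^{(j)}(z,r)\,\ud r$, where $\corrz{s}{r}(z)=\corrz{s}{0}(z)\corrz{r}{0}(z)^{-1}$ is the transition matrix appearing in \eqref{eqn::corrz}.

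The key ingredient — and the step I expect to be the main obstacle — is a uniform bound $\norm{\corrz{s}{r}(z)}\lesssim e^{(\upsilon-\eps)(s-r)}$ valid for all $s\le r\le0$ and $z\in\ball{\delta}{x}$. This should follow from the same mechanism that gave \eqref{eqn::J_C_decay}: $\nabla\dynb=\nabla^{2}V$ is symmetric, and on the trapping region $\mathcal{E}_{\mathrm{trap}}$ it satisfies $\nabla^{2}V>(\upsilon-\eps)\idmat_{\dimn}$, so for any vector $u$ one has $\tfrac{\ud}{\ud s}\abs{\corrz{s}{r}(z)u}^{2}=2\innerbig{\corrz{s}{r}(z)u}{\nabla^{2}V(\state{s}{z})\,\corrz{s}{r}(z)u}\ge2(\upsilon-\eps)\abs{\corrz{s}{r}(z)u}^{2}$ whenever $\state{s}{z}\in\mathcal{E}_{\mathrm{trap}}$; a Gr\"onwall estimate integrated from $s$ to $r$ yields the decay for $r$ below the uniform entry time $\tau$ of \eqref{eqn::state_trap}, and on the remaining bounded interval $[\tau,0]$ one absorbs the (smooth, hence bounded) flow-derivative into a constant exactly as in Part 1. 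Once this resolvent bound is in place, feeding it and the forcing bound into the Duhamel integral gives
\begin{align*}
\norm{B^{(j)}(z,s)}\lesssim\int_s^0 e^{(\upsilon-\eps)(s-r)}\,e^{2(\upsilon-\eps)r}\,\ud r=e^{(\upsilon-\eps)s}\int_s^0 e^{(\upsilon-\eps)r}\,\ud r\le\frac{1}{\upsilon-\eps}\,e^{(\upsilon-\eps)s},
\end{align*}
so $\norm{\nabla_z\corrz{s}{0}(z)}\lesssim e^{(\upsilon-\eps)s}$; combining with the boundedness of $W$ this controls the first term above by $e^{(\upsilon-\eps)s}$, and adding the two terms completes the proof. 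The one thing to watch in the bookkeeping is that the forcing decays at the doubled rate $2(\upsilon-\eps)$ while the resolvent only contributes the single rate $(\upsilon-\eps)$, yet the convolution still closes at rate $(\upsilon-\eps)$ precisely because $s\le0$ makes $e^{(\upsilon-\eps)s}\le1$.
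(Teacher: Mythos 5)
Your proposal is correct and follows essentially the same route as the paper: product rule using $\nabla_z\state{s}{z}=\corrz{s}{0}(z)$, differentiating \eqref{eqn::corrz_evol} to get an inhomogeneous linear ODE with source decaying like $e^{2(\upsilon-\eps)s}$, Duhamel's formula, and decay of the transition matrix from the trapping region. The only cosmetic difference is that you absorb the bounded interval $[\tau,0]$ into a uniform resolvent bound $\norm{\corrz{s}{r}(z)}\lesssim e^{(\upsilon-\eps)(s-r)}$ before integrating, whereas the paper splits the Duhamel integral at $\tau$ and bounds the $[\tau,0]$ piece via $\norm{\corrz{s}{0}(z)}\,\norm{(\corrz{r}{0}(z))^{-1}}$; both closings are equivalent.
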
 

By this lemma and the estimates in \eqref{eqn::H_bound}, 
	\begin{align*}
		\norm{\nabla_z H(z,s)} &\lesssim \norm\Big{\frac{H(z,s) \big(H(z,s)\big)^{T}}{\rho_k(\state{s}{z})\jacoarg{s}{z}}} + \jacoarg{s}{z} e^{(\upsilon-\eps) s}  + \jacoarg{s}{z} \int_{s}^{0} e^{(\upsilon-\eps)r}\ud r \\
		&\lesssim \jacoarg{s}{z} \lesssim e^{\big(\div\dynb(\pts_1) - \eps\big)s},
	\end{align*}
which readily leads into the continuity of $\nabla G_k$ based on \eqref{eqn::Gx}.

\medskip
{\noindent \emph{Part 3: Proof of \lemref{lem::Hderi_bound}}}

 By direct computation
 \begin{align*}
 	&\ \nabla_{z_j} \Big(\corrz{s}{0}(z)^T W(\state{s}{z}\Big)_{i} \\
 	=&\ \sum_{l} \nabla_{z_j} \Big(\big(\corrz{s}{0}(z)\big)_{l, i} W_{l}(\state{s}{z}) \Big) \\
 	\myeq{\eqref{eqn::deri_X_t}}&\ \sum_{l} \Big(\nabla_{z_j} \big(\corrz{s}{0}(z)\big)_{l, i}\Big) W_{l}(\state{s}{z}) + \sum_{l,m} \big(\corrz{s}{0}(z)\big)_{l,i} \big(\nabla W(\state{s}{z})\big)_{l,m} \big(\corrz{s}{0}(z))_{m,j}.
 \end{align*}
Since $W$ is assumed to be smooth on the torus $\dom$, we know $W$ and $\nabla W$ are uniformly bounded. Besides, previously, we know that  $\norm{\corrz{s}{0}(z)} \lesssim e^{\big(\upsilon-\epsilon\big) s}$. Therefore, we only need to prove that for any $1\le \ell\le \dimn$, 
\begin{align}
	\label{eqn::norm_corrz_ell}
	\norm{\partial_{z_\ell} \corrz{s}{0}(z)}\lesssim e^{(\upsilon-\eps) s}.
\end{align}

From \eqref{eqn::corrz_evol}, we have 
\begin{align}
	\label{eqn::corrz_deri_time}
	\begin{aligned}
	& \partial_{s} \big(\partial_{z_\ell} \corrz{s}{0}(z)\big) = \nabla\dynb(\state{s}{z}) \big(\partial_{z_\ell} \corrz{s}{0}(z)\big) + \bm{S}(s,z), \qquad 
	\big(\partial_{z_\ell} \corrz{0}{0}(z)\big) = \vectorzero_{\dimn\times\dimn},\\
	& \big(\bm{S}(s,z) \big)_{i,j} = \sum_{n,m}(\partial_{z_i,z_n,z_m} V) (\state{s}{z}) \big(\corrz{s}{0}(z)\big)_{m,\ell} \big(\corrz{s}{0}(z)\big)_{n,j}.
	\end{aligned}
\end{align}
Since $\norm{\corrz{s}{0}(z)}\lesssim e^{(\upsilon-\eps) s}$ from \eqref{eqn::J_C_decay}, the source term $\norm{\bm{S}(s,z)}$ also decays exponentially fast with rate $2(\upsilon-\eps)$ as $s\to -\infty$, namely, 
\begin{align}
	\label{eqn::S_norm}
	\norm{\bm{S}(s,z)}\lesssim e^{2(\nu-\eps)s}, \qquad \forall s\in (-\infty,0], z\in \ball{\delta}{x}.
\end{align}
By rewriting \eqref{eqn::corrz_deri_time} in the integral form and by \eqref{eqn::corrz}, we have
\begin{align*}
	\big(\partial_{z_\ell} \corrz{s}{0}(z)\big) 
	&= - \int_{s}^{0}\corrz{s}{0}(z) \big(\corrz{r}{0}(z)\big)^{-1} \bm{S}(r,z)\ \dd r
	= - \int_{s}^{0} \corrz{s}{r}(z) \bm{S}(r,z)\ \dd r.
\end{align*}
To prove \lemref{lem::Hderi_bound}, we only need to consider the case $s\ll 0$. 
Suppose we consider $s\le \tau$ only; recall the role of $\tau$ in \eqref{eqn::state_trap}. 
Then we could obtain that when $s\le r\le \tau$
\begin{align}
	\label{eqn::corrz_norm}
	\norm{\corrz{s}{r}(z)} = \norm{\exp_{\antichrontime}\Big(-\int_{s}^{r} \nabla\dynb(\state{u}{z})\ud u\Big)}\ \myle{\eqref{eqn::state_trap}}\ e^{-(\nu-\eps)(r-s)},
\end{align}
where $\exp_{\antichrontime{}}$ is the anti-chronological time-ordered operator exponential.
We can separate the above integral form using $\tau$ and obtain the following estimates: when $s\le \tau$, 
\begin{align*}
	\norm{\partial_{z_\ell} \corrz{s}{0}(z)} &\le \norm{\int_{s}^{\tau} \corrz{s}{r}(z) \bm{S}(r,z)\ \dd r} + \norm{\int_{\tau}^{0} \corrz{s}{r}(z) \bm{S}(r,z)\ \dd r}\\
	&\le \int_{s}^{\tau} \norm{\corrz{s}{r}(z) \bm{S}(r,z)}\ \dd r + \int_{\tau}^{0} \norm{\corrz{s}{0}(z) \big(\corrz{r}{0}(z)\big)^{-1} \bm{S}(r,z)}\ \dd r\\
	&\le \int_{s}^{\tau} \underbrace{\norm{\corrz{s}{r}(z)}}_{\text{use } \eqref{eqn::corrz_norm}} 
	\underbrace{\norm{\bm{S}(r,z)}}_{\text{use } \eqref{eqn::S_norm}}\ \dd r 
	+ 
	\underbrace{\norm{\corrz{s}{0}(z)}}_{\text{use } \eqref{eqn::J_C_decay}} 
	\int_{\tau}^{0} \underbrace{\norm{\big(\corrz{r}{0}(z)\big)^{-1}}}_{=\mathcal{O}(1)} \cdot \underbrace{\norm{\bm{S}(r,z)}}_{\text{use } \eqref{eqn::S_norm}}\ \dd r\\
	&\lesssim  \int_{s}^{\tau} e^{-(\upsilon-\eps)(r-s)} e^{2(\upsilon-\eps) r}\dd r 
	+ e^{(\upsilon-\eps) s} \underbrace{\int_{\tau}^{0} e^{2(\upsilon-\eps) r}\dd r}_{=\mathcal{O}(1)}\\
	&\lesssim e^{(\upsilon-\eps) s} + e^{(\upsilon-\eps) s} \lesssim e^{(\upsilon-\eps) s}.
\end{align*}
This verifies \eqref{eqn::norm_corrz_ell} for any $z\in \ball{\delta}{x}$ and $s\le\tau$. When $s\in [-\tau,0]$, we can simply choose the prefactor large enough so that \eqref{eqn::norm_corrz_ell} holds, as $\tau$ is a finite value.
Hence, \lemref{lem::Hderi_bound} is verified.

\subsection{Examples}
\label{subsec::generator-examples}

We elaborate on \propref{prop::existence_generator} by concrete examples. For these examples, we estimate $\timeT$ from \eqref{eqn::optimal_time} either analytically or numerically, and then we validate \propref{prop::existence_generator} by comparing $\rho_1$ and the empirical distribution of $\state{\timeT(x)}{x}$ where $x\sim\rho_0$.

\subsubsection{Gaussian examples in 1D}
\label{subsubsec::gaussian_1d}

For the case $U_0(x) = \frac{\abs{x}^2}{2} +\gausscst{1}$ and $U_1(x) = \frac{\abs{x-\omega}^2}{2\sigma^2}$ with $\sigma<1$, from Table \ref{table::solvable_eg}, we already know that a zero-variance dynamics is $\dynb(x) = x-\frac{\omega}{1-\sigma}$. 
Then $\state{t}{x} = e^t x - (e^t-1)\frac{\omega}{1-\sigma}$ for any $t,x\in\Real$ and $\jacoarg{t}{x} = e^t$ for any $t\in\Real$.
By direct computation 
\begin{align*}
	\frac{\int_{-\infty}^\theta \rho_1(\state{s}{x})\jacoarg{s}{x}\dd s}{		\int_{-\infty}^{0} \rho_0(\state{s}{x})\jacoarg{s}{x}\dd s} = -\frac{\erf(\frac{\omega}{\sqrt{2}(1-\sigma)}) 
		+ \erf\big(\frac{-e^\theta (\omega + x(\sigma-1) + \omega\sigma)}{\sqrt{2}\sigma(\sigma-1)}\big)
	}{
		\erf(\frac{x}{\sqrt{2}}) 
		+ \erf\big(\frac{\omega}{\sqrt{2}(\sigma-1)}\big)
	}.
\end{align*}
By solving $\timeT$ in \eqref{eqn::optimal_time} using the last equation, we have 
\begin{align*}
	\timeT(x) = \log(\sigma),\qquad \forall x \neq \frac{\omega}{1-\sigma},
\end{align*}
which is independent of the state $x$.

\begin{figure}[h!]
\centering
	\includegraphics[width=0.5\textwidth]{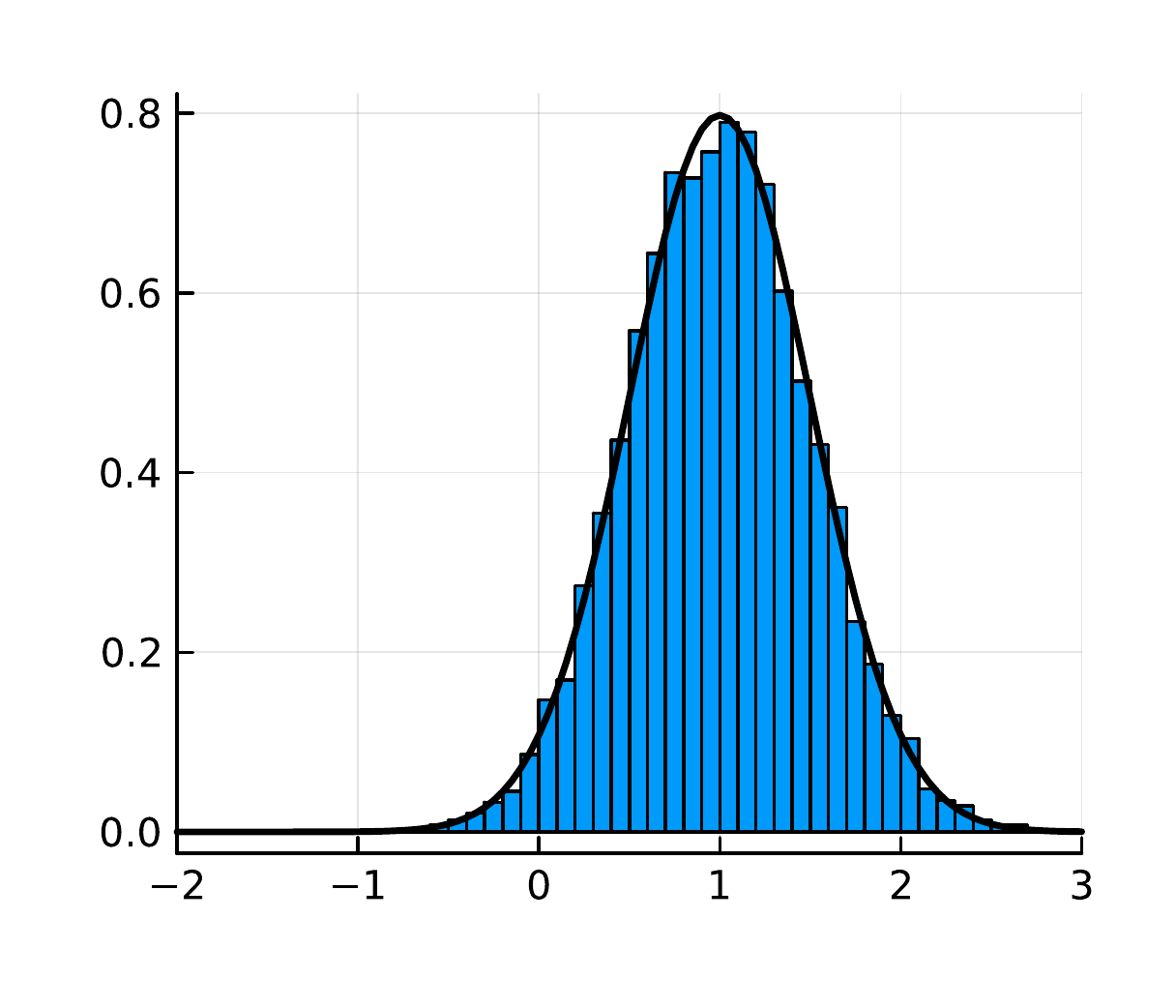}
	\caption{This figure shows the histogram of sample points of $\state{\timeT(x)}{x}$ (blue) and the distribution $\rho_1$ (black) for the 1D Gaussian example in Appendix~\ref{subsubsec::gaussian_1d} with $\omega=1$ and $\sigma=0.5$.}
\end{figure}

\subsubsection{Three-mode mixture on a 2D torus}
We consider the model \eqref{eqn::torus_eg} on the torus $\dom=[0,1]^2$ and recall that the zero-variance dynamics has been shown in \figref{fig::torus}.
Then the contour plot of $\timeT$ is visualized in \figref{fig::torus_optimal_time}. Moreover, in the same figure, the empirical distributions of $\state{\timeT(x)}{x}$ and contour plots of $\rho_1$ are provided, which numerically verifies \propref{prop::existence_generator}.

\subsubsection{An example on $(0,1)^2$ with Neumann boundary condition}

We consider the model
\begin{align}
	      \begin{aligned}
	      V(x) = \gamma \cos(2\pi x_1) & \cos(2\pi x_2), \qquad
	      \rho_0(x) = 1,\qquad    \rho_1(x) = \rho_0(x) + \Laplace V(x),\\
	      & x = \begin{bsmallmatrix}
	              x_1 \\ x_2 \\
	      \end{bsmallmatrix}, 
	     \qquad \gamma = \frac{0.45}{4\pi^2},
	      \end{aligned}
	      \label{eqn::eg_neumann_cos}
	\end{align}
on the domain $\dom=(0,1)^2$
and the potential $V$ automatically solves the Poisson's equation with Neumann boundary condition (see \eqref{eqn::poisson_vn}) by the above construction. Apart from verifying that $\dynb = \nabla V$ is a zero-variance dynamics numerically, we can also observe that \propref{prop::existence_generator} holds in this case; see \figref{fig::neumann_cos}.

\begin{figure}[h!]
\centering
	\subfigure[$\timeT$]{\includegraphics[width=0.48\textwidth]{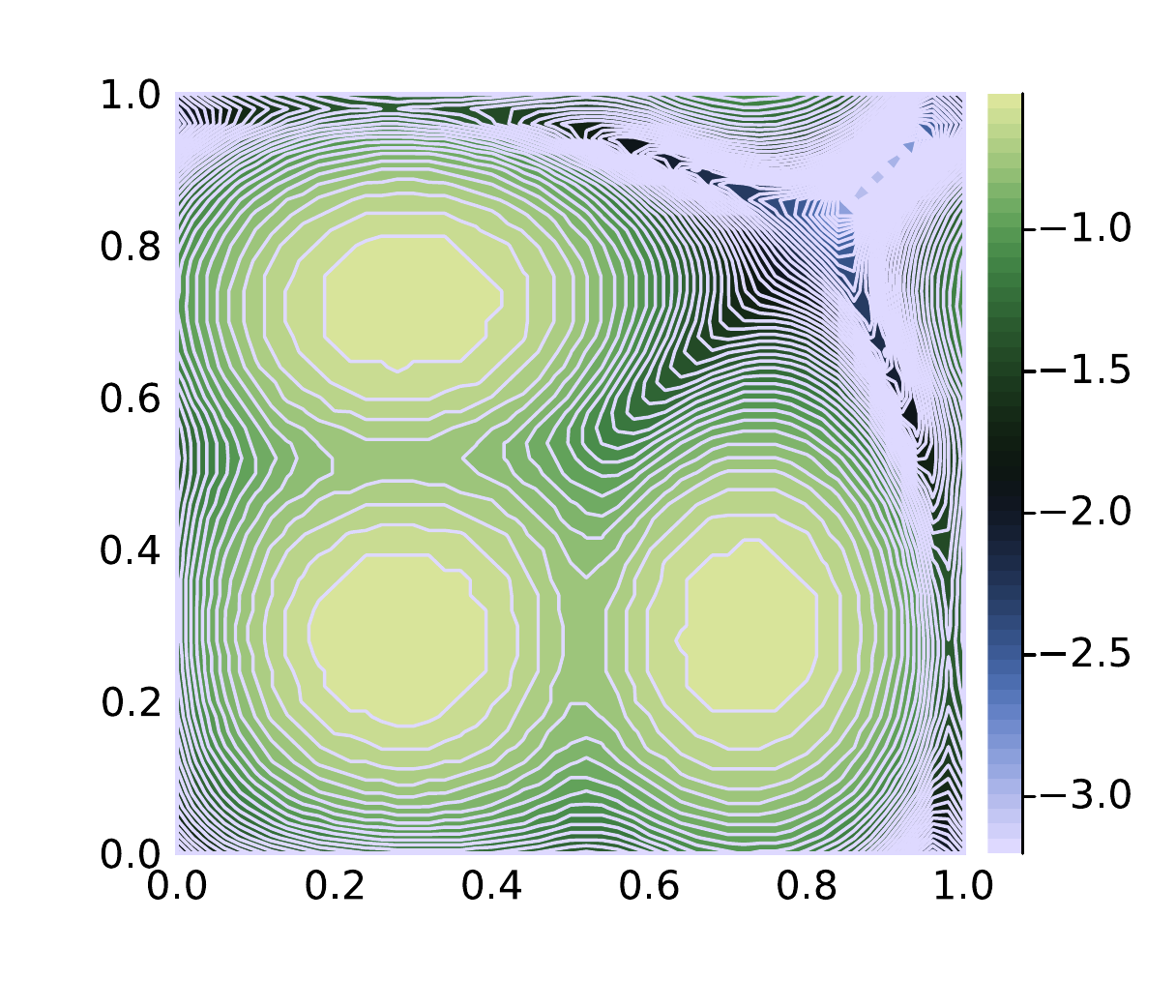}}\\
	\subfigure[histogram of samples of $\state{\timeT(x)}{x}$ where $x\sim \rho_0$]{\includegraphics[width=0.48\textwidth]{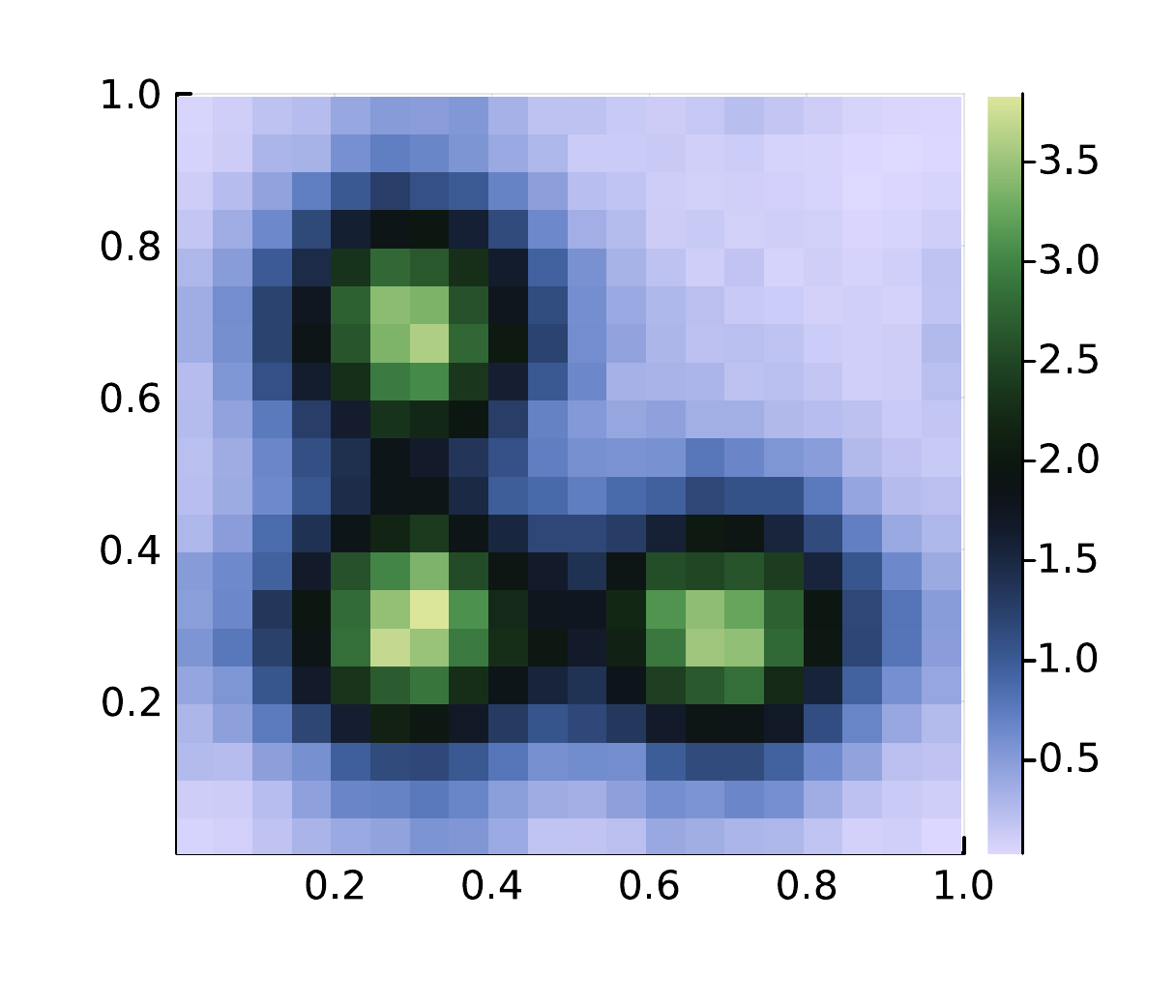}}
	\subfigure[$\rho_1$]{\includegraphics[width=0.48\textwidth]{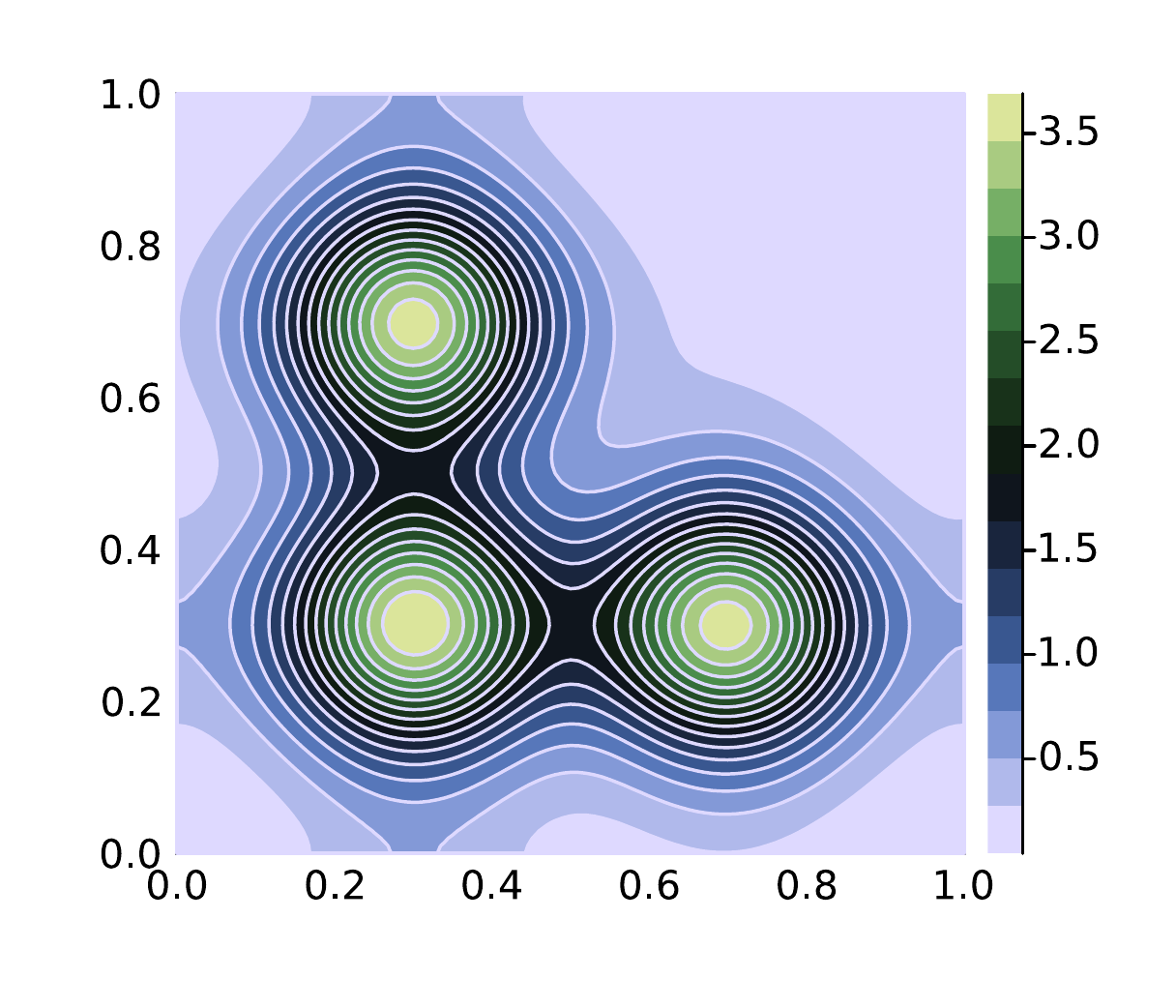}}
	\caption{The figure shows the time map $\timeT$ defined via \eqref{eqn::optimal_time} and it also numerically verifies that $\rho_1$ is the distribution of $\state{\timeT(x)}{x}$ with $x\sim\rho_0$, for the model \eqref{eqn::torus_eg} on the torus $\dom=[0,1]^2$ (with periodic boundary condition).}
	\label{fig::torus_optimal_time}
\end{figure}

\begin{figure}[h!]
\centering
	\subfigure[$V$ and trajectories]{
		\includegraphics[width=0.48\textwidth]{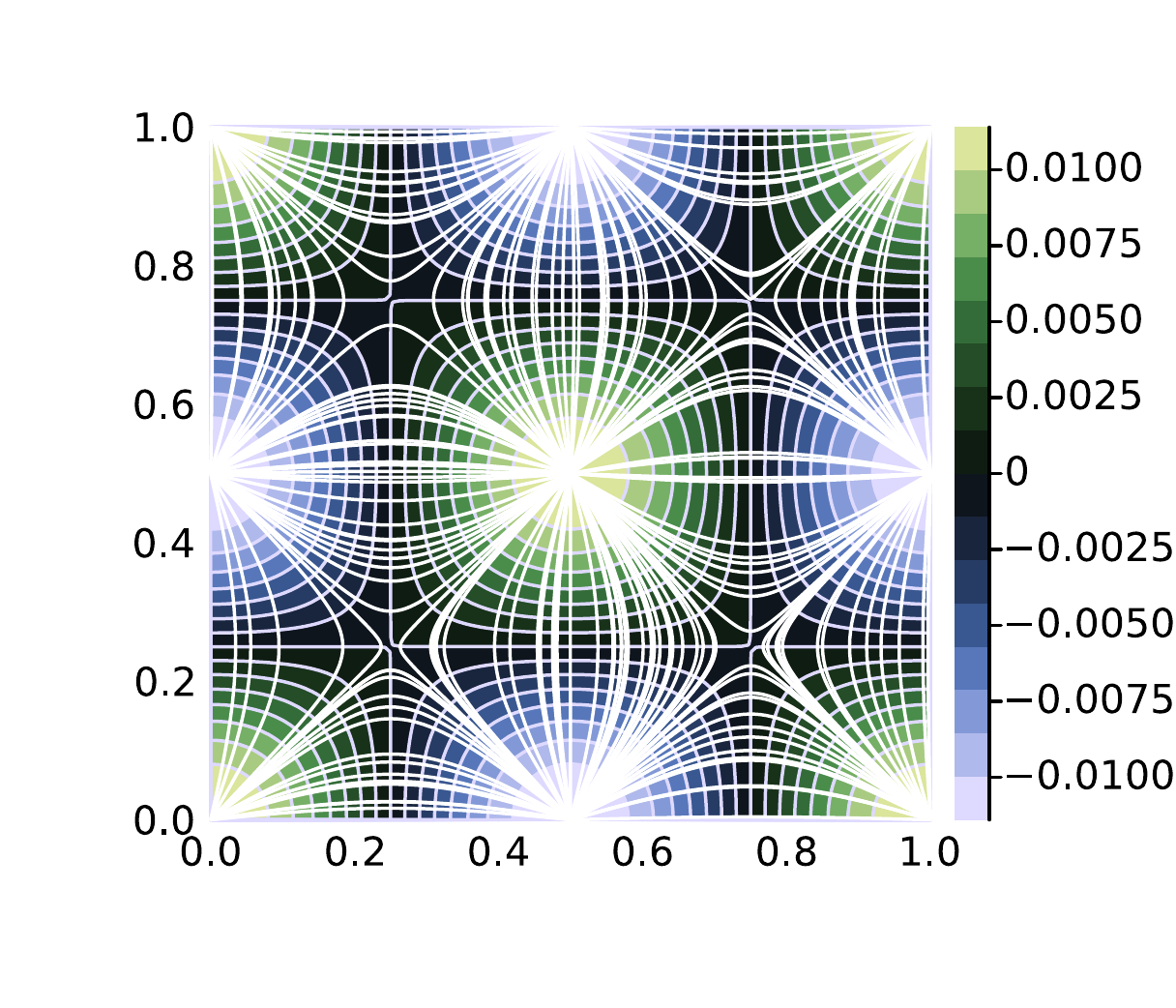}
	}
	\subfigure[$\rho_1$]{
		\includegraphics[width=0.48\textwidth]{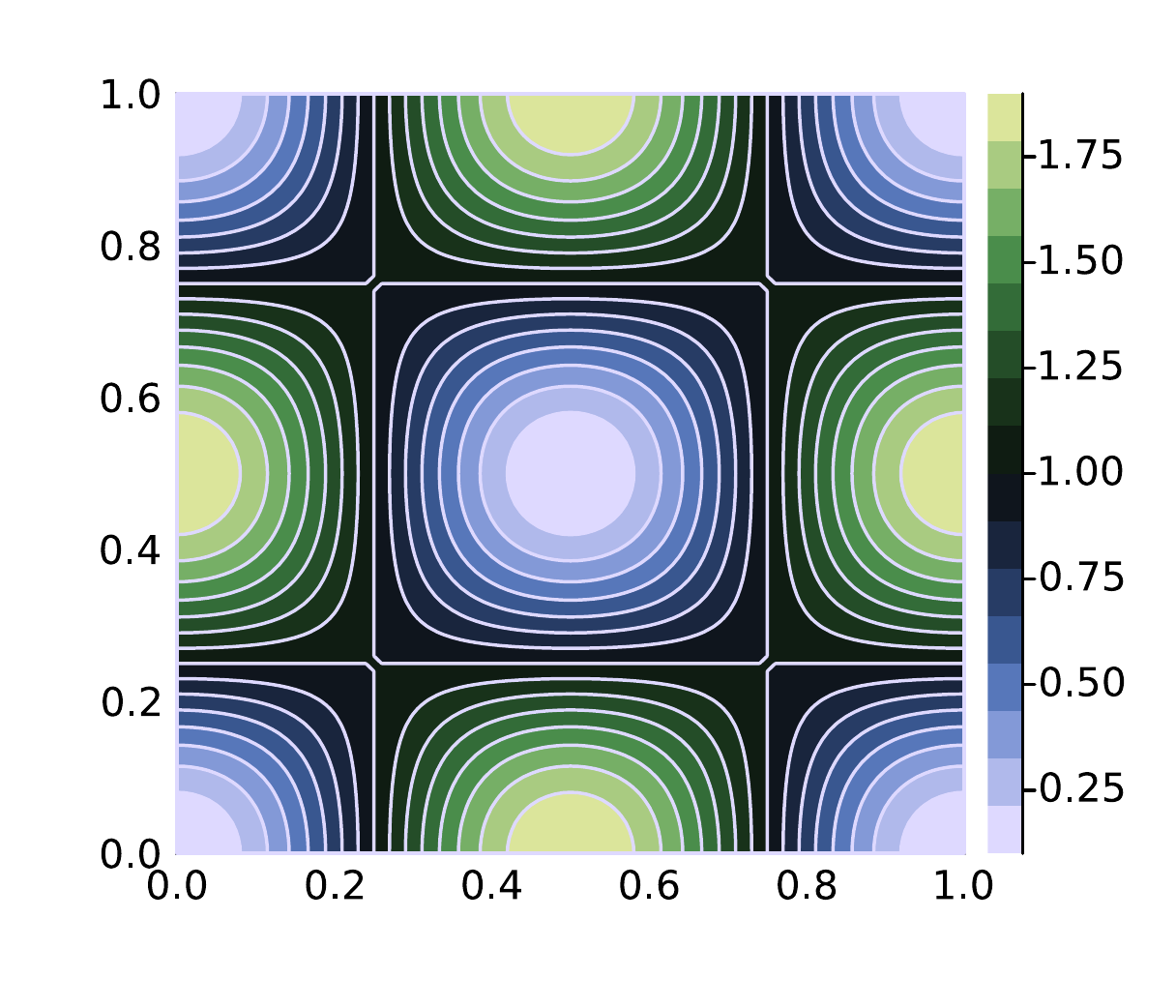}
	}
	\\
	\subfigure[$\timeT$ in symlog scale]{
		\includegraphics[width=0.48\textwidth]{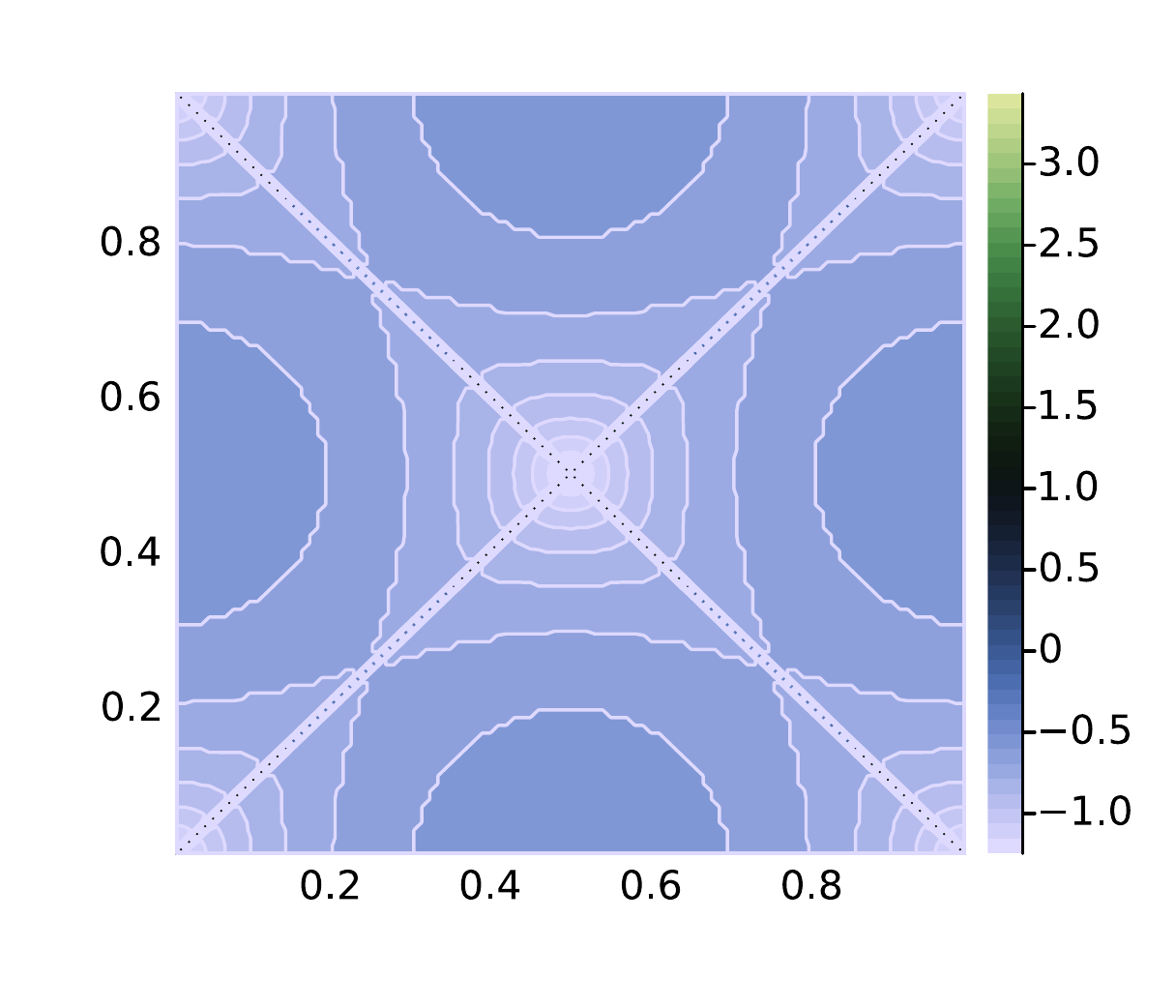}
	}
	\subfigure[histogram of samples of $\state{\timeT(x)}{x}$ where $x\sim \rho_0$]{
		\includegraphics[width=0.48\textwidth]{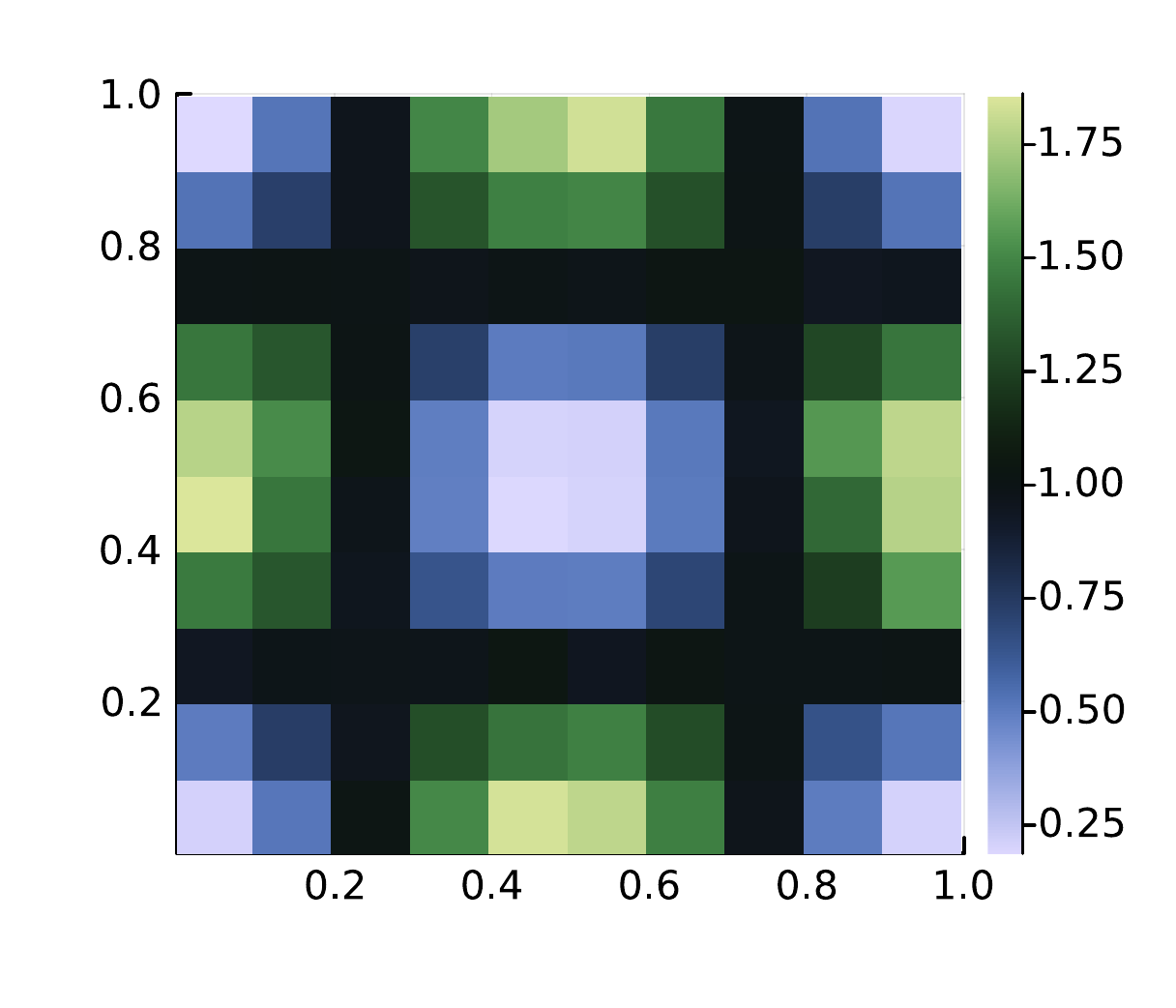}
	}
	\caption{This figure visualizes the model \eqref{eqn::eg_neumann_cos} on the domain $\dom=(0,1)^2$ (with Neumann boundary condition). 
		In the panel (a), we show the potential $V$ and sample trajectories under the dynamics $\dynb = \nabla V$; in the panel (b), we show the distribution $\rho_1$. In the panel (c), we present the time map $\varphi \circ \timeT$ where $\timeT$ is defined via \eqref{eqn::optimal_time} and the rescaling function $\varphi(z) := \text{sign}(z) \log(1+\abs{z})$ is the symlog function. In the panel (d), we show a histogram of samples of $\state{\timeT(x)}{x}$ where $x\sim \rho_0$. The panel (d) resembles the panel (b), which numerically verifies that $\rho_1$ is the distribution of $\state{\timeT(x)}{x}$ with $x\sim \rho_0$.}
	\label{fig::neumann_cos}
\end{figure}

\section{Proof of \propref{prop::local}}
\label{sec::proof::local}

Below is the full version of \propref{prop::local}.

\begin{proposition}[Local minimum]
	\label{prop::local::full}
	Assume that 
	\begin{enumerate}[label=(\roman*)]
		
		\item (Local minimum). $\dynb \in \maniinf$ is a (non-trivial) local minimum of $\newvarinf$, namely, $\newvarinf(\dynb) < \varmax$ and if there is a perturbation $\delta\dynb\in C_c^{\infty}(\Omega,\Rd)$ such that $\dynb + \eps \delta\dynb\in \maniinf$ for sufficiently small $\eps$, then $\newvarinf(\dynb+ \eps\delta\dynb) \ge  \newvarinf(\dynb)$.
		
		\item (Continuity assumption). The functional derivative 
		$\derivarinf$ is continuous and 
		$\derivarinf = \vectorzero_{\dimn}$ on $\Omega$. 
		In particular, $\nabla\newainf$ exists and is continuous on $\dom$.
		
		\item (Technical assumptions). $\effdom(\dynb) = \dom$ (see \defref{defn::maniinf}) and the set of $\dynb$-unstable points (see  \defref{defn::int_cond_dynb}) has Lebesgue measure zero. Moreover, the set $\big\{x\in \dom:\ \nabla (U_1 - U_0)(x) = \vectorzero_{\dimn}\big\}$ has Lebesgue measure zero.
	\end{enumerate}
	Then $\dynb$ is a zero-variance dynamics for the \itneis{} scheme, i.e., $\newvarinf(\dynb) = 0$.
\end{proposition}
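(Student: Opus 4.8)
The plan is to deduce from the hypotheses that the flow-line invariant $\newainf$ is constant on $\dom$; once this is known, \propref{prop::formulation} gives $\newainf\equiv\ee_0\newainf=\partition_1$, hence $\newmsecinf(\dynb)=\ee_0[|\newainf|^2]=\partition_1^2$ and $\newvarinf(\dynb)=0$. Since by assumption (ii) the field $\nabla\newainf$ exists and is continuous on $\dom$ and $\derivarinf\equiv\vectorzero_\dimn$, it suffices to prove that the open set $\mathcal{O}:=\{x\in\dom:\nabla\newainf(x)\neq\vectorzero_\dimn\}$ is empty: then $\nabla\newainf\equiv\vectorzero_\dimn$, so $\newainf$ is locally constant and, $\dom$ being connected, constant.

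Suppose $\mathcal{O}\neq\emptyset$. Being open and nonempty it has positive Lebesgue measure, so by assumption (iii) — the $\dynb$-unstable set is null, and $\effdom(\dynb)=\dom$ rules out stationary points and periodic orbits by \lemref{lem::G} — $\mathcal{O}$ contains a $\dynb$-stable point $\xst$. Shrinking, fix an open ball $\domsubset\ni\xst$ that is $\dynb$-stable, contained in $\mathcal{O}$, and of the type produced by \propref{prop::perturbation}. On $\domsubset$ we have $\nabla\newainf\neq\vectorzero_\dimn$, so the formula \eqref{eqn::func_deri_inf} for $\derimsecinf$ together with $\derimsecinf\equiv\vectorzero_\dimn$ forces the scalar factor to vanish:
\[
\int_{0}^{\infty}\testfunc{0}_t(x)\ud t\int_{-\infty}^{0}\testfunc{1}_t(x)\ud t=\int_{-\infty}^{0}\testfunc{0}_t(x)\ud t\int_{0}^{\infty}\testfunc{1}_t(x)\ud t,\qquad x\in\domsubset,
\]
equivalently $\int_{0}^{\infty}\testfunc{1}_t(x)\ud t=\newainf(x)\int_{0}^{\infty}\testfunc{0}_t(x)\ud t$ on $\domsubset$. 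Using the translation identities \eqref{eqn::testfunc_trans} and \eqref{eqn::time_trans_ainf}, the $\dynb$-stable property that each orbit meets $\partial\domsubset$ at exactly two points means that, replacing $x$ by $\state{s}{x}$, this identity persists for every $s$ in the single interval $I_x\ni 0$ during which $\state{s}{x}\in\domsubset$, i.e. $\int_{s}^{\infty}\testfunc{1}_u(x)\ud u=\newainf(x)\int_{s}^{\infty}\testfunc{0}_u(x)\ud u$ for $s\in I_x$. Differentiating in $s$ gives $\testfunc{1}_s(x)=\newainf(x)\testfunc{0}_s(x)$, i.e. $e^{-(U_1-U_0)(\state{s}{x})}=\newainf(x)$ for $s\in I_x$; taking $s=0$ yields the key identity
\[
\newainf(x)=e^{-(U_1-U_0)(x)}=\partition_1\,\rho_1(x)/\rho_0(x),\qquad x\in\domsubset,
\]
and therefore $\nabla\newainf=-\newainf\,\nabla(U_1-U_0)$ on $\domsubset$.

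Running this construction over all $\dynb$-stable points of $\mathcal{O}$ (which exhaust $\mathcal{O}$ up to a null set) and using continuity of $\newainf$, of $\nabla\newainf$, and of $\partition_1\rho_1/\rho_0$, one upgrades the two displayed identities to all of $\overline{\mathcal{O}}$; in particular at any $z\in\partial\mathcal{O}$ we have $\nabla\newainf(z)=\vectorzero_\dimn$, hence $\newainf(z)\nabla(U_1-U_0)(z)=\vectorzero_\dimn$, so $\nabla(U_1-U_0)(z)=\vectorzero_\dimn$, and by assumption (iii) $\partial\mathcal{O}$ is Lebesgue-null. The \emph{main obstacle} is the remaining step: ruling this out entirely. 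The route I would take is to show that $\mathcal{O}$ must then be essentially all of $\dom$ — so that $\newainf=\partition_1\rho_1/\rho_0$ almost everywhere and hence $\newvarinf(\dynb)=\ee_0[(\partition_1\rho_1/\rho_0)^2]-\partition_1^2=\varmax$, contradicting the non-triviality hypothesis $\newvarinf(\dynb)<\varmax$ in assumption (i). Excluding a nonempty open invariant complementary region $\dom\setminus\overline{\mathcal{O}}$, on which $\newainf$ would be locally constant, is where the flow-stability hypotheses and a perturbation of $\dynb$ supported in such a region and transverse to the level sets of $U_1-U_0$ (admissible by \propref{prop::perturbation}) come in; quantifying the effect of that perturbation on $\newmsecinf$ via the kernels $\corrz{t}{s}$, $\corrg{t}{s}{k}$ of \secref{sec::perturb::infinite} is the delicate computation, and since the first variation vanishes identically it must be carried to second order.
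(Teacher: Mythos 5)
Your reduction is sound up to the point you flag: from $\derivarinf\equiv\vectorzero_\dimn$, \lemref{lem::cond_equiv}, and the $\dynb$-stability near any point of $\mathcal{O}$ you correctly extract $\newainf=e^{-(U_1-U_0)}$ on $\mathcal{O}$, hence $\inner{\dynb}{\nabla(U_1-U_0)}=0$ there, and the boundary observation that $\partial\mathcal{O}$ is Lebesgue-null also holds. The gap is at the ``main obstacle,'' and the direction you propose there would not close it. You suggest perturbing $\dynb$ on $\dom\setminus\overline{\mathcal{O}}$ and computing the second variation with the kernels $\corrz{t}{s}$, $\corrg{t}{s}{k}$; but since $\newainf$ is $C^1$, the relation $\newainf\circ\state{s}{\cdot}=\newainf$ and invertibility of $\nabla_x\state{s}{x}$ make both $\mathcal{O}$ and $K:=\dom\setminus\mathcal{O}$ flow-invariant, so a $\delta\dynb$ supported in $K^\circ$ leaves every trajectory starting in $\mathcal{O}$ unchanged, and therefore leaves the $\mathcal{O}$-contribution to $\newvarinf$ unchanged. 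On $K^\circ$ nothing forces the perturbation to decrease the contribution there, and local minimality is fully consistent with a locally constant $\newainf$ on $K^\circ$ — you cannot manufacture a contradiction by perturbing in that region.

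What the paper does instead (\propref{prop::not_local_min}) is perturb in the \emph{other} region: pick a $\dynb$-stable $\xst\in\wt{D}\subset\mathcal{O}$ with $\nabla(U_1-U_0)(\xst)\neq\vectorzero_\dimn$ (available by assumption (iii)) and a $\delta\dynb$ compactly supported near $\xst$ and aligned with $\nabla(U_1-U_0)$. On $\mathcal{O}$ the identity $\newainf=e^{-(U_1-U_0)}$ says the estimator already attains the Jensen upper bound of \propref{prop::var_dynb} restricted to that invariant region, so the restricted second moment under $\dynb^\eps=\dynb+\eps\delta\dynb$ can only drop, and the equality case of Jensen fails as soon as $\eps\inner{\delta\dynb}{\nabla(U_1-U_0)}\not\equiv 0$ on a set of positive $\rho_0$-measure — which is exactly how $\delta\dynb$ is built. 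The $K$-contribution is unchanged (separated flows), so $\newmsecinf(\dynb^\eps)<\newmsecinf(\dynb)$ strictly for all small $\eps\neq 0$, contradicting local minimality. This is the ``second-order effect'' you correctly anticipate, but it is delivered without expanding the kernels to second order — the strict-Jensen inequality does the work. You should also be aware of the paper's remaining sub-case where $\wt{D}=\emptyset$: there a connectedness argument (on the decomposition of $\wt{K}^\circ$ into regions where $\newainf$ is locally constant, plus continuity of $\newainf$ and connectedness of $\dom$) shows directly that $\newainf$ is globally constant and $\newvarinf(\dynb)=0$; your proposal has no counterpart for this branch.
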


Recall the formula of $\derivarinf$ from \eqref{eqn::func_deri_inf}:
	\begin{align*}
	\begin{aligned}
		&\derivarinf(x) =
		\frac{2\nabla \newainf(x)}{\partitionzero\binf(x)}\left(
		\int_{0}^{\infty}
		\testfuncarg{0}{t}{x}\ud t \int_{-\infty}^{0} \testfuncarg{1}{t}{x}\ud t 
		- \int_{-\infty}^{0} \testfuncarg{0}{t}{x}\ud t  \int_{0}^{\infty} \testfuncarg{1}{t}{x}\ud t \right).
	\end{aligned}
\end{align*}
Here is a sketch of the main idea behind the proof.
If $\nabla \newainf = \vectorzero_{\dimn}$ on the domain $\dom$, then $\newainf$ is a constant function. Hence, $\dynb$ provides a zero-variance estimator for the \itneis{} scheme and it must be a global minimum of the functional $\newvarinf$ as well.
If the other term $\int_{0}^{\infty} \testfuncarg{0}{t}{x}\ud t  \int_{-\infty}^{0} \testfuncarg{1}{t}{x}\ud t - \int_{-\infty}^{0} \testfuncarg{0}{t}{x}\ud t  \int_{0}^{\infty} \testfuncarg{1}{t}{x}\ud t=0$ locally on an open subset, then it could be shown that this is equivalent to $\inner{\nabla (U_1 - U_0)(x)}{\dynb(x)} = 0$ (see \lemref{lem::cond_equiv} and \lemref{prop::cond_stationary}).
Such a dynamics $\dynb$ should not be optimal, because $\dynb$ is perpendicular to the gradient of the potential difference and such a $\dynb$ does not explore the local structure (cf. \propref{prop::var_dynb}). 
This intuition leads into the following idea: if $\dynb$ is a local minimum of $\newvarinf$, then we should only have $\nabla\newainf = \vectorzero_\dimn$ almost everywhere on $\dom$; otherwise, we should be able to perturb $\dynb$ so that the dynamics $\dynb$ can better explore the landscapes of $U_0$ and $U_1$ and the variance can be further reduced; see \propref{prop::not_local_min}.

\begin{remark}
	As we work on the domain $\dom$ only, we shall consider the topological space for the domain $\dom$ instead of $\Rd$ from now on.
\end{remark}

\subsection{A characterization of the global maximum}

\begin{proposition}\label{prop::var_dynb}
 If the dynamics $\dynb \in \maniinf$, then 
\begin{align}
	\label{eqn::var_reduction}
\newvarinf(\dynb) \le \varmax.
\end{align}
The equality can be achieved iff 
\begin{align}
	\label{eqn::orthogonal_flow}
	\innerbig{\dynb(x)}{\nabla (U_1 - U_0)(x)} = 0, \qquad \forall x\in \dom.
\end{align}

\end{proposition}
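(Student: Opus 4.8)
The plan is to prove the inequality $\newvarinf(\dynb)\le\varmax$ directly via Jensen's inequality, exploiting the invariance $\newainf(\state{s}{x})=\newainf(x)$ from \eqref{eqn::time_trans_ainf}, and then to characterize the equality case by pushing the Jensen argument to its rigidity statement. First I would recall that $\newvarinf(\dynb)=\newmsecinf(\dynb)-\ratio^2$ and $\varmax=\ee_{\rho_0}[e^{-2(U_1-U_0)}]-\ratio^2$, so it suffices to compare the two second moments: I must show $\newmsecinf(\dynb)=\ee_0[|\newainf|^2]\le\ee_{\rho_0}[e^{-2(U_1-U_0)}]$. The key identity is that, for $x\in\effdom(\dynb)$,
\begin{align*}
\newainf(x)=\frac{\intreal\testfuncarg{1}{t}{x}\ud t}{\binf(x)}=\frac{\intreal e^{-(U_1-U_0)(\state{t}{x})}\,\testfuncarg{0}{t}{x}\ud t}{\intreal\testfuncarg{0}{t}{x}\ud t},
\end{align*}
i.e.\ $\newainf(x)$ is a \emph{weighted average} of the function $y\mapsto e^{-(U_1-U_0)(y)}$ along the flowline through $x$, with the probability weight $\mu_x(\ud t):=\testfuncarg{0}{t}{x}\ud t/\binf(x)$. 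By convexity of $u\mapsto u^2$ and Jensen's inequality applied to this weight, $|\newainf(x)|^2\le\int_{\Real}e^{-2(U_1-U_0)(\state{t}{x})}\,\mu_x(\ud t)$.

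Next I would integrate this against $\rho_0$. Using $\ee_0[\,\cdot\,]=\int_\dom\rho_0(x)(\cdot)\ud x$ and the change of variables $\wt x=\state{t}{x}$ (exactly as in the proof of \propref{prop::formulation}, with $\dom$ replaced by $\effdom(\dynb)$ and justified by the bijectivity on $\overline{\effdom(\dynb)}$ from \lemref{lem::property_Omega_b}\ref{lem::property:close2}), one unfolds
\begin{align*}
\ee_0\big[|\newainf|^2\big]\le\int_\dom\rho_0(x)\frac{1}{\binf(x)}\intreal e^{-2(U_1-U_0)(\state{t}{x})}\testfuncarg{0}{t}{x}\ud t\,\ud x = \intreal\int_\dom e^{-2(U_1-U_0)(\state{t}{x})}\frac{\testfuncarg{0}{-t}{x}}{\binf(x)}\,e^{-U_0(x)}\ud x\,\ud t,
\end{align*}
and after the substitution and the identity $\binf(\state{t}{x})=\binf(x)/\jacoarg{t}{x}$ from \eqref{eqn::time_trans_ainf}, the $t$-integral collapses to $1$ (since $\int_{\Real}\testfuncarg{0}{s}{\wt x}\ud s=\binf(\wt x)$), leaving $\ee_{\rho_0}[e^{-2(U_1-U_0)}]$. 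This gives \eqref{eqn::var_reduction}. This is essentially the argument of \cite{rotskoff_dynamical_2019} repackaged with the infinite-time notation.

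For the equality case: equality in Jensen's inequality holds for $\mu_x$-a.e.\ $t$ iff $e^{-(U_1-U_0)(\state{t}{x})}$ is $\mu_x$-a.s.\ constant in $t$, i.e.\ constant along the whole flowline (the weight is strictly positive). Since the flowlines foliate $\dom$ up to the measure-zero set $\dom\setminus\effdom(\dynb)$, the overall inequality $\newvarinf(\dynb)=\varmax$ forces $(U_1-U_0)(\state{t}{x})$ to be independent of $t$ for almost every $x$, hence $\td(U_1-U_0)(\state{t}{x})=\inner{\nabla(U_1-U_0)(\state{t}{x})}{\dynb(\state{t}{x})}=0$ along almost every trajectory; by continuity of $\nabla(U_1-U_0)$ and $\dynb$ and the fact that the trajectories through a full-measure set sweep out a dense (indeed co-null) subset of $\dom$, this upgrades to \eqref{eqn::orthogonal_flow} on all of $\dom$. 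Conversely, if \eqref{eqn::orthogonal_flow} holds then $U_1-U_0$ is constant along every flowline, so $\newainf(x)=e^{-(U_1-U_0)(x)}$ pointwise and the two second moments coincide, giving equality. The main obstacle I anticipate is the rigorous measure-theoretic bookkeeping in the equality direction --- namely justifying that ``constant along $\mu_x$-a.e.\ flowline'' plus continuity implies \eqref{eqn::orthogonal_flow} everywhere --- which requires the co-null foliation structure of $\effdom(\dynb)$ and a careful use of \lemref{lem::property_Omega_b}; the inequality direction itself is a routine Jensen-plus-Fubini computation.
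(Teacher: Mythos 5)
Your proposal is correct and follows essentially the same route as the paper: Jensen's inequality applied to the flowline average $\newainf(x)=\intreal e^{-(U_1-U_0)(\state{t}{x})}\testfuncarg{0}{t}{x}\ud t/\binf(x)$, then the unbiasedness identity (the paper simply cites \eqref{eqn::noneq_sample::infinite} with potentials $\wt U_1=2U_1-U_0$, $\wt U_0=U_0$ where you redo the Fubini/change-of-variables by hand), and the equality case via Jensen rigidity, differentiation in $t$, and continuity, with the same converse. The only blemish is the mislabeled intermediate display (the $\testfunc{0}_{-t}$ factor belongs after the substitution $\wt x=\state{t}{x}$, not before), which your prose makes clear you intended correctly.
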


\begin{proof}
	We only need to prove that $\newmsecinf(\dynb) \le \ee_{\rho_0}[e^{-2(U_1 - U_0)}]$
and the equality is achieved iff \eqref{eqn::orthogonal_flow} holds.

	By Jensen's inequality,
	\begin{align*}
		\Big(\frac{\intreal \testfuncarg{1}{t}{x} \ud t}{\intreal \testfuncarg{0}{t}{x}\ud t}\Big)^2 &= \Big(\frac{\intreal e^{-U_1\big(\state{t}{x}\big) + U_0\big(\state{t}{x}\big)}\testfuncarg{0}{t}{x}\ud t}{\intreal \testfuncarg{0}{t}{x}\ud t}\Big)^2 \\
		&\le \frac{\intreal e^{-2(U_1 - U_0)\big(\state{t}{x}\big)} \testfuncarg{0}{t}{x} \ud t}{\intreal \testfuncarg{0}{t}{x}\ud t}.
	\end{align*}
	By taking the expectation $\ee_{x\sim\rho_0}\big[\cdot\big]$ for both sides and by \eqref{eqn::noneq_sample::infinite} for new potentials $\wt{U}_1 = 2U_1-U_0$ and $\wt{U}_0 = U_0$, we immediately have the inequality:
	\begin{align*}
		\newmsecinf(\dynb) &= \ee_{x\sim \rho_0} \bigg[\Big(\frac{\intreal \testfuncarg{1}{t}{x} \ud t}{\intreal \testfuncarg{0}{t}{x}\ud t}\Big)^2\bigg] \\
		&\le \ee_{x\sim \rho_0} \bigg[\frac{\intreal e^{-2(U_1 - U_0)\big(\state{t}{x}\big)} \testfuncarg{0}{t}{x} \ud t}{\intreal \testfuncarg{0}{t}{x}\ud t}\bigg] \\
		&\myeq{\eqref{eqn::noneq_sample::infinite}}\ \frac{\int_{\dom} e^{-2 U_1 + U_0}}{\int_{\dom} e^{-U_0}} = \ee_{x\sim\rho_0} \big[e^{-2(U_1 - U_0)}\big].
	\end{align*}
	
	Next in order to achieve the equality, we need the equality to hold in the above Jensen's inequality:
	\begin{align*}
		t\to e^{-\big(U_1 - U_0\big)\big(\state{t}{x}\big)} \text{ is a constant function}.
	\end{align*}
	By taking derivative with respect to $t$, we immediately know that
	\begin{align*}
		\innerbig{\nabla (U_1 - U_0)(x)}{\dynb(x)} = 0, \qquad \rho_0\text{-almost surely}.
	\end{align*}
	By the continuity assumption on $U_1, U_0$ and $\dynb$, we obtain \eqref{eqn::orthogonal_flow}.
	
	Conversely, when \eqref{eqn::orthogonal_flow} holds, 
	\begin{align*}
		\frac{\ud}{\ud t} (U_1 - U_0)\big(\state{t}{x}\big) = \innerbig{\nabla (U_1 - U_0)(\state{t}{x})}{\dynb\big(\state{t}{x}\big)} = 0.
	\end{align*}
	Hence, $
	U_1\big(\state{t}{x}\big) - U_0\big(\state{t}{x}\big) = U_1(x) - U_0(x)$ for any $t\in \Real.$
	Then
	\begin{align*}
		\frac{\intreal \testfuncarg{1}{t}{x}\ud t}{\intreal \testfuncarg{0}{t}{x}\ud t} = e^{-U_1(x) + U_0(x)}\frac{\intreal e^{-U_0(\state{t}{x})} \jacoarg{t}{x}\ud t}{\intreal e^{-U_0(\state{t}{x})} \jacoarg{t}{x}\ud t} = e^{-U_1(x) + U_0(x)}.
	\end{align*}
	Hence, the equality in \eqref{eqn::var_reduction} holds under the condition \eqref{eqn::orthogonal_flow}.
\end{proof}

\subsection{Some observations about the functional derivative}
We need some simplified understanding of the condition $\int_{0}^{\infty} \testfuncarg{0}{t}{x}\ud t \int_{-\infty}^{0} \testfuncarg{1}{t}{x}\ud t = \int_{-\infty}^{0} \testfuncarg{0}{t}{x} \ud t \int_{0}^{\infty} \testfuncarg{1}{t}{x}\ud t$ arising from  $\derivarinf=\vectorzero_{\dimn}$, 
which are presented in the following two lemmas.

\begin{lemma}
	\label{lem::cond_equiv}
	$\int_{0}^{\infty} \testfuncarg{0}{t}{x}\ud t \int_{-\infty}^{0} \testfuncarg{1}{t}{x}\ud t = \int_{-\infty}^{0} \testfuncarg{0}{t}{x} \ud t \int_{0}^{\infty} \testfuncarg{1}{t}{x}\ud t$ is equivalent to 
	\begin{align}
		\label{eqn::cond_2_v1} 
		\frac{\int_{0}^{\infty} \testfuncarg{1}{t}{x}\ud t}{\int_{0}^{\infty}\testfuncarg{0}{t}{x}\ud t} = \newainf(x).
	\end{align}
\end{lemma}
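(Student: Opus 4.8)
The plan is to reduce the claimed equivalence to elementary algebra among the four one-sided integrals. Introduce the shorthand $a_+ := \int_{0}^{\infty}\testfuncarg{0}{t}{x}\ud t$, $a_- := \int_{-\infty}^{0}\testfuncarg{0}{t}{x}\ud t$, and $b_+ := \int_{0}^{\infty}\testfuncarg{1}{t}{x}\ud t$, $b_- := \int_{-\infty}^{0}\testfuncarg{1}{t}{x}\ud t$. Since this lemma is invoked only at points $x\in\effdom(\dynb)$, \lemref{lem::property_Omega_b} together with \defref{defn::maniinf} guarantees that all four quantities are finite; moreover $\testfuncarg{k}{t}{x} = e^{-U_k(\state{t}{x})}\jacoarg{t}{x} > 0$ pointwise, so $a_+ > 0$, $a_- > 0$, and in particular $\binf(x) = a_+ + a_- > 0$. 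In this notation the hypothesis reads $a_+ b_- = a_- b_+$, while the conclusion \eqref{eqn::cond_2_v1} reads $b_+/a_+ = \newainf(x)$, where $\newainf(x) = (b_+ + b_-)/\binf(x) = (b_+ + b_-)/(a_+ + a_-)$ by \eqref{eqn::ainf} and \eqref{eqn::ainf_binf}.

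Then I would simply cross-multiply. The identity $b_+/a_+ = (b_+ + b_-)/(a_+ + a_-)$ holds if and only if $b_+(a_+ + a_-) = a_+(b_+ + b_-)$, and after cancelling the common term $a_+ b_+$ from both sides this is equivalent to $b_+ a_- = a_+ b_-$, which is precisely the hypothesis $\int_{0}^{\infty}\testfuncarg{0}{t}{x}\ud t\,\int_{-\infty}^{0}\testfuncarg{1}{t}{x}\ud t = \int_{-\infty}^{0}\testfuncarg{0}{t}{x}\ud t\,\int_{0}^{\infty}\testfuncarg{1}{t}{x}\ud t$. Since every step in this chain is an equivalence, both directions of the lemma are established at once and nothing further is required.

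The only point needing (minimal) care is the nonvanishing of the denominators $a_+$ and $a_+ + a_-$, which is why it matters that the statement is read at points of $\effdom(\dynb)$ (or, more generally, at points where the one-sided $\testfunc{0}$-integrals are finite and positive); I would flag this restriction explicitly. There is no genuine obstacle here: the lemma is a bookkeeping step whose sole purpose is to isolate the content of the stationarity condition $\derivarinf = \vectorzero_{\dimn}$ into its two factors $\nabla\newainf$ and $a_+ b_- - a_- b_+$, and the present statement merely rewrites the second factor in the more usable form \eqref{eqn::cond_2_v1}, to be combined later with the analogous backward-branch identity in \lemref{prop::cond_stationary}.
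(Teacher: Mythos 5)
Your proof is correct and uses essentially the same approach as the paper: both are a few lines of elementary algebra on the four one-sided integrals, differing only in whether one cross-multiplies directly (your route) or first divides through and then adds $1$ to both sides (the paper's route). Your remark about flagging the positivity of $a_+$ and $a_++a_-$ is a sensible bit of care that the paper's proof leaves implicit; note that your cross-multiplication formulation is marginally more economical in that it never needs to divide by $b_+=\int_0^\infty\testfuncarg{1}{t}{x}\ud t$, though in practice this is immaterial since $\testfuncarg{1}{t}{x}>0$ pointwise just as $\testfuncarg{0}{t}{x}$ is.
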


\begin{proof}
	\begin{align*}
		& \int_{0}^{\infty} \testfuncarg{0}{t}{x}\ud t \int_{-\infty}^{0} \testfuncarg{1}{t}{x}\ud t = \int_{-\infty}^{0} \testfuncarg{0}{t}{x} \ud t \int_{0}^{\infty} \testfuncarg{1}{t}{x}\ud t\\
		\Longleftrightarrow\  &
		\frac{\int_{-\infty}^{0} \testfuncarg{0}{t}{x}\ud t}{\int_{0}^{\infty} \testfuncarg{0}{t}{x}\ud t}
		= \frac{\int_{-\infty}^{0} \testfuncarg{1}{t}{x}\ud t}{\int_{0}^{\infty} \testfuncarg{1}{t}{x}\ud t}\qquad \text{(add } 1 \text{ to both sides)} \\
		\Longleftrightarrow\ &
		\frac{\int_{-\infty}^{\infty} \testfuncarg{0}{t}{x}\ud t}{\int_{0}^{\infty} \testfuncarg{0}{t}{x}\ud t}
		= \frac{\int_{-\infty}^{\infty} \testfuncarg{1}{t}{x}\ud t}{\int_{0}^{\infty} \testfuncarg{1}{t}{x}\ud t} \\
		\Longleftrightarrow\ & \frac{\int_{0}^{\infty} \testfuncarg{1}{t}{x}\ud t}{\int_{0}^{\infty}\testfuncarg{0}{t}{x}\ud t} = \newainf(x).
	\end{align*}
\end{proof}

The following result provides a simplified characterization of the equality \eqref{eqn::cond_2_v1}.
\begin{lemma}{\hspace{2em}}\par
	\label{prop::cond_stationary}
	\begin{enumerate}[label=(\roman*)]
		\item Suppose the condition in \eqref{eqn::cond_2_v1} holds
		for any $x$ in an open set $\domsubset$.
		Then
		\begin{align}
			\label{eqn::stationary_cond}
			\inner{\nabla (U_1 - U_0)(x)}{ \dynb(x)} = 0, \qquad \forall x\in \domsubset.
		\end{align}
		\label{lem::cond_stationary::1}
		
		\item Conversely, if \eqref{eqn::stationary_cond} holds for $\domsubset = \dom$, then $\dynb$ is a global maximum of $\newmsecinf$ (as well as $\newvarinf$).
		\label{lem::cond_stationary::2}
	\end{enumerate}
\end{lemma}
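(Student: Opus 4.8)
The plan is to obtain part (i) from the flow-covariance of the quantities entering \eqref{eqn::cond_2_v1}, and to obtain part (ii) as an immediate consequence of \propref{prop::var_dynb}. Write $P_k(x) := \int_{0}^{\infty} \testfuncarg{k}{t}{x}\ud t$ for $k=0,1$, so that the hypothesis of part (i) reads $P_1(x) = \newainf(x)\, P_0(x)$ for all $x\in\domsubset$. The two structural facts I would use are: (a) $\newainf$ is constant along flowlines, $\newainf(\state{s}{x}) = \newainf(x)$, by \eqref{eqn::time_trans_ainf}; and (b) the covariance identity $P_k(\state{s}{x}) = \jacoarg{s}{x}^{-1}\big(P_k(x) - \int_{0}^{s} \testfuncarg{k}{t}{x}\ud t\big)$, which follows from the second relation in \eqref{eqn::testfunc_trans} after splitting $\int_{s}^{\infty}=\int_{0}^{\infty}-\int_{0}^{s}$.

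First I would fix $x\in\domsubset$ and, using that $\domsubset$ is open and that $s\mapsto\state{s}{x}$ is continuous, pick $\delta>0$ with $\state{s}{x}\in\domsubset$ for all $\abs{s}<\delta$. Applying the hypothesis at the point $\state{s}{x}$ and invoking (a) gives $P_1(\state{s}{x}) = \newainf(x)\, P_0(\state{s}{x})$ for $\abs{s}<\delta$. Substituting (b) into this relation, the common factor $\jacoarg{s}{x}^{-1}$ cancels, and then using $P_1(x)=\newainf(x)P_0(x)$ to kill the constant terms leaves $\int_{0}^{s} \testfuncarg{1}{t}{x}\ud t = \newainf(x)\int_{0}^{s} \testfuncarg{0}{t}{x}\ud t$ for $\abs{s}<\delta$. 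Differentiating in $s$ (legitimate because $t\mapsto\testfuncarg{k}{t}{x}$ is continuous) yields $\testfuncarg{1}{s}{x} = \newainf(x)\,\testfuncarg{0}{s}{x}$, and dividing by $\jacoarg{s}{x}>0$ gives $e^{(U_0-U_1)(\state{s}{x})} = \newainf(x)$ for $\abs{s}<\delta$; that is, $U_1-U_0$ is constant along the flowline through $x$ near $s=0$. Differentiating this last identity in $s$ at $s=0$ via the chain rule, and using $e^{(U_0-U_1)(x)}>0$, produces exactly \eqref{eqn::stationary_cond} at $x$. Since $x\in\domsubset$ was arbitrary, part (i) follows.

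For part (ii) I would simply note that if \eqref{eqn::stationary_cond} holds on all of $\dom$, then it is precisely the equality condition \eqref{eqn::orthogonal_flow} of \propref{prop::var_dynb}, so $\newvarinf(\dynb)=\varmax$. By the bound \eqref{eqn::var_reduction}, $\varmax$ is the maximal value of $\newvarinf$ over $\maniinf$, hence $\dynb$ is a global maximizer of $\newvarinf$; and since $\newmsecinf(\dynb)=\newvarinf(\dynb)+\partition_1^2$ with the constant $\partition_1^2$ independent of $\dynb$ (see \eqref{eqn::var}), $\dynb$ is also a global maximizer of $\newmsecinf$.

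I do not anticipate a substantive obstacle here: once the covariance identity (b) is in hand, part (i) is a short computation and part (ii) is immediate from \propref{prop::var_dynb}. The only point needing some care is the implicit finiteness and regularity of $P_0$, $P_1$ and $\newainf$ used in part (i); for this one works on $\domsubset\cap\effdom(\dynb)$ --- the natural domain on which \eqref{eqn::cond_2_v1} is meaningful --- and uses the smoothness of $U_0,U_1,\dynb$ together with the continuity of $t\mapsto\testfuncarg{k}{t}{x}$.
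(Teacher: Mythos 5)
Your proposal is correct and follows essentially the same route as the paper: apply the hypothesis at $\state{s}{x}$, use the flow-invariance of $\newainf$ together with the translation identity \eqref{eqn::testfunc_trans}, differentiate twice in $s$ to get \eqref{eqn::stationary_cond}, and deduce part (ii) directly from the equality case of \propref{prop::var_dynb}. The only cosmetic difference is that you subtract the $s=0$ relation to work with $\int_0^s$ integrals on a small time interval, whereas the paper differentiates the ratio $\int_s^\infty\testfuncarg{1}{t}{x}\ud t / \int_s^\infty\testfuncarg{0}{t}{x}\ud t$ on the interval given by the hitting times of $\partial\domsubset$; these are equivalent.
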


\begin{proof}
	Part (ii) immediately follows from \propref{prop::var_dynb}. Next we prove part (i). From previous results, the condition is that 
	\begin{align*}
		\frac{\int_{0}^{\infty} \testfuncarg{1}{t}{x}\ud t}{\int_{0}^{\infty} \testfuncarg{0}{t}{x}\ud t} = \newainf(x), \qquad \forall x\in \domsubset.
	\end{align*}
	After replacing $x$ by $\state{s}{x}\in \domsubset$ in the above equation
	and by \eqref{eqn::testfunc_trans}, we have
	\begin{align*}
		\newainf(x) \myeq{\eqref{eqn::time_trans_ainf}} \newainf\big(\state{s}{x}\big) =\frac{\int_{0}^{\infty} \testfuncargbig{1}{t}{\state{s}{x}}\ud t}{\int_{0}^{\infty} \testfuncargbig{0}{t}{\state{s}{x}}\ud t} = \frac{\int_{s}^{\infty} \testfuncarg{1}{t}{x}\ud t}{\int_{s}^{\infty} \testfuncarg{0}{t}{x}\ud t},\ \
		\forall\ x\in \domsubset,\ s\in \big(\bhit{\domsubset}(x), \fhit{\domsubset}(x)\big),
	\end{align*}
	where $\fhit{\domsubset}(x)$ and $\bhit{\domsubset}(x)$ defined in \eqref{eqn::hitting_time} are hitting times for the forward and backward branches to the boundary of $\domsubset$. 
	Note that the right hand side of the last equation depends on $s$, whereas the left hand side does not. Let us take the derivative with respect to $s$ and with straightforward simplifications, we obtain 
	\begin{align*}
		\testfuncarg{1}{s}{x}/\testfuncarg{0}{s}{x} = \newainf(x),\qquad \forall\ x\in \domsubset,\ s\in \big(\bhit{\domsubset}(x), \fhit{\domsubset}(x)\big).
	\end{align*}
	By \eqref{eqn::testfunc}, we know 
	$\exp\Big(U_0\big(\state{s}{x}\big) - U_1\big(\state{s}{x}\big)\Big) = \newainf(x).$
	Again, 
	the left hand side depends on $s$ whereas the right hand side does not.
	So we take the derivative with respect to $s$ again and obtain
	$\innerbig{\nabla(U_1-U_0)\big(\state{s}{x}\big)}{\dynb\big(\state{s}{x}\big)} = 0$ for any $x\in \domsubset$ and $s\in \big(\bhit{\domsubset}(x), \fhit{\domsubset}(x)\big)$. Then \eqref{eqn::stationary_cond} follows immediately by choosing $s=0$.
\end{proof}

\subsection{A weaker version}

\lemref{prop::cond_stationary} leads into the following intuition: if there is a certain open subset $\domsubset$ in which $\inner{\nabla(U_1 - U_0)}{\dynb} = 0$, then such a dynamics $\dynb$ should not be a local minimum of $\newvarinf$, because such a $\dynb$ cannot explored the landscape structure of $U_1$ on $\domsubset$. 
This intuition is more rigorously formulated in the following proposition.

\begin{proposition}
\label{prop::not_local_min}
Consider a dynamics $\dynb\in \maniinf$.
Suppose $\domsubset \subset \effdom(\dynb)$ is nonempty and open. Let $K := \Omega\backslash \domsubset$.
Assume that
\begin{enumerate}[label=(\roman*)]
\item For any $x\in \domsubset$, we have
$\inner{\dynb(x)}{\nabla (U_1 - U_0)(x)} = 0.$

\item For any $t\in \Real$ and $x\in \domsubset$, we have $\state{t}{x} \notin K^{\circ}$, \ie{}, trajectories from $\domsubset$ are confined inside $\overline{\domsubset}$.

\item There exists a $\dynb$-stable point $\xst\in \domsubset$ such that $\nabla (U_1 - U_0)(\xst) \neq
 \vectorzero_{\dimn}$.
\end{enumerate}
Then such a dynamics $\dynb$ must not be a local minimum of the second moment $\newmsecinf$ (as well as the variance).
\end{proposition}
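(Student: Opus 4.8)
The plan is to upgrade \propref{prop::var_dynb} --- which says $\dynb$ globally maximizes $\newmsecinf$ iff $\innerbig{\dynb}{\nabla(U_1-U_0)}=0$ everywhere --- into an exact identity, and then to perturb $\dynb$ so as to inject some ``exploration'' of the $\nabla(U_1-U_0)$ direction on $\domsubset$, where hypothesis (i) says $\dynb$ currently has none. Write $\phi:=U_1-U_0$, and for a field $\dynb'\in\maniinf$ and $x$ in its effective domain let $\mu_x^{\dynb'}$ denote the probability measure on $\Real$ with density proportional to $t\mapsto e^{-U_0(\state{t}{x})}\jacoarg{t}{x}$ computed along the $\dynb'$-flow. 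Then $\newainf_{\dynb'}(x)=\ee_{\mu_x^{\dynb'}}[e^{-\phi}]$, while applying \propref{prop::formulation} (infinite-time case) to the potentials $(U_0,\,2U_1-U_0)$ gives $\ee_{\rho_0}\big[\ee_{\mu_x^{\dynb'}}[e^{-2\phi}]\big]=\ee_{\rho_0}[e^{-2\phi}]=\varmax+\ratio^2$, which does not depend on $\dynb'$. Expanding the square therefore yields the exact identity
\begin{equation*}
  \newmsecinf(\dynb')=\varmax+\ratio^2-\ee_{\rho_0}\!\big[\var_{\mu_x^{\dynb'}}(e^{-\phi})\big],
\end{equation*}
so \emph{decreasing} $\newmsecinf$ is exactly the same as \emph{increasing} the $\rho_0$-average of the variance of $e^{-\phi}$ along the trajectories. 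Under (i)--(ii) this average receives no contribution from trajectories meeting $\domsubset$: such a trajectory stays in $\overline{\domsubset}$ for all time by (ii), and along it $\phi$ is constant because $\innerbig{\dynb}{\nabla\phi}=0$ on $\overline{\domsubset}$ by continuity; hence $\var_{\mu_x^{\dynb}}(e^{-\phi})=0$ for every such $x$.

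Next I would construct the perturbation. Since $\xst$ is $\dynb$-stable, \propref{prop::perturbation} supplies a bounded open ball $\domsubset_0\ni\xst$ with $\domsubset_0\subset\effdom(\dynb)$ such that $\dynb+\eps\,\delta\dynb\in\maniinf$ for every $\delta\dynb\in C_c^\infty(\domsubset_0,\Rd)$ and all small $\eps>0$; shrinking $\domsubset_0$ I may additionally assume $\overline{\domsubset_0}\subset\domsubset$ (since $\xst\in\domsubset$ is open) and $|\nabla\phi|\ge c_0>0$ on $\overline{\domsubset_0}$ (since $\nabla\phi(\xst)\neq\vectorzero_\dimn$). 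Pick concentric balls $\domsubset_2\subset\subset\domsubset_1\subset\subset\domsubset_0$ about $\xst$, a cutoff $\chi\in C_c^\infty(\domsubset_0,[0,1])$ with $\chi\equiv1$ on $\overline{\domsubset_1}$, and set $\delta\dynb:=\chi\,\nabla\phi$, $\dynb^\eps:=\dynb+\eps\,\delta\dynb$. On $\domsubset_0\subset\domsubset$ this perturbation is orthogonal to $\dynb$ by (i), so along the $\dynb^\eps$-flow
\begin{equation*}
  \tfrac{\ud}{\ud t}\,\phi\big(\statesup{t}{x}{\eps}\big)=\innerbig{\nabla\phi}{\dynb}\big(\statesup{t}{x}{\eps}\big)+\eps\,\chi\big(\statesup{t}{x}{\eps}\big)\big|\nabla\phi\big(\statesup{t}{x}{\eps}\big)\big|^2,
\end{equation*}
which vanishes while the trajectory is in $\overline{\domsubset}\setminus\domsubset_0$ and is $\ge\eps c_0^2>0$ while it is in $\domsubset_1$ (where $\chi\equiv1$).

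With $\mu_x,\mu_x^\eps$ the measures attached to $\dynb$ and $\dynb^\eps$, the identity gives $\newmsecinf(\dynb^\eps)-\newmsecinf(\dynb)=\ee_{\rho_0}\big[\var_{\mu_x}(e^{-\phi})-\var_{\mu_x^\eps}(e^{-\phi})\big]$, and I would bound the integrand by cases. If the $\dynb$-trajectory of $x$ never meets $\domsubset_0$ then $\statesup{\cdot}{x}{\eps}=\state{\cdot}{x}$ by uniqueness of the ODE and $\operatorname{supp}\delta\dynb\subset\domsubset_0$, so $\mu_x^\eps=\mu_x$ and the integrand is $0$. If it does meet $\domsubset_0\subset\domsubset$, then by (ii) it stays in $\overline{\domsubset}$, so $\var_{\mu_x}(e^{-\phi})=0$ and the integrand equals $-\var_{\mu_x^\eps}(e^{-\phi})\le0$. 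Finally, for $x\in\domsubset_2$ and $\eps\le1$, since $|\dynb^\eps|$ is bounded on $\overline{\domsubset_0}$ uniformly in $\eps\le1$, the trajectory $\statesup{t}{x}{\eps}$ stays inside $\domsubset_1$ for $|t|$ below a fixed $\tau_*>0$; on that interval $t\mapsto\phi(\statesup{t}{x}{\eps})$ strictly increases, so $e^{-\phi}$ takes at least two distinct values each on a time-set of positive Lebesgue measure, hence of positive $\mu_x^\eps$-mass (the density of $\mu_x^\eps$ is everywhere positive), so $\var_{\mu_x^\eps}(e^{-\phi})>0$; as $\domsubset_2\subset\domsubset_0$ the previous case applies too and the integrand is $<0$ there. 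Since $\rho_0(\domsubset_2)>0$, the integrand is $\le0$ everywhere and $<0$ on a positive-measure set, so $\newmsecinf(\dynb^\eps)<\newmsecinf(\dynb)$ for all small $\eps>0$; as $\dynb^\eps\in\maniinf$, $\dynb$ is not a local minimum of $\newmsecinf$, nor of $\newvarinf$ (which differs by the constant $\ratio^2$).

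The step I expect to be the main obstacle is making the variance identity rigorous for the \emph{perturbed} field: one must verify that $\dynb^\eps\in\maniinf$ also with respect to the auxiliary potentials $(U_0,2U_1-U_0)$ so that \propref{prop::formulation} applies to it --- immediate on the torus, where all potentials are bounded and the relevant integrals converge, but in general requiring the integrability and continuity bookkeeping of \secref{sec::space_inf} --- together with the (routine but fiddly) measurability and uniform-in-$x$ smallness statements that underlie the last case. Everything else is driven by the observation that (i) and (ii) make the unperturbed along-trajectory variance vanish on the relevant trajectories, so any strictly positive perturbed contribution already forces a strict decrease of $\newmsecinf$.
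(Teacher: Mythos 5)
Your argument is correct (at the same level of rigor as the paper) and it shares the paper's core construction --- a perturbation proportional to $\nabla(U_1-U_0)$, compactly supported near the $\dynb$-stable point $\xst$, with admissibility of $\dynb+\eps\,\delta\dynb$ supplied by \propref{prop::perturbation} --- but the way you turn this into a strict decrease is genuinely different. The paper splits $\Omega$ into two sets $\wt{\domsubset}$ and $\wt{K}$ that are invariant under \emph{both} the original and the perturbed flow (this requires showing that perturbed trajectories started in $\domsubset$ still avoid $K^{\circ}$), notes that the estimator is unchanged on $\wt{K}$, and on $\wt{\domsubset}$ invokes the Jensen equality condition of \propref{prop::var_dynb} with $\wt{\domsubset}$ as the domain: equality would force $\inner{\dynb^{\eps}}{\nabla(U_1-U_0)}=0$ there, which the perturbation violates on a set of positive $\rho_0$-measure. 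You instead make the Jensen deficit exact, $\newmsecinf(\dynb')=\varmax+\ratio^2-\ee_{\rho_0}\big[\var_{\mu_x^{\dynb'}}(e^{-\phi})\big]$ (the same application of \propref{prop::formulation} with auxiliary potential $2U_1-U_0$ that underlies \propref{prop::var_dynb}), and then argue trajectory by trajectory: unperturbed trajectories meeting the support of $\delta\dynb$ have zero along-trajectory variance by (i)--(ii), so any perturbed variance can only decrease the second moment, and for $x\in\domsubset_2$ a short-time computation ($\phi$ increasing at rate at least $\eps c_0^2$ while the perturbed trajectory remains in $\domsubset_1$) makes that variance strictly positive. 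This buys a real simplification: you never need to control where the perturbed trajectories go globally, so the paper's $\wt{\domsubset}/\wt{K}$ invariance step disappears (only $\var_{\mu_x^{\eps}}\ge 0$ is used away from $\xst$), and your $\chi\,\nabla\phi$ is already smooth so no mollification is needed; what the paper's route buys is that it reuses \propref{prop::var_dynb} wholesale in place of your quantitative short-time argument. The one technical gloss you share with the paper is applying the unbiasedness identity with the potential $2U_1-U_0$ to the perturbed field without verifying the corresponding integrability/continuity hypotheses of \defref{defn::maniinf}; you flag this explicitly, and the paper's own proofs of \propref{prop::var_dynb} and \propref{prop::not_local_min} are no more careful on this point, so it is not a gap relative to the paper's standard.
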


\begin{proof}
We proceed in two steps.

{\bf Step (\rom{1}):} The first goal is
to find a smooth function $\delta \dynb\in C_c^\infty(D,\Rd)$ and $\eps_0 > 0$ such that
\begin{subequations}
\begin{align}
\rho_0(E)>0& \text{ where } E:= \big\{x\in \domsubset\ \big\rvert\ \innerbig{\delta \dynb(x) }{ \nabla (U_1 - U_0)(x)} \neq 0\big\}\subset \text{supp}(\delta\dynb); \label{eqn::delta_b_non_perp}\\
\dynb + &\eps\delta \dynb \in \maniinf,\qquad \forall \eps \in (0, \eps_0);
\label{eqn::delta_b_finite}\\
\text{dist} &\big(E, \partial D\big) > 0\label{eqn::dist_positive}.
\end{align}
\end{subequations}
By the assumption (iii) of this proposition and \propref{prop::perturbation}, we know there is an open ball $\ball{\lambda}{\xst}\subset \domsubset$ such that for any $\delta\dynb\in C_c^{\infty}\big(\ball{\lambda}{\xst}, \Rd\big)$, $\dynb + \eps\delta\dynb\in \maniinf$ for small enough $\epsilon$ and thus \eqref{eqn::delta_b_finite} is satisfied. 
It is clear that we can easily choose $\lambda$ small enough so that \eqref{eqn::dist_positive} holds.

Next, the task is to find a smooth function $\delta\dynb$ supported on $\ball{\lambda}{\xst}$ such that \eqref{eqn::delta_b_non_perp} holds. 
Since $\nabla(U_1 - U_0)(\xst)\neq \vectorzero_{\dimn}$ and $U_0, U_1$ are assumed to be smooth, we can choose $\lambda$ small enough such that
	\begin{align*}
		\abs{\nabla (U_1 - U_0)(x) - \nabla(U_1-U_0)(\xst)} \le  {\frac{1}{4}} \abs{\nabla(U_1-U_0)(\xst)},\qquad \forall x\in \ball{\lambda}{\xst}.
	\end{align*}
It is well-known that
\begin{align*}
\varphi(x) :=  \left\{\begin{aligned}
	e^{-\frac{1}{1-\abs{x}^2}}, & \qquad \text{ if }\abs{x} < 1;\\
	0, & \qquad \text{ if } \abs{x} \ge 1,
\end{aligned}\right.
\end{align*}
is a smooth function compactly supported on $\ball{1}{0}$.
Then let us consider
\begin{align*}
\delta \dynb_1(x) := \varphi\big(\frac{x-\xst}{\lambda_1}\big) \nabla (U_1 - U_0)(x), \qquad \text{where } \lambda_1 \in(0, \lambda).
\end{align*}
It is clear that $\delta \dynb_1$ is compactly supported on $\ball{\lambda_1}{\xst}\subset \ball{\lambda}{\xst}$ and for any $x\in \ball{\lambda_1}{\xst}$, we have $\inner{\delta\dynb_1(x)}{\nabla(U_1 - U_0)(x)}>0$ so that \eqref{eqn::delta_b_non_perp} clearly holds.
Next, we still need to further smooth out $\delta\dynb_1$ (see \eg{}, Appendix C of \cite{evans_pde_2010}) by introducing 
\begin{align*}
\delta \dynb_2(x) :=
\int_{\Real^{\dimn}} \varphi_{\varepsilon}(x-y) \delta \dynb_1(y)\ud y,
\end{align*}
where $\varphi_{\varepsilon}(x) := \frac{1}{\varepsilon^{\dimn}}\varphi(x/\varepsilon)$.
{Note that we can easily extend $\delta\dynb_1$ to $\Rd$ by letting $\delta\dynb_1=\vectorzero_\dimn$ outside of $\ball{\lambda_1}{\xst}$ so that $\delta\dynb_1$ can be well-defined on $\Rd$.}
By choosing $0 < \varepsilon < \lambda - \lambda_1$, we can ensure that the smooth function $\delta\dynb_2$ is compactly supported on $\ball{\lambda}{\xst}$.
It is also not hard to show that \eqref{eqn::delta_b_non_perp} still holds for $\delta \dynb_2$: for any $x\in \ball{\lambda_1}{\xst}$, 
\begin{align*}
	\innerbig{\delta\dynb_2(x)}{\nabla(U_1 - U_0)(x)} =& \int_{\Rd} \varphi_{\varepsilon}(x-y) \innerbig{\delta\dynb_1(y)}{\nabla (U_1 - U_0)(x)}\ud y \\
	=&   \int_{\Rd} \varphi_{\varepsilon}(x-y) \varphi\big(\frac{y-\xst}{\lambda_1}\big)\innerbig{\nabla (U_1-U_0)(y)}{\nabla (U_1 - U_0)(x)}\ud y\\
	\ge &  {\frac{7}{16}}\abs{\nabla (U_1 - U_0)(\xst)}^2\int_{\Rd} \varphi_{\varepsilon}(x-y) \varphi\big(\frac{y-\xst}{\lambda_1}\big)\ud y > 0.
\end{align*}
In summary, $\delta\dynb_2$ constructed above satisfies all requirements.

\medskip

{\noindent {\bf Step (\rom{2}):}} We prove that $\dynb$ is not a local minimum by showing that $\newmsecinf(\dynb + \eps \delta \dynb) < \newmsecinf(\dynb)$ for any $\eps \in (0, \eps_0)$, where $\delta\dynb$ satisfies all conditions in the Step (\rom{1}).

By the construction of $\delta\dynb$, we know $\dynb^{\eps} := \dynb + \eps\delta \dynb$ does not change the velocity field at $\partial \domsubset$, and therefore,  $\statesup{t}{x}{\eps} \notin K^\circ$ for any $x\in \domsubset$ still holds for the dynamics $\dynb^{\eps}$
 i.e., trajectories from $\domsubset$ do not enter $K^\circ$.
 As an immediately consequence, trajectories $t\to\statesup{t}{x}{\eps}$ with $x\in K^\circ$ will not enter $\domsubset$ (as ODE trajectories are reversible).
 The slightly technical part is to consider trajectories $t\rightarrow \statesup{t}{x}{\eps}$ with $x\in \partial \domsubset \equiv \partial K$, where the trajectory $t\to\statesup{t}{x}{\eps}$ evolves under the dynamics $\dynb^{\eps}$. 
Let us consider two disjoint sets:
 \begin{align*}
 \wt{\domsubset}&:= \big\{x\in D\cup \partial \domsubset \equiv \Omega\backslash K^\circ\ \big\rvert\ \statesup{t}{x}{\eps} \notin K^\circ,\ \forall t\in \Real\big\}\supset \domsubset,\qquad \wt{K}:= \Omega\backslash \wt{\domsubset} \subset K.
 \end{align*}
 By such a construction, we can observe that for any trajectory $t\rightarrow \statesup{t}{x}{\eps}$ with $x\in \wt{K}$ (or $x\in\wt{\domsubset}$), it must remain inside $\wt{K}$ (or $\wt{\domsubset}$).
 In other words, the flows within $\wt{\domsubset}$ and $\wt{K}$ are completely separated from each other. 
Moreover, because $\delta\dynb$ is only supported on $E\subset\domsubset$ which is completely inside $\domsubset$ by \eqref{eqn::dist_positive}, we know the above definitions of these two sets  $\wt{\domsubset}$ and $\wt{K}$ are independent of $\eps\in [0,\eps_0)$.

Let us use $\newainf$ to denote the function defined in \eqref{eqn::ainf_binf} corresponding to the dynamics $\dynb$ and use  $\newainf_\eps$ to denote the one corresponding to the perturbed dynamics $\dynb^{\eps}$.
Recall the assumption (i) that $\inner{\dynb(x)}{\nabla(U_1 - U_0)(x)}=0$ for all $x\in \domsubset$. By the fact that $\partial D = \partial K$, and by the continuity of $\dynb$, $\nabla U_0$, and $\nabla U_1$, we know 
\begin{align*}
	\innerbig{\dynb(x)}{\nabla (U_1 - U_0)(x)} = 0,\qquad  \forall x\in \partial \domsubset \cup \domsubset \equiv \Omega\backslash K^\circ.
\end{align*}
For any trajectory $t\rightarrow \state{t}{x}$ with $x\in \wt{\domsubset}$, we can easily show that $e^{-(U_1-U_0)\big(\state{t}{x}\big)} = e^{-(U_1-U_0)(x)}$ for all $t\in\Real$, and thus we have $\newainf(x) = e^{-(U_1 - U_0)(x)}$ for any $x\in \wt{\domsubset}$ (the same calculation, in fact, has been shown in the proof of \propref{prop::var_dynb}).
Hence,
\begin{align*}
\newmsecinf(\dynb) &= \ee_{\rho_0} \Big[\indi_{\wt{K}}(\cdot) \big(\newainf(\cdot)\big)^2 \Big] + \ee_{\rho_0} \Big[\indi_{\wt{\domsubset}}(\cdot) \big(\newainf(\cdot)\big)^2\Big] \\
&= \ee_{\rho_0} \Big[\indi_{\wt{K}}(\cdot) \big(\newainf(\cdot)\big)^2 \Big] +
\ee_{\rho_0} \Bigl[\indi_{\wt{\domsubset}}(\cdot) e^{-2(U_1 - U_0)(\cdot)}\Big],
\end{align*}
where $\indi_{A}(\cdot)$ is an indicator function for a set $A$.

Next we consider the trajectory $t\rightarrow \statesup{t}{x}{\eps}$ 
with $x\in \wt{K}$. 
Since such a trajectory never enters $\wt{\domsubset}\supset\domsubset$ and
 $\dynb^\eps = \dynb$ on $\Omega\backslash D$, we know $\statesup{t}{x}{\eps} = \state{t}{x}$ for any $t\in \Real$ and $x\in \wt{K}$, and thus
$\newainf = \newainf_{\eps}$ on $\wt{K}$ for any $\eps\in [0,\eps_0)$.
Hence, $\ee_{\rho_0} \Big[\indi_{\wt{K}}(\cdot) \big(\newainf(\cdot)\big)^2 \Big] = \ee_{\rho_0} \Big[\indi_{\wt{K}}(\cdot) \big(\newainf_\eps(\cdot)\big)^2\Big]$.
By the same argument as in \propref{prop::var_dynb} (by treating $\wt{\domsubset}$ as the domain),
\begin{align*}
\newmsecinf(\dynb^{\eps}) - \newmsecinf(\dynb) =
 \ee_{\rho_0} \Big[\indi_{\wt{\domsubset}}(\cdot) \bigl(\newainf_{\eps}(\cdot)\bigr)^2\Big] - \ee_{\rho_0} \Bigl[\indi_{\wt{\domsubset}}(\cdot) e^{-2(U_1 -U_0)(\cdot)}\Big] \le 0,
\end{align*}
where the equality is achieved only if $\inner{\dynb^{\eps}}{\nabla (U_1 - U_0)} = 0$ on $\wt{\domsubset}$.
Note that on $\wt{\domsubset}$, 
\begin{align*}
	\inner{\dynb^{\eps}}{\nabla (U_1 - U_0)} = \inner{\dynb + \eps \delta\dynb}{\nabla (U_1 - U_0)} = \eps \inner{\delta \dynb}{\nabla (U_1 - U_0)}.
\end{align*}
However, due to the fact that $\inner{\delta \dynb}{\nabla (U_1 - U_0)}\neq 0$ for some open subset of $\domsubset$ with strictly positive $\rho_0$-measure (as constructed in the Step (\rom{1})), the equality $\newmsecinf(\dynb^{\eps}) = \newmsecinf(\dynb)$ cannot be achieved and thus
\begin{align*}
	\newmsecinf(\dynb^{\eps}) < \newmsecinf(\dynb).
\end{align*}
Since we can find a local perturbation $\delta\dynb$ such that $\newmsecinf(\dynb^{\eps}) < \newmsecinf(\dynb)$ for any $\eps\in (0, \eps_0)$,
then $\dynb$ must not be a local minimum of $\newmsecinf$.
\end{proof}

\subsection{Proof of \propref{prop::local::full}}

By \propref{prop::stationary_inf},
we know either 
\begin{align*}
& \nabla \newainf(x) = \vectorzero_{\dimn}, 
\text{ or }
\int_{0}^{\infty} \testfuncarg{0}{t}{x}\ud t \int_{-\infty}^{0} \testfuncarg{1}{t}{x}\ud t - \int_{-\infty}^{0} \testfuncarg{0}{t}{x}\ud t  \int_{0}^{\infty} \testfuncarg{1}{t}{x}\ud t = 0.
\end{align*}
Define 
\begin{align*}
	K := \big\{x\in \dom:\ \nabla \newainf(x) = \vectorzero_{\dimn}\big\},
\end{align*}
which is a closed subset of $\dom$ by the continuity assumption on $\nabla\newainf$.

Hence, $\domsubset:=\Omega\backslash K$ is open and by \lemref{prop::cond_stationary} \ref{lem::cond_stationary::1}, we know $\inner{\dynb}{\nabla (U_1 - U_0)} = 0$ on $\domsubset$.
Here are a few cases to discuss.

\emph{Case (\rom{1})}: $K^\circ = \emptyset$. \par\smallskip
If $K^{\circ} = \emptyset$, then we claim that $\dynb$ must be a global maximum and this contradicts with the assumption that $\newvarinf(\dynb)<\varmax$.
It is not hard to observe that $K^\circ = \emptyset$ implies that $\overline{\domsubset}= \Omega$.
By continuity, we know $\inner{\dynb}{\nabla (U_1 - U_0)} = 0$ on $\overline{\domsubset} = \Omega$.
Then by \lemref{prop::cond_stationary} part \ref{lem::cond_stationary::2}, $\dynb$ must be a global maximum.

\emph{Case (\rom{2})}: $\domsubset = \emptyset$.\par\smallskip

If $\domsubset = \emptyset$ (\ie{}, $K = \Omega$), then it is clear that $\newainf$ is a constant function, and thus the variance $\newvarinf(\dynb)  = 0$. This means $\dynb$ is a zero-variance dynamics.

\emph{Case (\rom{3})}: $K^{\circ} \neq \emptyset$ and $\domsubset \neq \emptyset$ \par\smallskip 

In order to use \propref{prop::not_local_min}, we need to deal with the case that some trajectories $t\rightarrow \state{t}{x}$ for $x\in \domsubset$ might enter $K^\circ$. Let us introduce
\begin{align}
\label{eqn::setS}
S = \Big\{ x\in D:\ \big\{\state{t}{x}\big\}_{t\in\Real}\cap K^\circ \neq \emptyset \Big\},
\end{align}
which essentially contains all points in $\domsubset$ whose trajectories enter $K^\circ$ at some time.
We can easily show that $S$ is open: because $\dynb$ is assumed to be smooth, the trajectories are continuous under a small perturbation {for initial states}.
Then the new disjoint sub-regions to consider are $\wt{K}:=\overline{K^\circ \cup S}$ and $\wt{\domsubset} := \Omega\backslash \wt{K}$.
We collect some facts for clarity:
\begin{itemize}[leftmargin=4ex]
\item $\wt{K}^\circ = K^\circ \cup S \neq \emptyset$;

\item $\wt{\domsubset}\subset \domsubset$, which immediately implies that $\inner{\dynb}{\nabla (U_1 - U_0)} = 0$ on $\wt{\domsubset}$.

\item If $x\in \wt{\domsubset}$, then $\state{t}{x}$ must not enter $\wt{K}^\circ$. Indeed, if not, then we have either $t\rightarrow \state{t}{x}$ entering $K^\circ$ (which contradicts with $x\notin S$), or $\state{t}{x}$ entering $S$ (which still means $\state{t}{x}$ will enter $K^\circ$ due to the reversibility of deterministic trajectories).
\end{itemize}

{We need to discuss two cases:}
\begin{enumerate}[label=(\alph*)]
	\item 
Firstly, let us consider $\wt{\domsubset} = \emptyset$, \ie{}, $\wt{K}=\dom$ and thus $\wt{K}^\circ = \dom$ by the assumption that $\dom$ is open in the topology of the space $\Rd$. 
The connectivity granted by the definition of $S$ in \eqref{eqn::setS} implies that we can divide $\wt{K}^\circ$ into a countable number of sub-regions on which the function $\newainf$ is a constant. More specifically, for $x\in \Omega$, define
\begin{align*}
R_x := \big\{y\in \Omega:\ \exists \gamma_{\cdot}\in C([0,1],\Rd), \gamma_0 = x, \gamma_1 = y,\ \newainf(\gamma_t) = \newainf(x),\ \forall t\in [0,1]\big\}.
\end{align*}
Obviously, $x\in R_x$. 
By the invariance of $\newainf$ under the dynamical flow (see \eqref{eqn::time_trans_ainf}) and the definition of $S$ \eqref{eqn::setS}, we have $\cup_{x\in K^\circ} R_x \supset \wt{K}^\circ =\dom$, which implies that $\cup_{x\in K^\circ} R_x = \dom$.
\begin{itemize}[leftmargin=4ex]
	\item Suppose $x, y\in K^\circ$ and $R_x\cap R_y\neq \emptyset$, then there exists a $z\in R_x\cap R_y$ such that $z$ connects to both $x$ and $y$ via a continuous path and thus $\newainf(x) = \newainf(z)=\newainf(y)$. It is then clear that $R_x = R_y$ via treating $z$ as a bridge.
	Therefore, $\cup_{x\in K^\circ} R_x$ can be simplified as $\cup_{x\in E} R_x$ where $\{R_x\}_{x\in E}$ are disjoint and $E\subset K^\circ$.
	 
	\item Next, we can show that $E$ is countable. Since $K^\circ$ is open and $\nabla\newainf(x)=\vectorzero_\dimn$ on $K$, we can easily verify that for each $x\in K^\circ$, there exists a local neighbor $\ball{\delta}{x}\subset R_x$ and due to the fact that $\mathbb{Q}^\dimn$ is dense and countable, $E$ is at most countable.
	
\end{itemize} 
To summarize, we have 
\begin{align*}
\bigcup_{x\in E} R_{x} =\dom,
\end{align*}
where $\{R_x\}_{x\in E}$ are disjoint and $E$ is countable.

As $\newainf$ is a constant function on $R_{x}$ by the definition of $R_{(\cdot)}$, $\newainf$ is a step function on $\dom$. By the continuity of $\newainf$ from the assumption and $\dom$ is a connected open domain,
we readily know $\newainf$ must be a constant function on $\dom$ instead, and such a $\dynb$ must be a zero-variance dynamics.

\item Next, let us consider the case $\wt{\domsubset}\neq \emptyset$. 
By the assumption that $\nabla (U_1 - U_0) = \vectorzero_{\dimn}$ only on a set with Lebesgue measure zero, we know there must exist $y\in\wt{\domsubset}$ such that $\nabla (U_1 - U_0) (y)\neq \vectorzero_{\dimn}$.
By the assumption that $\dynb$-unstable points has Lebesgue measure zero, there must exist a $\dynb$-stable point $\xst$ (around $y$) such that $\nabla(U_1 - U_1)(\xst) \neq \vectorzero_{\dimn}$.
Then \propref{prop::not_local_min} tells us that $\dynb$ must not be a local minimum, which contradicts with the assumption.
\end{enumerate}

\section{Supplementary material for numerical experiments}
\label{app::numerics_details}

\subsection{Details about AIS.} 
\label{subsec::ais}

For AIS method, we use the equally-spaced temperature distribution, i.e., $\pi_k \propto \rho_0^{1-\beta_k} \rho_1^{\beta_k}$, where $\beta_k = k/K$ for $0\le k\le K$; for each transition step, we use Metropolis-adjusted Langevin algorithm with time step $0.1$ to generate the chain.

More specifically, suppose $M_j(\cdot,\cdot)$ is a transition kernel which leaves $\pi_j$ invariant, then 
\begin{align}
	\label{eqn::ais_formula}
	\ratio = \ee\bigg[ e^{-\sum_{j=1}^{K} (\beta_j-\beta_{j-1}) \big(U_1(x_{j-1}) - U_0(x_{j-1})\big)}\bigg],
\end{align}
where $x_{j} \sim M_j(x_{j-1}, \cdot)$ and $x_0\sim \rho_0$ \cite{neal_annealed_2001,arbel_annealed_2021}.
To implement such a transition kernel, we use Metropolis-adjusted Langevin algorithm:
suppose $\tau$ (which is chosen as $0.1$ as a prescribed parameter) is the time step, then let $\wt{x}_{j} := x_{j-1} + \tau \nabla \log\pi_j(x_{j-1}) + \sqrt{2\tau} \xi_j$, where $\xi_j$ are \emph{i.i.d.} $\dimn$-dimensional standard normal random variables;
the state $\wt{x}_j$ is accepted with a rate 
\begin{align*}
	\min\Big\{1, \frac{\pi_j(\wt{x}_{j}) q(x_{j-1} \mid \wt{x}_{j})}{\pi_{j}(x_{j-1}) q(\wt{x}_j\mid x_{j-1})}\Big\},
\end{align*}
where $q(x'\mid x) = \exp\big(-\frac{1}{4\tau} \abs{x' - x - \tau \nabla\log\pi_j(x)}^2\big)$ coming from the transition probability for the Langevin step.

For each $j$, inside the Metropolis-Hasting correction term, 
we need $2$ queries to $\nabla U_1$ (i.e., $\nabla U_1(x_{j-1})$ and $\nabla U_1(\wt{x}_j)$ hidden inside the computation of $\nabla\log\pi_j$ for the acceptance rate),
and we need two queries
to $U_1$ when computing $U_1(x_{j-1})$ (inside \eqref{eqn::ais_formula} and $\pi_j(x_{j-1})$) and $U_1(\wt{x}_j)$ (inside $\pi_j(\wt{x}_j)$).

\subsection{More implementation details}
\label{subsec::numerical_details}

\newparagraphno{Neural network architecture} We use the following $\ell$-layer neural network \cite{e_barron_2019,petersen_neural_2020} to parameterize the dynamics $\dynb:\dom\to\Rd$ during training:
\begin{align*}
		x\to W_{\ell}\big(f_{\ell-1} \circ \cdots f_2 \circ f_1(x)\big) + b_{\ell},
\end{align*}
where $\ell$ is the layer depth (one output layer and $\ell-1$ hidden layers), $f_j(\cdot) = \AF(W_{j} (\cdot) + b_{j})$ for $j = 1, 2, \cdots, \ell -1$,
$\AF$ is the activation function,
$W_{j}\in \Real^{n_j}\times \Real^{n_{j+1}}$,
$b_j \in \Real^{n_{j+1}}$ for $j = 1, 2, \cdots, \ell$.
When we choose $n_j = \mode$ for all $2\le j\le \ell$, we refer such a neural network as $(\ell,\mode)$-architecture which is mentioned in \secref{sec::numerics}. 

More specifically, 
let us take $\ell = 2$ as an example: an $(\ell,\mode)=(2,\mode)$ architecture for a generic ansatz for $\dynb:\dom \to\Rd$ refers to the following parameterization
\begin{align*}
\dynb(x) = W_2 \AF(W_1 x + b_1) + b_2, \qquad \text{(generic ansatz)},
\end{align*}
where weights $W_1\in\Real^\dimn\times\Real^\mode$, $W_2\in\Real^\mode\times\Real^\dimn$ and bias vectors $b_1\in\Real^\mode$, $b_2\in\Real^\dimn$. 

For the gradient-form ansatz, as we essentially need to parameterize a potential $V:\dom\to\Real$, the $(\ell,\mode)$-architecture for $V$ refers to the following choice when $\ell = 2$,
\begin{align*}
    V(x) = W_2 \AF(W_1 x + b_1)
\end{align*}
where $W_2 \in \Real^\mode\times\Real$, $W_1\in\Real^\dimn\times\Real^\mode$, $b_1\in\Real^\mode$. The bias vector in the output layer is chosen as zero (i.e., $b_2 = 0$) because $b_2$ is a redundant parameter after taking the gradient.
The dynamics $\dynb$ in the gradient form refers to 
$\dynb = \nabla V$.

\newparagraphno{Initialization and trial repetition}

When we parameterize the flow via neural networks, weights and biases in the neural network are randomly generated. 
Therefore, we consider two (or three) independent trials (associated with different random initializations of $\dynb$) for the same neural network architecture characterized by a pair $(\ell,m)$.

\newparagraphno{Optimization algorithm}
We use the SGD algorithm to optimize parameters for $\dynb$.
The {Armijo line search algorithm} (see \eg{}, \cite{armijo_minimization_1966,vaswani_painless_2019}) can used to find the learning rate;
in practice, we notice that solving $\dot{\vartheta} = - \frac{\nabla_\vartheta  \newmsecfinarg{\tm}{\tp}(\dynb_{\vartheta})}{\abs{\nabla_\vartheta  \newmsecfinarg{\tm}{\tp}(\dynb_{\vartheta})}}$ with a relatively large learning rate also works well for numerical examples considered in \secref{sec::numerical_experiments}, and it is used for training in \secref{sec::numerical_experiments}; $\vartheta$ are trainable parameters and $\dynb_{\vartheta}$ is the flow parameterized by $\vartheta$.
The way to approximate the loss function $\newmsecfinarg{\tm}{\tp}(\dynb) \equiv \ee_{0} \big[|\newafin|^2\big]$ (see \secref{sec::variational}) and in particular its gradient with respect to parameters in $\dynb$ will be explained in Appendix \ref{subsec::integral_method} and \ref{subsec::ode_method}.

\subsection{An integration-based forward propagation method to compute the estimator and its gradient}
\label{subsec::integral_method}

To estimate $\newafinarg{\tm}{\tp}(x)$ in \eqref{eq::afin} with $\tp=\tm+1$ and $\tm\in [-1,0]$, the most straightforward approach is to compute $\testfuncarg{k}{t}{x}$ at time grid points and then employ some integration scheme like the trapezoidal quadrature method. 

More specifically, we first discretize the time interval $[-1,1]$ by $2N_t + 1$ equally-spaced points $t_m := \frac{1}{N_t} m$ where $-N_t \le m\le N_t$. Then we use classical ODE integration schemes (we use RK4) to propagate the dynamics $\state{t}{x}$ both forward and backward in time to estimate the following quantities:
\begin{align*}
	\hat{U}_{0,m} = U_0\big(\state{t_m}{x}\big), \quad \hat{U}_{1,m} = U_1\big(\state{t_m}{x}\big), \quad \hat{D}_m = \div\dynb\big(\state{t_m}{x}\big).
\end{align*}
Then 
\begin{align*}
	\testfuncarg{k}{t_m}{x} \approx e^{-\hat{U}_{k,m} + \frac{1}{N_t}\text{sign}(m) \mathcal{Q}_{\text{Tr}}(\hat{D}_0, \hat{D}_1, \cdots, \hat{D}_m)},
\end{align*}
where 
\begin{align*}
	\mathcal{Q}_{\text{Tr}}(\hat{D}_0, \hat{D}_1, \cdots, \hat{D}_m) = \sum_{j=0}^{m} \hat{D}_j - \frac{1}{2}\big(\hat{D}_0 + \hat{D}_m\big)
\end{align*}
is the trapezoidal quadrature scheme. Then similarly, we can use the trapezoidal quadrature scheme to estimate $\int_{t_j - \tp}^{t_j - \tm} \testfuncarg{0}{s}{x}\ud s$ given the values $\testfuncarg{0}{t_m}{x}$ for $-N_t\le m\le N_t$ and the time $t_j$ with $t_j \in [\tm,\tp]$. 
After we have approximated values for $\frac{\testfuncarg{1}{t_j}{x}}{\int_{t_j - \tp}^{t_j - \tm} \testfuncarg{0}{s}{x}\ud s}$, the trapezoidal quadrature scheme is utilized again to approximate $\newafinarg{\tm}{\tp}(x)$. 
The computational cost is mostly dominated by the ODE integration, \eg{}, propagating $\state{t}{x}$ or evaluating $\div\dynb(\state{s}{x})$ in general. 
Therefore, this straightforward integration-based method has linear computational cost with respect to $N_t$ and is thus expected to be optimal.
Using the same principle, we can estimate the gradient of the second moment with respect to parameters in the dynamics (see \eqref{eqn::pert_M_2}) during the training.

\subsection{{An ODE-based} forward propagation method to compute the estimator and its gradient for $\tm = 0$, $\tp=1$}
\label{subsec::ode_method}
To simplify notations, let us introduce 
\begin{align}
	\label{eqn::ab_finite}
	\bfin_t(x) := \int_{t-\tp}^{t-\tm} \testfuncarg{0}{s}{x} \ud s,\ \text{and thus, }\ 
    \newafin(x) 
    \myeq{\eqref{eq::afin}} \int_{\tm}^{\tp} \frac{\testfuncarg{1}{t}{x}}{\bfin_t(x)} \ud t,
\end{align}
which implicitly depends on $\dynb$.
Let us denote 
$\alpha_t(x):= \int_{\tm}^{t} \frac{\testfuncarg{1}{s}{x}}{\bfin_s(x)}\ud s$. 

\subsubsection{ODE dynamics to compute the estimator}
Then the estimator $\newafinarg{0}{1}$ is simply $\alpha_{1}(x)$. To compute $\newafinarg{0}{1}$, we simply need to run the following ODE:
\begin{align}
	\label{eqn::T0_estimator}
	\left\{
	\begin{aligned}
	\td \alpha_t(x) &= e^{-U_1\big(\state{t}{x}\big)}\jacoarg{t}{x}/\bfin_t(x), \quad &\alpha_0(x) = 0,\\
	\td \bfin_t(x) &= e^{-U_0\big(\state{t}{x}\big)} \jacoarg{t}{x} - e^{-U_0\big(\statesup{t}{x}{\lag}\big)}\jacoargsup{t}{x}{\lag}, \quad & \bfin_0(x) = \bfin_1^R(x), \\
	\td\state{t}{x} &= \dynb\big(\state{t}{x}\big),\quad &\state{0}{x} = x,\\
	\td\statesup{t}{x}{\lag} &= \dynb\big(\statesup{t}{x}{\lag}\big),\quad & \statesup{0}{x}{\lag} = \statesup{1}{x}{R},\\
	\td \jacoarg{t}{x} &= \div\dynb\big(\state{t}{x}\big) \jacoarg{t}{x},\quad & \jacoarg{0}{x} = 1,\\
	\td \jacoargsup{t}{x}{\lag} &= \div\dynb\big(\statesup{t}{x}{\lag}\big) \jacoargsup{t}{x}{\lag}, \quad & \jacoargsup{0}{x}{\lag} = \jacoargsup{1}{x}{R},
	\end{aligned}\right.
\end{align}
where $\statesup{t}{x}{\lag} := \state{t-1}{x}$ and $\jacoargsup{t}{x}{\lag} := \jacoarg{t-1}{x}$. Moreover, in order to obtain the initial condition, we shall run the dynamics backward in time, i.e., simulating the following ODE on $[0,1]$, 
\begin{align}
	\label{eqn::aug_T0_estimator}
\left\{\begin{aligned}
	\td \statesup{t}{x}{R} &= -\dynb\big(\statesup{t}{x}{R}\big), \quad & \statesup{0}{x}{R} = x,\\
	\td \jacoargsup{t}{x}{R} &= -\div\dynb\big(\statesup{t}{x}{R}\big)\jacoargsup{t}{x}{R}, \quad &\jacoargsup{0}{x}{R} = 1,\\
	\td \bfin_t^R(x) &= e^{-U_0\big(\statesup{t}{x}{R}\big)} \jacoargsup{t}{x}{R}, \quad &\bfin_0^R(x) = 0,
\end{aligned}\right.
\end{align}
where the superscript $R$ means the reversed process. 
As a remark, the auxiliary backward ODE \eqref{eqn::aug_T0_estimator} has dimension $\dimn + 2$ and the ODE \eqref{eqn::T0_estimator} has dimension $2\dimn + 4$. 

\subsubsection{ODE dynamics to compute the gradient}

Denote $\vartheta$ as a vector containing parameters in $\dynb$ and 
let $N_p$ be the number of parameters. In what follows, we also vectorize all quantities involving $\vartheta$, \eg{}, for each parameter $\vartheta_j$, there is a corresponding $\deristatesup{t}{x}{(j)}$ in \eqref{eqn::deriX} and we simply use the notation $\deristate{t}{x}$ to be a matrix whose $j^{\text{th}}$ column is $\deristatesup{t}{x}{j}$ to save notations; the same convention applies to other quantities.

Next, we shall similarly re-write the expression $\nabla_\vartheta \newafinarg{0}{1}$ (implicitly given in $\nabla_{\vartheta}\newmsecfinarg{0}{1}(\dynb)$ \eqref{eqn::pert_M_2}) in terms of outputs from an ODE.
Using the same idea, let us introduce
\begin{align*}
	\left\{\begin{aligned}
	    \stated{t}{x} &:= \int_{0}^{t} \frac{\deritestfuncarg{1}{r}{x} \int_{r-1}^{r} \testfuncarg{0}{s}{x} \ud s - \testfuncarg{1}{r}{x} \int_{r-1}^{r} \deritestfuncarg{0}{s}{x} \ud s}{\Big(\int_{r-1}^{r} \testfuncarg{0}{s}{x} \ud s\Big)^2} \ud r\\
		\stateg{t}{x} &:= \int_{t-1}^{t} \deritestfuncarg{0}{s}{x}\ud s,\\
		\stateL{t}{x} &:= \int_{0}^{t} \nabla_{\vartheta}(\nabla_x\cdot\dynb)\big(\state{s}{x}\big)\ud s,\\
		\stateH{t}{x} &:= \int_{0}^{t} \Big(\nabla(\div\dynb)(\state{s}{x})\Big)^{T} \deristate{s}{x} \ud s, 
	\end{aligned}\right.
\end{align*} 
and then we can rewrite quantities involved inside $\nabla_{\vartheta}\newmsecfinarg{0}{1}(\dynb)$ as follows:
\begin{align*}
	&\text{the evolution of } \state{t}{x}, \statesup{t}{x}{\lag}, \jacoarg{t}{x},\ \jacoargsup{t}{x}{\lag},\ \bfin_t(x),\ \alpha_t(x)  \text{ in } \eqref{eqn::T0_estimator}, \hspace{-2em} &\\
	\td \stated{t}{x} &= \left(\begin{aligned}
		& \frac{e^{-U_1\big(\state{t}{x}\big)}\jacoarg{t}{x}\big(-\nabla U_1\big(\state{t}{x}\big)^T \deristate{t}{x} + \stateH{t}{x}+ \stateL{t}{x}\big)}{\bfin_t(x)} \\
		&- \frac{e^{-U_1\big(\state{t}{x}\big)}\jacoarg{t}{x} \stateg{t}{x}}{\big(\bfin_t(x)\big)^2}
		\end{aligned}\right), \hspace{-2.5em} & \stated{0}{x} = \vectorzero_{N_p},\\
	\td \stateg{t}{x} &= \left(\begin{aligned}
		e^{-U_0\big(\state{t}{x}\big)}\jacoarg{t}{x}\big(&-\nabla U_0\big(\state{t}{x}\big)^{T} \deristate{t}{x} \\
		& + \stateH{t}{x} + \stateL{t}{x} \big)\\
		- e^{-U_0\big(\statesup{t}{x}{\lag}\big)}\jacoargsup{t}{x}{\lag}
		\big(&-\nabla U_0\big(\statesup{t}{x}{\lag}\big)^{T} \deristatesup{t}{x}{\lag} \\
		&+ \stateHsup{t}{x}{\lag} + \stateLsup{t}{x}{\lag}\big)
	\end{aligned}\right), & \stateg{0}{x} = \stategsup{1}{x}{R},\\
	\td \stateH{t}{x} &= \Big(\nabla(\div\dynb)\big(\state{t}{x}\big)\Big)^{T} \deristate{t}{x},  & \stateH{0}{x} = \vectorzero_{N_p},\\
	\td \stateHsup{t}{x}{\lag} &= \Big(\nabla(\div\dynb)\big(\statesup{t}{x}{\lag}\big)\Big)^{T} \deristatesup{t}{x}{\lag},& \stateHsup{0}{x}{\lag} = \stateHsup{1}{x}{R},\\
	\td \stateL{t}{x} &= \nabla_{\vartheta}(\nabla_x\cdot\dynb)\big(\state{t}{x}\big), & \stateL{0}{x} = \vectorzero_{N_p},\\
	\td \stateLsup{t}{x}{\lag} &= \nabla_{\vartheta}(\nabla_x\cdot\dynb)\big(\statesup{t}{x}{\lag}\big), & \stateLsup{0}{x}{\lag} = \stateLsup{1}{x}{R},\\
	\td \deristate{t}{x} &= \nabla\dynb\big(\state{t}{x}\big) \deristate{t}{x} + \nabla_{\vartheta}\dynb\big(\state{t}{x}\big), &\deristate{0}{x} = \vectorzero_{\dimn\times N_p},\\
	\td \deristatesup{t}{x}{\lag} &= \nabla\dynb\big(\statesup{t}{x}{\lag}\big) \deristatesup{t}{x}{\lag} + \nabla_{\vartheta}\dynb\big(\statesup{t}{x}{\lag}\big), &\deristatesup{0}{x}{\lag} = \deristatesup{1}{x}{R},
\end{align*}
where the initial conditions come from solving an auxiliary backward equation on $[0,1]$, given as follows:
\begin{align*}
	&\text{the evolution of } \statesup{t}{x}{R}, \jacoargsup{t}{x}{R}, \bfin^R_t(x) \text{ in } \eqref{eqn::aug_T0_estimator},  & \\ 
	\td \stategsup{t}{x}{R} &= e^{-U_0(\statesup{t}{x}{R})}\jacoargsup{t}{x}{R}\left(\begin{aligned} -\nabla U_0\big(\statesup{t}{x}{R}\big)^{T} \deristatesup{t}{x}{R}\\
	+ \stateHsup{t}{x}{R} + \stateLsup{t}{x}{R}\end{aligned} \right), & \stategsup{0}{x}{R} = \vectorzero_{N_p},\\
	\td \stateHsup{t}{x}{R} &= -\Big(\nabla(\div\dynb)\big(\statesup{t}{x}{R}\big)\Big)^T \deristatesup{t}{x}{R}, & \stateHsup{0}{x}{R} = \vectorzero_{N_p},\\ 
	\td \stateLsup{t}{x}{R} &= -\nabla_\vartheta(\nabla_x \cdot \dynb)\big(\statesup{t}{x}{R}\big), & \stateLsup{0}{x}{R} = \vectorzero_{N_p},\\
	\td \deristatesup{t}{x}{R} &= -\nabla\dynb\big(\statesup{t}{x}{R}\big) \deristatesup{t}{x}{R} - \nabla_{\vartheta}\dynb\big(\statesup{t}{x}{R}\big), &\deristatesup{0}{x}{R} = \vectorzero_{\dimn\times N_p}.
\end{align*}
As a remark, we combine the auxiliary states $\deristate{t}{x}$ for all parameters together, so that all three terms $\deristate{t}{x}$, $\deristatesup{t}{x}{\lag}$, $\deristatesup{t}{x}{R}$ have dimension $\dimn\times N_p$. Below is a table that summarizes the dimension of all components. 

\begin{table}[h!]
	\centering
	\caption{This table summarizes all variables and their corresponding dimensions.}
\begin{tabular}{c|c}
	\toprule
	notations & dimensions \\
	\toprule
	$\jacoarg{t}{x}$, $\jacoargsup{t}{x}{\lag}$, $\bfin_t(x)$, $\alpha_t(x)$, $\jacoargsup{t}{x}{R}$, $\bfin_t^R(x)$ & $1$\\
	$\state{t}{x}$, $\statesup{t}{x}{\lag}$, $\statesup{t}{x}{R}$ & $\dimn$\\
	$\stated{t}{x}$, $\stateg{t}{x}$, $\stateH{t}{x}$, $\stateHsup{t}{x}{\lag}$, $\stateL{t}{x}$, $\stateLsup{t}{x}{\lag}$, $\stategsup{t}{x}{R}$, $\stateLsup{t}{x}{R}$, $\stateHsup{t}{x}{R}$ & $N_p$\\ 
	$\deristate{t}{x}$, $\deristatesup{t}{x}{\lag}$, $\deristatesup{t}{x}{R}$ & $\dimn\times N_p$\\
	\bottomrule
\end{tabular}
\end{table}
In the end, the outputs we need are 
\begin{align*}
	\newafinarg{0}{1}(x) = \alpha_1(x),\quad \nabla_{\vartheta} \newafinarg{0}{1}(x) =\stated{1}{x}, \quad \text{ so that } \nabla_{\vartheta}\newmsecfinarg{0}{1}(\dynb) = 2 \ee_{\rho_0} \big[\alpha_1(x) \stated{1}{x}\big].
\end{align*}

\subsection{A discussion on the backward propagation for differentiation}

In the formulas \eqref{eqn::pert_M_2} and \eqref{eqn::deri_test_fun}, a crucial step is to evaluate 
\begin{align*}
	\Phi(t,x) := \innerbig{\nabla \varphi(\state{t}{x})}{\deristate{t}{x}}, \quad \varphi = U_k, \text{ or } \div\dynb.
\end{align*}
for time $t\in [-T,T]$ where $T = \tp - \tm$. For any fixed $t$, 
this value can be efficiently measured by the adjoint equation with backward propagation proposed in \cite{chen_neural_2018}. For instance, let us consider $t>0$ and 
\begin{align*}
	\Phi(t,x)\ &\myeq{\eqref{eqn::Zt_sol}}\ \innerBig{\nabla \varphi\big(\state{t}{x}\big)}{\int_{0}^{t} \corrz{t}{s}(x)\ \delta \dynb\big(\state{s}{x}\big) \ud s}\\
	&=\ \int_{0}^{t}\innerBig{ \underbrace{\corrz{t}{s}(x)^{T} \nabla \varphi(\state{t}{x})}_{=:\mathscr{A}(s,x)}}{\delta\dynb\big(\state{s}{x}\big)} \ud s
\end{align*}
Moreover, it can be easily verified by the definition \eqref{eqn::corrz} that for $s\in [0,t]$, 
\begin{align*}
	\frac{\ud}{\ud s} \mathscr{A}(s,x) &= -\nabla\dynb^{T}\big(\state{s}{x}\big) \mathscr{A}(s,x), \quad \mathscr{A}(t,x) = \nabla\varphi\big(\state{t}{x}\big).
\end{align*}
The above two equations are Eqs. (4) \& (5) in \cite{chen_neural_2018}. This can eliminate the need to simulate $\deristate{t}{x}$, whose memory cost scales like $\mathcal{O}(\dimn N_p)$. 
However, in the \ftneis{} scheme, we need to estimate $\Phi(t,x)$ not only for a fixed $t$ but also for $t\in [-1,1]$. Therefore, the computational cost scales like $\mathcal{O}(N_t^2)$ where $N_t$ is the number of time-discretization (or say the depth of the flow map in the normalizing flow context).
As a comparison, the forward propagation method uses more memory but could be computationally cheaper as it only needs to visit the whole trajectory for $\mathcal{O}(1)$ times (see the table below). 

\begin{table}[h!]
\centering
\caption{A comparison between the forward and backward propagation in computing the derivative of $\newmsecfinarg{0}{1}(\dynb)$ with respect to parameters; see \eqref{eqn::pert_M_2} and \eqref{eqn::deri_test_fun}. The notation $N_p$ is the number of parameters for the training and $N_t$ is the number of grid points in time-discretization.}
\begin{tabular}{c|ccc}
	\toprule
	Method & Key difference & Memory & Computational cost \\
	\toprule
	Backward propagation & simulate $\mathscr{A}(s,x)$ instead of $\deristate{s}{x}$ & $\mathcal{O}(N_p)$ & $\mathcal{O}(N_t^2)$ \\
	Forward propagation & simulate $\deristate{s}{x}$ & $\mathcal{O}(\dimn N_p)$ & $\mathcal{O}(N_t)$ \\
	\bottomrule
\end{tabular}

\end{table}

\subsection{A discussion on query complexity}
The query complexities to estimate $\ratio$ by both AIS and NEIS are summarized in the Table~\ref{table::query} below:
\begin{table}[h!]
	\centering
	\caption{A summary of query complexities to $U_1$ when estimating $\ratio$ for various methods. $K$ is the transition step in AIS; $N$ is the number of time steps ($\Delta t=1/N$) for either integration-based or ODE-based discretization; 
	$s$ is the order of ODE integration schemes. The integration-based method does not depend on $s$ as the query to $U_1$ is achieved during trapezoidal integral (see Appendix~\ref{subsec::integral_method}).
	See the AIS-$K$ algorithm and relevant analysis in Appendix~\ref{subsec::ais}}

\label{table::query}

	\begin{tabular}{p{0.1\textwidth}p{0.1\textwidth}p{0.3\textwidth}p{0.35\textwidth}}
	\toprule
	 & AIS-$K$ & ODE-based discretization \eqref{eqn::T0_estimator} & integration-based discretization \par (see Appendix~\ref{subsec::integral_method})\\
	 \toprule
	 $U_1$ & $2K$ & $sN$ & $2N$ \\
	 $\nabla U_1$ & $2K$ & 0 & 0 \\
	 \bottomrule
	\end{tabular}
\end{table}

\newpage
\section{More training and comparisons results}
\label{subsec::train_figure}

\begin{table}[h!]
	\caption{Comparison of NEIS with AIS: we include training and estimation query costs for AIS and NEIS, as well as statistics for 10 independent estimates of $\ratio$; 
		for each method, we first set the query cost to obtain one approximated value of $\ratio$ and then accordingly choose the sample size; we repeatedly estimate $\ratio$ $10$ times and report the mean and std of these $10$ estimates (in the form of mean $\pm$ std).
		For NEIS, we consider multiple random initializations for training and therefore, there are multiple estimates about $\ratio$ in the last row in each panel (either 2 or 3 estimates).
		Estimates in AIS, however, simply refer to estimating results from independent experiments.
		The exact value $\ratio = 1$ and $1 \text{ MB} = 10^6$;
		the best result using AIS is colored in blue and the best result using NEIS is colored in green.
		The layer number $\ell=2$ for both generic and gradient ansatz.
	}
	
	\label{table::eg}
	\begin{tabular}{p{0.08\textwidth}p{0.11\textwidth}p{0.11\textwidth}p{0.11\textwidth}p{0.11\textwidth}p{0.11\textwidth}p{0.11\textwidth}}
		\multicolumn{7}{l}{\ul{\emph{Asymmetric 2-mode Gaussian mixture (2D):}}} \smallskip \\
		& AIS-10 & AIS-100 & NEIS\par (Generic, $\mode=20$) & NEIS\par (Generic, $\mode=30$) & NEIS\par (Grad, $\mode=20$) & NEIS \par (Grad, $\mode=30$) \\
		\hline
		\multirow{2}{*}{\shortstack[l]{training\\ cost}} & \multirow{2}{*}{N/A} & \multirow{2}{*}{N/A} &  \multicolumn{4}{c}{$U_1$: 2 MB}\\
		& & & \multicolumn{4}{c}{$\nabla U_1$: 2.1 MB} \\
		\hline
		\multirow{2}{*}{\shortstack[l]{cost per\\ estimate}} & \multicolumn{2}{c}{$U_1$: 6.2 MB} & \multicolumn{4}{c}{$U_1$: 4.2 MB}\\
		& \multicolumn{2}{c}{$\nabla U_1$: 6.2 MB} & \multicolumn{4}{c}{$\nabla U_1$: 0}\\ 
		\hline
		\multirow{2}{*}{\shortstack[l]{10\\ estimates}} & $0.932 \pm 0.466$ & \bestcellais $1.033 \pm 0.071$ & $1.011 \pm 0.013$ & $0.997 \pm 0.016$ & \bestcell $0.997 \pm 0.007$  & $0.999 \pm 0.012$ \\ \hhline{~*6-}
		& $1.068 \pm 0.499$ & $1.031 \pm 0.166$ & $0.997 \pm 0.009$ & $1.001 \pm 0.011$ & $1.000 \pm 0.008$ & \bestcell $1.003 \pm 0.007$ \\
	\end{tabular}
	
	\bigskip
	
	\begin{tabular}{p{0.15\textwidth}p{0.15\textwidth}p{0.15\textwidth}p{0.15\textwidth}p{0.15\textwidth}}
		\multicolumn{5}{l}{\ul{\emph{Symmetric 4-mode Gaussian mixture (10D):}}} \smallskip \\
		& AIS-10 & AIS-100  & NEIS\par (Grad, $\mode=30$) & NEIS \par (Grad, $\mode=40$) \\
		\hline
		\multirow{2}{*}{\shortstack[l]{training cost}} & \multirow{2}{*}{N/A} & \multirow{2}{*}{N/A} &  \multicolumn{2}{c}{$U_1$: 11.5 MB}\\
		& & & \multicolumn{2}{c}{$\nabla U_1$: 12.8 MB} \\
		\hline
		\multirow{2}{*}{\shortstack[l]{\\ cost per\\ estimate}} & \multicolumn{2}{c}{$U_1$:  89.4 MB} & \multicolumn{2}{c}{$U_1$: 76.6$\sim$76.8 MB}\\
		& \multicolumn{2}{c}{$\nabla U_1$:  89.4 MB} & \multicolumn{2}{c}{$\nabla U_1$: 0}\\ 
		\hline
		\multirow{2}{*}{\shortstack[l]{\medskip 10 estimates}} & $0.987 \pm 0.074$ & $1.005 \pm 0.019$ & \bestcell $0.998 \pm 0.004$ & $0.998 \pm 0.011$ \\ \hhline{~*4-}
		& $0.997 \pm 0.052$ & \bestcellais $1.000 \pm 0.014$ & $1.001 \pm 0.005$ & $0.996 \pm 0.008$ \\
	\end{tabular}
	
	\bigskip
	
	\begin{tabular}{p{0.15\textwidth}p{0.15\textwidth}p{0.15\textwidth}p{0.15\textwidth}p{0.22\textwidth}}
		\multicolumn{5}{l}{\ul{\emph{Funnel distribution (10D):}}}\\
		& AIS-10 & AIS-100  & NEIS\par (linear \eqref{eqn::ln}) & NEIS \par (two-parametric  \eqref{eqn::ln_ansatz}) \\
		\hline
		\multirow{2}{*}{\shortstack[l]{training cost}} & \multirow{2}{*}{N/A} & \multirow{2}{*}{N/A} &  $U_1$: 40.4 MB & $U_1$: 20.2 MB \\
		& & & $\nabla U_1$: 48.8 MB &  $\nabla U_1$: 20.2 MB \\
		\hline
		\multirow{2}{*}{\shortstack[l]{\\ cost per\\ estimate}} & \multicolumn{2}{c}{$U_1$: 341.3 MB} & $U_1$:  293 MB & $U_1$: 121.2 MB \\
		& \multicolumn{2}{c}{$\nabla U_1$: 341.3 MB} & $\nabla U_1$: 0 & $\nabla U_1$: 0\\  \hhline{*5-}
		\multirow{3}{*}{\shortstack[l]{\medskip 10 estimates}} & \multirow{3}{*}{$0.681 \pm 0.022$} & \bestcellais & $0.816 \pm 0.089$ &  \bestcell\\ \hhline{~~~-~}
		& & \bestcellais & $0.847 \pm 0.054$ & \bestcell\\ \hhline{~~~-~}
		& & \multirow{-3}{*}{\bestcellais $0.767 \pm 0.035$} & \bestcell $0.890 \pm 0.171$ & \multirow{-3}{*}{\bestcell $0.984 \pm 0.036$}\\
	\end{tabular}
\end{table}

\afterpage{
\begin{figure}[h!]
	\centering
	\includegraphics[width=0.9\textwidth]{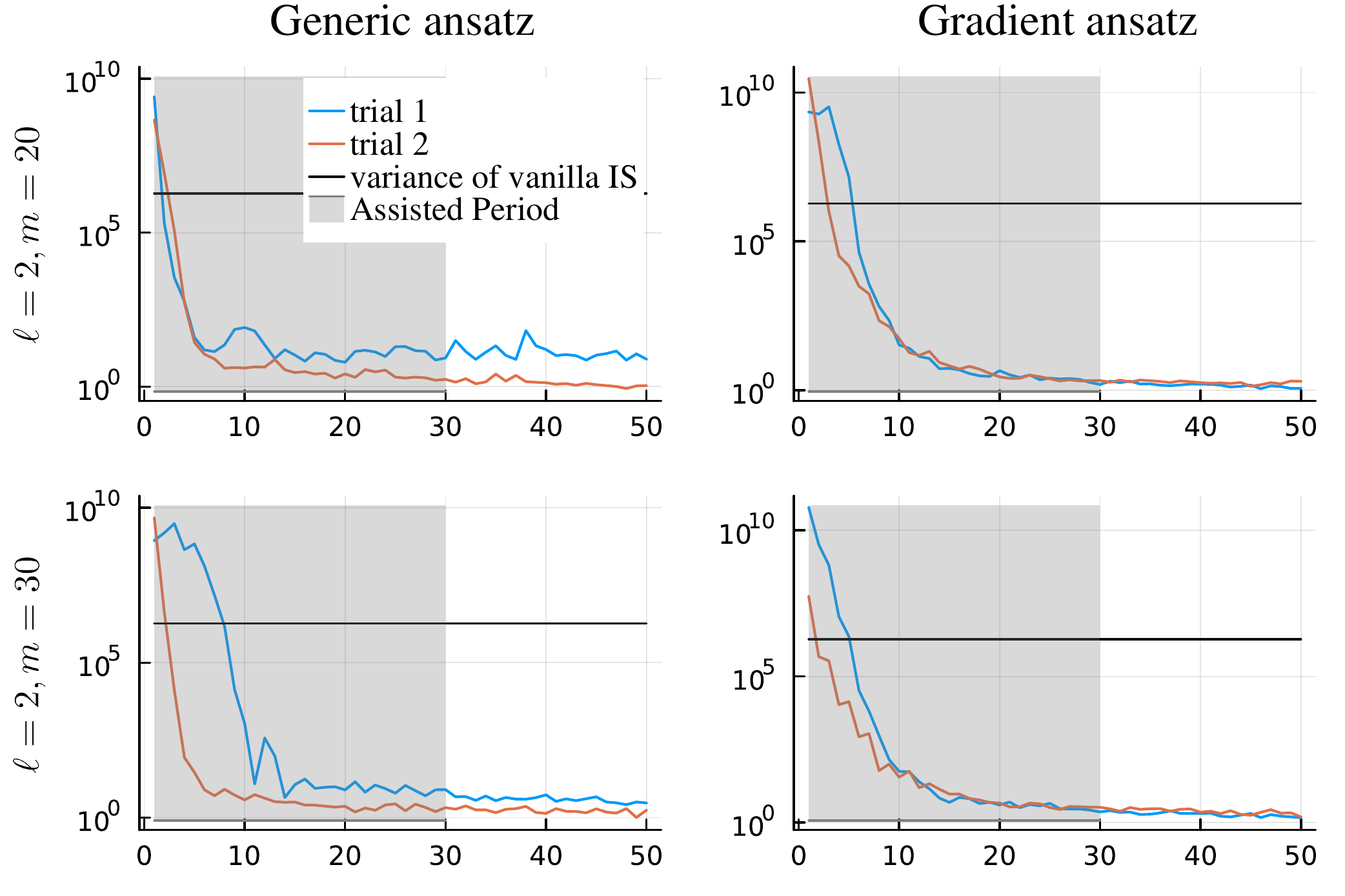}
	\caption{Asymmetric $2$-mode Gaussian mixture in 2D: variance against SGD steps for the asymmetric $2$-mode Gaussian mixture \eqref{eqn::eg_two_mode} in two trial runs.
		\eqref{eq::afin} was discretized with time step $\frac{1}{50}$.
		A mini-batch of sample size $200$ was used throughout the training. The biasing parameters used in~\eqref{eqn::bias} were $\upsilon=0.6$, $c=0.1$ and $\varsigma=1$ (see \eqref{eqn::bias} and the follow-up paragraph).}
	\label{fig::train::1}
\end{figure}

\begin{figure}[h!]
	\centering
	\includegraphics[width=0.45\textwidth]{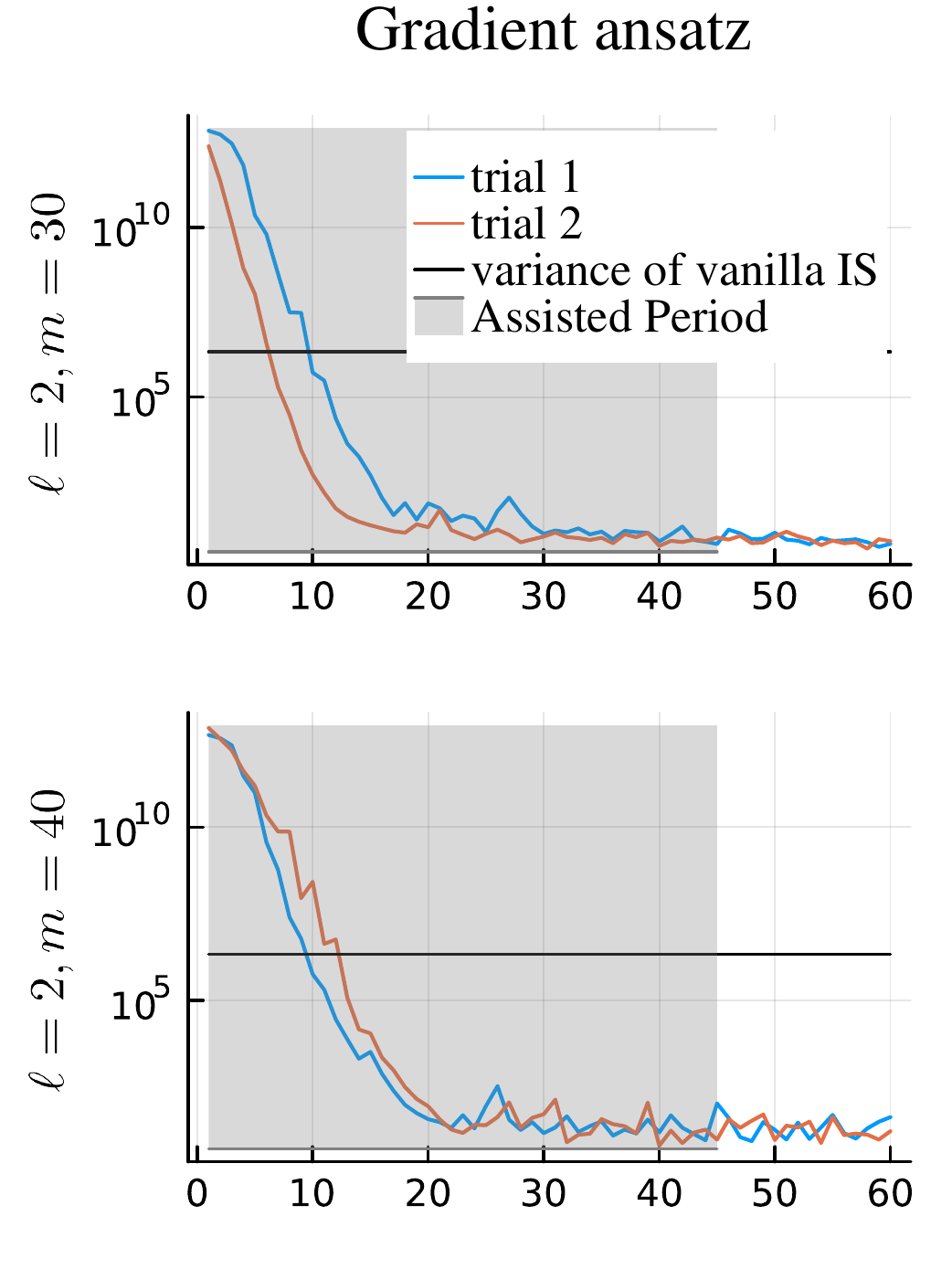}
	\caption{Symmetric $4$-mode Gaussian mixture in 10D:  
		variance as a function of SGD training step in 2 trial runs, with gradient form ansatz for $\dynb$. 
		\eqref{eq::afin} was discretized with time step $\frac{1}{60}$.
		The sample size of the mini-batch was $800$ during the training. The biasing parameters were $\upsilon = 0.75$, $c=0.3$ and $\varsigma=1$ (see \eqref{eqn::bias} and the follow-up paragraph).
	}
	\label{fig::train::2}
\end{figure}
\clearpage
}

\afterpage{
\begin{figure}[h!]
	\centering
	\includegraphics[width=0.55\textwidth]{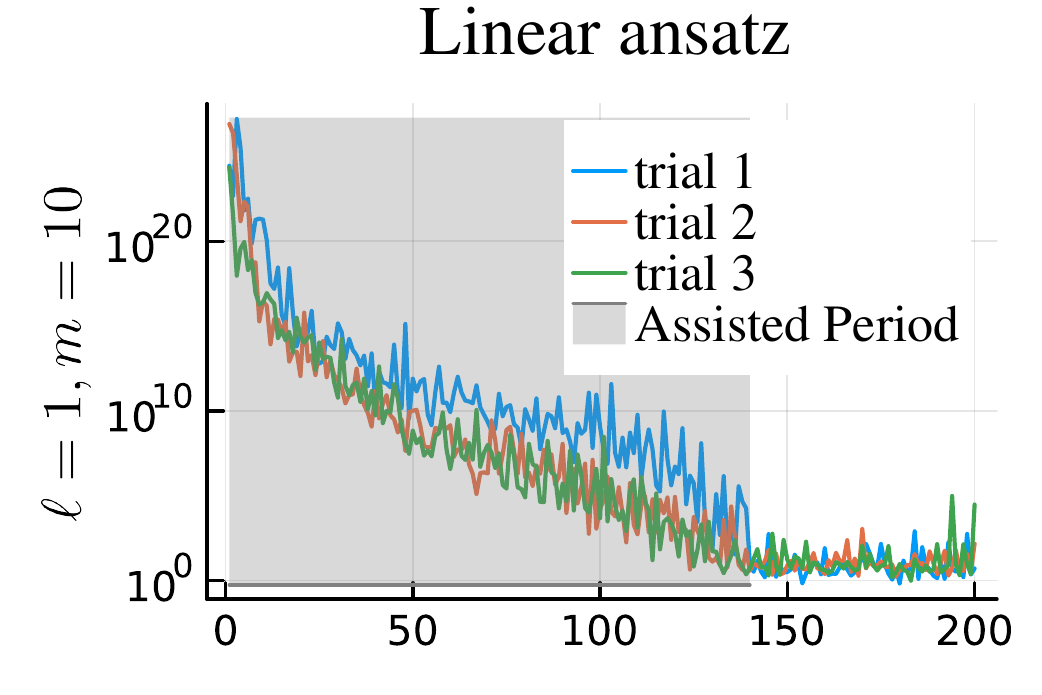}
	\caption{Funnel distribution in 10D: variance as a function of SGD training step in 3 trial runs.
	We consider the finite-time NEIS scheme with $\tm=-\frac{1}{2}$ and use the linear ansatz \eqref{eqn::ln}.
	During the training, the integral inside $\newafinarg{-\frac{1}{2}}{\frac{1}{2}}$ was discretized with time step $\frac{1}{100}$, and
		the mini-batch sample size was $10^3$. 
	 The biasing parameters were $\upsilon = 0.7$, $c=0.3$ and $\varsigma=1$ (see \eqref{eqn::bias} and the follow-up paragraph).
	}
	\label{fig::train::3::ln}
\end{figure}

\begin{figure}[ht!]
	\centering
	\includegraphics[width=\textwidth]{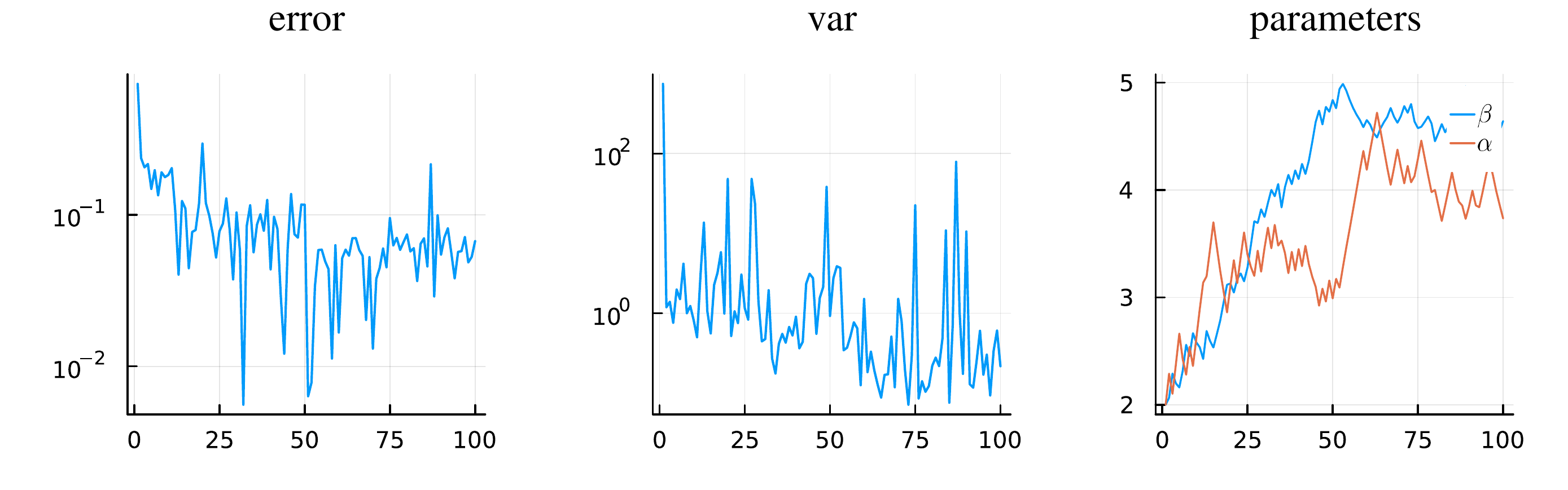}
	\caption{Funnel distribution in 10D: we consider the finite-time NEIS scheme with $\tm=-\frac{1}{2}$ and use the two-parametric ansatz \eqref{eqn::ln_ansatz}.
		During the training, the integral inside $\newafinarg{-\frac{1}{2}}{\frac{1}{2}}$ was discretized with time step $\frac{1}{100}$, and
		the mini-batch sample size was $10^3$.
		The direct training method was used.
	}
	\label{fig::train::3::ln_2}
\end{figure}
\clearpage
}

\afterpage{
\begin{figure}[h!]
	\centering
	\subfigure[Generic ansatz, $\ell=2$, $\mode=20$]{\includegraphics[width=0.9\textwidth]{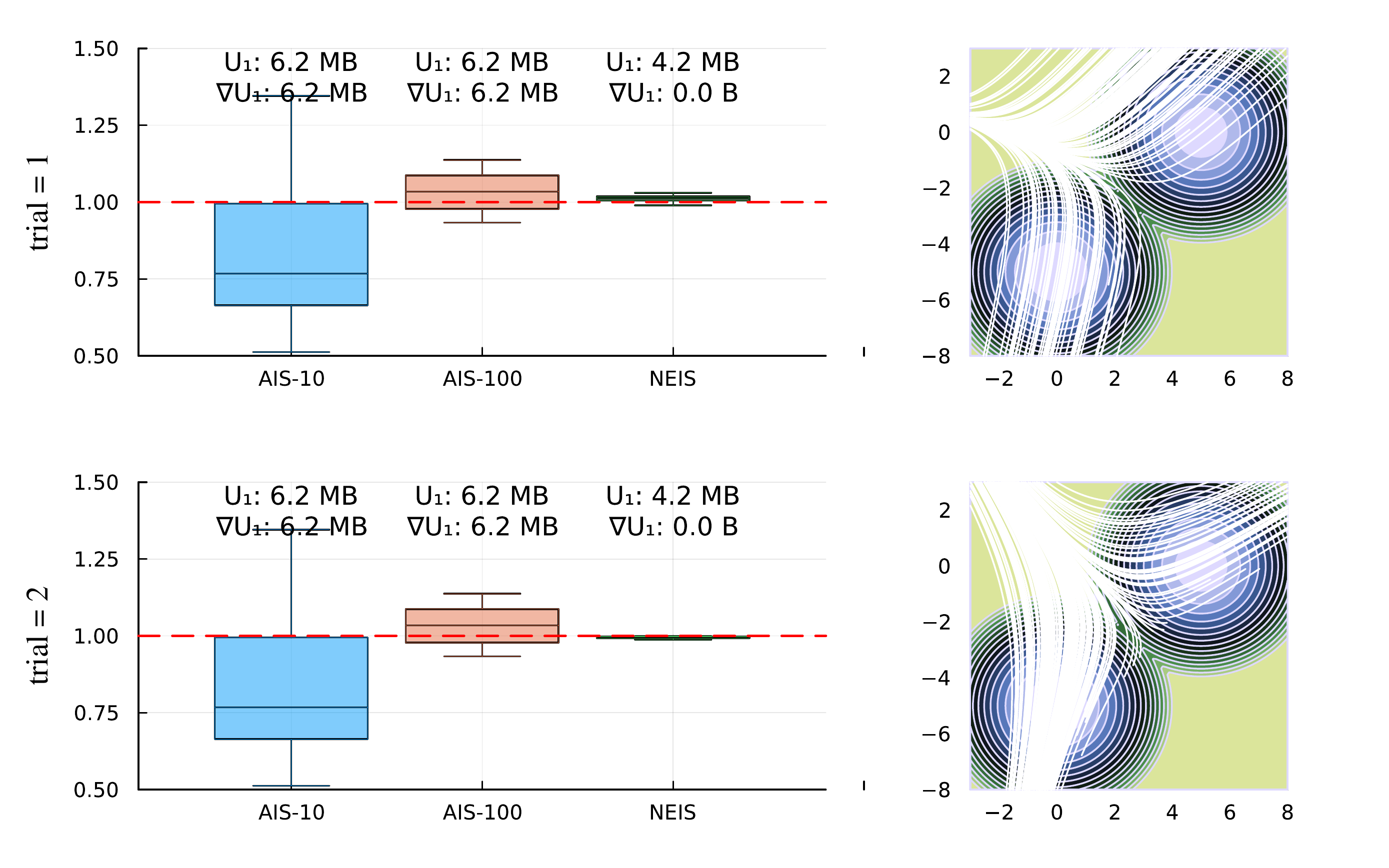}}
	
	\subfigure[Generic ansatz, $\ell=2$, $\mode=30$]{\includegraphics[width=0.9\textwidth]{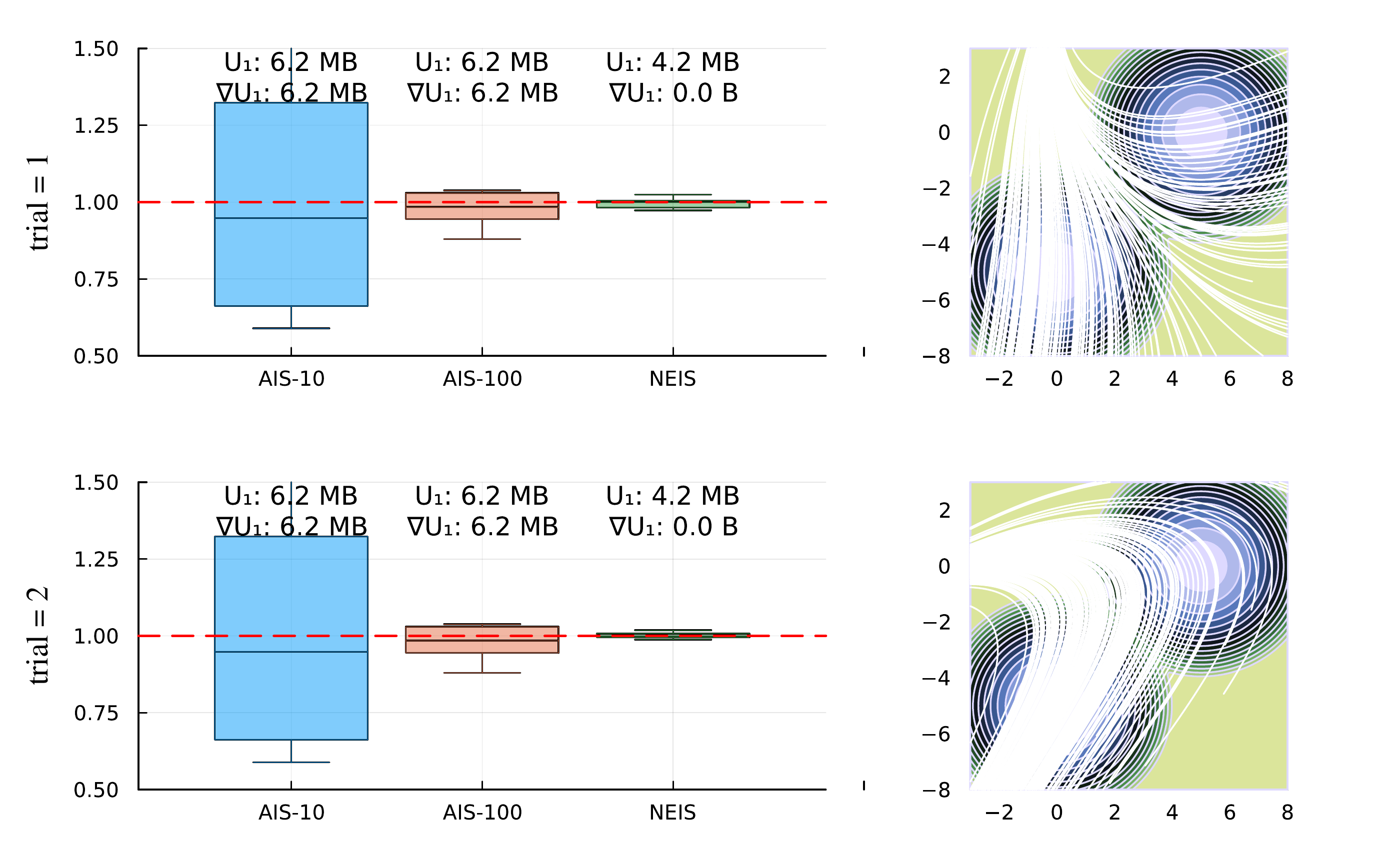}}
	\caption{Asymmetric $2$-mode Gaussian mixture in 2D: we consider the \textbf{generic ansatz} with $\ell = 2$ as the architecture for training. 
		In the left part, we show a comparison of NEIS using optimized flow with AIS, under fixed query budget: we estimate $\ratio$ using the above mentioned query budget for each method and then repeat the experiment $10$ times; we show a boxplot of these $10$ independent estimates for each method.
		In the right part, we plot a contour of $U_1$ together with streamlines of optimized flows.
		For each architecture, we use two random initializations for training, which refer to two trials above; the estimates using AIS are reused within each panel.
	}
\label{fig::cmp::1::generic}
\end{figure}

\begin{figure}
	\centering
	\subfigure[Gradient ansatz, $\ell=2$, $\mode=20$]{\includegraphics[width=0.9\textwidth]{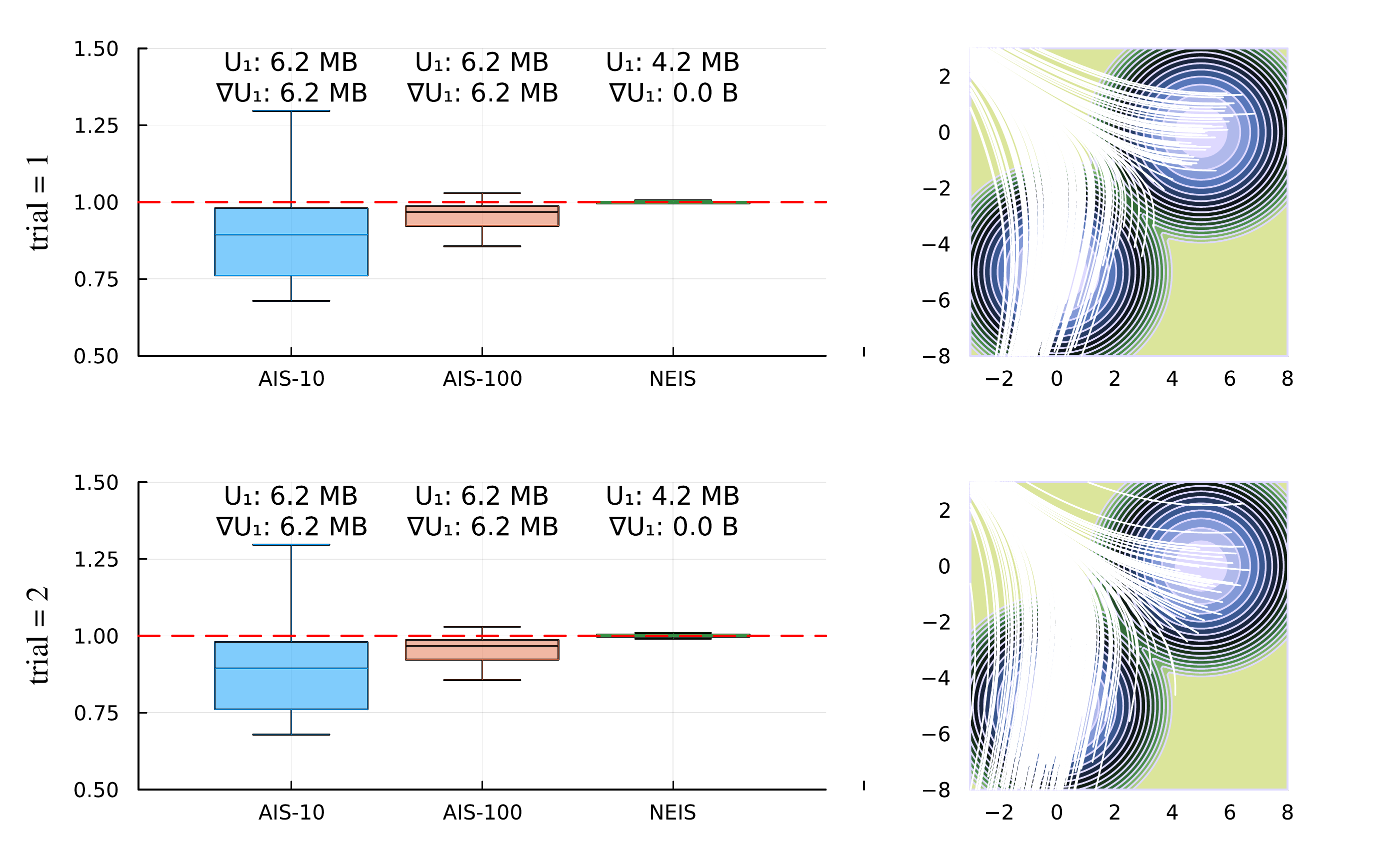}}
	
	\subfigure[Gradient ansatz, $\ell=2$, $\mode=30$]{\includegraphics[width=0.9\textwidth]{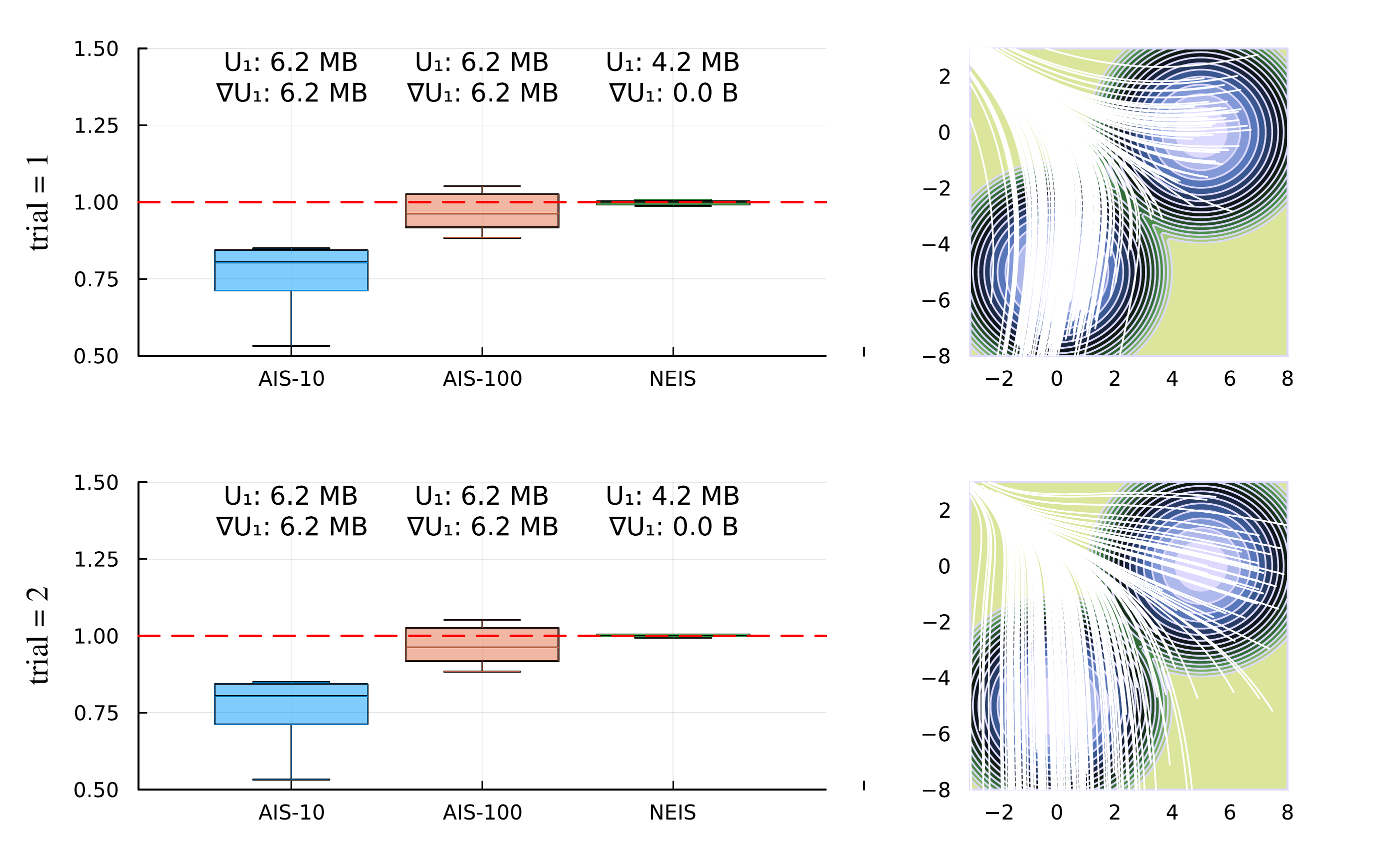}}
	\caption{Asymmetric $2$-mode Gaussian mixture in 2D: we consider the \textbf{gradient ansatz} with $\ell = 2$ as the architecture for training. 
		In the left part, we show a comparison of NEIS using optimized flow with AIS, under fixed query budget: we estimate $\ratio$ using the above mentioned query budget for each method and then repeat the experiment $10$ times; we show a boxplot of these $10$ independent estimates for each method.
		In the right part, we plot a contour of $U_1$ together with streamlines of optimized flows.
		For each architecture, we use two random initializations for training, which refer to two trials above; the estimates using AIS are reused within each panel.
	}
\label{fig::cmp::1::grad}
\end{figure}

\begin{figure}
	\centering
	\subfigure[Gradient ansatz, $\ell=2$, $\mode=30$]{\includegraphics[width=0.9\textwidth]{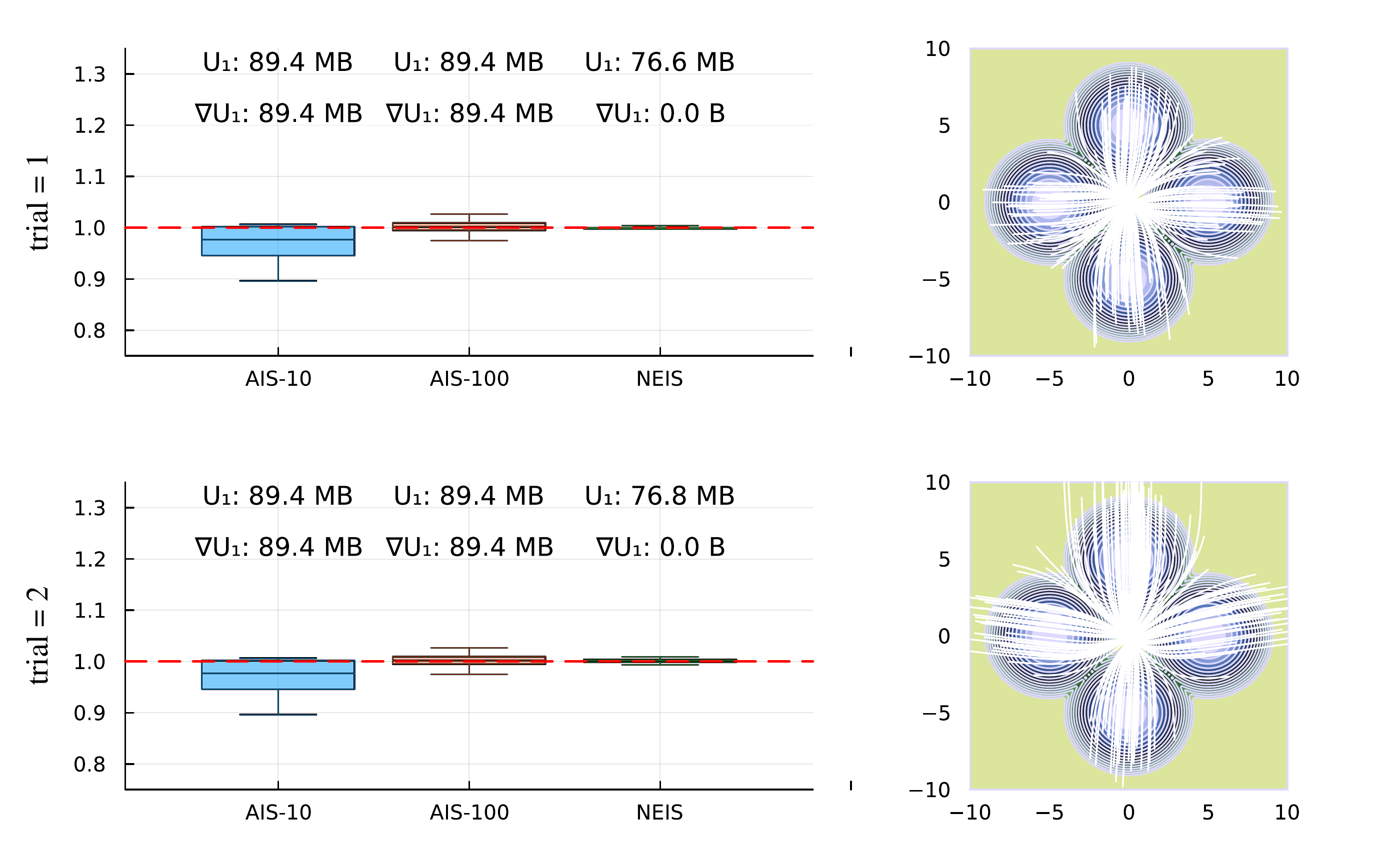}}
	
	\subfigure[Gradient ansatz, $\ell=2$, $\mode=40$]{\includegraphics[width=0.9\textwidth]{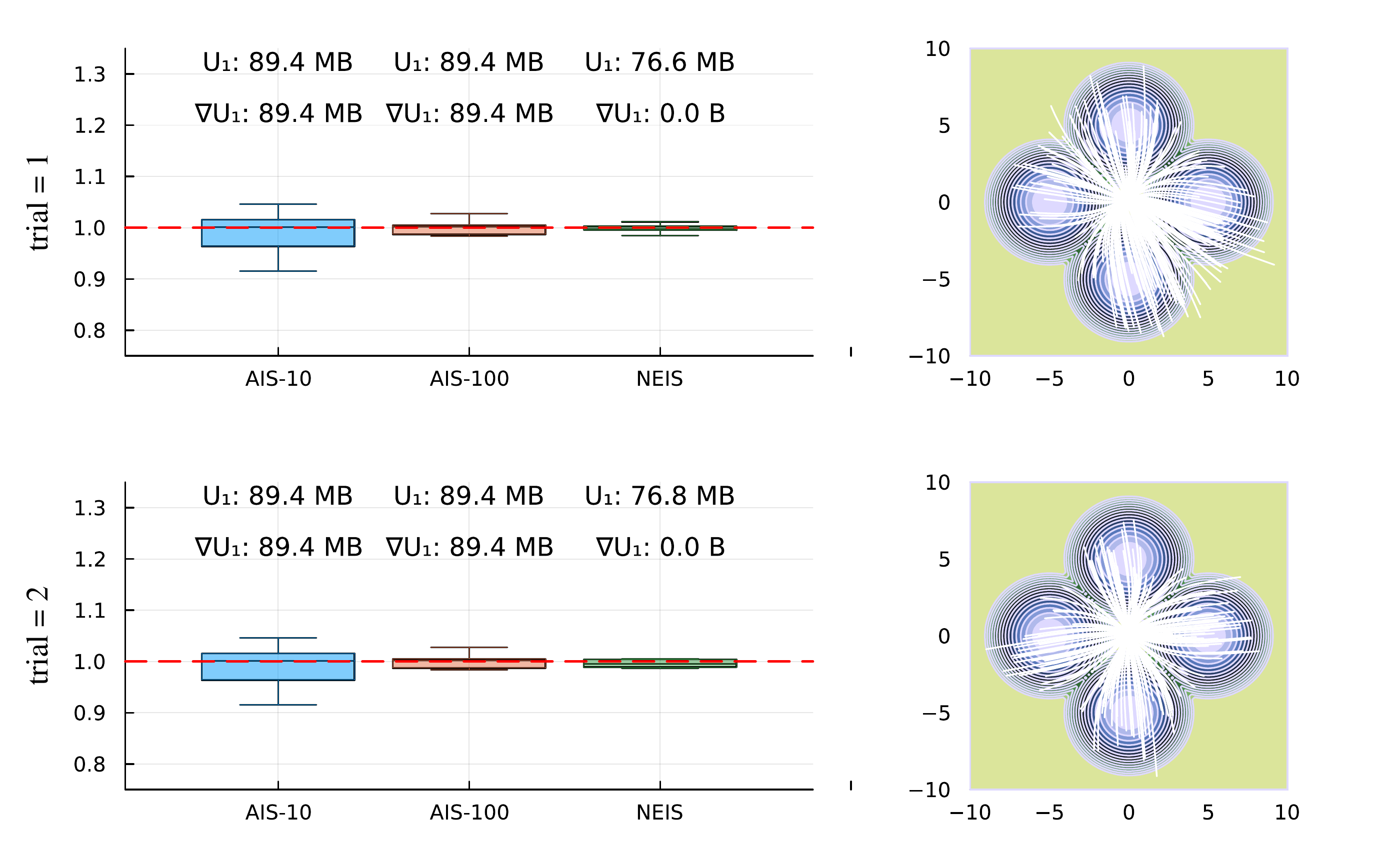}}
	\caption{Symmetric $4$-mode Gaussian mixture in 10D: we consider the \textbf{gradient ansatz} with $\ell = 2$ as the architecture for training. 
		In the left part, we show a comparison of NEIS using optimized flow with AIS, under fixed query budget: we estimate $\ratio$ using the above mentioned query budget for each method and then repeat the experiment $10$ times; we show a boxplot of these $10$ independent estimates for each method.
		In the right part, we plot a contour of projected $U_1$ (more specifically, the function $(x_1,x_2)\to U_1\big(\begin{bsmallmatrix}x_1 & x_2 & 0 \cdots \end{bsmallmatrix}\big)$) together with streamlines of optimized flows projected to the $x_1$-$x_2$ plane.
		For each architecture, we use two random initializations for training, which refer to two trials above; the estimates using AIS are reused within each panel.
	}
	\label{fig::cmp::2::grad}
\end{figure}

\begin{figure}
	\centering
	\subfigure[Linear ansatz \eqref{eqn::ln} ]{\includegraphics[width=0.9\textwidth]{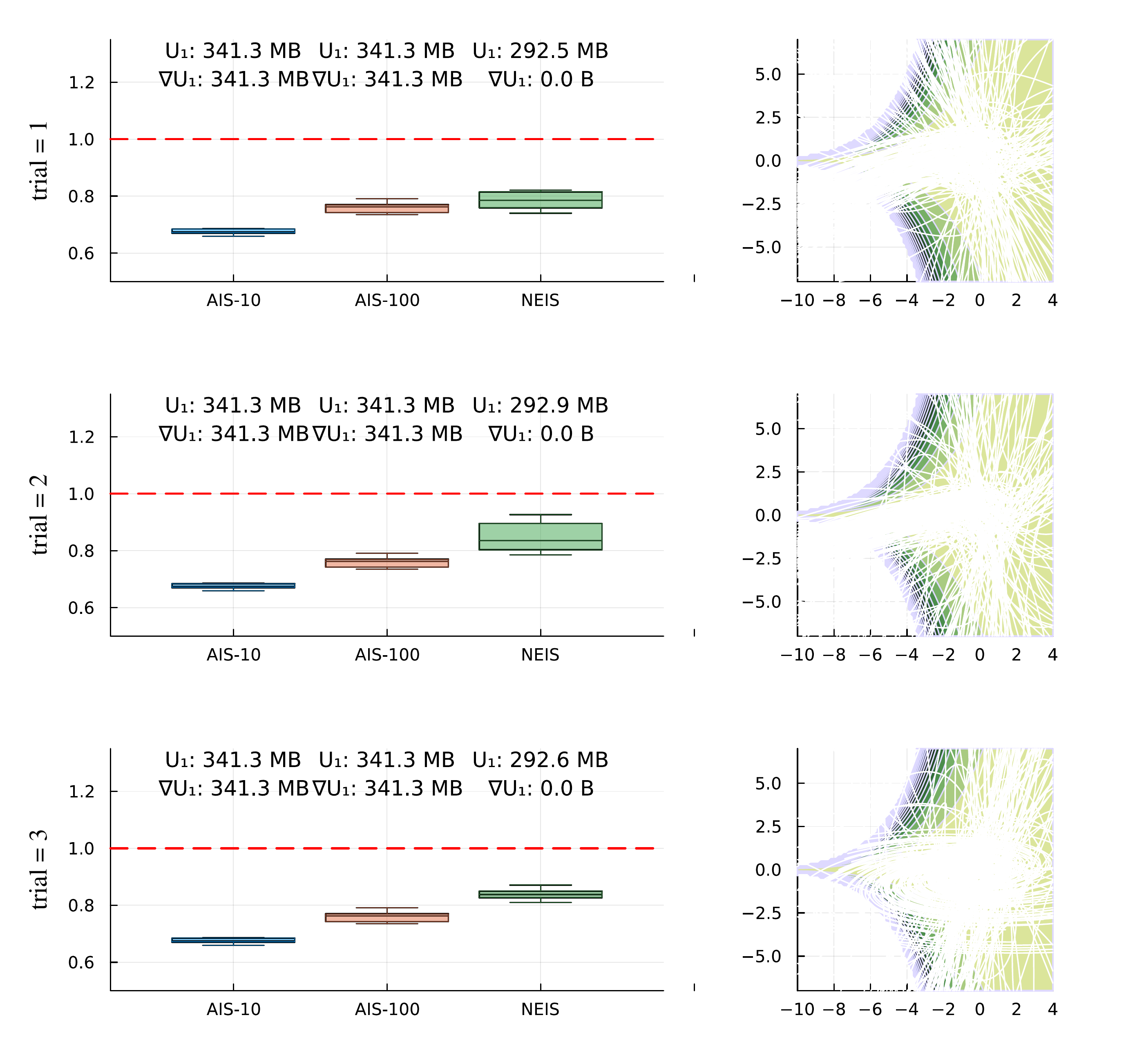}}
	
	\subfigure[two-parametric ansatz \eqref{eqn::ln_ansatz} ]{\includegraphics[width=0.9\textwidth]{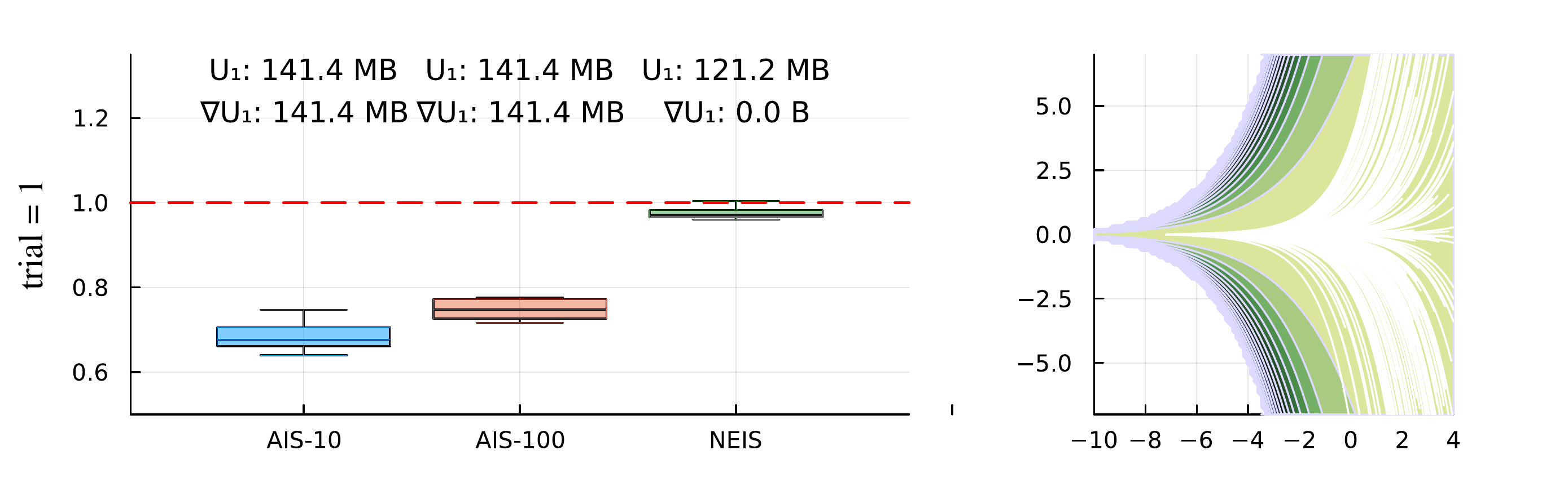}}
	
\caption{Funnel distribution in 10D: we consider the \textbf{generic linear ansatz} and a \textbf{two-parametric ansatz} as the architecture for training. 
In the left part, we show a comparison of NEIS using optimized flow with AIS, under fixed query budget: we estimate $\ratio$ using the above mentioned query budget for each method and then repeat the experiment $10$ times; we show a boxplot of these $10$ independent estimates for each method.
In the right part, we plot the contour of projected $\rho_1$ in log10 scale (more specifically, we plot the function $(x_1,x_2)\to \log_{10}\big(\rho_1(\begin{bsmallmatrix}x_1 & x_2 & 0 \cdots\end{bsmallmatrix})\big)$) together with streamlines of optimized flows projected to the $x_1$-$x_2$ plane.
For the linear ansatz, we use three random initializations for training, which refer to three trials above;
for the two-parametric ansatz, we only use a particular initialization (see the paragraph below \eqref{eqn::ln_ansatz}), which refers to the only trial in part (b); 
the estimates using AIS are reused within each panel.
}
	\label{fig::cmp::3::ln}
\end{figure}
\clearpage
}

 \end{document}